\newlength{\enummargin}
\newcounter{CounterEnumi}
\numberwithin{equation}{section}
\newtheorem{thm}{Theorem}[section]
\newtheorem{thmi}{Theorem}
\newtheorem{prop}[thm]{Proposition}
\newtheorem{lemma}[thm]{Lemma}
\newtheorem{cor}[thm]{Corollary}
\newtheorem*{thm*}{Theorem}
\newtheorem*{prop*}{Proposition}
\newtheorem*{cor*}{Corollary}
\newtheorem*{conj*}{Conjecture}
\theoremstyle{definition}
\theoremstyle{remark}
\newtheorem{rmk}[thm]{Remark}
\newcommand{\itref}[1]{(\ref{#1})}
\newcommand{\mc}{\mathcal}
\newcommand{\cH}{\mc H}
\newcommand{\cL}{\mc L}
\newcommand{\cV}{\mc V}
\newcommand{\cX}{\mc X}
\newcommand{\cY}{\mc Y}
\newcommand{\cM}{\mc M}
\newcommand{\sfe}{\mathsf{e}}
\newcommand{\bB}{\mathbf{B}}
\newcommand{\ft}{\mathfrak{t}}
\newcommand{\R}{\mathbb R}
\newcommand{\RR}{\mathbb R}
\newcommand{\C}{\mathbb C}
\newcommand{\Cx}{\mathbb C}
\newcommand{\N}{\mathbb N}
\newcommand{\bhm}{M_\bullet}
\newcommand{\la}{\langle}
\newcommand{\ra}{\rangle}
\newcommand{\pa}{\partial}
\newcommand{\tn}{\textnormal}
\newcommand{\Sphere}{\mathbb S}
\newcommand{\ep}{\epsilon}
\newcommand{\eps}{\epsilon}
\newcommand{\Span}{\operatorname{Span}}
\newcommand{\supp}{\operatorname{supp}}
\newcommand{\wt}{\widetilde}
\newcommand{\wh}{\widehat}
\renewcommand{\tilde}{\wt}
\renewcommand{\hat}{\wh}
\renewcommand{\Im}{\operatorname{Im}}
\renewcommand{\Re}{\operatorname{Re}}
\newcommand{\Sym}{\operatorname{S}}
\newcommand{\thr}{\mathrm{th}}
\newcommand{\bl}{{\mathrm{b}}}
\newcommand{\cl}{{\mathrm{c}}}
\newcommand{\CI}{\mc C^\infty}
\newcommand{\CIc}{\mc C^\infty_\cl}
\newcommand{\CIdot}{\dot{\mc C}^\infty}
\newcommand{\CIdotc}{\CIdot_\cl}
\newcommand{\ftrans}{\;\!\widehat{\ }\;\!}
\newcommand{\Diff}{\mathrm{Diff}}
\newcommand{\WF}{\mathrm{WF}}
\newcommand{\wozero}{\setminus o}
\newcommand{\numin}{\nu_{\min}}
\newcommand{\semi}{\hbar}
\newcommand{\Psih}{\Psi_{\semi}}
\newcommand{\WFh}{\WF_{\semi}}
\DeclareMathOperator{\Op}{Op}
\newcommand{\CxTb}{{}^{\C\bl}T}
\newcommand{\Tb}{{}^{\bl}T}
\newcommand{\Lambdab}{{}^{\bl}\Lambda}
\newcommand{\rcTb}{{}^{\bl}\overline{T}}
\newcommand{\Sb}{{}^{\bl}S}
\newcommand{\bdiff}{{}^\bl\!d}
\newcommand{\Diffb}{\Diff_\bl}
\newcommand{\Hb}{H_{\bl}}
\newcommand{\WFb}{\WF_{\bl}}
\newcommand{\Psib}{\Psi_{\bl}}
\newcommand{\Vb}{\cV_\bl}
\newcommand{\Db}{{}^{\bl}\!D}
\newcommand{\fpsi}{\Psi}
\newcommand{\fpsib}{\Psi_\bl}
\newcommand{\Rnhalf}{{\overline{\R^n_+}}}
\newcommand{\derivsU}{10}
\newcommand{\derivsUdU}{12}
\begin{document}

\title[Quasilinear wave equations on Kerr-de Sitter]{Global analysis of quasilinear wave equations on asymptotically Kerr-de Sitter spaces}

\author{Peter Hintz and Andras Vasy}
\address{Department of Mathematics, Stanford University, CA 94305-2125, USA}
\email{phintz@math.stanford.edu}
\email{andras@math.stanford.edu}
\thanks{The authors were supported in part by A.V.'s National Science
  Foundation grants DMS-0801226 and DMS-1068742 and P.H.\ was
  supported in part by a Gerhard Casper Stanford Graduate Fellowship.}
\date{April 4, 2014. Final revision: July 12, 2015.}
\subjclass{35L72, 35L05, 35P25}
\keywords{Quasilinear waves,
  Kerr-de Sitter space,
  b-pseudodifferential operators, Nash-Moser iteration, resonances, asymptotic expansion}

\begin{abstract}
  We consider quasilinear wave equations on manifolds for which infinity has a structure generalizing that of Kerr-de Sitter space; in particular the trapped geodesics form a normally hyperbolic invariant manifold. We prove the global existence and decay, to constants for the actual wave equation, of solutions. The key new ingredient compared to earlier work by the authors in the semilinear case \cite{HintzVasySemilinear} and by the first author in the non-trapping quasilinear case \cite{HintzQuasilinearDS} is the use of the Nash-Moser iteration in our framework.
\end{abstract}

\maketitle

%%%%%%%%%%%%%%%%%%%%%%%%%%%%%%%%%%%%%%%%%%%%%%%%%%%%%%%%%%%%%%%%%%%%%%
\section{Introduction}

We consider quasilinear wave equations on manifolds for which infinity has a structure generalizing that of Kerr-de Sitter space. An important feature is that, as in perturbations of Kerr-de Sitter space, the trapped geodesics form a normally hyperbolic invariant manifold. We prove the global existence and decay of solutions; this means decay to constants for the actual wave equation. This result is part of a new framework for solving quasilinear wave equations with normally hyperbolic trapping, which extends the semilinear framework developed by the two authors \cite{HintzVasySemilinear} and the {\em non-trapping} quasilinear theory developed by the first author \cite{HintzQuasilinearDS}. The main new tool introduced here is a Nash-Moser iteration necessitated by the loss of derivatives in the linear estimates at the normally hyperbolic trapping. To our knowledge, this is the first global result for the forward problem for a quasilinear wave equation on either a Kerr or a Kerr-de Sitter background. We remark, however, that Dafermos, Holzegel and Rodnianski \cite{Dafermos-Holzegel-Rodnianski:Scattering} have constructed backward solutions for Einstein's equations on the Kerr background; for backward constructions the trapping does not cause difficulties. For concreteness, we state our results first in the special case of Kerr-de Sitter space, but it is important to keep in mind that the setting is more general.

The region of Kerr-de Sitter space we are interested in is a (non-compact) 4-dimensional manifold
\[
  M^\circ = \R_{t_*} \times (r_--2\delta,r_++2\delta)_r \times \Sphere^2,
\]
which extends past the event horizon (at $r=r_-$) and past the cosmological horizon (at $r=r_+$). The manifold $M^\circ$ is equipped with a stationary Lorentzian metric $g_0$ (i.e.\ $\pa_{t_*}$ is a Killing vector field) which depends on three parameters, $\Lambda>0$ (the cosmological constant), $\bhm>0$ (the black hole mass) and $a$ (the angular momentum), though we usually drop this in the notation. We will always assume that $\Lambda$, $\bhm$ and $a$ are such that the non-degeneracy condition \cite[(6.2)]{VasyMicroKerrdS} holds, which in particular ensures that the cosmological horizon lies outside the black hole event horizon, i.e.\ $r_+>r_-$. Concretely, in Boyer-Lindquist coordinates $(t,r,\theta,\phi)$ on $\R_t\times(r_-,r_+)_r\times\Sphere^2$, the Kerr-de Sitter metric, see e.g.\ \cite[Equation~(6.1)]{VasyMicroKerrdS},\footnote{Our $t,\phi,t_*,\phi_*$ are denoted $\tilde t,\tilde\phi,t,\phi$ in \cite{VasyMicroKerrdS}.} does not extend to $r=r_\pm$ due to a coordinate singularity, but introducing $t_*=t+h(r)$, $\phi_*=\phi+P(r)$ with suitable functions $h,P$ as in \cite[Equation~(6.5)]{VasyMicroKerrdS}, the metric does extend smoothly to $r=r_\pm$ and beyond.

In order to set up our problem, see Figure~\ref{FigKDS} for an illustration, we consider the domain
\[
  \Omega^\circ = [0,\infty)_{t_*}\times[r_--\delta,r_++\delta]_r\times\Sphere^2 \subset M^\circ,
\]
which is a submanifold with corners with two boundary hypersurfaces, which are the intersections of
\[
  H_1 = \{t_*=0\}, \quad H_2 = \{r=r_--\delta\}\cup\{r=r_++\delta\}
\]
with $\Omega^\circ$. Thus, $H_1$ is a Cauchy hypersurface, and $H_2$ has two connected components, both spacelike hypersurfaces. We are interested in solving the forward problem for wave-like equations in $\Omega^\circ$, i.e.\ imposing vanishing Cauchy data at $H_1$; initial value problems with general Cauchy data can always be converted into an equation of this type.

\begin{figure}[!ht]
  \centering 
  \includegraphics{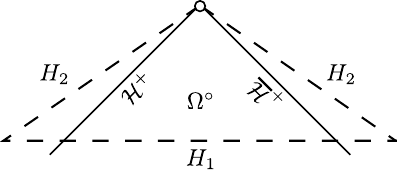}
  \caption{Penrose diagram of the domain $\Omega^\circ$, bounded by the dashed lines, on which we solve quasilinear wave equations. $\cH^+$ is the event horizon, $\overline{\cH}^+$ the cosmological horizon, further $H_1$ is a Cauchy hypersurface, where we impose vanishing Cauchy data, and $H_2$ has two spacelike connected components.}
\label{FigKDS}
\end{figure}

The simplest (albeit not the most natural from a geometric perspective, see below) wave equations we consider are of the form
\[
  \Box_{g(u,du)}u = f+q(u,du),
\]
$u$ real-valued, where $g(0,0)=g_0$ is the Kerr-de Sitter metric, and at each $p=(t_*,r,\omega)\in M^\circ$, the metric $g(u,du)$ is $g_p(u(p),du(p))$, where
\[
  g_p\colon\R\oplus T^*_p M \to S^2 T^*_p M
\]
depends smoothly on $p$, and in fact only depends on $(r,\omega)$, \emph{but not on $t_*$}; further
\[
  q(u,du) = \sum_{j=1}^{N'} a_j u^{e_j}\prod_{k=1}^{N_j} X_{jk}u, \quad e_j,N_j\in\N_0,\ N_j+e_j\geq 2,
\]
with $a_j\in\CI(M^\circ)$ and $X_{jk}\in\cV(M^\circ)$ real and independent of $t_*$. (Here $a_j$ is only relevant if $N_j=0$, since it can otherwise be absorbed into $X_{j1}$.)

Using the stationary nature of the spacetime further, we will use the Sobolev spaces $H^s$ on $M^\circ=\R_{t_*}\times X$, with $X=(r_--2\delta,r_++2\delta)_r\times\Sphere^2$ considered as an open subset of $\R^3_y$; thus, the norm of $u\in H^s$ is defined by
\begin{equation}
\label{EqKdSSobolev}
  \|u\|_{H^s}^2 = \sum_{j+|\beta|\leq s} \|\pa_{t_*}^j \pa_y^\beta u\|_{L^2(M^\circ,|dg_0|)}^2
\end{equation}

Our central result in the form which is easiest to state is:

\begin{thmi}
\label{ThmIntroWave-simple}
  For $g(0,0)=g_0$ a Kerr-de Sitter metric with angular momentum $|a|\ll\bhm$ as above, and for $\alpha>0$ sufficiently small and $f\in\CIc(M^\circ)$ real-valued with sufficiently small $H^{\derivsUdU}$-norm, the wave equation $\Box_{g(u,du)} u=f+q(u,du)$ in $\Omega^\circ$, with vanishing Cauchy data, and with $q$ as above with $N_j\geq 1$ for all $j$, has a unique real smooth (in $\Omega^\circ$) global forward solution of the form $u=u_0+\tilde u$, $e^{\alpha t_*}\tilde u$ bounded, $u_0=c\chi$, $c\in\R$, where $\chi=\chi(t_*)$ is identically $1$ for large $t_*$. More precisely, for any fixed $s\in\R$, we will have $e^{\alpha t_*}\tilde u\in H^s$ if $f$ is sufficiently small (depending on $s$).
\end{thmi}

Further, the analogous conclusion holds for the Klein-Gordon operator $\Box-m^2$ with $m>0$ sufficiently small, without the presence of the $u_0$ term, i.e.\ for $\alpha>0,\ m>0$ sufficiently small, if $f\in\CIc(\Omega^\circ)$ has sufficiently small $H^{\derivsUdU}$-norm, $(\Box_{g(u,du)}-m^2)u=f+q(u,du)$ has a unique smooth global forward solution $u\in e^{-\alpha t_*}L^\infty(\Omega^\circ)$.  In fact, for Klein-Gordon equations one can also obtain a leading term, analogously to $u_0$, which now has the form $c e^{-i\sigma_1 t_*}\chi$, $\sigma_1$ the resonance of $\Box_{g(0)}-m^2$ with the largest imaginary part; thus $\Im\sigma_1<0$, so this is a decaying solution.

The {\em only} reason the assumption $|a|\ll\bhm$ is made is due to the possible presence (to the extent that we do not disprove it here) of resonances in $\Im\sigma\geq 0$, apart from the $0$-resonance with constants as the resonant state, for larger $a$. Below, in \S\ref{sec:general}, we give a general result in a form that makes it clear that this is the only remaining item to check --- indeed, this even holds in natural vector bundle settings.

We proceed to describe the natural geometric setup in which our analysis of quasilinear wave equations will take place. To begin with, one can conveniently encode the uniform structure of $(M^\circ,g_0)$ as $t_*\to\infty$ by adding an `ideal boundary' at infinity, that is, by introducing
\[
  x:=e^{-t_*}
\]
and adding $x=0$ to the spacetime, obtaining the 4-dimensional manifold
\[
  M = [0,\infty)_x \times (r_--2\delta,r_++2\delta)_r \times \Sphere^2
\]
with boundary $X=\{x=0\}$. The Lorentzian metric $g_0$ then has a specific structure at $\pa M$, i.e.\ `infinity', called a totally characteristic, or b-, structure, which we recall below. The closure of $\Omega^\circ$ in $M$ is the compact domain $\Omega=[0,1]_x\times[r_--\delta,r_++\delta]_r\times\Sphere^2$, which is a submanifold with corners with three boundary hypersurfaces which are the intersections of $H_1$, $H_2$ (defined above) and $\pa M$ (the boundary at future infinity) with $\Omega$; see Figure~\ref{FigKDSFull}. We are interested in solving forward problems for wave-like equations in $\Omega$ with vanishing Cauchy data at $H_1$ as above.

\begin{figure}[!ht]
  \centering 
  \includegraphics{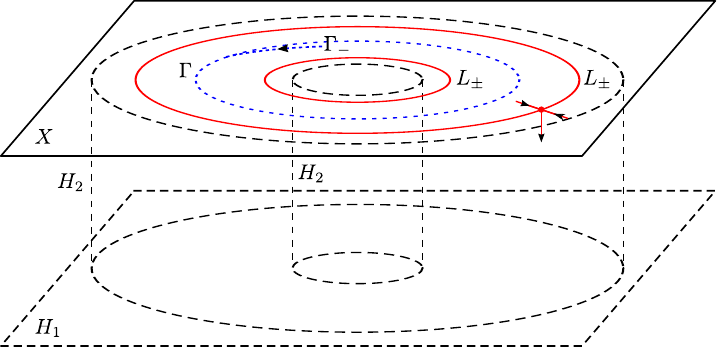}
  \caption{Setup for the discussion of the forward problem on Kerr-de Sitter space from a compactified perspective. Indicated are the ideal boundary $X$, the Cauchy hypersurface $H_1$ and the hypersurface $H_2$, which has two connected components which lie beyond the cosmological horizon and beyond the black hole event horizon, respectively. The domain $\Omega$ is bounded by $H_1$ in the past and by $X$ and $H_2$ in the future. The horizons at $X$ themselves are the projections to the base of the (generalized) radial sets $L_\pm$, discussed below, each of which has two components, corresponding to the two horizons. The projection to the base of the bicharacteristic flow is indicated near a point on $L_+$; near $L_-$, the directions of the flowlines are reversed. Lastly, $\Gamma$ is the trapped set, and the projection of a trapped trajectory approaching $\Gamma$ within $\Gamma_-=\Gamma_-^+\cup\Gamma_-^-$, discussed below, is indicated.}
\label{FigKDSFull}
\end{figure}

Now, recall that on any $n$-dimensional manifold with boundary $M$, the Lie algebra of smooth vector fields tangent to the boundary is denoted by $\Vb(M)$; in local coordinates $(x,y_1,\ldots,y_{n-1})$, with $x$ a boundary defining function, these are linear combinations of $x\pa_x$ and $\pa_{y_j}$ with $\CI(M)$ coefficients. Just as a dual metric is a linear combination of symmetric tensor products of coordinate vector fields, a dual metric in this totally characteristic setting, also called a dual b-metric, is a linear combination of
\[
  x\pa_x\otimes x\pa_x,\ \frac{1}{2}(x\pa_x\otimes \pa_{y_j}+\pa_{y_j}\otimes x\pa_x),\ \frac{1}{2}(\pa_{y_i}\otimes\pa_{y_j}+\pa_{y_j}\otimes\pa_{y_i}).
\]
One can think of this as a symmetric bilinear form; then a Lorentzian dual b-metric is a non-degenerate bilinear form of signature $(1,n-1)$. The actual metric is then a linear combination of
\begin{equation}
\label{EqBSym2}
  \frac{dx}{x}\otimes \frac{dx}{x},\ \frac{1}{2}\Big(\frac{dx}{x}\otimes dy_j+\,dy_j\otimes \frac{dx}{x}\Big),\ \frac{1}{2}(dy_i\otimes dy_j+dy_j\otimes dy_i).
\end{equation}
The corresponding wave operator is thus a totally characteristic, or b-, operator, $\Box\in\Diffb^2(M)$, i.e.\ is the sum of products of up to two factors of elements of $\Vb(M)$, with $\CI(M)$ coefficients.

Invariantly, $\Vb(M)$ is the space of all $\CI$ sections of a vector bundle, $\Tb M$, with local basis $x\pa_x$, $\pa_{y_j}$. The dual bundle of $\Tb M$ is denoted by $\Tb^*M$; it has a local basis of $\frac{dx}{x}$, $dy_j$. Thus, the tensors in \eqref{EqBSym2} span (over $\CI(M)$) the space of sections of the symmetric tensor power $S^2\Tb^*M$ in a local coordinate chart. A b-metric then is a non-degenerate smooth section of $S^2\Tb^*M$, i.e.\ a non-degenerate symmetric bilinear form on the fibers of $\Tb M$ smoothly depending on the base point; a Lorentzian b-metric is one of signature $(1,n-1)$.

In the concrete example of the compactification of Kerr-de Sitter space described above, we simply have $x\pa_x=-\pa_{t_*}$ and $\frac{dx}{x}=-dt_*$, hence $\Tb M$ and $\Tb^* M$ can be thought of as `uniform versions' of $TM^\circ$ and $T^*M^\circ$ up to the ideal boundary $x=0$. Further, a special class of elements of $\CI(M)$ is given by the $t_*$-independent smooth functions on $X$, and in general, elements of $\CI(M)$, by virtue of their smooth dependence on $x=e^{-t_*}$, differ from a $t_*$-independent function by a function bounded by $Ce^{-t_*}$. (Since the choice $x=e^{-t_*}$ of boundary defining function of future infinity is arbitrary --- any $x=e^{-\gamma t_*}$, $\gamma\in\CI(X)$, would work as well --- the space $\CI(M)$ is not quite natural, hence we will later amend it by adding to it the space $\Hb^\infty$ of conormal functions, see below, in order to obtain a space which \emph{is} independent of the choice of b-compactification.) Thus, in this example, smooth b-metrics are asymptotically stationary, i.e.\ differ from a stationary metric by a symmetric 2-tensor which is exponentially decaying (measuring its size using a stationary Riemannian metric, or in fact any smooth Riemannian b-metric) in $t_*$.

In order to compress notation for elements of $\Vb(M)$ applied to a function $u$, it is convenient to introduce the notation 
\[
  \bdiff u=(x\pa_x u)\frac{dx}{x}+\sum_j (\pa_{y_j} u) dy_j 
\]
in terms of local coordinates. This is simply the differential $du$ of $u$, rewritten in terms of the 1-forms $\frac{dx}{x}$ and $dy_j$ dual to the vector fields $x\pa_x$ and $\pa_{y_j}$, thus it is in fact invariantly defined. Note that when one writes e.g.\ $a(u,\bdiff u)$, one could instead, at least locally, write 
\[
  a(u,x\pa_x u,\pa_{y_1} u,\ldots,\pa_{y_{n-1}}u);
\]
the $\bdiff u$ notation is more concise and invariant. We note that $\bdiff$ preserves reality.

We now discuss Sobolev spaces from the point of view of b-analysis: We measure regularity with respect to $\Vb(M)$, and for non-negative integer $s$, one lets $\Hb^s(M)$ be the set of (complex-valued) $u\in L^2_{\bl}(M)$ such that $V_1\ldots V_j u\in L^2_{\bl}(M)$ for $j\leq s$ and $V_1,\ldots, V_j\in \Vb(M)$. Here, $L^2_{\bl}$ is the $L^2$-space with respect to any b-metric (such as the Kerr-de Sitter metric) which in local coordinates is thus given by a b-density, which is a positive smooth multiple of $x^{-1}\,|dx\,dy_1\ldots dy_{n-1}|$. Further, one introduces the weighted Sobolev spaces $\Hb^{s,\alpha}(M)=x^{\alpha}\Hb^s(M)$; we will often also work with the spaces $\Hb^{s,\alpha}(M;\RR)$ of real-valued elements without making this explicit in the notation. $\Hb^{s,\alpha}(M)$-sections of vector bundles are defined by local trivializations. In our Kerr-de Sitter setup, the Sobolev spaces $\Hb^{s,\alpha}$ on $\Omega$ are defined by restriction, with the norm of $u\in\Hb^{s,\alpha}$ given by
\[
  \|u\|_{\Hb^{s,\alpha}}^2 = \sum_{j\leq s, 1\leq i_1\leq\ldots\leq i_j\leq N} \|V_{i_1}\ldots V_{i_j}u\|_{L^2_\bl(\Omega)}^2,
\]
where $\{V_1,\ldots,V_N\}$ spans $\Vb(M)$ over $\CI(M)$. We point out that the b-differential $\bdiff$, defined locally by
$$
\bdiff u=(x\pa_x u)\frac{dx}{x}+\sum_j (\pa_{y_j} u) dy_j
$$
maps $\Hb^{s,\alpha}(M)$ to $\Hb^{s-1,\alpha}(M;\Tb^*M)$.

By the compactness of $\Omega$, norms on $\Hb^{s,\alpha}$ corresponding to different choices of spanning set and b-density are equivalent. We stress that this statement is a purely topological one, i.e.\ does \emph{not} rely on any metric; only the choice of compactification $M$ (and thus $\Omega$) was motivated by the stationary nature of the Kerr-de Sitter metric, but even then, different choices of boundary defining function $e^{-\gamma t_*}$, $\gamma\in\CI(X)$, as above, while changing the smooth structure of the compactification, yield the \emph{same} b-Sobolev spaces. Likewise, norms of $\Hb^{s,\alpha}$-sections of a vector bundle $E$ over $\Omega$ can be defined using any positive definite metric on $E$ --- again, by compactness of $\Omega$, all such choices give equivalent norms.

From the stationary point of view, the Sobolev space $\Hb^{s,\alpha}$ on $\Omega$ is the weighted Sobolev space $e^{-\alpha t_*}H^s$, with $H^s$ defined in \eqref{EqKdSSobolev}; for sections of vector bundles $E$ over $\Omega$, one first identifies $E$ with the pullback of $E|_X$ ($X$ a spatial slice as before, intersected with $\Omega$), equipping $E|_X$ with any positive definite fiber metric, by the map $\R\times X\ni(t_*,y)\mapsto y$, in order to obtain a $t_*$-independent bundle and metric on it, and one can then define norms of $E$-valued $\Hb^{s,\alpha}$-sections using local trivializations of $E|_X$. In summary, adopting the `b' point of view in the present context is a natural and geometric replacement for the use of the stationary nature of the spacetime, and a very useful one for the analysis, as we will see in the sequel.

Rephrasing and generalizing the setup of Theorem~\ref{ThmIntroWave-simple}, the wave equations we study include those of the form
\[
  \Box_{g(u,\bdiff u)}u=f+q(u,\bdiff u),
\]
where $g(0,0)=g_0$, and for each $p\in M$, $g_p(v_0,v):\RR\oplus\Tb^*_pM\to\Sym^2\Tb_p^* M$, depending smoothly on $p$, and
\[
  q(u,\bdiff u)=\sum_{j=1}^{N'} a_j u^{e_j}\prod_{k=1}^{N_j} X_{jk} u,\quad e_j,N_j\in \N_0,\ N_j+e_j\geq 2,
\]
with
\begin{equation}
\label{eq:init-reg}
  a_j\in\CI(M),\ X_{jk}\in\Vb(M)
\end{equation}
real; in fact, since $\CI(M)$ is not a natural space as discussed above, we relax \eqref{eq:init-reg} to
\begin{equation}
\label{eq:prec-reg}
  a_j\in\CI(M)+\Hb^\infty(M),\ X_{jk}\in(\CI+\Hb^\infty)\Vb(M);
\end{equation}
we note that the space $\CI(M)+\Hb^\infty(M)$ is independent of the choice of b-compact\-ification. Generalizing the forcing as well, the more natural version of Theorem~\ref{ThmIntroWave-simple} is, with further generalization given in Theorems~\ref{ThmIntroGeneralKG} and \ref{ThmIntroGeneralWave}:

\begin{thmi}
\label{ThmIntroWave}
  Let $s\in\R$. For $g(0,0)=g_0$ a Kerr-de Sitter metric with angular momentum $|a|\ll\bhm$ as above, and for $\alpha>0$ sufficiently small and $f\in\Hb^{\infty,\alpha}$ real-valued with sufficiently small $\Hb^{\derivsUdU,\alpha}$-norm (depending on $s$), the wave equation $\Box_{g(u,\bdiff u)} u=f+q(u,\bdiff u)$ in $\Omega^\circ$, with vanishing Cauchy data, and with $q$ as above with $N_j\geq 1$ for all $j$, has a unique real smooth (in $\Omega^\circ$) global forward solution of the form $u=u_0+\tilde u$, $\tilde u\in\Hb^{s,\alpha}$, $u_0=c\chi$, $c\in\R$, with $\chi\in\CI(M)$ identically $1$ near $\pa M$.

  Further, the analogous conclusion, in fact for $s=\infty$, holds for the Klein-Gordon equation $\Box-m^2$ with $m>0$ sufficiently small, without the presence of the $u_0$ term, i.e.\ for $\alpha>0,\ m>0$ sufficiently small, if $f\in\Hb^{\infty,\alpha}(\Omega)$ has sufficiently small $\Hb^{\derivsUdU,\alpha}$-norm, $(\Box_{g(u,\bdiff u)}-m^2)u=f+q(u,\bdiff u)$ in $\Omega^\circ$ has a unique, smooth in $\Omega^\circ$, global forward solution $u\in\Hb^{\infty,\alpha}$.
\end{thmi}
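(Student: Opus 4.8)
The plan is to recast the equation as a fixed-point problem $P(u)=f$ and solve it by a Nash--Moser iteration on the scale of weighted b-Sobolev spaces $\Hb^{s,\alpha}(\Omega)$. In the wave case write $u=u_0+\tilde u$ with $u_0=c\chi$, so the unknown is the pair $(c,\tilde u)$ and $P(u)=\Box_{g(u,\bdiff u)}u-q(u,\bdiff u)$; in the Klein--Gordon case one drops $u_0$ and replaces $P$ by $P(u)=(\Box_{g(u,\bdiff u)}-m^2)u-q(u,\bdiff u)$. The linearization of $P$ at $u$ is a second-order b-differential operator $L_u=\Box_{g(u,\bdiff u)}+R_u$ (minus $m^2$ in the Klein--Gordon case) whose principal part is the dual metric $g(u,\bdiff u)$ and whose lower-order part $R_u\in\Diffb^1(\Omega)$, together with a $0$th order term, depends on $u$, $\bdiff u$, and the derivatives of $g$ and $q$ in their arguments. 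The first and central step is: for $u$ in a small ball in $\Hb^{s_0,\alpha}$ with $s_0$ large, the forward problem for $L_u$ on $\Omega$ (vanishing Cauchy data at $H_1$) is solvable modulo the one-dimensional obstruction coming from the resonance of $\Box_{g_0}$ at the origin, and the solution operator obeys a \emph{tame} estimate of the form
\[
  \|L_u^{-1}h\|_{\Hb^{s,\alpha}}\le C_s\bigl(\|h\|_{\Hb^{s+\delta,\alpha}}+\|u\|_{\Hb^{s+\delta,\alpha}}\|h\|_{\Hb^{s_0,\alpha}}\bigr)
\]
with a \emph{fixed} loss $\delta>0$ of b-derivatives, the size of $\delta$ being dictated by the normally hyperbolic trapping.

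The analytic heart of this step is the b-Fredholm theory for $L_u$: propagation of b-regularity, radial-point estimates at the generalized radial sets $L_\pm$ (which force $\alpha>0$ and give the Mellin-transformed normal operator family $\widehat{L_u}(\sigma)$ meromorphic Fredholm behavior on $\Im\sigma\ge-\alpha$ away from a discrete resonance set), and---the ingredient absent from the non-trapping theory of \cite{HintzQuasilinearDS}---the semiclassical estimate at the normally hyperbolic trapped set $\Gamma$ of Wunsch--Zworski/Dyatlov type, which incurs only a polynomial high-energy loss $h^{-N}$ and hence produces the finite $\delta$ above. One must verify all of these estimates \emph{uniformly} for $L_u$ as $u$ varies over the small ball. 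Since $g(u,\bdiff u)$ is then a small $\CI+\Hb^\infty$ perturbation of $g_0$, this reduces to checking that the symbolic and dynamical hypotheses survive the perturbation with constants controlled by $\|u\|_{\Hb^{s_0,\alpha}}$: the sign of the subprincipal symbol at $L_\pm$, the threshold relation for the weight $\alpha$, the normal hyperbolicity and pinching (expansion-rate) condition at $\Gamma$ needed for Dyatlov's estimate, and the absence of resonances in $\Im\sigma>-\alpha$ other than the one at $\sigma=0$ (wave case) or none at all (Klein--Gordon, $m$ small). This stability analysis---in particular that the only conormally-stable trapped set $\Gamma$ moves in a controlled way with $u$---is where the bulk of the work lies, and is the main obstacle.

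With the tame linear theory in hand, existence follows from a Nash--Moser scheme: one introduces smoothing operators $S_\theta$ on $\Hb^{s,\alpha}(\Omega)$ and runs the modified Newton iteration $u_{k+1}=u_k-S_{\theta_k}L_{u_k}^{-1}(P(u_k)-f)$, at each step adjusting the free constant $c_k$ (wave case) to kill the component of $P(u_k)-f$ along the one-dimensional cokernel of $L_{u_k}$, so that $L_{u_k}^{-1}$ is applied within its range. Combining the tame bound above, the quadratic estimate for the nonlinear remainder $P(u_k)-f-L_{u_k}(u_{k+1}-u_k)$, and the standard smoothing inequalities yields convergence of $u_k$ in $\Hb^{s,\alpha}$ for $s$ somewhat below $s_0$; tracking the numerology, $s_0=14$ suffices to close. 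Smoothness of the limit, $\tilde u\in\Hb^{\infty,\alpha}$, then follows by a bootstrap: once $u\in\Hb^{s,\alpha}$ with $f\in\Hb^{\infty,\alpha}$, the coefficients of $L_u$ lie in $\CI+\Hb^{s-1,\alpha}$ and applying the linear b-regularity theory to $L_u\tilde u=f-(\text{known lower-order terms})$ gains regularity; iterating gives $\Hb^{\infty,\alpha}$.

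Uniqueness is obtained by a forward-solvability argument for the difference $w=u-u'$ of two solutions: $w$ solves a linear b-wave equation $L_{u,u'}w=0$ with vanishing Cauchy data at $H_1$, where $L_{u,u'}$ (coming from the fundamental theorem of calculus in $u$) is again of the stable, forward-solvable b-type above, so smallness of both solutions forces $w=0$. Thus the principal difficulty throughout is the simultaneous control of (i) the polynomial high-energy loss from $\Gamma$, which is precisely what makes ordinary Newton iteration fail and Nash--Moser necessary, and (ii) the persistence of the entire microlocal-dynamical structure---radial sets, thresholds, trapping, resonance locations---under the quasilinear dependence of the metric on $u$.
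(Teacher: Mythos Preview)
Your overall architecture is right: Nash--Moser on the scale $\cX^{s,\alpha}=\C\oplus\Hb^{s,\alpha}$, with a tame solution operator for the linearization $L_u$ built from tame b-microlocal estimates, and the loss of derivatives coming from the trapping. But there is a real gap in how you propose to obtain the tame estimate at $\Gamma$.

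You write that one must verify Dyatlov's normally hyperbolic trapping estimate ``uniformly for $L_u$ as $u$ varies over the small ball,'' treating this as a perturbation-stability problem for the trapped set $\Gamma_u$. This is not what works, and it is not what the paper does. Dyatlov's estimate is for \emph{smooth} operators; there is no tame version for operators with $\Hb^s$-coefficients, and producing one would be a serious (and open) undertaking. The paper's key idea is instead to \emph{decompose} $L_u=P_0(v_0)+\tilde P$, where $v_0\in\C$ is the constant part of $u$ and $\tilde P$ has coefficients in $\Hb^{s,\alpha}$ with $\alpha>0$, hence \emph{decaying} at $X$. This buys two things. First, the normal operator $N(L_u)=N(P_0(v_0))$ depends only on the finite-dimensional parameter $v_0$, so Dyatlov's smooth-coefficient estimate applies to the Mellin-transformed family $\widehat{P_0(v_0)}(\sigma)$ uniformly in $v_0$ small; this is where the high-energy bound on $\Im\sigma\ge -\alpha$ (and hence the contour-shifting argument giving the asymptotic expansion $u=c\chi+\tilde u$) comes from. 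Second, for the b-regularity theory on the full manifold one needs only the \emph{non-trapping} tame estimate at $\Gamma$ on \emph{negatively} weighted spaces (Theorem~\ref{ThmTameHypTr}), which is a genuine positive-commutator argument that does extend to $\Hb^s$-coefficients because the weight contributes a favorable sign. The combination of these two ingredients in the proof of Theorem~\ref{ThmExpansion} is what yields the tame forward solution operator; without this split you cannot close the tame estimate at $\Gamma$.

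A smaller point: your description of the constant $c$ as something adjusted ``to kill the component of $P(u_k)-f$ along the one-dimensional cokernel of $L_{u_k}$'' is not quite right. The linearized forward problem $L_u:\cY^{s,\alpha}\to\Hb^{s+3,\alpha}$ is \emph{surjective}; the constant $c$ appears because the \emph{solution} lies in $\cX^{s,\alpha}=\C\oplus\Hb^{s,\alpha}$, i.e.\ it is produced by the contour-shifting argument as the residue at the resonance $\sigma=0$. One must also check (as the paper does in the proof of Theorem~\ref{ThmQuasilinearWave}) that the first-order terms in $L_u$ coming from linearizing $q$ and $g$ annihilate constants, so that the resonance at $0$ with resonant state $1$ persists for all $L_u$ in the iteration.
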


One can work over the complex numbers as well, the only change being that one needs to assume $g_p(v_0,v):\C\oplus\CxTb^*_pM\to\Sym^2\Tb_p^* M$ now, i.e.\ taking complex-valued arguments, but still producing a real symmetric 2-tensor. We will in fact consider the complex-valued setting in the rest of the paper, since the real-valued case follows as a special case.

The natural notion of \emph{asymptotically Kerr-de Sitter spaces} in the context of the present paper is suggested by this result, namely, one would reasonably thus call $(M,g(u,\bdiff u))$ for a function $u$ as in the above Theorem; hence, for us, an asymptotically Kerr-de Sitter space is a metric perturbation of a Kerr-de Sitter space within the class of sections of $S^2\Tb^*M$ with regularity $\CI+\Hb^{s,\alpha}$, $\alpha>0$, for $s$ large or $s=\infty$.

For the proof of Theorem~\ref{ThmIntroWave}, we refer to Corollaries~\ref{CorQuasilinearKdS} and \ref{CorQuasilinearKGKdS}, which are special cases of Theorems~\ref{ThmQuasilinearWave} and \ref{ThmQuasilinearKG}. For any fixed amount of regularity of the solution, our arguments only require a finite number of derivatives: Indeed, for sufficiently large $s_0,C\in\R$ and for $s\geq s_0$, it is sufficient to assume $f\in\Hb^{Cs,\alpha}$, with small $\Hb^{\derivsUdU,\alpha}$-norm, to ensure the existence of a unique global forward solution $u$ with $\Hb^{s,\alpha}$-regularity, i.e.\ with $\tilde u\in\Hb^{s,\alpha}$ in the case of wave equations, $u\in\Hb^{s,\alpha}$ in the case of Klein-Gordon equations; see Remark~\ref{RmkFiniteRegularity} for details.

We now discuss previous results on Kerr-de Sitter space and its perturbations. There seems
to be little work on non-linear equations in Kerr-de Sitter type settings; indeed the only
paper the authors are aware of is the earlier paper \cite{HintzVasySemilinear} of the authors in which
the semilinear Klein-Gordon equation was studied (with small data
well-posedness shown) with non-linearity
depending on $u$ only, so that the losses due to the trapping could
still be handled by a contraction mapping argument. In addition, the
same paper also analyzed non-linearities depending on $\bdiff u$
provided these had a special structure at the trapped set. There
is more work on the linear equation on perturbations of de Sitter-Schwarzschild and
Kerr-de Sitter spaces: a rather complete analysis of the asymptotic
behavior of solutions of the linear wave equation was given in
\cite{VasyMicroKerrdS}, upon which the linear analysis of the present
paper is ultimately based. Previously in exact Kerr-de Sitter space and for small angular momentum,
Dyatlov \cite{DyatlovQNM, Dyatlov:Exponential}
has shown exponential decay to constants, even across the event
horizon; see also the more recent work of Dyatlov \cite{Dyatlov:Asymptotics}.
Further, in de Sitter-Schwarzschild space
(non-rotating black holes) Bachelot \cite{Bachelot:Scattering} set up
the functional analytic scattering theory in the early 1990s, while later
S\'a Barreto and Zworski
\cite{Sa-Barreto-Zworski:Distribution} and Bony and H\"afner
\cite{Bony-Haefner:Decay} studied resonances and decay away from the
event horizon, Dafermos and
Rodnianski in \cite{Dafermos-Rodnianski:Sch-dS} showed polynomial
decay to constants in this setting, and
Melrose, S\'a Barreto and Vasy
\cite{Melrose-SaBarreto-Vasy:Asymptotics} improved this result to
exponential decay
to constants. There is also physics literature on the subject,
starting with Carter's discovery of this space-time
\cite{Carter:Hamilton, Carter:Global}, either
using explicit solutions in special cases, or numerical calculations,
see in particular \cite{Yoshida-Uchikata-Futamase:KerrdS}, and
references therein. We also refer to the paper of Dyatlov and Zworski
\cite{DyatlovZworskiTrapping} connecting recent mathematical advances
with the physics literature.

While it received more attention,
the linear, and thus the non-linear, equation on Kerr space (which has vanishing
cosmological constant) does not fit directly
into our setting; see the introduction of \cite{VasyMicroKerrdS} for an
explanation and for further references and
\cite{Dafermos-Rodnianski:Lectures} for more background and additional
references. Some of the key works in this area include
the polynomial decay on Kerr space which
was shown recently by Tataru and Tohaneanu
\cite{Tataru-Tohaneanu:Local, Tataru:Local}
and Dafermos, Rodnianski and Shlapentokh-Rothman \cite{Dafermos-Rodnianski:Black,
Dafermos-Rodnianski:Axi, Dafermos-Rodnianski-Shlapentokh:Decay}, after pioneering work of Kay and Wald
in \cite{Kay-Wald:Linear} and \cite{Wald:Stability} in the
Schwarzschild setting. Andersson and Blue
\cite{Andersson-Blue:Uniform} proved a decay result for
the Maxwell system on slowly rotating Kerr spaces; see also the
earlier work of Bachelot \cite{Bachelot:Gravitational} in the
Schwarzschild setting.
The crucial normal hyperbolicity of the trapping, corresponding to
null-geodesics that do not escape through the event horizons, in Kerr space was realized
and proved by Wunsch and Zworski \cite{WunschZworskiNormHypResolvent};
later Dyatlov extended and refined the result
\cite{DyatlovResonanceProjectors, DyatlovNormally}. Note that a
stronger version of normal hyperbolicity, called $r$-normal hyperbolicity and proved for Kerr in \cite{WunschZworskiNormHypResolvent} as well, is
a notion that is stable under perturbations.

On the non-linear side, Luk \cite{LukKerrNonlinear} established global existence for forward problems for semilinear wave equations on Kerr space under a null condition, and Dafermos, Holzegel and Rodnianski \cite{Dafermos-Holzegel-Rodnianski:Scattering} constructed \emph{backward} solutions for Einstein's equations on Kerr space. (There was also recent work by Marzuola, Metcalfe, Tataru and
Tohaneanu \cite{Marzuola-Metcalfe-Tataru-Tohaneanu:Strichartz} and Tohaneanu \cite{TohaneanuKerrStrichartz}
on Strichartz estimates, which are applied to the study of semilinear wave equations with power non-linearities, and by Donninger, Schlag and Soffer \cite{Donninger-Schlag-Soffer:Price} on $L^\infty$
estimates on Schwarzschild black holes,
following $L^\infty$ estimates of Dafermos and Rodnianski
\cite{Dafermos-Rodnianski:Red-shift, Dafermos-Rodnianski:Price},
of Blue and Soffer \cite{Blue-Soffer:Phase}
on non-rotating charged black holes giving
$L^6$ estimates, and of Finster, Kamran, Smoller and
Yau \cite{Finster-Kamran-Smoller-Yau:Decay,
Finster-Kamran-Smoller-Yau:Linear} on Dirac waves on Kerr.)

In the next section, \S\ref {sec:general}, we explain the ingredients of the proof of Theorem~\ref{ThmIntroWave}, and we also state natural generalizations. At the end of that section we provide a detailed roadmap through this paper.

The authors are very grateful to Semyon Dyatlov for providing a  
 preliminary version of his manuscript \cite{DyatlovNormally} and for
discussions about it, as well as for pointing out the reference
\cite{Klainerman:Global}. They are also very grateful to Maciej
Zworski and three anonymous referees for comments that greatly improved the exposition. They are also thankful to Gunther Uhlmann,
Richard Melrose and Rafe Mazzeo for comments and their interest in this project.

%%%%%%%%%%%%%%%%%%%%%%%%%%%%%%%%%%%%%%%%%%%%%%%%%%%%%%%%%%%%%%%%%%%%%%
\section{Overview of the proof and the more general results}
\label{sec:general}

Having stated the result, we now explain {\em why} it holds. It is best to begin with the underlying linear equation; after all, the non-linearity is `just' a rather serious perturbation! In general, the analysis of b-differential operators (locally finite sums of finite products of elements of $\Vb(M)$), such as $\Box_g\in\Diffb^2(M)$, has two ingredients, corresponding to the two orders, smoothness and decay, of the Sobolev spaces:
\begin{enumerate}
\item b-regularity analysis. This provides the framework for understanding PDE at high b-frequencies, which in non-degenerate situations involves the b-principal symbol and perhaps a subprincipal term. This is sufficient in order to control solutions $u$ in $\Hb^{s,r}$ in terms of their $\Hb^{s',r}$-norm, $s'<s$, i.e.\ in terms of the norm of $u$ in a space with lower regularity, but no additional decay. Since for the inclusion $\Hb^{s,r}\to\Hb^{s',r'}$ to be compact one needs both $s>s'$ and $r>r'$, this does not control the problem modulo relatively compact errors.
\item Normal operator analysis. This provides a framework for understanding
the decay properties of solutions of the PDE. The normal operator is
obtained by freezing coefficients of the differential operator $L$ at $\pa M$ to obtain a
dilation-invariant b-operator $N(L)$. One then Mellin
transforms the normal operator in the normal variable to obtain a
family of operators $\hat L(\sigma)$, depending on the Mellin-dual
variable $\sigma$. The b-regularity analysis, in non-degenerate situations, gives control of this
family $\hat L(\sigma)$ in a Fredholm sense, uniformly as
$|\sigma|\to\infty$ with $\Im\sigma$ bounded. However, in any such
strip, $\hat L(\sigma)^{-1}$ will still typically have finitely many
poles $\sigma_j$; these poles, called {\em resonances}, dictate the
asymptotic behavior of solutions of the PDE.
\end{enumerate}
In order to have a
Fredholm operator $L$, one needs to work in spaces such as
$\Hb^{s,r}$, where $r$ is such that there are no resonances $\sigma_j$
with $\Im\sigma_j=-r$. One can also work in slightly more general
spaces, such as $\cX^{s,r}:=\Cx\oplus\Hb^{s,r}$, $r>0$, identified with a space of
distributions via $u=u_0+\tilde u$, $\tilde u\in\Hb^{\infty,\alpha}$,
$u_0=c\chi$, corresponding to $(c,\tilde u)\in \Cx\oplus\Hb^{s,r}$.

The global analysis of linear (and semilinear) wave-type equations on asymptotically Kerr-de Sitter spacetimes in this framework, which we will recall momentarily, was carried out in \cite{VasyMicroKerrdS,HintzVasySemilinear}, and the analysis in the present paper follows these earlier works closely.

Now, the b-regularity analysis for our non-elliptic equation involves
the (null)-bicharacteristic flow. In view of the version of H\"ormander's
theorem on propagation of singularities in this setting, and in view
of the a priori control on Cauchy data at $H_1$, what one would like
is that all bicharacteristics tend to $T^*_{H_1}M$ in one
direction. Moreover, for the purposes of the adjoint problem, which
effectively imposes Cauchy data at $H_2$, one
would like that the bicharacteristics tend to $\Tb^*_{H_2}M$ in the
other direction.

Unfortunately, bicharacteristics within $\Tb^*_XM$ can never leave this space, and thus will not tend to $T^*_{H_1}M$. This is mostly resolved, however, by the conormal bundle of the {\em horizons} at $X$, which give rise to a bundle of saddle points for the bicharacteristic flow. Since the flow is homogeneous, it is convenient to consider it in $\Sb^*M=(\Tb^*M\setminus o)/\RR^+$. The characteristic set $\Sigma\subset\Sb^*M$ has two components $\Sigma_\pm$, with $\Sigma_-$ forward-oriented (i.e.\ future oriented time functions increase along null-bicharacteristics in $\Sigma_-$), $\Sigma_+$ backward oriented. Then the images of the conormal bundles of the horizons in the cosphere bundle are submanifolds $L_\pm\subset\Sigma_\pm$ of $\Sb^*_X M$, with one-dimensional stable ($-$)/unstable ($+$) manifold $\cL_\pm$ transversal to $\Sb^*_XM$. (The flow within $L_\pm$ need not be trivial; if it is, one has {\em radial points}, as in the $a=0$ de Sitter-Schwarzschild space. However, for simplicity we refer to the $L_\pm$ estimates as {\em radial point estimates} in general.) The realistic ideal situation, called a {\em non-trapping} one, then is if all (null-)bicharacteristics in $\Sb^*_\Omega M\cap(\Sigma_+\setminus L_+)$ tend to $\Sb^*_{H_2}M\cup L_+$ in the backward direction, and $\Sb^*_{H_1}M\cup L_+$ in the forward direction, with a similar statement for $\Sigma_-$, with backward and forward interchanged. Notice that due to the assumption on the one-dimensional stable/unstable manifold being transversal to $\Sb^*_X M$, there cannot be non-trivial bicharacteristics in $\Sb^* M$ tending to $L_+$ in both the forward and the backward direction, since a bicharacteristic is either completely in $\Sb^*_X M$, or completely outside it. In this non-trapping setting the only subtlety is that the propagation estimates through $L_\pm$ require that the differentiability order $s$ and the decay order $r$ be related by $s>\frac{1}{2}+\beta r$ for a suitable $\beta>0$ (dictated by the Hamilton dynamics at $L_\pm$), i.e.\ the more decay one wants, the higher the regularity needs to be.

This is still not the case in Kerr-de Sitter space, though it is true
for neighborhoods of the static patch in de Sitter space, and its
perturbations. The additional ingredient for Kerr-de Sitter space is
normally hyperbolic trapping, introduced in this context by Wunsch and
Zworski \cite{WunschZworskiNormHypResolvent}, given by smooth submanifolds
$\Gamma^\pm\subset\Sigma_\pm$. Here $\Gamma^\pm$ are invariant
submanifolds for the Hamilton flow, given by the transversal
intersection of locally defined smooth, Hamilton flow invariant,
$\Gamma^\pm=\Gamma^\pm_+\cap\Gamma^\pm_-$, with
$\Gamma^\pm_-\subset\Sigma$ transversal to $\Sb^*_XM\cap\Sigma$, and $\Gamma^\pm_+\subset\Sb^*_XM\cap\Sigma$. Combining results of
\cite{DyatlovResonanceProjectors,DyatlovNormally} (which would work directly in a
dilation invariant setting) and \cite{HintzVasyNormHyp} we show
that for $r>0$ sufficiently small, one can propagate $\Hb^{s,r}$ estimates through
$\Gamma^\pm$. This suffices to complete the b-regularity setup if the
non-trapping requirement is replaced by: All (null-)bicharacteristics
in $\Sb^*_\Omega M\cap(\Sigma_+\setminus (L_+\cup \Gamma^+))$ tend to
either $\Sb^*_{H_2}M\cup L_+\cup\Gamma^+$ in the backward direction, and
$\Sb^*_{H_1}M\cup L_+\cup\Gamma^+$ in the forward direction, with the
tending to $\Gamma^+$ allowed in only {\em one} of the forward and
backward directions, with a similar
statement for $\Sigma_-$, with backward and forward
interchanged. Finally, this {\em is} satisfied in Kerr-de Sitter
space, and also in its b-perturbations (the whole setup is perturbation stable).

Next, one needs to know about the resonances of the operator. For the
wave operator, the only resonance with non-negative imaginary part is
$0$, with the kernel of $\hat L(\sigma)$ one dimensional,
consisting of constants. Since strips can only have finitely many
resonances, there is $r>0$ such that in $\Im\sigma\geq -r$ the only
resonance is $0$; then $\Hb^{s,r}\oplus\Cx$ works for our Fredholm
setup. For the Klein-Gordon equation with $m>0$ small, the $m=0$
resonance at $0$ moves to $\sigma_1=\sigma_1(m)$ inside $\Im\sigma<0$, see \cite{DyatlovQNM,HintzVasySemilinear}. Thus, one can either work with
$\Hb^{s,r'}$ where $r'$ is sufficiently small (depending on $m$), or
with $\Hb^{s,r}\oplus\Cx$, though with $\Cx$ now identified with
$cx^{i\sigma_1}\chi$.

We now discuss the non-linear terms. Here the basic point is that
$\Hb^{s,0}$ is an algebra if $s>n/2$, and thus for such $s$, products of elements of
$\Hb^{s,r}$ possess even more decay if $r>0$, but they become more
growing if $r<0$. Thus, one is forced to work with $r\geq 0$.

First, with the simplest
semilinear equation, with no derivatives in the non-linearity $q$ (so
$N_j\geq 2$ is replaced by $N_j=0$), the regularity losses due to the normally hyperbolic trapping are in principle sufficiently small
to allow for a contraction mapping principle (Picard iteration) based
argument. However, for the actual wave equation on Kerr-de Sitter space, the $0$-resonance
prohibits this, as the iteration maps outside the space $\Hb^{s,r}\oplus\C$. Thus, it is
the semilinear Klein-Gordon equation that is well-behaved from this
perspective, and this was solved by the authors in
\cite{HintzVasySemilinear}. On the other hand, if derivatives are
allowed, with an at least quadratic behavior in $\bdiff u$, then the
non-linearity annihilates the $0$-resonance. Unfortunately, since the
normally hyperbolic estimate loses $1+\ep$ derivatives, as opposed to
the usual real principal type/radial point loss of one derivative, the
solution operator for $\Box_g$ will not map $q(u,\bdiff u)$ back into the
desired Sobolev space, preventing a non-linear analysis based on the contraction mapping principle.

Fortunately, the Nash-Moser iteration is designed to deal with just
such a situation. In this paper we adapt the iteration to our
requirements, and in particular show that semilinear equations of the
kind just described are in fact solvable. In particular, we prove that
all the estimates used in the linear problems are {\em tame}. Here we
remark that Klainerman's early work on global solvability involved the
Nash-Moser scheme \cite{Klainerman:Global}, though this was later
removed by Klainerman and Ponce \cite{Klainerman-Ponce:Global}. In the
present situation the loss of derivatives seems much more serious,
however, due to the trapping, so it seems unlikely that the solution scheme can be made
more `classical'.

However, we are also interested in quasilinear equations. Quasilinear
versions of the above non-trapping scenario were studied by the first author \cite{HintzQuasilinearDS}, who showed the solvability of quasilinear
wave equations on perturbations of de Sitter space. The key ingredient
in dealing with quasilinear equations is to allow operators with
coefficients with regularity the same kind as what one is proving for
the solutions, in this case $\Hb^{s,r}$-regularity; the technical tools for dealing with such operators were developed in \cite{HintzQuasilinearDS}, following Beals and Reed \cite{BealsReedMicroNonsmooth}. All of the smooth linear
ingredients (microlocal elliptic regularity, propagation of
singularities, radial points) have their analogue for $\Hb^{s,r}$
coefficients if $s$ is sufficiently large. Thus, in
\cite{HintzQuasilinearDS} a Picard-type iteration,
$u_{k+1}=\Box_{g(u_k)}^{-1}(f+q(u_k,\bdiff u_k))$ was used to solve
the quasilinear wave equations on de Sitter space. Notice that
$\Box_{g(u_k)}$ has non-smooth coefficients; indeed, these lie in a weighted
b-Sobolev space.

In our Kerr-de Sitter situation there is normally hyperbolic
trapping. However, notice that as we work in decaying Sobolev spaces
modulo constants, $\Box_{g(u)}$ differs from a Kerr-de Sitter operator
with smooth coefficients, $\Box_{g(c)}$, by one with {\em exponentially (in $t_*$) decaying coefficients}. This means that one can combine the smooth coefficient normally hyperbolic theory, as in the work of Dyatlov
\cite{DyatlovResonanceProjectors}, with a tame estimate in $\Hb^{s,r}$
with $r<0$; the sign of $r$ here is a crucial gain since for $r<0$ the
propagation estimates through normally hyperbolic trapped sets behave in exactly the same way
as real principal type estimates. In combination this provides the
required tame estimates for Kerr-de Sitter wave equations: Using the notation of Theorem~\ref{ThmIntroWave} and its setup (thus, allowing $g$ to depend on derivatives of $u$ as well), and with $u=c\chi+\wt u$, $c\in\R$, $\wt u\in\Hb^{s+4,\alpha}$, the tame estimate takes the form
\begin{equation}
\label{EqIntroTameEst}
  \|w\|_{\cX^{s,\alpha}} \leq C(s,\|u\|_{\cX^{s_0,\alpha}})\bigl(\|f\|_{\Hb^{s+3,\alpha}} + \|f\|_{\Hb^{s_0,\alpha}}\|u\|_{\cX^{s+4,\alpha}}\bigr)
\end{equation}
for $w=b\chi+\wt w$, $b\in\R,\wt w\in\Hb^{s,\alpha}$, solving $\Box_{g(u,\bdiff u)}w=f$, where we use the norm $\|w\|_{\cX^{s,\alpha}}:=|b|+\|\wt w\|_{\Hb^{s,\alpha}}$, likewise for $u=a\chi+\wt u$, and $s_0$ is a suitable fixed number, while $s$ can be chosen arbitrarily large. The point of a tame estimate like \eqref{EqIntroTameEst} for $w$ is that one loses a \emph{fixed number of derivatives}, independent of $s$, relative to the forcing $f$, and moreover the estimate is \emph{linear in the high regularity norms} of the forcing $f$ and the coefficients, coming from $u$, of the operator $\Box_{g(u,\bdiff u)}$. The idea of the proof of the estimate \eqref{EqIntroTameEst} is to rewrite $\Box_{g(u,\bdiff u)}w=f$ as $\Box_{g(c,0)}w=f-(\Box_{g(u,\bdiff u)}-\Box_{g(c,0)})w$ and treating the second term on the right as an error term in the sense that it has better a priori decay than $w$, even though it has less regularity; then the normal operator analysis allows one to deduce further partial asymptotics for $w$ from the rewritten equation. Tame regularity estimates for the equation $\Box_{g(u,\bdiff u)}w=f$ itself, using the dynamical structure of Kerr-de Sitter space as described above, then allow one to regain the loss of regularity to a large extent. The tame-ness comes about by a careful analysis of these microlocal estimates, the fundamental ingredient being a simple Moser-type estimate for products of Sobolev functions, which on $\R^n$ is of the form $\|uv\|_{H^s}\lesssim\|u\|_{H^s}\|v\|_{H^{s_0}}+\|u\|_{H^{s_0}}\|v\|_{H^s}$ for $s\geq s_0$, $s_0$ sufficiently large but fixed. --- Given \eqref{EqIntroTameEst}, one can then use a simple version of the Nash-Moser iteration scheme given by Saint-Raymond \cite{SaintRaymondNashMoser} to complete the proof of the main theorem.

We emphasize that our treatment of quasilinear wave-type equations is
systematic and general; indeed, our discussion of quasilinear waves in \S\ref{SecQuasilinear} takes place in a general geometric setting (but we restrict to scalar equations for brevity): To wit, quasilinear equations which at $X=\pa M$
are modelled on a finite dimensional family $L=L(v_0)$, $v_0\in\Cx^d$
small corresponding to the zero resonances (thus the family is
$0$-dimensional without $0$-resonances!), of smooth b-differential operators on a vector
bundle with scalar
principal symbol which has
the bicharacteristic dynamics described above (radial sets, normally
hyperbolic trapping, etc.) fit into it, {\em provided two conditions hold
for the normal operator} (i.e.\ the dilation invariant model associated
to $L$ at $\pa M$):\footnote{The operator needs to be a second order differential operator, with principal symbol a Lorentzian dual metric, only if Cauchy hypersurfaces are used, as we do in the applications presented in this paper; otherwise, e.g.\ in situations where one can instead use complex absorption, the order of the operator is irrelevant.}
\begin{enumerate}
\item First, {\em the resonances for the model $L(v_0)$} have negative imaginary part, or if they have $0$ imaginary part, the non-linearity annihilates them.
\item Second, the {\em normally hyperbolic trapping estimates} of Dyatlov \cite{DyatlovResonanceProjectors} hold for $\hat L(\sigma)$ (as $|\Re\sigma|\to\infty$) in $\Im\sigma>-r_0$ for some $r_0>0$. In the semiclassical rescaling, with $\sigma=h^{-1}z$, $h=|\sigma|^{-1}$, this is a statement about $\hat L_{h,z}=h^m\hat L(h^{-1}z)$, $\Im z>-r_0 h$. By Dyatlov's recent result\footnote{This could presumably also be seen from the work of Nonnenmacher and Zworski \cite{NonnenmacherZworskiDecay} by checking that this extension goes through without significant changes in the proof.} \cite{DyatlovNormally} this indeed is the case if $\hat L_{h,z}$ satisfies that {\em at $\Gamma$} its skew-adjoint part, $\frac{1}{2i}(\hat L_{h,z}-\hat L_{h,z}^*)\in h\Diff_{\hbar}^1(X)$, for $z\in\RR$ has semiclassical principal symbol bounded above by $h(\numin-\ep)/2$ for some $\ep>0$, where $\numin$ is the minimal expansion rate in the normal directions at $\Gamma$; see \cite[Theorem~1]{DyatlovNormally} and the remark below it (which allows the non-trivial skew-adjoint part, denoted by $Q$ there, microlocally at $\Gamma$).
\end{enumerate}

It is important to point out that in view of the 
decay of the solutions either to $0$ if there are no real resonances, or to the space of resonant states corresponding to real resonances, the conditions must be checked for at most a finite dimensional family of
elements of the `smooth' algebra $\Psib(M)$, and moreover there is no need to
prove tame estimates, deal with rough coefficients, etc., for this
point, and one is in a dilation invariant setting, i.e.\ can simply
Mellin transform the problem. Hence, in
principle, solving wave-type equations on more complicated bundles is
reduced to analyzing these two aspects of the
associated linear model operator at infinity.

Concretely, we have the following two theorems on Kerr-de Sitter spaces:

\begin{thmi}
\label{ThmIntroGeneralKG}
Let $M$ be a Kerr-de Sitter space with angular momentum
$|a|<\frac{\sqrt{3}}{2}\bhm$ that satisfies \cite[(6.13)]{VasyMicroKerrdS} (this condition on $\Lambda,\bhm$ and $a$ ensures non-trapping \emph{classical} dynamics for the null-bicharacteristic flow for the normal operator family). Let $E$ a vector bundle over $M$ with a positive definite
metric $k$ on $E$, and let $L_{g(u,\bdiff u)}\in\Diffb^2(M;E)$ have principal
symbol $G=g^{-1}(u,\bdiff u)$ (times the identity), and suppose that $L_0=L_{g(0,0)}$
satisfies that
\begin{enumerate}
\item
the large parameter principal symbol of
$\frac{1}{2i|\sigma|}(L_0-L_0^*)$, with the adjoint taken relative to
$k\,|dg|$, at the trapped set $\Gamma$ is $<\numin/2$
as an endomorphism of $E$,
\item
$\hat L_0(\sigma)$ has no resonances in
$\Im\sigma\geq 0$.
\end{enumerate}
Then for $\alpha>0$ sufficiently small, there exists\footnote{See the
  proof of this theorem in \S\ref{SubsecGeneralProofs}, in
  particular \eqref{EqGeneralDCond}, for the value of $d$.} $d>0$ such
that the following holds: If $f\in\Hb^{\infty,\alpha}(\Omega)$ has a
sufficiently small $\Hb^{2d}$-norm, then the equation $L_{g(u,\bdiff
  u)}u=f+q(u,\bdiff u)$ has a unique, smooth in $M^\circ$, global forward solution $u\in\Hb^{\infty,\alpha}(\Omega)$.
\end{thmi}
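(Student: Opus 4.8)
The plan is to run a Nash--Moser iteration for the operator $\Phi(u)=L_{g(u,\bdiff u)}u-q(u,\bdiff u)-f$, whose linearization at $u$ has $L_{g(u,\bdiff u)}$ as its leading b-second-order part (the terms produced by differentiating $g$ in $u$ and $q$ in $(u,\bdiff u)$ are of strictly lower b-order and will be absorbed), so the analytic heart is a \emph{tame} left inverse for the linearized operators $L_{g(v,\bdiff v)}$ as $v$ ranges over a small ball in a fixed high-regularity space $\Hb^{s_0,\alpha}(\Omega)$. For such $v$ the scalar principal symbol $g^{-1}(v,\bdiff v)$ is a small $\Hb^{s_0,\alpha}$-perturbation of $g_0^{-1}$, so the characteristic set, the radial sets $L_\pm$, and the normally hyperbolic trapped set $\Gamma=\Gamma^+\cup\Gamma^-$ are small perturbations of those of $g_0$ with all qualitative dynamical features preserved (saddle structure at $L_\pm$, normal hyperbolicity at $\Gamma$, and the global non-trapping flow-out of the remaining bicharacteristics toward $\Sb^*_{H_1}M$ and $\Sb^*_{H_2}M$ modulo $L_\pm\cup\Gamma$). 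Since $\alpha>0$, the coefficients of $L_{g(v,\bdiff v)}-L_0$ lie in $\Hb^{s_0,\alpha}$ and hence \emph{gain} decay; in particular $N(L_{g(v,\bdiff v)})=N(L_0)$, and by hypothesis (2) together with the finiteness of resonances in strips, $\hat L_0(\sigma)$ is invertible on the line $\Im\sigma=-\alpha$ for $\alpha>0$ small (so no $0$-resonance term is needed and $q$ enters only through its quadratic vanishing). Assembling microlocal elliptic regularity, real principal type propagation, radial point estimates at $L_\pm$ (forcing the threshold $s>\tfrac12+\beta\alpha$), the normally hyperbolic trapping estimate of \cite{DyatlovResonanceProjectors,HintzVasyNormHyp,DyatlovNormally} --- whose hypotheses hold for $\alpha$ small by condition (1) --- energy estimates near $H_1,H_2$, and the normal operator inversion, one obtains invertibility of
\[
  L_{g(v,\bdiff v)}\colon \cX^{s,\alpha}(\Omega)\longrightarrow \Hb^{s-1,\alpha}(\Omega)
\]
for every $s$ above the threshold, where $\cX^{s,\alpha}$ is the $L_{g(v,\bdiff v)}$-adapted subspace of $\Hb^{s,\alpha}$ (extendible at $X$ and $H_2$, supported at $H_1$). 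This extends the scheme of \cite{HintzVasySemilinear,HintzQuasilinearDS} to the present bundle setting.

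Next I would upgrade each ingredient to a tame estimate, i.e.\ one of the shape
\[
  \|L_{g(v,\bdiff v)}^{-1}h\|_{\Hb^{s,\alpha}}\le C\bigl(\|h\|_{\Hb^{s-1,\alpha}}+\|h\|_{\Hb^{s_0-1,\alpha}}\,\|v\|_{\Hb^{s+L,\alpha}}\bigr),\qquad s\ge s_0,
\]
with a \emph{fixed} derivative loss $L$ independent of $s$; here $L$ collects the $1+\ep$ loss incurred at $\Gamma$ and the order of $L_{g(v,\bdiff v)}$, the decaying perturbation $L_{g(v,\bdiff v)}-L_0$ contributing only a real-principal-type-like error thanks to $\alpha>0$. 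This rests on tame composition and mapping estimates for b-pseudodifferential operators with $\Hb^{s_0,\alpha}$ coefficients, interpolation inequalities, and a tame bound for $\hat L_0(\sigma)^{-1}$ --- the latter a Mellin/contour estimate uniform in $\Re\sigma$ and hence routine --- while the nonlinear term $q(u,\bdiff u)$ obeys tame Moser-type product estimates in the algebra $\Hb^{s,0}$, $s>n/2$, with products of $\Hb^{\cdot,\alpha}$ elements gaining decay because $\alpha>0$.

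Then, with b-mollifiers $S_\theta$ as smoothing operators, a Nash--Moser implicit function theorem applied to $\Phi$ started at $u=0$ --- where $\Phi(0)=-f$ is small in $\Hb^{2d,\alpha}$ for the explicit $d>0$ of the statement, determined by $s_0$ and $L$ --- produces $u\in\Hb^{s,\alpha}(\Omega)$ with $\Phi(u)=0$ for all $s$ permitted by the regularity of $f$. With $u$ in hand, $L_{g(u,\bdiff u)}$ has fixed (if nonsmooth) coefficients of regularity matching $u$, and a standard elliptic-plus-propagation bootstrap along the now-frozen bicharacteristic flow, together with the normal operator mapping property, upgrades $u$ to $\Hb^{\infty,\alpha}(\Omega)$, hence smooth. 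Uniqueness is the usual forward-hyperbolic argument: the difference of two solutions satisfies a linear wave equation with vanishing Cauchy data at $H_1$, and a Gr\"onwall-type energy estimate forces it to vanish.

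The main obstacle will be the tameness of the normally hyperbolic trapping estimate in the quasilinear, rough-coefficient setting: obtaining Dyatlov's propagation estimate through $\Gamma$ with explicit tame dependence on the $\Hb^{s_0,\alpha}$ coefficients and a finite, $s$-independent loss, while at the same time controlling the quasilinear motion of the microlocal geometry --- one must verify that for $v$ in the relevant ball the perturbed characteristic set still has precisely the non-trapping-away-from-$(L_\pm\cup\Gamma)$ dynamics and the normal hyperbolicity at $\Gamma$, uniformly, so that every microlocal estimate holds with uniform constants throughout the iteration.
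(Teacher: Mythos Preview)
Your overall architecture---Nash--Moser applied to $\Phi(u)=L_{g(u,\bdiff u)}u-q(u,\bdiff u)-f$, with a tame forward solution operator for the linearization, and the observation that $N(L_{g(v,\bdiff v)})=N(L_0)$ because the perturbation has weight $\alpha>0$---matches the paper. Where you diverge is in the handling of the trapping, and your ``main obstacle'' paragraph reveals this: you propose to prove Dyatlov's normally hyperbolic trapping estimate with explicit tame dependence on rough $\Hb^{s_0,\alpha}$ coefficients. The paper does \emph{not} do this, and avoiding it is the central technical point.

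The paper's mechanism is a dichotomy glued by contour deformation. First, Dyatlov's estimate is applied \emph{only} to the smooth Mellin-transformed normal operator $\hat L_0(\sigma)$ (Theorems~\ref{ThmNHypSemiMic} and~\ref{ThmNHypSemiGlobal}); since under hypothesis~(2) there is no resonance at $0$, this is a single fixed smooth operator, and no rough-coefficient or tame analysis enters. Second, the non-smooth tame b-estimate at $\Gamma$ (Theorem~\ref{ThmTameHypTr}) is proved only for weights $r$ below a threshold $r_\thr$ determined by the subprincipal symbol at $\Gamma$; in that regime it behaves like an ordinary real-principal-type estimate. Third, these are combined iteratively (proof of Theorem~\ref{ThmExpansion}, generalized in Section~\ref{SubsecGeneralProofs}): one rewrites $Pu=f$ as $N(P)u=f-(P-N(P))u$, uses the smooth normal-operator inverse together with contour shifting to gain $\alpha$ in decay, and then uses the non-smooth tame microlocal estimates (Corollary~\ref{CorWavePropSing}) to regain regularity. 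Once the weight passes $r_\thr$, Theorem~\ref{ThmTameHypTr} no longer applies and each further contour-shifting step costs two derivatives; the number $N$ of such lossy steps fixes the loss parameter $d$ in \eqref{EqGeneralDCond}. Your phrase ``the decaying perturbation \ldots\ contributing only a real-principal-type-like error thanks to $\alpha>0$'' is morally the right intuition, but its implementation is this smooth-normal-operator/negative-weight dichotomy, not a direct tame rough-coefficient version of Dyatlov. What you flag as the main obstacle is therefore not overcome but sidestepped.
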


In particular, the conditions at $\Gamma$ for the theorem hold if $|a|\ll\bhm$, $E$ a tensor bundle over $M$ of finite rank (e.g.\ the bundle $\Lambdab^*M$ of differential forms or $S^k\Tb^*M$ of symmetric $k$-tensors) , $L_{g(u,\bdiff u)}=\Box_{g(u,\bdiff u)}$ the tensor d'Alembertian, or indeed if $L_{g(u,\bdiff u)}-\Box_{g(u,\bdiff u)}$ is a $0$th order operator, since hyperbolicity is shown in \cite{VasyMicroKerrdS} in the full stated range of $a$, while for $a=0$, one can explicitly compute $\frac{1}{2i}(L_0-L_0^*)$ at $\Gamma$ and verify the stated bound, where one takes adjoints with respect to a suitable Riemannian fiber metric $k$; in fact, to obtain this bound for all permitted spacetime parameters $\bhm$ and $\Lambda$, one needs to take $k$ to be a pseudodifferential Riemannian inner product, which one can indeed allow as well in the statement of the Theorem. See \cite{HintzPsdoInner} for definitions and proofs.

\begin{thmi}
\label{ThmIntroGeneralWave}
Let $(M,g_0)$ be a Kerr-de Sitter space with angular momentum
$|a|<\frac{\sqrt{3}}{2}\bhm$ that satisfies \cite[(6.13)]{VasyMicroKerrdS}. Let $E$ be a vector bundle over $M$ with a positive definite
metric $k$ on $E$, and let $L_{g(u,\bdiff u)}\in\Diffb^2(M;E)$ have principal
symbol $G=g^{-1}(u,\bdiff u)$ (times the identity), where $g(0,0)=g_0$. Suppose further that $L_0=L_{g(0,0)}$
is such that $\hat L_0(\sigma)$ has a simple resonance at $0$, with resonant states spanned by $u_{0,1},\ldots,u_{0,d}$, and no
other resonances in $\Im\sigma\geq 0$.  Consider the family $\hat L_{g(u_0,\bdiff u_0)}(\sigma)$,
$u_0\in\Span\{u_{0,1},\ldots,u_{0,d}\}$ with small enough
norm. Suppose that
\begin{enumerate}
\item
this family only has resonances at $0$ in
$\Im\sigma\geq 0$, and these are given by $\Span\{u_{0,1},\ldots,u_{0,d}\}$,
\item
the large parameter principal symbol of
$\frac{1}{2i|\sigma|}(L_0-L_0^*)$, with the adjoint taken relative to
$k\,|dg|$, at the trapped set $\Gamma$ is $<\numin/2$,
\item
$q(u_0,\bdiff u_0)=0$ for $u_0\in\Span\{u_{0,1},\ldots,u_{0,d}\}$.
\end{enumerate}
Let $s\in\R$. Then for $\alpha>0$ sufficiently small (depending on $s$), there exists\footnote{The
  value of $d$ is given in \eqref{EqGeneralDCond} in the course of the
  proof of this theorem in \S\ref{SubsecGeneralProofs}.} $d>0$
such that the following holds: If $f\in\Hb^{\infty,\alpha}$ has a
sufficiently small $\Hb^{2d,\alpha}$-norm, then the equation
$L_{g(u,\bdiff u)} u=f+q(u,\bdiff u)$ has a unique, smooth in $M^\circ$, global forward solution of the form $u=u_0+\tilde u$, $\tilde u\in\Hb^{s,\alpha}$, $u_0=\chi\sum_{j=1}^d c_j u_{0,j}$, $\chi\in\CI(M)$ identically $1$ near $\pa M$.

If moreover $\Gamma$ is uniformly normally hyperbolic for $\hat L_{g(u_0,\bdiff u_0)}(\sigma)$, one can take $s=\infty$, i.e.\ obtain a solution $u$ as above, with $\tilde u\in\Hb^{\infty,\alpha}$ now.
\end{thmi}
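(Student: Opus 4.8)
\emph{Reduction to an operator equation.} The plan is to write $u=u_0+\tilde u$ with $u_0=\chi\sum_{j=1}^d c_j u_{0,j}$, $c=(c_1,\dots,c_d)\in\Cx^d$, $\tilde u\in\Hb^{s,\alpha}$, and to solve
$$\Phi(c,\tilde u):=L_{g(u,\bdiff u)}u-f-q(u,\bdiff u)=0$$
by a Nash-Moser iteration. The first task is to check that $\Phi$ maps $\Cx^d\times\Hb^{s,\alpha}$ into the \emph{decaying} space $\Hb^{s-2,\alpha}$ for $s$ large; this is precisely the role of hypotheses (1) and (4). Indeed, by (1) each $u_{0,j}$ spans a piece of $\ker\hat L_{g(u_0,\bdiff u_0)}(0)$ for all small $u_0$, so it is annihilated by the normal operator of $L_{g(u_0,\bdiff u_0)}$; hence $L_{g(u_0,\bdiff u_0)}u_0$ only feels the $x$-decaying coefficients of $L_{g(u_0,\bdiff u_0)}$ minus its normal operator, together with the compactly supported commutator $[L_{g(u_0,\bdiff u_0)},\chi]$, and so $L_{g(u_0,\bdiff u_0)}u_0\in\Hb^{\infty,\alpha}$. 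By (4), $q(u,\bdiff u)=q(u,\bdiff u)-q(u_0,\bdiff u_0)$, which upon Taylor expanding in $\tilde u,\bdiff\tilde u$ is a sum of terms each carrying a factor $\tilde u$ or a component of $\bdiff\tilde u$, hence lies in $\Hb^{s-1,\alpha}$; and the remaining pieces $(L_{g(u,\bdiff u)}-L_{g(u_0,\bdiff u_0)})u_0$ and $L_{g(u,\bdiff u)}\tilde u$ lie in $\Hb^{s-1,\alpha}$, resp.\ $\Hb^{s-2,\alpha}$, the first because the coefficient difference vanishes where $\tilde u=0$. Thus $\Phi(0,0)=-f\in\Hb^{\infty,\alpha}$, and, given Moser-type product and composition estimates for $g(u,\bdiff u)$ and $q$, $\Phi$ is a tame map between the corresponding scales.

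\emph{The linearized operator and its uniform invertibility.} Differentiating, $D\Phi(c,\tilde u)(c',\tilde u')=L_{g(u,\bdiff u)}(\chi\sum_j c_j' u_{0,j}+\tilde u')+R_1(c',\tilde u')$, where $R_1$ is a lower order operator with $x^\alpha$-decaying coefficients (coming from differentiating the coefficients of $L$ and of $q$). So $D\Phi(c,\tilde u)$ equals $L_{g(u_0,\bdiff u_0)}$ plus the decaying-coefficient remainder $(L_{g(u,\bdiff u)}-L_{g(u_0,\bdiff u_0)})+R_1$, acting on $\cX^{s,\alpha}:=\Cx^d\oplus\Hb^{s,\alpha}$ with $\Cx^d\ni c'\mapsto\chi\sum_j c_j' u_{0,j}$. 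For the smooth model $L_{g(u_0,\bdiff u_0)}$ with $\|c\|$ small I would assemble the b-regularity estimates along the characteristic set — microlocal elliptic regularity, real principal type propagation, the radial point estimates at $L_\pm$ (valid since $s>\tfrac{1}{2}+\beta\alpha$ for $\alpha$ small), and, crucially, the normally hyperbolic trapping estimate of \cite{DyatlovResonanceProjectors,DyatlovNormally,HintzVasyNormHyp}, which by (2) and (3) holds uniformly for $\|c\|$ small — and combine them with the Mellin/normal-operator analysis and (1) (the only resonance in $\Im\sigma\geq 0$ being the $0$-resonance, spanned exactly by the $u_{0,j}$) to conclude that $L_{g(u_0,\bdiff u_0)}:\cX^{s,\alpha}\to\cY^{s-1,\alpha}$, with suitable (possibly variable) orders, is invertible, uniformly in small $\|c\|$, as in the Fredholm framework of \cite{HintzVasySemilinear}. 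The decaying-coefficient remainder is then absorbed using the rough-coefficient microlocal calculus of \cite{HintzQuasilinearDS} and its normally hyperbolic analogue: since those coefficients lie in $\Hb^{s',\alpha}$ with $\alpha>0$, the perturbation is genuinely decaying near $\Gamma$, where the relevant trapping estimate then behaves exactly like a real principal type estimate with no new threshold obstruction, so it is absorbable for $s'$ large. Hence $D\Phi(c,\tilde u)$ is invertible for $(c,\tilde u)$ in a small ball.

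\emph{Tameness, and the main obstacle.} The key remaining point, and where the real work lies, is to show that \emph{all} of these estimates are tame, i.e.\ that
$$\|D\Phi(c,\tilde u)^{-1}h\|_{\cX^{s,\alpha}}\leq C\bigl(\|h\|_{\cY^{s-1,\alpha}}+\|(c,\tilde u)\|_{\cX^{s+C,\alpha}}\,\|h\|_{\cY^{s_0,\alpha}}\bigr)$$
for a fixed $s_0$ and loss $C$. This requires revisiting the elliptic, propagation, radial point and normally hyperbolic trapping estimates, as well as the Mellin-transformed resolvent bounds used in the normal operator step, and extracting polynomial-in-$s$ operator norms; the trapping estimate, which loses $1+\ep$ derivatives and rests on Dyatlov's semiclassical analysis, is the delicate case. (That $1+\ep$ loss is exactly what rules out a contraction mapping argument, but it is harmless for Nash-Moser, which tolerates a fixed derivative loss.)

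\emph{Conclusion.} With $\Phi$ tame, $\Phi(0,0)=-f$ small in $\Hb^{2d,\alpha}$ for the threshold $d$ made explicit in \eqref{EqGeneralDCond}, and $D\Phi$ invertible with a tame inverse near $(0,0)$, the Nash-Moser implicit function theorem — using b-smoothing operators on the scale $\Hb^{s,\alpha}$ — produces $(c,\tilde u)\in\Cx^d\times\Hb^{\infty,\alpha}$ with $\Phi(c,\tilde u)=0$, so that $u=u_0+\tilde u$ is a solution of the stated form. Smoothness of $\tilde u$, i.e.\ membership in $\Hb^{\infty,\alpha}$, follows by the usual bootstrap: once $\tilde u\in\Hb^{s,\alpha}$ above the threshold, the equation together with the linear theory gives $\tilde u\in\Hb^{s+1,\alpha}$, and iterating yields $\Hb^{\infty,\alpha}$. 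Uniqueness within this class follows from the invertibility of the linearized forward problem, equivalently from energy estimates exploiting the timelike character of the differentials of $\ft_1,\ft_2$ and finite speed of propagation.
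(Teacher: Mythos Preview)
Your proposal is essentially correct and follows the paper's approach: set up a Nash--Moser iteration on $\cX^{s,\alpha}=\Cx^d\oplus\Hb^{s,\alpha}$, verify that $\Phi$ maps into decaying spaces using hypotheses (1) and (4), establish tame invertibility of the linearization by splitting into the smooth model $L_{g(u_0,\bdiff u_0)}$ plus a decaying-coefficient perturbation, and conclude. Your identification of the roles of (1)--(4) and of the decaying-coefficient structure of the remainder is accurate.

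One point deserves sharpening, because it is where the explicit derivative count in \eqref{EqGeneralDCond} comes from. You treat the trapping estimates for the smooth model and for the decaying-coefficient remainder somewhat interchangeably, but the paper separates them carefully. Dyatlov's estimate is a \emph{semiclassical} estimate for the Mellin-transformed normal operator $\hat L_{g(u_0,\bdiff u_0)}(\sigma)$; it gives high-energy bounds (with an $h^{-2}$, i.e.\ two-derivative, loss) and is used only in the contour-shifting step that improves decay. The non-smooth b-space estimate through $\Gamma$ (Theorem~\ref{ThmTameHypTr}, extending \cite{HintzVasyNormHyp}) is what ``behaves exactly like a real principal type estimate'', but only for weights $r$ below the threshold $r_\thr=-\sup_\Gamma\sfe_1/c_\pa$ determined by the skew-adjoint part of $L_0$; above $r_\thr$ it is simply unavailable. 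The paper's mechanism (Section~\ref{SubsecGeneralProofs}) is therefore an iteration: shift the contour using the high-energy estimate on the normal operator to gain decay, then regain the lost regularity using the non-smooth b-estimates for $Pu=f$ directly --- the latter step works losslessly only while the weight remains below $r_\thr$, and each further contour shift past $r_\thr$ costs two derivatives that cannot be recovered. Your phrase ``no new threshold obstruction, so it is absorbable'' elides this: the decaying remainder is not absorbed into the smooth trapping estimate, rather its decay is what makes the regain-regularity step (on sub-threshold weighted spaces) available at all.
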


Here, `uniformly normally hyperbolic' means that one has a continuous family 
$\Gamma=\Gamma_{u_0}$ of smooth trapped sets, with a continuous family of 
stable/unstable manifolds, with uniform bounds (within this family) on 
the normal expansion rates for the flow, which ensures that the 
normally hyperbolic estimates are uniform within the family (for small $u_0$); see the discussion around \eqref{EqNuMinDef} for details.

We stress again that Theorems~\ref{ThmIntroGeneralKG} and \ref{ThmIntroGeneralWave} hold under the general assumptions described above; we restrict to quasilinear equations on Kerr-de Sitter spaces merely for concreteness. See also Remark~\ref{RmkGeneralTheoremsMfds}.

Again, the conditions at $\Gamma$ for the
theorem hold if
$|a|\ll\bhm$, $E$ a tensor bundle, if $L_{g(u,\bdiff u)}-\Box_{g(u,\bdiff u)}$ is a $0$th order operator,
$\Box_{g(u,\bdiff u)}$  the tensor d'Alembertian,
since the structurally stable $r$-normally hyperbolic
statement is shown in \cite{VasyMicroKerrdS} (which implies the
uniform normal hyperbolicity required in the theorem), while for $a=0$,
$\frac{1}{2i}(L_0-L_0^*)$ can be computed explicitly at $\Gamma$, as
mentioned above, and upper bounds on this are stable under perturbations.

The uniform normal hyperbolicity condition at $\Gamma$ holds if $|a|<\frac{\sqrt{3}}{2}\bhm$, $E$ a tensor bundle,
$L_{g(u,\bdiff u)}=\Box_{g(u,\bdiff u)}$ the tensor d'Alembertian, with $g(u_0,\bdiff u_0)$ being a Kerr-de Sitter metric for
$u_0\in\Span\{u_{0,1},\ldots,u_{0,d}\}$ with small norm, since the
hyperbolicity of $\Gamma$ was shown in this generality in
\cite{VasyMicroKerrdS}. (However, the computation of $\frac{1}{2i}(L_0-L_0^*)$ for this range of $a$ is more involved and will not be pursued here.) While the assumption on $g(u_0,\bdiff u_0)$ here is somewhat unnatural from the point of view of applications, a closely related assumption is expected to hold in the context of Einstein's field equations; this is subject of ongoing research.

The plan of the rest of this paper is the following. In \S\ref{SecNonsmoothCalc} we show that the non-smooth pseudodifferential operators of \cite{HintzQuasilinearDS} facilitate tame estimates (operator bounds, composition, etc.), with \S\ref{SubsecTameMicrolocal} establishing tame elliptic estimates in \S\ref{SubsecTameElliptic}, tame real principal type and radial point estimates in \S\ref{SubsecTamePropagation} and tame estimates at normally hyperbolic trapping in \S\ref{SubsecNontrapping} for $r<0$. In \S\ref{SubsecTrapping}, we adapt Dyatlov's analysis at normally hyperbolic trapping given in \cite{DyatlovNormally} to our needs. Finally, in \S\ref{SecQuasilinear} we solve our quasilinear equations by first showing that the microlocal results of \S\ref{SubsecTameMicrolocal} combine with the high energy estimates for the relevant normal operators following from the discussion of \S\ref{SubsecTrapping} to give tame estimates for the forward propagator in \S\ref{SubsecForward}, and then showing in \S\ref{SubsecSolve} that the Nash-Moser iteration indeed allows for solving our wave equations. \S\ref{SubsecSolveKG} then explains the changes required for quasilinear Klein-Gordon equations. Finally, in \S\ref{SubsecGeneralProofs} we show how our methods apply in the general settings of Theorems~\ref{ThmIntroGeneralKG} and \ref{ThmIntroGeneralWave}.

%%%%%%%%%%%%%%%%%%%%%%%%%%%%%%%%%%%%%%%%%%%%%%%%%%%%%%%%%%%%%%%%%%%%%%
\section{Tame estimates in the non-smooth operator calculus}
\label{SecNonsmoothCalc}

In this section we prove the basic tame estimates for the
$\Hb$-coefficient, or simply non-smooth, b-pseudodifferential
operators defined in \cite{HintzQuasilinearDS}.

We work on the half space $\Rnhalf$ with coordinates $z=(x,y)\in[0,\infty)\times\R^{n-1}$; the coordinates in the fiber of the b-cotangent bundle are denoted $\zeta=(\lambda,\eta)$, i.e.\ we write b-covectors as $\lambda\,\frac{dx}{x}+\eta\,dy$. We point out that under the change of coordinates $(0,\infty)_x\times\R^{n-1}_y\mapsto\R^n$, $(x,y)\mapsto(-\log x,y)$, the non-smooth b-ps.d.o.s considered here become quantizations of symbols with standard (on $\R^n$) Sobolev regularity in the coefficients. However, once non-smooth and smooth b-ps.d.o.s interact (as they will in the study of microlocal regularity results at the boundary $x=0$), the b-picture, locally working on $\Rnhalf$, will be conceptually much cleaner than the Euclidean picture, locally working on $\R^n$ with a certain uniformity in one direction; hence we emphasize the `b' point of view from the beginning.

%%%%%%%%%%%%%%%%%%%%%%%%%%%%%%%%%%%%%%%%%%%%%%%%%
\subsection{Mapping properties}
\label{SubsecMapping}

We start with the tame mapping estimate, Proposition~\ref{PropMapping}, which essentially states that for non-smooth pseudodifferential operators $A$, a high regularity norm of $Au$ can be estimated by a high regularity norm of $A$ times a low regularity norm of $u$, plus a low regularity norm of $A$ times a high regularity norm of $u$. This is stronger than the a priori continuity estimate one gets from the bilinear map $(A,u)\mapsto Au$, which would require a product of high norms of both. In case $A$ is a multiplication operator, this is essentially a b-version of a (weak) Moser estimate, see Corollary~\ref{CorHbModule}.

Recall from \cite{HintzQuasilinearDS} the symbol class
\[
  S^{m;0}\Hb^s:=\{a(z,\zeta)\colon \|\la\zeta\ra^{-m}a(z,\zeta)\|_{\Hb^s}\in L^\infty_\zeta\}
\]
with the norm
\[
  \|a\|_{S^{m;0}\Hb^s}=\left\|\frac{\la\xi\ra^s \hat a(\xi,\zeta)}{\la\zeta\ra^m}\right\|_{L^\infty_\zeta L^2_\xi},
\]
where $\hat a$ denotes the Mellin transform in $x$ (which is the standard Fourier transform in $-\log x$) and Fourier transform in $y$ of $a$. Left quantizations of such symbols, denoted $\Op(a)\in\Psi^{m;0}\Hb^s$, act on $u\in\CIdotc(\Rnhalf)$ by
\[
  \Op(a)u(x,y)=(2\pi)^{-n}\int x^{i\lambda}e^{iy\eta}a(x,y,\lambda,\eta)\hat u(\lambda,\eta)\,d\lambda\,d\eta.
\]
Also recall
\[
  S^{m;k}\Hb^s=\{a\in S^{m;0}\Hb^s\colon \pa_\zeta^\alpha a\in S^{m-|\alpha|;0}\Hb^s, |\alpha|\leq k\}.
\]
and $\Psi^{m;k}\Hb^s=\Op S^{m;k}\Hb^s$. For brevity, we will use the following notation for Sobolev, symbol class and operator class norms, with the distinction between symbolic and b-Sobolev norms being clear from the context:
\begin{gather*}
  \|u\|_s:=\|u\|_{\Hb^s},\qquad \|u\|_{s,r}:=\|u\|_{\Hb^{s,r}}, \\
  \|a\|_{m,s}:=\|a\|_{S^{m;0}\Hb^s},\qquad \|a\|_{(m;k),s}:=\|a\|_{S^{m;k}\Hb^s}, \\
  \|\Op(a)\|_{m,s}:=\|\Op(a)\|_{\Psi^{m;0}\Hb^s}\equiv\|a\|_{S^{m;0}\Hb^s}, \\
  \|\Op(a)\|_{(m;k),s}:=\|\Op(a)\|_{\Psi^{m;k}\Hb^s}\equiv\|a\|_{S^{m;k}\Hb^s}.
\end{gather*}
If $A$ is a b-operator acting on an element of a weighted b-Sobolev space with weight $r$ (which will be apparent from the context), then $\|A\|_{m,s}$ is to be understood as $\|x^{-r}Ax^r\|_{m,s}$, similarly for $\|A\|_{(m;k),s}$. Lastly, for $A\in\Hb^s\Psib^m$, i.e.\ $A$ is a finite sum of products $u_jA_j$, where $u_j\in\Hb^s$, $A_j\in\Psib^m$, we write $\|A\|_{\Hb^s\Psib^m}$, by an abuse of notation, for an unspecified $\Hb^s\Psib^m$-seminorm of $A$. Note that $\Hb^s\Psib^m\subset\Psi^{m;\infty}\Hb^s$ (with strict inclusion).

Recall the notation $x_+=\max(x,0)$ for $x\in\R$.
\begin{prop}
\label{PropMapping}
  (Extension of \cite[Proposition~3.9]{HintzQuasilinearDS}.) Let $s\in\R$, $A=\Op(a)\in\Psi^{m;0}\Hb^s$, and suppose $s'\in\R$ is such that $s\geq s'-m$, $s>n/2+(m-s')_+$. Then $A$ defines a bounded map $\Hb^{s'}\to\Hb^{s'-m}$, and for all $\mu,\nu$ with
  \[
    \mu>n/2+(m-s')_+,\quad \nu>n/2+(m-s')_+ +s'-s,
  \]
  there is a constant $C=C(s,s',m,\mu,\nu)>0$ such that
  \begin{equation}
  \label{EqMappingTame}
    \|Au\|_{s'-m}\leq C(\|A\|_{m,\mu}\|u\|_{s'} + \|A\|_{m,s}\|u\|_\nu).
  \end{equation}
\end{prop}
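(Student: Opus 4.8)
The plan is to reduce the estimate to a statement about symbol multiplication and the action of Mellin--Fourier multipliers, treating $A=\Op(a)$ via the oscillatory integral defining it. Writing $\Op(a)u(z)=\int e^{iz\zeta}a(z,\zeta)\hat u(\zeta)\,d\zeta$, I would take the Mellin transform in $x$ and Fourier transform in $y$ of $\Op(a)u$ to express its transform as a convolution (in the dual variable $\xi$) of $\hat a(\cdot,\zeta)$ against $\hat u(\zeta)$, integrated over $\zeta$; schematically, $\widehat{Au}(\xi)=\int \hat a(\xi-\zeta,\zeta)\hat u(\zeta)\,d\zeta$ with the usual care about the contour in the Mellin variable. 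The $\Hb^{s'-m}$ norm of $Au$ is then the $L^2_\xi$ norm of $\la\xi\ra^{s'-m}\widehat{Au}(\xi)$, and one bounds this by splitting $\la\xi\ra^{s'-m}$ according to whether $|\xi-\zeta|$ or $|\zeta|$ dominates, i.e.\ by the elementary inequality $\la\xi\ra^{t}\lesssim \la\xi-\zeta\ra^{|t|}\la\zeta\ra^{t}+\la\zeta\ra^{|t|}\la\xi-\zeta\ra^{t}$ applied with the right exponents.

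The two regions produce the two terms on the right-hand side of \eqref{EqMappingTame}. In the region where the frequency of $a$ is large, one estimates by (a high Sobolev norm of $a$)$\,\times\,$(a low norm of $u$): here $\la\xi\ra^{s'-m}$ is absorbed mostly by $\la\xi-\zeta\ra$, the symbolic factor $\la\zeta\ra^{-m}$ in the definition of $\|a\|_{m,s}$ is used to pair $\la\zeta\ra^m$ against $\la\zeta\ra^{s'-m}$, leaving a residual power $\la\zeta\ra^{s'}$ on $\hat u$, which gives the $\|A\|_{m,s}\|u\|_\nu$ term once one checks the $\zeta$-integral converges — this is exactly where the hypothesis $\nu>n/2+(m-s')_+ + s'-s$ enters, ensuring enough decay in $\zeta$ after $s-s'$ derivatives (in the $a$-frequency) have been spent and $n/2+\ep$ is left over to make $L^2_\zeta\to L^\infty_\zeta$-type sums (really Schur-test bounds) converge. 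In the complementary region where the frequency of $u$ dominates, one keeps the full $\la\zeta\ra^{s'}$ on $\hat u$, spends only a low ($n/2+(m-s')_+$) Sobolev norm on $a$, and obtains the $\|A\|_{m,\mu}\|u\|_{s'}$ term; the condition $\mu>n/2+(m-s')_+$ is precisely what makes the corresponding Schur/Young estimate work. The boundedness $\Hb^{s'}\to\Hb^{s'-m}$ is then the special case with both norms of $u$ taken at order $s'$, using $s\ge s'-m$ and $s>n/2+(m-s')_+$ so that both $\mu$ and $\nu$ may be taken $\le s$ (more precisely, $\|A\|_{m,\mu},\|A\|_{m,s}\lesssim \|A\|_{m,s}$ under these constraints after possibly enlarging $\mu,\nu$).

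Since this is billed as an \emph{extension} of \cite[Proposition~3.9]{HintzQuasilinearDS}, the cleanest route is to follow that proof verbatim but carry the two norms of $a$ (and the two of $u$) separately rather than bounding everything by the product of the two high norms; the non-smooth calculus machinery — the Mellin--Fourier representation of $\Op(a)$, the Schur-test lemmas for the resulting $\zeta$-integrals, the handling of the Mellin contour and of the half-space geometry near $x=0$ — is all already in place there, so one only needs to bookkeep the exponents. The main obstacle, and the only genuinely new point, is getting the \emph{threshold inequalities} on $\mu$ and $\nu$ exactly right: one must verify that after the frequency splitting, the $\zeta$-integral (resp.\ the discrete dyadic sum) in the mixed-frequency term converges with the stated $\nu$, i.e.\ that the loss $s'-s$ is correctly compensated and that no additional $\ep$ is needed beyond $n/2$. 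A secondary technical point is that $\la\xi-\zeta\ra^{|s'-m|}$ may carry a positive power when $s'<m$, which is why $(m-s')_+$ appears in all three thresholds; one has to check this power is genuinely absorbed by the Sobolev regularity $s$ of $a$ in the high-frequency-of-$a$ region and by $\mu$ in the other region, rather than by $u$. Neither difficulty is deep, but the accounting must be done carefully since the whole point of the proposition is the sharp split of regularity between $A$ and $u$.
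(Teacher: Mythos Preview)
Your proposal is correct and follows essentially the same approach as the paper: write $\widehat{Au}$ as a convolution of $\hat a$ and $\hat u$, split the $\xi$-integral into the regions $|\zeta-\xi|\leq|\xi|$ (low-high) and $|\xi|\leq|\zeta-\xi|$ (high-low), and apply Cauchy--Schwarz in each to produce the two terms $\|A\|_{m,\mu}\|u\|_{s'}$ and $\|A\|_{m,s}\|u\|_\nu$. The paper verifies the uniform boundedness of the resulting weight integrals directly in each region (treating the cases $s'-m\geq 0$ and $s'-m\leq 0$ separately), which is exactly the Schur-type bookkeeping you describe; there is no additional idea beyond carrying the two norms separately rather than bounding both by the high one.
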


By the assumptions on $s$ and $s'$, one may always choose $\mu=s$ and $\nu=s'$ here, which recovers the non-tame estimate given in \cite{HintzQuasilinearDS}. Estimates of the form \eqref{EqMappingTame}, called `tame estimates' e.g.\ in \cite{HamiltonNashMoser,SaintRaymondNashMoser}, are crucial for applications in a Nash-Moser iteration scheme, as alluded to in \S\ref{sec:general}.

\begin{proof}[Proof of Proposition~\ref{PropMapping}]
  We compute
  \begin{align*}
    \|Au\|_{s'-m}^2&=\int\la\zeta\ra^{2(s'-m)}|\widehat{Au}(\zeta)|^2\,d\zeta \\
	  &\leq\int\la\zeta\ra^{2(s'-m)}\left(\int|\hat a(\zeta-\xi,\xi)\hat u(\xi)|\,d\xi\right)^2\,d\zeta.
  \end{align*}
  We split the inner integral into two pieces, corresponding to the domains of integration $|\zeta-\xi|\leq|\xi|$ and $|\xi|\leq|\zeta-\xi|$, which can be thought of as splitting up the action of $A$ on $u$ into a low-high and a high-low frequency interaction. We estimate
  \begin{align}
	\label{EqLowHigh}
	\begin{split}
    \int&\la\zeta\ra^{2(s'-m)}\left(\int_{|\zeta-\xi|\leq|\xi|}|\hat a(\zeta-\xi,\xi)\hat u(\xi)|\,d\xi\right)^2\,d\zeta \\
	  &\leq\int\left(\int_{|\zeta-\xi|\leq|\xi|}\frac{\la\zeta\ra^{2(s'-m)}\la\xi\ra^{2m}}{\la\zeta-\xi\ra^{2\mu}\la\xi\ra^{2s'}}\,d\xi\right) \\
	  &\hspace{3cm}\times\left(\int\frac{\la\zeta-\xi\ra^{2\mu}|\hat a(\zeta-\xi,\xi)|^2}{\la\xi\ra^{2m}} \la\xi\ra^{2s'}|\hat u(\xi)|^2\,d\xi\right)\,d\zeta,
	\end{split}
  \end{align}
  and we claim that the integral which is the first factor on the right hand side is uniformly bounded in $\zeta$: Indeed, if $s'-m\geq 0$, then we use $|\zeta|\leq 2|\xi|$ on the domain of integration, thus
  \[
    \int_{|\zeta-\xi|\leq|\xi|}\frac{\la\zeta\ra^{2(s'-m)}}{\la\zeta-\xi\ra^{2\mu}\la\xi\ra^{2(s'-m)}}\,d\xi\lesssim\int\frac{1}{\la\zeta-\xi\ra^{2\mu}}\,d\xi \in L^\infty_\zeta,
  \]
  since $\mu>n/2$; if, on the other hand, $s'-m\leq 0$, then $|\xi|\leq|\zeta-\xi|+|\zeta|$ gives
  \[
    \int_{|\zeta-\xi|\leq|\xi|}\frac{\la\xi\ra^{2(m-s')}}{\la\zeta-\xi\ra^{2\mu}\la\zeta\ra^{2(m-s')}}\,d\xi\lesssim\int\frac{1}{\la\zeta-\xi\ra^{2(\mu-(m-s'))}}+\frac{1}{\la\zeta-\xi\ra^{2\mu}}\,d\xi \in L^\infty_\zeta,
  \]
  since $\mu>n/2+(m-s')$; hence, from \eqref{EqLowHigh}, the $\Hb^{s'-m}$ norm of the low-high frequency interaction in $Au$ is bounded by $C_\mu\|a\|_{m,\mu}\|u\|_{s'}$.

  We estimate the norm of high-low interaction in a similar way: We have
  \begin{align}
	\label{EqHighLow}
	\begin{split}
    \int&\la\zeta\ra^{2(s'-m)}\left(\int_{|\xi|\leq|\zeta-\xi|}|\hat a(\zeta-\xi,\xi)\hat u(\xi)|\,d\xi\right)^2\,d\zeta \\
	  &\leq\int\left(\int_{|\xi|\leq|\zeta-\xi|}\frac{\la\zeta\ra^{2(s'-m)}\la\xi\ra^{2m}}{\la\zeta-\xi\ra^{2s}\la\xi\ra^{2\nu}}\,d\xi\right) \\
	  &\hspace{3cm}\times\left(\int\frac{\la\zeta-\xi\ra^{2s}|\hat a(\zeta-\xi,\xi)|^2}{\la\xi\ra^{2m}} \la\xi\ra^{2\nu}|\hat u(\xi)|^2\,d\xi\right)\,d\zeta.
	\end{split}
  \end{align}
  If $s'-m\geq 0$, the first inner integral on the right hand side is bounded by
  \[
    \int_{|\xi|\leq|\zeta-\xi|}\frac{1}{\la\zeta-\xi\ra^{2(s-s'+m)}\la\xi\ra^{2(\nu-m)}}\,d\xi\leq\int\frac{1}{\la\xi\ra^{2(s-s'+\nu)}}\,d\xi,
  \]
  where we use $s\geq s'-m$, and this integral is finite in view of $\nu>n/2+s'-s$; if $s'-m\leq 0$, then
  \[
    \int_{|\xi|\leq|\zeta-\xi|}\frac{1}{\la\zeta\ra^{2(m-s')}\la\zeta-\xi\ra^{2s}\la\xi\ra^{2(\nu-m)}}\,d\xi\leq\int\frac{1}{\la\xi\ra^{2(\nu-m+s)}}\,d\xi,
  \]
  which is finite in view of $\nu>n/2+m-s$. In summary, we need $\nu>n/2+\max(m,s')-s=n/2+(m-s')_++s'-s$ and can then bound the $\Hb^{s'-m}$ norm of the high-low interaction by $C_\nu\|a\|_{m,s}\|u\|_\nu$. The proof is complete.
\end{proof}

Using $\Hb^s\subset S^{0;0}\Hb^s$, we obtain the following weak version of the Moser estimate for the product of two b-Sobolev functions:
\begin{cor}
\label{CorHbModule}
  Let $s>n/2$, $|s'|\leq s$. If $u\in\Hb^s,v\in\Hb^{s'}$, then $uv\in\Hb^{s'}$, and one has an estimate
  \[
    \|uv\|_{s'}\leq C(\|u\|_\mu\|v\|_{s'}+\|u\|_s\|v\|_\nu)
  \]
  for all $\mu>n/2+(-s')_+,\nu>n/2+s'_+-s$, where $C=C(s,s',\mu,\nu)$. In particular, for $u,v\in\Hb^s$,
  \[
    \|uv\|_s\leq C(\|u\|_\mu\|v\|_s+\|u\|_s\|v\|_\mu)
  \]
  for $\mu>n/2$, with $C=C(s,\mu)$.
\end{cor}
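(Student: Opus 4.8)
The plan is to realize multiplication by $u$ as a b-pseudodifferential operator and then quote Proposition~\ref{PropMapping}. Concretely, take $A=\Op(u)$ to be the left quantization of the $\zeta$-independent symbol $(z,\zeta)\mapsto u(z)$; then $Av=uv$, and since the Mellin--Fourier transform of a $\zeta$-independent symbol is again $\zeta$-independent, the embedding $\Hb^t\hookrightarrow S^{0;0}\Hb^t$ is norm-preserving on such constant symbols, so $A\in\Psi^{0;0}\Hb^s$ with $\|A\|_{0,t}=\|u\|_t$ for every $t\in\R$. One then applies Proposition~\ref{PropMapping} with $m=0$ and with $v$ in the role of the input.

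Suppose first that $s'\geq 0$, or that $s'<0$ with $s+s'>n/2$. Then the hypotheses of Proposition~\ref{PropMapping} with $m=0$ and target index $s'$ are met: $s\geq s'$ follows from $|s'|\leq s$, while $s>n/2+(-s')_+$ reduces to $s>n/2$ in the first case and to $s+s'>n/2$ in the second. Hence, for $\mu>n/2+(-s')_+$ and $\nu>n/2+(-s')_++s'-s$, the proposition yields
\[
  \|uv\|_{s'}\leq C\bigl(\|A\|_{0,\mu}\|v\|_{s'}+\|A\|_{0,s}\|v\|_\nu\bigr)=C\bigl(\|u\|_\mu\|v\|_{s'}+\|u\|_s\|v\|_\nu\bigr);
\]
using the elementary identity $(-s')_++s'=s'_+$, the range of $\nu$ is exactly the one asserted. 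The ``in particular'' statement is the special case $s'=s$, where $(-s)_+=0$ and $s_+-s=0$, so one may take $\nu=\mu>n/2$ (and symmetrize in $u,v$ if desired).

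It remains to cover $s'<0$ with $s+s'\leq n/2$, which I would do by duality. Since $\Hb^{s'}$ is the dual of $\Hb^{-s'}$ relative to the $L^2_\bl$ pairing and the $L^2_\bl$-adjoint of multiplication by $u$ is multiplication by $\bar u$ (with $\|\bar u\|_t=\|u\|_t$), for $\phi\in\CIdotc$ one has $|\la uv,\phi\ra|\leq\|v\|_{s'}\|\bar u\phi\|_{-s'}$. The case already proved, applied with the high-regularity function $\bar u\in\Hb^s$ acting on $\Hb^{-s'}$ (legitimate because $-s'\in[0,s]$ and $s>n/2$), bounds $\|\bar u\phi\|_{-s'}$ by $C(\|u\|_\mu\|\phi\|_{-s'}+\|u\|_s\|\phi\|_{\nu'})$ for any $\nu'>n/2-s'-s$; since $s>n/2$ one may choose $\nu'\leq -s'$, whence $\|\phi\|_{\nu'}\leq\|\phi\|_{-s'}$, and taking the supremum over $\|\phi\|_{-s'}\leq 1$ gives $\|uv\|_{s'}\leq C(\|u\|_\mu+\|u\|_s)\|v\|_{s'}$. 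Finally, in this regime $s'\leq n/2-s<\nu$, so $\|v\|_{s'}\leq\|v\|_\nu$ and the claimed inequality follows. The only genuinely nontrivial step is this index bookkeeping in the duality argument (together with the trivial matching of the $\nu$-ranges against Proposition~\ref{PropMapping}); everything else is a direct quotation of that proposition, so I do not expect a real obstacle.
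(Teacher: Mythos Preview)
Your approach is exactly the paper's: view $u$ as a $\zeta$-independent symbol in $S^{0;0}\Hb^s$ and apply Proposition~\ref{PropMapping} with $m=0$. The paper stops there. You correctly observe that the hypothesis $s>n/2+(-s')_+$ of Proposition~\ref{PropMapping} is not implied by $|s'|\leq s$ alone when $s'<0$ and $s+s'\leq n/2$, and you supply a clean duality argument to cover that residual range; the index bookkeeping (choosing $\nu'\in(n/2-s'-s,-s']$, then using $s'\leq n/2-s<\nu$) is correct. So your proof is at least as complete as the paper's one-line justification.
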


%%%%%%%%%%%%%%%%%%%%%%%%%%%%%%%%%%%%%%%%%%%%%%%%%%
\subsection{Operator compositions}
\label{SubsecComp}

We give a tame estimate for the norms of expansion and remainder terms arising in the composition of two non-smooth operators:
\begin{prop}
\label{PropNonsmoothComp}
  Suppose $s,m,m'\in\R$, $k,k'\in\N_0$ are such that $s>n/2$, $s\leq s'-k$ and $k\geq m+k'$. Suppose $P=p(z,\Db)\in\Psi^{m;k}\Hb^s$, $Q=q(z,\Db)\in\Psi^{m';0}\Hb^{s'}$. Put
  \begin{gather*}
    E_j:=\sum_{|\beta|=j}\frac{1}{\beta!}(\pa_\zeta^\beta p \Db_z^\beta q)(z,\Db), \\
	R:=P\circ Q-\sum_{0\leq j<k} E_j.
  \end{gather*}
  Then $E_j\in\Psi^{m+m'-j;0}\Hb^s$ and $R\in\Psi^{m'-k';0}\Hb^s$, and for $\mu>n/2$ fixed,
  \begin{gather*}
    \|E_j\|_{m+m'-j,s}\leq C(\|P\|_{(m;j),\mu}\|Q\|_{m,s+j}+\|P\|_{(m;j),s}\|Q\|_{m',\mu+j}), \\
	\|R\|_{m'-k',s}\leq C(\|P\|_{(m;k),\mu}\|Q\|_{m',s+k}+\|P\|_{(m;k),s}\|Q\|_{m',\mu+k}).
  \end{gather*}
\end{prop}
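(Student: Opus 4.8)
The plan is to mimic the classical proof of the composition formula in the pseudodifferential calculus, but tracking the bilinear dependence on the coefficients of $P$ and $Q$ carefully so that each term is estimated by a high-regularity norm of one factor times a low-regularity norm of the other. First I would write the Schwartz kernel of $P\circ Q$ in terms of the Mellin--Fourier transforms $\hat p$, $\hat q$ of the symbols, and expand the symbol $(p\# q)(z,\zeta)$ by Taylor's theorem in the first slot of $q$ (equivalently, in the $\zeta$-variable of $p$) to order $k$: this produces exactly the terms $E_j$ for $0\le j<k$ together with an integral remainder $R$ given by a Taylor remainder of the form $\int_0^1 \frac{(1-t)^{k-1}}{(k-1)!}(\cdots)\,dt$ involving $\pa_\zeta^\beta p$ with $|\beta|=k$ and $\Db_z^\beta q$ evaluated at an intermediate point.

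For the terms $E_j$, each summand $(\pa_\zeta^\beta p\,\Db_z^\beta q)(z,\Db)$ is the left quantization of a product of two symbols: $\pa_\zeta^\beta p\in S^{m-j;0}\Hb^s$ (by the definition of $S^{m;k}\Hb^s$, using $j\le k$) and $\Db_z^\beta q\in S^{m';0}\Hb^{s}$ (differentiation in $z$ costs $j$ derivatives of regularity, so from $\Hb^{s'}\supset\Hb^{s+j}$ we land in $\Hb^{s}$, using $s\le s'-k\le s'-j$). The symbolwise product estimate is then exactly Corollary~\ref{CorHbModule} applied in the $z$-variable uniformly in $\zeta$ (with the $\la\zeta\ra$-weights bookkept): since $s>n/2$, one gets $\|\pa_\zeta^\beta p\cdot\Db_z^\beta q\|_{S^{m+m'-j;0}\Hb^s}\lesssim \|\pa_\zeta^\beta p\|_{S^{m-j;0}\Hb^\mu}\|\Db_z^\beta q\|_{S^{m';0}\Hb^{s}}+\|\pa_\zeta^\beta p\|_{S^{m-j;0}\Hb^s}\|\Db_z^\beta q\|_{S^{m';0}\Hb^{\mu}}$, and translating back gives the stated bound on $\|E_j\|_{\Psi^{m+m'-j;0}\Hb^s}$ in terms of $\|P\|_{\Psi^{m;j}\Hb^\mu},\|Q\|_{\Psi^{m';0}\Hb^{s+j}}$ and $\|P\|_{\Psi^{m;j}\Hb^s},\|Q\|_{\Psi^{m';0}\Hb^{\mu+j}}$. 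One must of course check that the pointwise-in-$\zeta$ product of symbols is again a left quantization of a symbol with the advertised order, which is the usual (now non-smooth) symbol calculus bookkeeping.

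The main work, and the step I expect to be the principal obstacle, is the remainder estimate for $R$. Here one has to show that the oscillatory-integral remainder, which a priori is defined by an iterated integral involving $\hat p(\cdot,\cdot)$ and $\hat q(\cdot,\cdot)$ and a Taylor-remainder kernel, is itself the left quantization of a symbol in $S^{m'-k';0}\Hb^s$ with a tame bound. The strategy is the standard one: write $R=\Op(r)$ with $r(z,\zeta)=\sum_{|\beta|=k}\frac{k}{\beta!}\int_0^1(1-t)^{k-1}\,\Op(\pa_\zeta^\beta p)\bigl(\Db_z^\beta q(\cdot,\zeta)e^{i\cdot\zeta}\bigr)(z)\,dt$ (the usual reduction of the remainder to a single application of the operator $\Op(\pa_\zeta^\beta p)$ to the $z$-dependent family $\Db_z^\beta q$), and then estimate its $S^{m'-k';0}\Hb^s$-norm using the tame mapping estimate of Proposition~\ref{PropMapping} applied to $A=\Op(\pa_\zeta^\beta p)\in\Psi^{m-k;0}\Hb^s$ acting on $\Db_z^\beta q(\cdot,\zeta)\in\Hb^{s+k-k}=\Hb^{s}$ (gaining $k$ from $\Hb^{s+k}\ni q$), uniformly in $\zeta$, together with the order-counting $m-k+m'=m'-(k-m)\le m'-k'$ coming from $k\ge m+k'$. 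The delicate points are: (i) justifying the interchange of the $t$-integral, the $\zeta$-integral and the Mellin/Fourier transforms so that $R$ is genuinely $\Op(r)$ for a bona fide symbol $r$; (ii) checking that the $\la\zeta\ra$-weights work out to give exactly the claimed order $m'-k'$, which is where the hypothesis $k\ge m+k'$ is used; and (iii) verifying the regularity bookkeeping $s\le s'-k$ is exactly what is needed for $\Db_z^\beta q$ (with $|\beta|=k$) to still lie in $\Hb^s$ uniformly in the symbolic sense. Once these are in place, Proposition~\ref{PropMapping} (with the free parameters $\mu,\nu$ there chosen near $\mu=s$, $\nu=$ the low index) delivers precisely the asserted tame bound $\|R\|_{\Psi^{m'-k';0}\Hb^s}\le C(\|P\|_{\Psi^{m;k}\Hb^\mu}\|Q\|_{\Psi^{m';0}\Hb^{s+k}}+\|P\|_{\Psi^{m;k}\Hb^s}\|Q\|_{\Psi^{m';0}\Hb^{\mu+k}})$, completing the proof.
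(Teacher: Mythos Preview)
Your treatment of the $E_j$ is correct and matches the paper's: it is precisely Corollary~\ref{CorHbModule} applied in $z$, uniformly in $\zeta$.

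For the remainder $R$ there is a genuine gap. First, your displayed formula for $r(z,\zeta)$ is not the Taylor remainder: the integral remainder involves $\pa_\zeta^\beta p(z,\zeta+t\xi)$, not $\pa_\zeta^\beta p(z,\xi)$, so for fixed $\zeta$ the relevant object is $\Op\bigl(\pa_\zeta^\beta p(\cdot,\zeta+t\cdot)\bigr)$ acting on $\Db_z^\beta q(\cdot,\zeta)$, not the fixed operator $\Op(\pa_\zeta^\beta p)$. More importantly, your order-counting ``$m-k+m'\le m'-k'$'' does not come out of Proposition~\ref{PropMapping}: that proposition is a statement about the $z$-Sobolev index, not about decay in the external parameter $\zeta$. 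From $|\pa_\zeta^\beta p(z,\zeta+t\xi)|\lesssim\la\zeta+t\xi\ra^{m-k}$ one cannot extract a factor $\la\zeta\ra^{m-k}$ uniformly in $\xi$; all that the direct bilinear estimate (low-high/high-low, exactly as in the paper) yields is $p_0:=\pa_\zeta^k p\in S^{0;0}\Hb^s$ and hence $R\in\Psi^{m';0}\Hb^s$, i.e.\ the case $k'=0$.

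The paper obtains the extra $k'$ orders by the Beals--Reed trick: split $q=q_0+\sum_i q_i$ with $q_i$ supported in $|\zeta_i|\geq 1$, and use the Leibniz rule to write $P\circ Q_i=\sum_{j=0}^{k'} c_{jk'}\,P\Db_{z_i}^{k'-j}\circ\Op(\Db_{z_i}^j q_i)\circ\Db_{z_i}^{-k'}$. On $\supp q_i$ the factor $\Db_{z_i}^{-k'}$ is a genuine symbol of order $-k'$, which supplies the missing $\la\zeta\ra^{-k'}$; each summand is then expanded to order $k-j$ and handled by the already-proved $k'=0$ case. Without this device (or an equivalent one), the bound $R\in\Psi^{m'-k';0}\Hb^s$ for $k'>0$ is not established by your argument.
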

\begin{proof}
  The statements about the $E_j$ follow from Corollary~\ref{CorHbModule}. For the purpose of proving the estimate for $R$, we define
  \[
    p_0=\pa_\zeta^k p\in S^{m-k;0}\Hb^s,\quad \Db_z^k q\in S^{m';0}\Hb^{s'-k},
  \]
  where we write $\pa_\zeta^k=(\pa_\zeta^\beta)_{|\beta|=k}$, similarly for $\Db_z^k$. Notice that in particular $p_0\in S^{0;0}\Hb^s$. Then $R=r(z,\Db)$ with
  \[
    |\hat r(\eta;\zeta)|\lesssim \int\left(\int_0^1 p_0(\eta-\xi;\zeta+t\xi)\,dt\right) q_0(\xi;\zeta)\,d\xi
  \]
  by Taylor's formula, hence
  \begin{align*}
    \int&\frac{\la\eta\ra^{2s}|\hat r(\eta;\zeta)|^2}{\la\zeta\ra^{2m'}}\,d\eta \\
	  &\lesssim \int\left(\int_{|\eta-\xi|\leq|\xi|}\frac{\la\eta\ra^{2s}}{\la\eta-\xi\ra^{2\mu}\la\xi\ra^{2s}}\,d\xi\right) \\
	  &\hspace{2cm}\times \left(\int\biggl(\int_0^1\la\eta-\xi\ra^{2\mu}|p_0(\eta-\xi,\zeta+t\xi)|^2\,dt\biggr)\frac{\la\xi\ra^{2s}|q_0(\xi;\zeta)|^2}{\la\zeta\ra^{2m'}}\,d\xi\right)\,d\eta \\
	  &\hspace{0.2cm}+ \int\left(\int_{|\xi|\leq |\eta-\xi|}\frac{\la\eta\ra^{2s}}{\la\eta-\xi\ra^{2s}\la\xi\ra^{2\mu}}\,d\xi\right) \\
	  &\hspace{2cm}\times \left(\int\biggl(\int_0^1\la\eta-\xi\ra^{2s}|p_0(\eta-\xi,\zeta+t\xi)|^2\,dt\biggr)\frac{\la\xi\ra^{2\mu}|q_0(\xi;\zeta)|^2}{\la\zeta\ra^{2m'}}\,d\xi\right)\,d\eta,
  \end{align*}
  which implies the claimed estimate for $k'=0$. For $k'>0$, we use a
  trick of Beals and Reed \cite{BealsReedMicroNonsmooth} as in the
  proof of Theorem~3.12 in \cite{HintzQuasilinearDS} to reduce the
  statement to the case $k'=0$: Recall that the idea is to split up $q(z,\zeta)$ into a `trivial' part $q_0$ with compact support in $\zeta$ and $n$ parts $q_i$, where $q_i$ has support in $|\zeta_i|\geq 1$, and then writing
  \[
    P\circ Q_i = \sum_{j=0}^{k'} c_{jk'} P\Db_{z_i}^{k'-j}\circ (\Db_{z_i}^j q_i)(z,\Db)\Db_{z_i}^{-k'}
  \]
  for some constants $c_{jk'}\in\R$ using the Leibniz rule; then what we have proved above for $k'=0$ can be applied to the $j$-th summand on the right hand side, which we expand to order $k-j$, giving the result.
\end{proof}

%%%%%%%%%%%%%%%%%%%%%%%%%%%%%%
\subsection{Reciprocals of and compositions with $\Hb^s$ functions}
\label{SubsecRecComp}

We also need sharp\-er bounds for reciprocals and compositions of b-Sobolev functions on a compact $n$-dimensional manifold with boundary. Localizing using a partition of unity, we can simply work on $\Rnhalf$.

\begin{prop}
\label{PropHbRec}
  (Extension of \cite[Lemma~4.1]{HintzQuasilinearDS}.) Let $s>n/2+1$, $u,w\in\Hb^s$, $a\in\CI$, and suppose that $|a+u|\geq c_0$ near $\supp w$. Then $w/(a+u)\in\Hb^s$, and one has an estimate
  \begin{equation}
  \label{EqHbRecEst}
    \left\|\frac{w}{a+u}\right\|_s\leq C(\|u\|_\mu,\|a\|_{C^N})c_0^{-1}\max(c_0^{-\lceil s\rceil},1)\bigl(\|w\|_s+\|w\|_\mu(1+\|u\|_s)\bigr).
  \end{equation}
  for any fixed $\mu>n/2+1$ and some $s$-dependent $N\in\N$.
\end{prop}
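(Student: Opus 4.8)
\textbf{Proof plan for Proposition~\ref{PropHbRec}.} The plan is to establish the estimate by a Neumann-series/geometric-argument combined with the tame Moser estimate of Corollary~\ref{CorHbModule}, handling first the special case $a\equiv 1$, $w$ globally supported, and then reducing the general case to it.

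First I would reduce to the case where the denominator is $1+v$ with $v$ small in the relevant low norm. Since $|a+u|\ge c_0$ near $\supp w$, after localizing by a cutoff $\psi\in\CI$ equal to $1$ near $\supp w$ and supported where $|a+u|\ge c_0/2$, we may replace $a+u$ by $b:=a+\psi u$ (which agrees with $a+u$ on $\supp w$ and is bounded below by $c_0/2$ everywhere), and $\psi u\in\Hb^s$ with $\|\psi u\|_s\lesssim\|u\|_s$ by Corollary~\ref{CorHbModule} (the cutoff multiplication is tame and controlled by $\|\psi\|_{C^N}$, $\|u\|_\mu$). Write $b=a_0(1+v)$ where $a_0\in\CI$ is a smoothed version of $b$ staying $\ge c_0/4$ with $C^N$-norm controlled by $\|a\|_{C^N}$ and $\|u\|_\mu$, and $v=(b-a_0)/a_0$. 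The point is that $v\in\Hb^s$ but $v$ need \emph{not} be small; however $v$ \emph{is} bounded in $\Hb^\mu$ by $C(\|u\|_\mu,\|a\|_{C^N})$, and more importantly $\|v\|_{L^\infty}$ can be made $\le 1/2$ by choosing $a_0$ close enough to $b$ in sup-norm (possible since $\Hb^s\hookrightarrow C^0$ as $s>n/2$ and we can mollify). [This is where the $\max(c_0^{-\lceil s\rceil},1)$ and the various $c_0$ powers will be accounted for, from dividing by powers of $a_0\gtrsim c_0$.]

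Next I would expand $1/(1+v)=\sum_{j\ge 0}(-v)^j$ and estimate $\|w\,v^j/(1+v)\|_s$ by iterating Corollary~\ref{CorHbModule}. The key structural fact I would prove by induction on $j$ is a tame bound of the form
\[
  \|w v^j\|_s \le C_0^j\bigl(\|w\|_s\|v\|_\mu^j + j\,\|w\|_\mu\|v\|_\mu^{j-1}\|v\|_s\bigr),
\]
where $C_0=C_0(\|v\|_\mu)$ comes from Corollary~\ref{CorHbModule}; the inductive step multiplies by one more factor of $v$, using the high-low/low-high splitting, and the combinatorial factor $j$ tracks the single ``bad'' slot that carries the $\|v\|_s$ (respectively $\|w\|_s$). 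Since $\|v\|_\mu\le$ a constant but is not small, I would instead track the $L^\infty$ smallness: using $\|wv^j\|_s\lesssim \|v\|_{L^\infty}\|wv^{j-1}\|_s + (\text{Moser correction terms with one }\|v\|_s)$, the geometric series in $\|v\|_{L^\infty}\le 1/2$ converges, and the correction terms, each of which contains exactly one factor $\|v\|_s$ or one factor $\|w\|_s$ and otherwise only $\|v\|_{L^\infty}\le 1/2$ and $\|v\|_\mu$ or $\|w\|_\mu$, sum to $C(\|u\|_\mu,\|a\|_{C^N})(\|w\|_s+\|w\|_\mu(1+\|v\|_s))$. Collecting and using $\|v\|_s\lesssim c_0^{-1}(1+\|u\|_s)$ (from $v=(b-a_0)/a_0$ and $a_0\gtrsim c_0$) gives \eqref{EqHbRecEst}; dividing out the smooth factor $1/a_0$ at the end contributes the $c_0^{-1}\max(c_0^{-\lceil s\rceil},1)$ prefactor, the $\lceil s\rceil$ arising because controlling $1/a_0$ in $\Hb^s\subset C^{\lceil s\rceil-\text{ish}}$ needs $\lceil s\rceil$ derivatives to fall on $a_0$, each producing a factor $a_0^{-1}\gtrsim c_0^{-1}$.

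The main obstacle is the bookkeeping of the geometric series in the \emph{tame} (rather than merely continuous) category: a naive application of Corollary~\ref{CorHbModule} at each step would produce $\|v\|_s$ in every factor and the series would not close. The essential trick is therefore to run the recursion so that the ``small'' quantity driving convergence is $\|v\|_{L^\infty}$ (or $\|v\|_\mu$ after rescaling $a_0$ to make this small — one can always trade one power of $\mu$-norm for sup-norm smallness by mollifying harder, at the cost of $C^N$-norms of $a_0$, which are allowed in the constant), while $\|v\|_s$ — and $\|w\|_s$ — each appear linearly, in exactly one slot, across the whole sum. Verifying that the combinatorial factors multiplying $\|v\|_\mu^{j}$-type terms do not destroy summability is the one genuinely delicate point; it works because each additional factor of $v$ contributes a factor $\le\|v\|_{L^\infty}\le 1/2$ in the dominant term and the subdominant ``correction'' terms are themselves geometrically small. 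This is exactly the strategy used for the smooth-coefficient version \cite[Lemma~4.1]{HintzQuasilinearDS}, now upgraded to keep track of which norms are high and which are low.
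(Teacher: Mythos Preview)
Your Neumann-series strategy is genuinely different from the paper's proof. The paper does not expand $1/(a+u)$ at all; instead it runs an \emph{elliptic bootstrap}: starting from $\|w/(a+u)\|_0\le c_0^{-1}\|w\|_0$, it gains one order at a time by writing, for $\Lambda_{s'}$ with symbol $\la\zeta\ra^{s'}$,
\[
  \psi(a+u)\Lambda_{s'}\frac{w}{a+u}=\psi\Lambda_{s'}w-\psi[\Lambda_{s'},a+u]\frac{w}{a+u},
\]
dividing through by $a+u$, and invoking a tame commutator bound $\|[\Lambda_{s'},u]v\|_0\lesssim\|u\|_\mu\|v\|_{s'-1}+\|u\|_{s'}\|v\|_\nu$ (proved directly by the same low-high/high-low Fourier splitting as Proposition~\ref{PropMapping}). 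Iterating $s'=1,\dots,\lceil s\rceil$ produces the $c_0^{-\lceil s\rceil}$ factor and the tame structure, with \emph{no smallness hypothesis needed anywhere}. (Incidentally, this, not a geometric series, is also the method of \cite[Lemma~4.1]{HintzQuasilinearDS}.)

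Your argument has a real gap at the convergence step. Corollary~\ref{CorHbModule} gives $\|fg\|_s\le C(\|f\|_\mu\|g\|_s+\|f\|_s\|g\|_\mu)$ with $C$ depending only on $s,\mu,n$; iterating on $wv^j$ therefore produces factors $(C\|v\|_\mu)^j$, and summability needs $\|v\|_\mu<1/C$, not merely $\|v\|_{L^\infty}<1/2$. Your proposed switch to $L^\infty$-smallness in the recursion would require a Kato--Ponce type bound $\|fg\|_s\lesssim\|f\|_{L^\infty}\|g\|_s+\|f\|_s\|g\|_{L^\infty}$, which is \emph{not} Corollary~\ref{CorHbModule} and is not available from the Fourier-side arguments in the paper. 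The parenthetical alternative (mollify harder so that $\|v\|_\mu$ is small) is also problematic as written: taking $a_0$ to be a scale-$\epsilon$ mollification of $b=a+\psi u$ forces $\|a_0\|_{C^N}\sim\epsilon^{-N}$, and since $v=(b-a_0)/a_0$, these negative powers of $\epsilon$ feed back into $\|v\|_\mu$ through the division by $a_0$, so one does not gain smallness. One can likely rescue the series approach (e.g.\ via a partition of unity on which $a+u$ is frozen near constants), but as it stands the argument does not close; the commutator bootstrap avoids the issue entirely.
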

\begin{proof}
  Choose $\psi_0,\psi\in\CI$ such that $\psi_0\equiv 1$ on $\supp w$, $\psi\equiv 1$ on $\supp\psi_0$, and such that moreover $|a+u|\geq c_0>0$ on $\supp\psi$. Then we have $\|w/(a+u)\|_0\leq c_0^{-1}\|w\|_0$. We now iteratively prove higher regularity of $w/(a+u)$ and an accompanying `tame' estimate: Let us assume $w/(a+u)\in\Hb^{s'-1}$ for some $1\leq s'\leq s$. Let $\Lambda_{s'}=\lambda_{s'}(\Db)\in\Psib^{s'}$ be an operator with principal symbol $\la\zeta\ra^{s'}$. Then
  \begin{equation}
    \begin{split}
	\label{EqHsRecEllEstimate}
    \Bigl\|\Lambda_{s'}&\frac{w}{a+u}\Bigr\|_0\leq\Bigl\|(1-\psi)\Lambda_{s'}\frac{\psi_0 w}{a+u}\Bigr\|_0+\Bigl\|\psi \Lambda_{s'}\frac{\psi_0 w}{a+u}\Bigr\|_0 \\
	    &\lesssim \Bigl\|\frac{w}{a+u}\Bigr\|_0+c_0^{-1}\Bigl\|\psi(a+u)\Lambda_{s'}\frac{w}{a+u}\Bigr\|_0 \\
	    &\leq c_0^{-1}\|w\|_0+c_0^{-1}\left(\|\psi\Lambda_{s'}w\|_0+\Bigl\|\psi[\Lambda_{s'},a+u]\frac{w}{a+u}\Bigr\|_0\right) \\
	    &\lesssim c_0^{-1}\biggl(\|w\|_{s'}+\Bigl\|\frac{w}{a+u}\Bigr\|_{s'-1}+\Bigl\|\psi[\Lambda_{s'},u]\frac{w}{a+u}\Bigr\|_0\biggr),
	\end{split}
  \end{equation}
  where we used that the support assumptions on $\psi_0$ and $\psi$ imply $(1-\psi)\Lambda_{s'}\psi_0\in\Psib^{-\infty}$, and $\psi[\Lambda_{s'},a]\in\Psib^{s'-1}$. Hence, in order to prove that $w/(a+u)\in\Hb^{s'}$, it suffices to show that $[\Lambda_{s'},u]\colon \Hb^{s'-1}\to\Hb^0$. Let $v\in \Hb^{s'-1}$. Since
  \begin{align*}
    (\Lambda_{s'}uv)\ftrans(\zeta)&=\int \lambda_{s'}(\zeta)\hat u(\zeta-\xi)\hat v(\xi)\,d\xi \\
	(u\Lambda_{s'}v)\ftrans(\zeta)&=\int\hat u(\zeta-\xi)\lambda_{s'}(\xi)\hat v(\xi)\,d\xi,
  \end{align*}
  we have, by taking a first order Taylor expansion of $\lambda_{s'}(\zeta)=\lambda_{s'}(\xi+(\zeta-\xi))$ around $\zeta=\xi$,
  \[
    ([\Lambda_{s'},u]v)\ftrans(\zeta)=\sum_{|\beta|=1}\int\left(\int_0^1\pa_\zeta^\beta \lambda_{s'}(\xi+t(\zeta-\xi))\,dt\right)(\Db_z^\beta u)\ftrans(\zeta-\xi)\hat v(\xi)\,d\xi,
  \]
  thus, writing $u'=\Db_z u\in\Hb^{s-1}$,
  \[
    |([\Lambda_{s'},u]v)\ftrans(\zeta)|\lesssim\int\left(\int_0^1\la\xi+t(\zeta-\xi)\ra^{s'-1}\,dt\right)\big|\widehat{u'}(\zeta-\xi)\big||\hat v(\xi)|\,d\xi.
  \]
  To obtain a tame estimate for the $L^2_\zeta$ norm of this expression, we again use the method of decomposing the integral into low-high and high-low components: The low-high component is bounded by
  \begin{align*}
    &\int\left(\int_{|\zeta-\xi|\leq|\xi|}\frac{\sup_{0\leq t\leq 1}\la\xi+t(\zeta-\xi)\ra^{2(s'-1)}}{\la\zeta-\xi\ra^{2(\mu-1)}\la\xi\ra^{2(s'-1)}}\,d\xi\right) \\
	  &\hspace{3cm}\times\left(\int\la\zeta-\xi\ra^{2(\mu-1)}\big|\widehat{u'}(\zeta-\xi)\big|^2\la\xi\ra^{2(s'-1)}|\hat v(\xi)|^2\,d\xi\right)\,d\zeta;
  \end{align*}
  the first inner integral, in view of $s'\geq 1$, so the $\sup$ is bounded by $\la\xi\ra^{2(s'-1)}$, which cancels the corresponding term in the denominator, is finite for $\mu>n/2+1$. For the high-low component, we likewise estimate
  \begin{align*}
    &\int\left(\int_{|\xi|\leq|\zeta-\xi|}\frac{\sup_{0\leq t\leq 1}\la\xi+t(\zeta-\xi)\ra^{2(s'-1)}}{\la\zeta-\xi\ra^{2(s-1)}\la\xi\ra^{2\nu}}\,d\xi\right) \\
	  &\hspace{3cm}\times\left(\int\la\zeta-\xi\ra^{2s}\big|\widehat{u'}(\zeta-\xi)\big|^2\la\xi\ra^{2\nu}|\hat v(\xi)|^2\,d\xi\right)\,d\zeta,
  \end{align*}
  and the first inner integral on the right hand side is bounded by
  \[
    \int_{|\xi|\leq|\zeta-\xi|}\frac{1}{\la\zeta-\xi\ra^{2(s-s')}\la\xi\ra^{2\nu}}\,d\xi\leq\int\frac{1}{\la\xi\ra^{2(s-s'+\nu)}}\,d\xi
  \]
  because of $s\geq s'$, which is finite for $\nu>n/2+s'-s$. We conclude that
  \[
    \|[\Lambda_{s'},u]v\|_0\leq C_{\mu\nu}(\|u\|_\mu\|v\|_{s'-1}+\|u\|_{s'}\|v\|_\nu),
  \]
  for $\mu>n/2+1,\nu>n/2+s'-s$. Plugging this into \eqref{EqHsRecEllEstimate} yields
  \[
    \left\|\frac{w}{a+u}\right\|_{s'}\lesssim c_0^{-1}\left(\|w\|_{s'}+(1+\|u\|_\mu)\left\|\frac{w}{a+u}\right\|_{s'-1}+\|u\|_{s'}\left\|\frac{w}{a+u}\right\|_\nu\right),
  \]
  where the implicit constant in the inequality is independent of $c_0,w$ and $u$. Using the abbreviations $q_\sigma:=\|w/(a+u)\|_\sigma$, $u_\sigma=\|u\|_\sigma$, $w_\sigma=\|w\|_\sigma$ and fixing $\mu>n/2+1$, this means
  \[
    q_{s'}\lesssim c_0^{-1}(w_{s'}+(1+u_\mu)q_{s'-1}+u_{s'}q_\nu), \quad\nu>n/2+s'-s,
  \]
  with the implicit constant being independent of $c_0,w,a,u,\mu$. We will use this for $s'\leq\gamma:=\lfloor n/2\rfloor+1$ with $\nu=s'-1$, and for $s'>\gamma$, we will take $\nu=\gamma$, thus obtaining a tame estimate for $q_s$. In more detail, for $1\leq s'\leq\gamma$, we have
  \[
    q_{s'}\lesssim c_0^{-1}(w_{s'}+(1+u_{s'})q_{s'-1}),
  \]
  which gives, with $C_0=\max(1,c_0^{-1})$,
  \[
    q_\gamma\lesssim c_0^{-1}w_\gamma\sum_{j=0}^{\gamma-1}(c_0^{-1}(1+u_\gamma))^j+(c_0^{-1}(1+u_\gamma))^\gamma q_0\lesssim c_0^{-1}C_0^\gamma w_\gamma(1+u_\gamma)^\gamma
  \]
  using the bound $q_0\leq c_0^{-1}w_0\leq c_0^{-1}w_\gamma$. For $\gamma<s'\leq s$, we have
  \[
    q_{s'}\lesssim c_0^{-1}(w_s+u_s q_\gamma+(1+u_\mu)q_{s'-1}),
  \]
  thus for integer $k\geq 1$ with $\gamma+k\leq s$,
  \begin{align*}
    q_{\gamma+k}&\leq c_0^{-1}(w_s+u_s q_\gamma)\sum_{j=0}^{k-1}(c_0^{-1}(1+u_\mu))^j + (c_0^{-1}(1+u_\mu))^k q_\gamma \\
	   &\lesssim c_0^{-1}C_0^{k-1}(1+u_\mu)^k(w_s+(1+u_s)q_\gamma) \\
	   &\lesssim c_0^{-1}C_0^{\gamma+k}(1+u_\mu)^{\gamma+k}(w_s+(1+u_s)w_\gamma),
  \end{align*}
  where we used $\mu>\gamma$ in the last inequality, thus proving the estimate \eqref{EqHbRecEst} in case $s$ is an integer; in the general case, we just use $q_{\gamma'}\leq q_\gamma$ for $\gamma'<\gamma$, in particular for $\gamma'=s-\lceil s-\gamma\rceil$, and use the above with $q_{\gamma+k}$ replaced by $q_{\gamma'+k}$.
\end{proof}

As in \cite{HintzQuasilinearDS}, one thus obtains regularity results for compositions, but now with sharper estimates. To illustrate how to obtain these, let us prove an extension of \cite[Proposition~4.5]{HintzQuasilinearDS}. Let $M$ be a compact $n$-dimensional manifold with boundary, $s>n/2+1$, $\alpha\geq 0$.

\begin{prop}
\label{PropCompWithAnalytic}
  Let $u\in\Hb^{s,\alpha}(M)$. If $F\colon\Omega\to\C$, $F(0)=0$, is holomorphic in a simply connected neighborhood $\Omega$ of the range of $u$, then $F(u)\in\Hb^{s,\alpha}(M)$, and
  \begin{equation}
  \label{EqCompWithAnalyticEst}
    \|F(u)\|_{s,\alpha}\leq C(\|u\|_{\mu,\alpha})(1+\|u\|_{s,\alpha})
  \end{equation}
  for fixed $\mu>n/2+1$. Moreover, there exists $\eps>0$ such that $F(v)\in\Hb^{s,\alpha}(M)$ depends continuously on $v\in\Hb^{s,\alpha}(M)$, $\|u-v\|_{s,\alpha}<\eps$.
\end{prop}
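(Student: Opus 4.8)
The plan is to represent $F(u)$ by a Cauchy contour integral and reduce everything to the reciprocal estimate of Proposition~\ref{PropHbRec}. Since $s>n/2$, the Sobolev embedding $\Hb^s\hookrightarrow C^0(M)$ shows that $u$ is continuous, so its range $u(M)$ is a compact subset of $\Omega$. Using the homological form of Cauchy's theorem I would fix a cycle $\gamma$ contained in $\Omega\setminus u(M)$ with winding number $1$ about every point of $u(M)$ and $0$ about every point of $\C\setminus\Omega$; then $c_0:=\dist(\gamma,u(M))>0$. Because $F(0)=0$, the function $\zeta\mapsto F(\zeta)/\zeta$ extends holomorphically across $\zeta=0$ and hence is holomorphic on all of $\Omega$, so $\oint_\gamma F(\zeta)\zeta^{-1}\,d\zeta=0$ while $\sup_\gamma|F(\zeta)/\zeta|<\infty$. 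Consequently, for every $p\in M$,
\[
  F(u(p))=\frac{1}{2\pi i}\oint_\gamma\frac{F(\zeta)}{\zeta-u(p)}\,d\zeta=\frac{1}{2\pi i}\oint_\gamma\frac{F(\zeta)}{\zeta}\cdot\frac{u(p)}{\zeta-u(p)}\,d\zeta;
\]
it is exactly through $F(0)=0$ that this representation produces the $x^\alpha$-decay of $F(u)$ when $\alpha>0$.

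Next I would estimate the integrand in $\Hb^{s,\alpha}$. For fixed $\zeta\in\gamma$ one has $|\zeta-u|\geq c_0$ on $M$, so Proposition~\ref{PropHbRec}, applied with its `$w$' equal to $x^{-\alpha}u\in\Hb^s$, its `$a$' equal to the constant $\zeta$, and its `$u$' equal to $-u$, gives $u/(\zeta-u)\in x^\alpha\Hb^s=\Hb^{s,\alpha}$ together with
\[
  \Bigl\|\frac{u}{\zeta-u}\Bigr\|_{s,\alpha}\leq C(\|u\|_{\mu,\alpha})\,(1+\|u\|_{s,\alpha})
\]
uniformly in $\zeta\in\gamma$: $c_0$ is a fixed positive number, $\|\zeta\|_{C^N}=|\zeta|$ is bounded on the compact set $\gamma$, and on the compact manifold $M$ one has $\|u\|_\sigma\leq C\|u\|_{\sigma,\alpha}$ (here $\mu>n/2+1$ is a fixed exponent as in Proposition~\ref{PropHbRec}). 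Moreover $\zeta\mapsto u/(\zeta-u)$ is Lipschitz from $\gamma$ into $\Hb^{s,\alpha}$, since $\frac{u}{\zeta-u}-\frac{u}{\zeta'-u}=(\zeta'-\zeta)\,u\,(\zeta-u)^{-1}(\zeta'-u)^{-1}$ and the $\Hb^{s,\alpha}$-norm of the right-hand side is $\leq C|\zeta-\zeta'|$ by the $\Hb^s$-module property of Corollary~\ref{CorHbModule} and the bound $\|(\zeta-u)^{-1}\|_s\leq C(\|u\|_{\mu,\alpha})(1+\|u\|_{s,\alpha})$ from Proposition~\ref{PropHbRec} (now taking `$w$' $\equiv1$). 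Hence the contour integral above is a Bochner integral with values in $\Hb^{s,\alpha}$; since $\Hb^{s,\alpha}\hookrightarrow C^0(M)$, evaluation at any point of $M$ is a bounded functional and commutes with it, so the integral represents the pointwise function $F(u)$. This shows $F(u)\in\Hb^{s,\alpha}$, and pulling the norm inside the integral gives
\[
  \|F(u)\|_{s,\alpha}\leq\frac{\operatorname{length}(\gamma)}{2\pi}\,\sup_{\zeta\in\gamma}\Bigl|\frac{F(\zeta)}{\zeta}\Bigr|\cdot C(\|u\|_{\mu,\alpha})\,(1+\|u\|_{s,\alpha}),
\]
which is \eqref{EqCompWithAnalyticEst}.

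For the continuity statement I would keep $\gamma$ fixed, adapted to $u$, and choose $\eps>0$ so small that $\|v-u\|_{s,\alpha}<\eps$ forces $\sup_M|v-u|<c_0/2$, which is possible by Sobolev embedding. For such $v$ the straight segment from $u(p)$ to $v(p)$ never meets $\gamma$, so $\gamma$ still has winding number $1$ about each $v(p)$ — in particular $v(p)\in\Omega$, so $F(v(p))$ is defined — and $\dist(v(M),\gamma)\geq c_0/2$. The representation of the first paragraph then applies verbatim to $v$ (so in particular $F(v)\in\Hb^{s,\alpha}$), and for two such $v,v'$ one obtains, after simplification,
\[
  F(v)-F(v')=\frac{1}{2\pi i}\oint_\gamma\frac{F(\zeta)\,(v-v')}{(\zeta-v)(\zeta-v')}\,d\zeta.
\]
Using the $\Hb^s$-module property (Corollary~\ref{CorHbModule}, applied twice) and the uniform reciprocal bounds $\|(\zeta-v)^{-1}\|_s,\|(\zeta-v')^{-1}\|_s\leq C$ from Proposition~\ref{PropHbRec} — valid since $\dist(v(M)\cup v'(M),\gamma)\geq c_0/2$ and $\|v\|_{s,\alpha},\|v'\|_{s,\alpha}\leq\|u\|_{s,\alpha}+\eps$ — the $\Hb^{s,\alpha}$-norm of the integrand is $\leq C\|v-v'\|_{s,\alpha}$ with $C$ depending only on $u,\eps,\gamma,F$; integrating over $\gamma$ gives $\|F(v)-F(v')\|_{s,\alpha}\leq C\|v-v'\|_{s,\alpha}$, so $v\mapsto F(v)$ is locally Lipschitz, in particular continuous, on $\{\,\|v-u\|_{s,\alpha}<\eps\,\}$.

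The complex-analytic input — existence of the cycle $\gamma$, vanishing of $\oint_\gamma F(\zeta)\zeta^{-1}\,d\zeta$, and Cauchy's formula — is standard, so the only points needing care are the bookkeeping that makes the constant from Proposition~\ref{PropHbRec} uniform along $\gamma$ (it is, since $c_0$ is a fixed positive distance and $\gamma$ is compact) and the reduction of the weighted estimate to the unweighted statement by substituting $x^{-\alpha}u$ for `$w$'. When $F$ is entire (so $\Omega=\C$), which is the relevant case in the applications, one may simply take $\gamma=\{|\zeta|=2\sup_M|u|+1\}$, which makes $c_0$, $\operatorname{length}(\gamma)$ and $\sup_\gamma|F/\zeta|$ functions of $\|u\|_{\mu,\alpha}$ alone, precisely matching the form of \eqref{EqCompWithAnalyticEst}. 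I do not expect a genuinely hard step.
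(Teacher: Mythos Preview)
Your argument is correct and follows essentially the same route as the paper's proof: the Cauchy contour representation $F(u)=\frac{1}{2\pi i}\oint_\gamma F_1(\zeta)\frac{u}{\zeta-u}\,d\zeta$ with $F_1(\zeta)=F(\zeta)/\zeta$, the reduction to Proposition~\ref{PropHbRec} for the estimate on $u/(\zeta-u)$, and the appeal to Proposition~\ref{PropHbRec} and Corollary~\ref{CorHbModule} for the Lipschitz dependence are exactly what the paper does. You simply spell out more of the bookkeeping (the Bochner interpretation, the explicit difference formula for continuity, the weighted-to-unweighted reduction via $w=x^{-\alpha}u$), which the paper leaves implicit.
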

\begin{proof}
  Observe that $u(M)$ is compact. Let $\gamma\subset\C$ denote a smooth contour which is disjoint from $u(M)$, has winding number $1$ around every point in $u(M)$, and lies within the region of holomorphicity of $F$. Then, writing $F(z)=zF_1(z)$ with $F_1$ holomorphic in $\Omega$, we have
  \[
    F(u)=\frac{1}{2\pi i}\oint_\gamma F_1(\zeta)\frac{u}{\zeta-u}\,d\zeta,
  \]
  Since $\gamma\ni\zeta\mapsto u/(\zeta-u)\in\Hb^{s,\alpha}(M)$ is continuous by Proposition~\ref{PropHbRec}, we obtain, using the estimate \eqref{EqHbRecEst},
  \[
    \|F(u)\|_{s,\alpha} \leq C(\|u\|_\mu)\bigl(\|u\|_{s,\alpha}+\|u\|_{\mu,\alpha}(1+\|u\|_s)\bigr),
  \]
  which implies \eqref{EqCompWithAnalyticEst} in view of $\alpha\geq 0$. The continuous (in fact, Lipschitz) dependence of $F(v)$ on $v$ is a consequence of Proposition~\ref{PropHbRec} and Corollary~\ref{CorHbModule}.
\end{proof}

We also study compositions $F(u)$ for $F\in\CI(\R;\C)$ and real-valued $u$.

\begin{prop}
\label{PropCompWithSmooth}
  (Extension of \cite[Proposition~4.7]{HintzQuasilinearDS}.) Let $F\in\CI(\R;\C)$, $F(0)=0$. Then for $u\in\Hb^{s,\alpha}(M;\R)$, we have $F(u)\in\Hb^{s,\alpha}(M)$, and one has an estimate
  \begin{equation}
  \label{EqCompWithSmoothEst}
    \|F(u)\|_{s,\alpha}\leq C(\|u\|_{\mu,\alpha})(1+\|u\|_{s,\alpha})
  \end{equation}
  for fixed $\mu>n/2+1$. In fact, $F(u)$ depends continuously on $u$ in the same sense as in Proposition~\ref{PropCompWithAnalytic}.
\end{prop}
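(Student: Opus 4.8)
The plan is to reduce the statement for $F\in\CI(\R;\C)$ to the holomorphic case already treated in Proposition~\ref{PropCompWithAnalytic}, using that $u$ is real-valued and that $u(M)$ is compact. First I would fix a compact interval $I=[-R,R]\subset\R$ containing the range of $u$, with $R$ slightly larger than $\|u\|_{L^\infty}$ (note $\|u\|_{L^\infty}\lesssim\|u\|_\mu$ since $\mu>n/2$, so $R$ can be controlled by $\|u\|_{\mu,\alpha}$). On this interval I would approximate $F$ by a holomorphic function: either Taylor-expand $F$ and use an entire truncation, or more cleanly, write $F$ near $I$ as a convergent series in a family of holomorphic functions via a smooth cutoff. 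The cleanest route is to use the almost-analytic extension $\tilde F$ of $F$ to a complex neighborhood of $I$, with $\bar\partial\tilde F$ vanishing to infinite order on $\R$, and then apply the Cauchy--Pompeiu formula
\[
  F(u) = \tilde F(u) = \frac{1}{2\pi i}\oint_\gamma \tilde F(\zeta)\frac{1}{\zeta-u}\,d\zeta - \frac{1}{\pi}\iint_{D} \bar\partial\tilde F(\zeta)\frac{1}{\zeta-u}\,dA(\zeta),
\]
where $\gamma=\partial D$ is a contour in $\C$ encircling $I$ and staying in the domain of $\tilde F$, and $D$ is the region it bounds. Since $u$ is real-valued and $\gamma$ and the support of $\bar\partial\tilde F$ stay at positive distance from $\R$ away from $I$... actually one must be careful: $\bar\partial\tilde F$ is supported in a neighborhood of $I$, which meets $\R$, so $\zeta-u$ can be small. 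This is handled because $|\zeta - u| \geq |\Im\zeta|$ for real $u$, and $\bar\partial\tilde F(\zeta) = O(|\Im\zeta|^\infty)$, so $\bar\partial\tilde F(\zeta)/(\zeta-u)$ is still, for fixed $\zeta$, a smooth bounded function of the real variable, with all the quantitative control coming from Proposition~\ref{PropHbRec}.

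Concretely, the key steps are: (1) choose $\tilde F$ an almost-analytic extension with $\bar\partial\tilde F(\zeta)=O(|\Im\zeta|^N)$ for every $N$, supported in a fixed complex neighborhood $\Omega_0$ of $I$; (2) for each $\zeta\in\Omega_0$ with $\Im\zeta\neq 0$, apply Proposition~\ref{PropHbRec} with $a=\zeta$ (a constant, so $\|a\|_{C^N}$ is controlled), $w\equiv 1$ near $u(M)$ (or rather a fixed cutoff), and $c_0=|\Im\zeta|$, obtaining
\[
  \left\|\frac{1}{\zeta-u}\right\|_{s}\leq C(\|u\|_{\mu,\alpha})\,|\Im\zeta|^{-1}\max(|\Im\zeta|^{-\lceil s\rceil},1)\bigl(1+\|u\|_{s}\bigr);
\]
(3) multiply by $|\bar\partial\tilde F(\zeta)|\lesssim_N |\Im\zeta|^N$ with $N$ large enough to beat the negative powers of $|\Im\zeta|$, so that the area integral converges absolutely in $\Hb^{s}$, and likewise the contour integral (where $|\Im\zeta|$ is bounded below on $\gamma$); (4) assemble the bound, tracking that the weight $\alpha\geq 0$ only helps, exactly as in the proof of Proposition~\ref{PropCompWithAnalytic} — one uses that $\Hb^{s,\alpha}$ is preserved and that products of an $\Hb^{s,\alpha}$ and an $\Hb^{\mu,\alpha}$ (or $C^\infty$) factor stay in $\Hb^{s,\alpha}$ by Corollary~\ref{CorHbModule}, giving \eqref{EqCompWithSmoothEst}; (5) for the continuous dependence, run the same Cauchy--Pompeiu representation for $F(u)-F(v)$, writing $\frac{1}{\zeta-u}-\frac{1}{\zeta-v}=\frac{u-v}{(\zeta-u)(\zeta-v)}$ and estimating with Proposition~\ref{PropHbRec} and Corollary~\ref{CorHbModule} (here one needs $\|u-v\|_{s,\alpha}$ small enough that $u(M)$ and $v(M)$ lie in a common interval $I$ and $|\zeta-v|\geq c\,|\Im\zeta|$ uniformly, which holds since $v$ is also real-valued).

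The main obstacle is the simultaneous smallness of $\bar\partial\tilde F(\zeta)$ and $\zeta - u$ as $\Im\zeta\to 0$: one must verify that the almost-analytic extension can be chosen with $\bar\partial\tilde F$ vanishing fast enough — faster than any fixed power — so that after applying Proposition~\ref{PropHbRec}, whose constant degrades like $|\Im\zeta|^{-1-\lceil s\rceil}$, the integrand is still absolutely integrable in $\Hb^{s}$ over $D$. This is standard once one invokes the Mather/Hörmander almost-analytic extension (the estimate $|\bar\partial\tilde F(\zeta)|\leq C_N|\Im\zeta|^N$ for all $N$), but it is the step that genuinely uses the smoothness (not just finite regularity) of $F$ and must be stated carefully. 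A secondary technical point is that Proposition~\ref{PropHbRec} is stated on $\Rnhalf$ after localization by a partition of unity, so one should note at the outset, as the paper does before that proposition, that $M$ being a compact manifold with boundary lets us patch the estimate together; and that the cutoff $w$ must be chosen once and for all, identically $1$ on a neighborhood of $u(M)$, with the $\Hb^{s}$ estimates then globalized in the standard way.
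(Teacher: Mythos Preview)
Your approach via almost analytic extensions and the Cauchy--Pompeiu formula is exactly the route the paper takes (it simply cites the argument in \cite{HintzQuasilinearDS} and invokes the sharper estimate \eqref{EqHbRecEst}), and your identification of the main obstacle---that the $|\Im\zeta|^{-1-\lceil s\rceil}$ blow-up from Proposition~\ref{PropHbRec} is beaten by $\bar\partial\tilde F(\zeta)=O(|\Im\zeta|^N)$ for all $N$---is correct.

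There is, however, a genuine gap in how you handle the weight $\alpha$. You propose to apply Proposition~\ref{PropHbRec} with $w\equiv 1$ ``or rather a fixed cutoff''; but a function identically $1$ near $\partial M$ is not in $L^2_\bl$ (the b-density $\frac{dx}{x}\,dy$ has infinite volume), hence not in $\Hb^s$, let alone $\Hb^{s,\alpha}$. Correspondingly $\frac{1}{\zeta-u}$ is \emph{not} in $\Hb^{s,\alpha}$: near the boundary it tends to the nonzero constant $\frac{1}{\zeta}$. Your remark that ``the weight $\alpha\geq 0$ only helps'' goes the wrong way: $\Hb^{s,\alpha}\subset\Hb^s$, so an unweighted bound does not yield a weighted one. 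This is precisely where the hypothesis $F(0)=0$ enters, and you have not used it. The fix is to do what the proof of Proposition~\ref{PropCompWithAnalytic} does: write $F(z)=zF_1(z)$ and apply Cauchy--Pompeiu to an almost analytic extension $\tilde F_1$, so that the integrand becomes $\tilde F_1(\zeta)\,\dfrac{u}{\zeta-u}$ (and similarly for the area term). Now one applies Proposition~\ref{PropHbRec} with $w=u\in\Hb^{s,\alpha}$, obtaining a genuine $\Hb^{s,\alpha}$ estimate for each fixed $\zeta$ with the stated dependence on $|\Im\zeta|$, and the rest of your steps (3)--(5) go through unchanged. (Your parenthetical about $w$ being a cutoff ``identically $1$ on a neighborhood of $u(M)$'' also conflates the base $M$ with the target $\R$; $w$ in Proposition~\ref{PropHbRec} is a function on $M$.)
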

\begin{proof}
  The proof is the same as in \cite{HintzQuasilinearDS}, using almost analytic extensions, only we now use the sharper estimate \eqref{EqHbRecEst} to obtain \eqref{EqCompWithSmoothEst}.
\end{proof}

\begin{prop}
\label{PropCompWithSmooth2}
  (Extension of \cite[Proposition~4.8]{HintzQuasilinearDS}.) Let $F\in\CI(\R;\C)$, and $u'\in\CI(M;\R),u''\in\Hb^{s,\alpha}(M;\R)$; put $u=u'+u''$. Then $F(u)\in\CI(M)+\Hb^{s,\alpha}(M)$, and one has an estimate
  \[
    \|F(u)-F(u')\|_{s,\alpha}\leq C(\|u'\|_{C^N},\|u''\|_{\mu,\alpha})(1+\|u''\|_{s,\alpha})
  \]
  for fixed $\mu>n/2+1$ and some $N\in\N$. In fact, $F(u)$ depends
  continuously on $u$.
\end{prop}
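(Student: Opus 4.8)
The plan is to reduce the statement about $F(u)=F(u'+u'')$ with $u'$ smooth and $u''\in\Hb^{s,\alpha}$ to a statement about a composition with a genuinely smooth function of $u''$ alone, to which Proposition~\ref{PropCompWithSmooth} applies directly. First I would observe that since $u'\in\CI(M;\R)$, its range together with the range of $u$ lies in a compact set $K\subset\R$; after multiplying by a cutoff in $\R$ I may assume $F$ is compactly supported (this does not affect $F(u)$ or $F(u')$). Writing $F(u)-F(u')=\int_0^1 F'(u'+t u'')\,u''\,dt$, or more in the spirit of the earlier proofs, introducing the parametrized family
\[
  G(p,v):=F(u'(p)+v)-F(u'(p)),\qquad p\in M,\ v\in\R,
\]
I would note that $G$ is a smooth function of $(p,v)$ with $G(p,0)=0$, and that $F(u)-F(u')$ is exactly $p\mapsto G(p,u''(p))$, i.e.\ a composition of $u''\in\Hb^{s,\alpha}(M;\R)$ with a smooth function depending smoothly (in fact, with all $C^N$ seminorms controlled by finitely many $C^N$ seminorms of $u'$) on the base point.

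Next I would re-run the proof of Proposition~\ref{PropCompWithSmooth}: using an almost analytic extension $\tilde G$ of $v\mapsto G(p,v)$, with $\bar\pa$-derivative vanishing to high order on $\R$ and with the implied constants depending only on finitely many derivatives of $G$ in $v$ — hence only on $\|u'\|_{C^N}$ — one writes $G(p,u''(p))$ via a Cauchy-type integral $\frac{1}{2\pi i}\oint \tilde G(p,\zeta)\,u''/(\zeta-u'')\,d\zeta$ plus a remainder controlled by the $\bar\pa$-estimate. The key inputs are then Proposition~\ref{PropHbRec} for the reciprocal $u''/(\zeta-u'')\in\Hb^{s,\alpha}$ with the tame estimate \eqref{EqHbRecEst}, and Corollary~\ref{CorHbModule} for multiplying by the $\Hb^s$ (indeed $\CI$, since $u'$ is smooth) coefficients coming from $\tilde G(p,\zeta)$; the contour $\gamma$ is chosen, as before, to wind once around $u''(M)$ inside the region where the almost analytic extension is under control, with $\gamma$ depending only on $\|u''\|_{L^\infty}\lesssim\|u''\|_{\mu,\alpha}$. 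Since $\alpha\geq 0$, the weight plays no essential role — multiplication by $x^\alpha$ only improves decay — so the estimate \eqref{EqHbRecEst} yields
\[
  \|F(u)-F(u')\|_{s,\alpha}\leq C(\|u'\|_{C^N},\|u''\|_{\mu,\alpha})(1+\|u''\|_{s,\alpha}),
\]
and in particular $F(u)-F(u')\in\Hb^{s,\alpha}(M)$, so $F(u)\in\CI(M)+\Hb^{s,\alpha}(M)$ as claimed.

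Finally, continuity of $u\mapsto F(u)$ (equivalently, of $u''\mapsto F(u'+u'')-F(u')$ for fixed $u'$, together with the trivial continuity of $u'\mapsto F(u')$ in $C^\infty$) follows in the same way from the Lipschitz dependence of the reciprocal in Proposition~\ref{PropHbRec} and the bilinear estimate of Corollary~\ref{CorHbModule}: writing the difference for two inputs $u''_1,u''_2$ as a contour integral of $u''_1/(\zeta-u''_1)-u''_2/(\zeta-u''_2)$ against $\tilde G(p,\zeta)$ and expanding the difference of reciprocals over a common contour, one gets local Lipschitz bounds in $\Hb^{s',\alpha}$ for $s'<s$ and hence, by interpolation with the uniform $\Hb^{s,\alpha}$ bounds from \eqref{EqCompWithSmoothEst}, continuity in $\Hb^{s,\alpha}$. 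The main obstacle — really the only point requiring care — is bookkeeping the dependence of the almost analytic extension $\tilde G$ and of all implied constants on the base point $p$, i.e.\ making sure that everything that was a fixed smooth function in Proposition~\ref{PropCompWithSmooth} is now controlled uniformly in $p$ by finitely many $C^N(M)$ seminorms of $u'$; this is where one uses that $u'$ is smooth (so these seminorms are finite) and that the construction of almost analytic extensions is natural in parameters.
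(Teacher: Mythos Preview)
Your proposal is correct and follows essentially the same approach as the paper, which simply defers to \cite[Proposition~4.8]{HintzQuasilinearDS} and notes that one now uses the sharper reciprocal estimate \eqref{EqHbRecEst}; you have effectively reconstructed that argument, including the key reduction to the parametrized function $G(p,v)=F(u'(p)+v)-F(u'(p))$ and the bookkeeping of the $\|u'\|_{C^N}$-dependence. One minor slip: in your Cauchy-type integral the integrand should be $\tilde G_1(p,\zeta)\,u''/(\zeta-u'')$ with $\tilde G_1(p,\zeta)=\tilde G(p,\zeta)/\zeta$ (well-defined since $G(p,0)=0$), not $\tilde G(p,\zeta)$ itself, but this is notational and does not affect the argument.
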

\begin{proof}
  The proof is the same as in \cite{HintzQuasilinearDS}, but now uses the sharper estimate \eqref{EqHbRecEst}.
\end{proof}

%%%%%%%%%%%%%%%%%%%%%%%%%%%%%%%%%%%%%%%%%%%%%%%%%%%%%%%%%%%%%%%%%%%%%%
\section{Microlocal regularity: tame estimates}
\label{SubsecTameMicrolocal}

When stating microlocal regularity estimates (like elliptic regularity, real principal type propagation, etc.) for operators with coefficients in $\Hb^s(\Rnhalf)$, we will give two quantitative statements, one for `low' regularities $\sigma\lessapprox n/2$, in which we will not make use of any tame estimates established earlier, and one for `high' regularities $n/2\lessapprox\sigma\lessapprox s$, in which the tame estimates will be used.

To concisely write down tame estimates, we use the following notation: The right hand side of a tame estimate will be a real-valued function, denoted by $L$, of the form
\begin{equation}
\label{EqDefTame}
\begin{split}
  L(p^\ell_1,\ldots,p^\ell_a&; p^h_1,\ldots,p^h_b; u^\ell_1,\ldots,u^\ell_c; u^h_1,\ldots,u^h_d) \\
    &=\sum_{j=1}^d c_j(p^\ell_1,\ldots,p^\ell_a)u^h_j + \sum_{j=1}^b\sum_{k=1}^c c_{jk}(p^\ell_1,\ldots,p^\ell_a)p^h_j u^\ell_k
\end{split}
\end{equation}
here, the $c_j$ and $c_{jk}$ are continuous functions. In applications, $p^{\ell/h}_j$ will be a low/high regularity norm of the coefficients of a non-smooth operator, and $u^{\ell/h}_j$ will be a low/high regularity norm of a function that an operator is applied to. The important feature of such functions $L$ is that they are linear in the $u^{\ell/h}_j$, and all $p^h_j,u^h_j$, corresponding to high regularity norms, only appear in the first power.

%%%%%%%%%%%%%%%%%%%%%%%%%%%%%%%%%%%%%%%%%%%%%%%%%%
\subsection{Elliptic regularity}
\label{SubsecTameElliptic}

Concretely, we have the following quantitative {\em microlocal} elliptic estimate, which will be used to control solutions to equations $Pu=f$ microlocally away from the characteristic set of $P$.

\begin{prop}
\label{PropTameElliptic}
  (Cf.\ \cite[Theorem~5.1]{HintzQuasilinearDS}.) Let $m,s,r\in\R$ and $\zeta_0\in\Sb^*\Rnhalf$. Suppose $P'=p'(z,\Db)\in\Hb^s\Psib^m(\Rnhalf)$ has a homogeneous principal symbol $p'_m$. Moreover, let $R\in\fpsib^{m-1;0}\Hb^{s-1}(\Rnhalf)$. Let $P=P'+R$, and suppose $p_m\equiv p'_m$ is elliptic at $\zeta_0$. Then there exists $B\in\Psib^0(\Rnhalf)$ elliptic at $\zeta_0$ such that the following holds: Let $\tilde s\in\R$ be such that $\tilde s\leq s-1$ and $s>n/2+1+(-\tilde s)_+$, and suppose that $u\in\Hb^{\tilde s+m-1,r}(\Rnhalf)$ satisfies
	\[
	  Pu=f\in\Hb^{\tilde s,r}(\Rnhalf).
	\]
	Then $Bu\in\Hb^{\tilde s+m}$, and for $\tilde s\leq n/2+t$, $t>0$, the estimate
	\begin{equation}
	\label{EqTameEllLow}
	\begin{split}
	  \|Bu\|_{\tilde s+m,r}&\leq C(\|P'\|_{(m;1),n/2+1+(-\tilde s)_+ +t},\|R\|_{m-1,n/2+(-\tilde s)_+ +t}) \\
	    &\hspace{1cm}\times(\|u\|_{\tilde s+m-1,r}+\|f\|_{\tilde s,r})
	\end{split}
	\end{equation}
	holds; recall here that $P'\in\Hb^s\Psib^m\subset\Psi^{m;\infty}\Hb^s$, justifying the use of the norm on $P'$ in this estimate. For $\tilde s>n/2$, $\eps>0$, there is a tame estimate
	\begin{equation}
	\label{EqTameEllHigh}
	\begin{split}
	  \|Bu\|_{\tilde s+m,r}&\leq L(\|P'\|_{(m;1),n/2+1+\eps},\|R\|_{m-1,n/2+\eps}; \|P'\|_{(m;1),s},\|R\|_{m-1,s-1}; \\
	    &\hspace{3cm} \|u\|_{n/2+m-1+\eps,r}, \|f\|_{n/2-1+\eps,r}; \|u\|_{\tilde s+m-1,r}, \|f\|_{\tilde s,r}).
	\end{split}
	\end{equation}
\end{prop}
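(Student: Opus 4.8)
The plan is to re-run the microlocal elliptic parametrix argument behind \cite[Theorem~5.1]{HintzQuasilinearDS}, but to carry quantitative norm bounds through every step using the tame mapping, composition and reciprocal estimates of Section~\ref{SecNonsmoothCalc}, arranging the computation so that the bounds acquire the linear-in-the-top-norm structure \eqref{EqDefTame}.

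\emph{Reductions and parametrix.} Conjugating by $x^r$ (as in the norm conventions before Proposition~\ref{PropMapping}) we reduce to $r=0$. Choose smooth cutoff operators $B\prec B_0\prec B_1$ in $\Psib^0(\Rnhalf)$, all elliptic at $\zeta_0$, with operator wavefront sets nested inside the preceding elliptic set and $\WFb'(B_1)$ inside a conic neighborhood $U$ of $\zeta_0$ on which $p_m$ is elliptic, $|p_m|\geq c_0\la\zeta\ra^m$; it suffices to produce such a $B$ and prove the estimates for it. Write $p'=p'_m+p'_{\mathrm{sub}}$ with $p'_{\mathrm{sub}}$ of order $m-1$ and $\Hb^s$ coefficients, and set $q_0=\chi\,p_m^{-1}$, $\chi\in\CI$ equal to $1$ on $\WFb'(B_1)$ and supported in $U$; by Proposition~\ref{PropHbRec} (applied to $p_m^{-1}$ where $|p_m|$ is bounded below), the symbol $q_0$, and hence $Q_0:=\Op(q_0)\in\Hb^s\Psib^{-m}$, is controlled in $\Hb^s$ by a tame function of $\|P'\|_{(m;1),\cdot}$. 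Composing, by Proposition~\ref{PropNonsmoothComp} together with the estimates in its proof (and the Beals--Reed device \cite{BealsReedMicroNonsmooth} to accommodate $m>0$),
\[
  Q_0 P' = B_0 + E,\qquad E\in\Psi^{-1;0}\Hb^{s-1},
\]
where the $\Psi^{-1;0}\Hb^{s-1}$-seminorm of $E$ (which collects the sub-principal term $\Op(q_0 p'_{\mathrm{sub}})\in\Psi^{-1;0}\Hb^s$, the composition remainder, and a disjoint-wavefront smoothing term) is bounded by an $L$-type expression in the coefficient norms $\|P'\|_{(m;1),\cdot}$.

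\emph{The basic identity and extraction of regularity.} Using $P'u=f-Ru$,
\[
  B_0 u = Q_0 f - Q_0(Ru) - Eu,
\]
and we bound each term in $\Hb^{\tilde s+m}$. The key device is to write $\Lambda_{\tilde s+m}=\Lambda_{\tilde s}\Lambda_m$ (modulo smoothing) and to absorb the \emph{smooth} regularity-gaining factor $\Lambda_m$ into the operator at hand, turning $Q_0$ (order $-m$) into the order-$0$ operator $\Lambda_m Q_0\in\Hb^s\Psib^0$ and $E$ (order $-1$) into $\Lambda_m E\in\Psi^{m-1;0}\Hb^{s-1}$; thus the genuine gain of $m$ derivatives is performed by the smooth operator $\Lambda_m$, not by any rough one. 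Concretely: $\|Q_0 f\|_{\tilde s+m}=\|(\Lambda_m Q_0)f\|_{\tilde s}$ is estimated by Proposition~\ref{PropMapping} (order $0$, coefficients $\Hb^s$, using $s\geq\tilde s$); for the second term, $Ru\in\Hb^{\tilde s}$ by Proposition~\ref{PropMapping} (order $m-1$, coefficients $\Hb^{s-1}$, using $s-1\geq\tilde s$ and $s>n/2+1+(-\tilde s)_+$), after which $\|Q_0(Ru)\|_{\tilde s+m}=\|(\Lambda_m Q_0)(Ru)\|_{\tilde s}$ is again handled by Proposition~\ref{PropMapping}; and $\|Eu\|_{\tilde s+m}=\|(\Lambda_m E)u\|_{\tilde s}$ by Proposition~\ref{PropMapping} (order $m-1$, coefficients $\Hb^{s-1}$, using $s-1\geq\tilde s$). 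Each application of Proposition~\ref{PropMapping} produces a bound of the form (low coefficient norm)$\times$(high function norm) $+$ (high coefficient norm)$\times$(low function norm), with the low function levels coming out as $n/2+m-1+\eps$ for $u$ and $n/2-1+\eps$ for $f$; the high coefficient norms of $\Lambda_m Q_0,\Lambda_m E$ are linear in $\|P'\|_{(m;1),s}$ resp.\ $\|R\|_{m-1,s-1}$ times a function of low coefficient norms, by Propositions~\ref{PropHbRec} and \ref{PropNonsmoothComp}. Assembling these, and using that $L$-type expressions are stable under sums and under multiplication by low-regularity norms, gives \eqref{EqTameEllHigh} when $\tilde s>n/2$; standard smooth elliptic regularity replaces $B_0$ by $B$, and a routine regularization argument (running the a priori estimate on mollifications of $u$) gives the qualitative conclusion $Bu\in\Hb^{\tilde s+m}$.

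\emph{Low-regularity regime and main obstacle.} For $\tilde s\leq n/2+t$ one repeats the same steps, but replaces every use of the tame estimate \eqref{EqMappingTame} by the crude bilinear continuity estimate for $(A,v)\mapsto Av$ --- now harmless, since all Sobolev orders that occur stay in a fixed bounded window around $n/2$ --- yielding \eqref{EqTameEllLow} with coefficient norms only at the fixed levels $n/2+1+(-\tilde s)_++t$ resp.\ $n/2+(-\tilde s)_++t$. The main work, and the only genuinely delicate point, is the bookkeeping: one must guarantee that no term in the final bound contains a product of two high-regularity norms, which dictates (i) the order in which mapping and composition estimates are applied, (ii) the splitting $\Lambda_{\tilde s+m}=\Lambda_{\tilde s}\Lambda_m$ so that the smooth factor does the regularity gain while the rough factors are kept at order $\le m-1$, and (iii) keeping the reciprocal estimate \eqref{EqHbRecEst} tame, so that the high-regularity norm of the parametrix $Q_0$ is linear in the high-regularity norm of $P'$. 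A secondary technical nuisance is that the composition $Q_0\circ P'$ pairs two operators of the \emph{same} coefficient regularity $s$, so the one-derivative improvement in $E$ must be drawn from the proof of Proposition~\ref{PropNonsmoothComp} rather than its clean statement; the one-derivative loss of coefficient regularity incurred there is precisely what the hypothesis $\tilde s\le s-1$ (together with $s>n/2+1+(-\tilde s)_+$) is designed to absorb.
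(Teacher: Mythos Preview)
Your proposal is correct and follows essentially the same approach as the paper's proof. The only difference is packaging: the paper builds the smooth order-$m$ factor directly into the parametrix symbol, setting $q=a_0\lambda_m/p_m\in S^{0;\infty}\Hb^s$ so that $Q\circ P'=B+R'$ with $B=a_0(z,\Db)\Lambda_m$ smooth of order $m$ and $R'\in\Psi^{m-1;0}\Hb^{s-1}$, and then estimates $\|Bu\|_{\tilde s}$ directly; you instead take $Q_0$ of order $-m$, obtain $Q_0P'=B_0+E$ with $B_0$ smooth of order $0$ and $E\in\Psi^{-1;0}\Hb^{s-1}$, and post-compose with the smooth $\Lambda_m$ when estimating $\|B_0u\|_{\tilde s+m}$. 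These are the same computation up to where the smooth regularity gain $\Lambda_m$ is inserted, and the resulting bounds coincide. One cosmetic point: your $B_0$ is forced to be $\Op(\chi)$ for the specific $\chi$ in $q_0=\chi p_m^{-1}$, so it should be defined that way rather than chosen independently beforehand.
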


Let us point out that in our application of such an estimate to the study of nonlinear equations it will be irrelevant what exactly the low regularity norms in \eqref{EqTameEllHigh} are; in fact, it will be sufficient to know that there is \emph{some} tame estimate of the general form \eqref{EqTameEllHigh}, and this in turn is in fact clear without any computation, namely it follows directly from the fact that we have tame estimates for all `non-smooth' operations involved in the proof of this proposition. Thus, a simpler, but less precise, statement of Proposition~\ref{PropTameElliptic} is the following: Let $P\in\Hb^s\Psib^m$ (we ignore $R$ for brevity) be elliptic at $\zeta_0$, then for sufficiently large $d>0$, and for $s,\tilde s$ with $s\geq\tilde s+d$, $\tilde s\geq d$, one has an estimate
\[
  \|Bu\|_{\tilde s+m} \leq C(\|P\|_{(m;1),d})\bigl(\|u\|_{\tilde s+m-1} + \|f\|_{\tilde s} + \|P\|_{(m;1),s}(\|u\|_d+\|f\|_d)\bigr),
\]
likewise with the weight $r$ added. Similar remarks apply to all further tame microlocal regularity results below. The only point where the precise numerology does matter is when one wants to find an explicit bound on the number of required derivatives for the forcing term in Theorems~\ref{ThmIntroWave}, \ref{ThmIntroGeneralKG} and \ref{ThmIntroGeneralWave}, as we will do.

\begin{proof}[Proof of Proposition~\ref{PropTameElliptic}]
  We can assume that $r=0$ by conjugating $P$ by $x^{-r}$. Choose $a_0\in S^0$ elliptic at $\zeta_0$ such that $p_m$ is elliptic (and non-vanishing, which only matters near the zero section) on $\supp a_0$. Let $\Lambda_m\in\Psib^m$ be a b-ps.d.o with full symbol $\lambda_m(\zeta)$ independent of $z$, whose principal symbol is $\la\zeta\ra^m$, and define
  \[
    q(z,\zeta):=a_0(z,\zeta)\lambda_m(\zeta)/p_m(z,\zeta)\in S^{0;\infty}\Hb^s,\quad Q=q(z,\Db),
  \]
  then by Proposition~\ref{PropHbRec} and Corollary~\ref{CorHbModule}, we have
  \begin{equation}
  \label{EqQEstimate}
    \|Q\|_{(0;k),\sigma}\leq C(\|P'\|_{(m;k),n/2+1+\eps})(1+\|P'\|_{(m;k),\sigma}),\quad\sigma>n/2+1,\eps>0.
  \end{equation}
  Put $B=a_0(z,\Db)\Lambda_m$, then
  \[
    Q\circ P'=B+R'
  \]
  with $R'\in\fpsi^{m-1;0}\Hb^{s-1}$; by Proposition~\ref{PropNonsmoothComp}, we have for $n/2<\sigma\leq s-1$
  \begin{equation}
  \label{EqRPrimeEstimate}
    \|R'\|_{m-1,\sigma} \lesssim \|Q\|_{(0;1),\mu}\|P'\|_{(m;1),\sigma+1}+\|Q\|_{(0;1),\sigma}\|P'\|_{(m;1),\mu+1}, \quad\mu>n/2.
  \end{equation}
  Now, since $Bu=QP'u-R'u=Qf-QRu-R'u$, we need to estimate the $\Hb^{\tilde s}$-norms of $Qf$, $QRu$ and $R'u$, which we will do using Proposition~\ref{PropMapping}. In the low regularity regime, we have, for $t>0$ and $\tilde s\leq n/2+t$, using \eqref{EqQEstimate} and \eqref{EqRPrimeEstimate}:
  \begin{align*}
    \|Qf\|_{\tilde s}&\lesssim \|Q\|_{0,n/2+(-\tilde s)_+ +t}\|f\|_{\tilde s}\leq C(\|P'\|_{m,n/2+1+(-\tilde s)_+ +t})\|f\|_{\tilde s}, \\
	\|R'u\|_{\tilde s}&\lesssim \|R'\|_{m-1,n/2+(-\tilde s)_+ +t}\|u\|_{\tilde s+m-1} \\
	  &\hspace{1cm}\leq C(\|P'\|_{(m;1),n/2+1+(-\tilde s)_+ +t})\|u\|_{\tilde s+m-1}, \\
	\|QRu\|_{\tilde s}&\leq C(\|P'\|_{m,n/2+1+(-\tilde s)_+ +t})\|R\|_{m-1,n/2+(-\tilde s)_+ +t}\|u\|_{\tilde s+m-1},
  \end{align*}
  giving \eqref{EqTameEllLow}. In the high regularity regime, in fact for $0\leq\tilde s\leq s-1$, we have, for $\eps>0$,
  \begin{align*}
    \|Qf\|_{\tilde s}&\lesssim \|Q\|_{0,n/2+\eps}\|f\|_{\tilde s}+\|Q\|_{0,s}\|f\|_{n/2-1+\eps} \\
	  &\hspace{1cm}\leq C(\|P'\|_{m,n/2+1+\eps})(\|f\|_{\tilde s}+(1+\|P'\|_{m,s})\|f\|_{n/2-1+\eps}), \\
	\|R'u\|_{\tilde s}&\lesssim\|R'\|_{m-1,n/2+\eps}\|u\|_{\tilde s+m-1}+\|R'\|_{m-1,s-1}\|u\|_{n/2+m-1+\eps} \\
	   & \leq C(\|P'\|_{(m;1),n/2+1+\eps})(\|u\|_{\tilde s+m-1}+(1+\|P'\|_{(m;1),s})\|u\|_{n/2+m-1+\eps}), \\
    \|QRu\|_{\tilde s}&\leq L(\|P'\|_{m,n/2+1+\eps},\|R\|_{m-1,n/2+\eps};\|P'\|_{m,s},\|R\|_{m-1,s-1}; \\
	  &\hspace{6cm} \|u\|_{n/2+m-1+\eps}; \|u\|_{\tilde s+m-1}),
  \end{align*}
  giving \eqref{EqTameEllHigh}. The proof is complete.
\end{proof}

There is a similar tame microlocal elliptic estimate for operators of the form $P=P'+P''+R$ with $P',R$ as above and $P''\in\Psib^m$, as in part (2) of \cite[Theorem~5.1]{HintzQuasilinearDS}, where the tame estimate now also involves the $C^N$-norm of the `smooth part' $P''$ of the operator for some ($s$-dependent) $N$. Since in our application $P''$ will only depend on finitely many complex parameters, there is no need to prove an estimate which is also tame with respect to the $C^N$-norm of $P''$; however, this could easily be done in principle.

%%%%%%%%%%%%%%%%%%%%%%%%%%%%%%%%%%%%%%%%%%%%%%%%%%
\subsection{Real principal type propagation; radial points}
\label{SubsecTamePropagation}

Tame estimates for real principal type propagation and propagation near radial points can be deduced from a careful analysis of the proofs of the corresponding results in \cite{HintzQuasilinearDS}. The main observation is that the regularity requirements, given in the footnotes to the proofs of these results in \cite{HintzQuasilinearDS}, indicate what regularity is needed to estimate the corresponding terms: For example, an operator in $A\in\Psi^{m;0}\Hb^s$ with $m\geq 0$ maps $\Hb^{m/2}$ to $\Hb^{-m/2}$ under the condition $s>n/2+m/2$, which is to say that one has a bound
\[
  \|Au\|_{-m/2}\lesssim\|A\|_{m-1,n/2+m/2+\eps}\|u\|_{m/2},\quad\eps>0.
\]
This means that the only places where one needs to use tame operator bounds for operators with coefficients of regularity $s$ are those where the condition for mapping properties etc.\ to hold reads $s\gtrapprox\sigma$ where $\sigma$ is the regularity of the target space, i.e.\ where $\sigma$ is comparable to the regularity $s$ of the coefficients.

We again only prove the tame real principal type estimate in the interior; the estimate near the boundary is proved in the same way, see also the discussion at the end of \S\ref{SubsecTameElliptic}.

\begin{prop}
\label{PropTameRealPrType}
  (Cf.\ \cite[Theorem~6.6]{HintzQuasilinearDS}.) Let $m,r,s,\tilde s\in\R$. Suppose $P_m\in\Hb^s\Psib^m(\Rnhalf)$ has a real, scalar, homogeneous principal symbol $p_m$, and let $P_{m-1}\in\Hb^{s-1}\Psib^{m-1}(\Rnhalf)$, $R\in\fpsib^{m-2;0}\Hb^{s-1}(\Rnhalf)$. Let $P=P_m+P_{m-1}+R$. Suppose $s$ and $\tilde s$ are such that
  \[
    \tilde s\leq s-1,\quad s>n/2+7/2+(2-\tilde s)_+.
  \] 
  Let $\gamma\colon[0,T]\to\Sb^*\Rnhalf$ be a segment of a null-bicharacteristic of $p_m$ with $\gamma(0)=\zeta_0$. Then for all $A\in\Psib^0$ elliptic at $\zeta_0$, there exist $B\in\Psib^0$ elliptic at $\gamma(T)$ and $G\in\Psib^0$ elliptic on $\gamma([0,T])$ such that the following holds: If $u\in\Hb^{\tilde s+m-3/2,r}(\Rnhalf)$ satisfies $Pu=f\in\Hb^{\tilde s,r}(\Rnhalf)$, and $\zeta_0\notin\WFb^{\tilde s+m-1,r}(u)$,\footnote{Recall that for $u\in\Hb^{-\infty,r}$, we say $\zeta_0\notin\WFb^{\sigma,r}(u)$ if and only if there exists $A\in\Psib^0$, elliptic at $\zeta_0$, such that $Au\in\Hb^{\sigma,r}$.} then $\gamma(t)\notin\WFb^{\tilde s+m-1,r}(u)$ for all $t\in[0,T]$; quantitatively, for $\tilde s\leq n/2+1$, $\eps>0$,
  \begin{equation}
  \label{EqTameRealPrTypeLow}
  \begin{split}
    \|&Bu\|_{\tilde s+m-1,r} \\
	  &\leq C(\|P_m\|_{\Hb^{n/2+7/2+(2-\tilde s)_+ +\eps}\Psib^m}, \|P_{m-1}\|_{\Hb^{n/2+1+(3/2-\tilde s)_+ +\eps}\Psib^{m-1}},\|R\|_{n/2+1+(-\tilde s)_+}) \\
	  &\hspace{1cm} \times (\|u\|_{\tilde s+m-3/2,r}+\|Au\|_{\tilde s+m-1,r}+\|Gf\|_{\tilde s,r}).
  \end{split}
  \end{equation}
  Moreover, for $\tilde s>n/2+1$, $\eps>0$, there is a tame estimate
  \begin{equation}
  \label{EqTameRealPrTypeHigh}
  \begin{split}
    \|Bu\|_{\tilde s+m-1,r}&\leq L(\|P_m\|_{\Hb^{n/2+7/2+\eps}\Psib^m}, \|P_{m-1}\|_{\Hb^{n/2+1+\eps}\Psib^{m-1}}, \|R\|_{n/2+\eps}; \\
	  &\hspace{1cm} \|P_m\|_{\Hb^s\Psib^m},\|P_{m-1}\|_{\Hb^{s-1}\Psib^{m-1}},\|R\|_{m-2,s-1}; \\
	  &\hspace{1cm} \|u\|_{n/2-1/2+m+\eps}; \|u\|_{\tilde s+m-3/2,r},\|Au\|_{\tilde s+m-1,r},\|Gf\|_{\tilde s,r}).
  \end{split}
  \end{equation}
\end{prop}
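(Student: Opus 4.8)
The plan is to run a positive-commutator (H\"ormander) argument in the non-smooth b-calculus, following the proof of \cite[Theorem~6.6]{HintzQuasilinearDS}, with one new feature: \emph{every} operator bound and every symbolic composition occurring in that argument can be controlled by the tame calculus estimates of Section~\ref{SecNonsmoothCalc}, so the resulting propagation estimate is itself automatically of the tame form \eqref{EqDefTame}. First I would conjugate $P$ by $x^{-r}$ to reduce to $r=0$; this changes $P_m,P_{m-1},R$ only by conjugation, keeping the same orders and regularities, and replaces the relevant norms by the weight-$r$ versions fixed above Proposition~\ref{PropMapping}.

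Next I would construct a commutant in the usual way: a symbol $\sfa$, microsupported in a thin conic neighborhood of $\gamma([0,T])$, of the order appropriate so that the principal commutator term matches $\|Bu\|^2_{\tilde s+m-1}$, and chosen monotone along the flow so that
\[
  H_{p_m}\sfa^2 = -\sfb^2 + \sfe + (\text{lower order}),
\]
with $\sfb$ elliptic at $\gamma(T)$ (and on most of $\gamma$) and $\sfe$ microsupported near $\zeta_0$. The $\sfe$-term is then a priori controlled from the hypothesis $\zeta_0\notin\WFb^{\tilde s+m-1,r}(u)$ together with the microlocal elliptic estimate (Proposition~\ref{PropTameElliptic}) and the ellipticity of $A$ at $\zeta_0$. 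I would then regularize, $A_\delta=\Op(\sfa)\Lambda_\delta$ with $\Lambda_\delta=\lambda_\delta(\Db)$ a bounded family in $\Psib^0$ tending to $1$ and of negative order for $\delta>0$, so that all the pairings below are a priori finite given $u\in\Hb^{\tilde s+m-3/2,r}$.

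The heart of the matter is to expand $2\Re\la iPu,A_\delta^*A_\delta u\ra$ in two ways. Since $Pu=f$, it equals $2\Re\la if,A_\delta^*A_\delta u\ra$, which Cauchy--Schwarz bounds by $\eps\|A_\delta u\|^2_{\tilde s+m-1}+C\eps^{-1}\|Gf\|^2_{\tilde s}$. On the other hand, commuting $A_\delta^*A_\delta$ past $P_m$ produces the principal term $\la\Op(H_{p_m}\sfa^2\lambda_\delta^2)u,u\ra=-\|B_\delta u\|^2_{\tilde s+m-1}+(\sfe\text{-term})+(\text{errors})$, where the errors come from (i) the non-smoothness of the symbolic expansion for $P_m\in\Hb^s\Psib^m$, controlled by Proposition~\ref{PropNonsmoothComp}; (ii) the contribution of $P_{m-1}\in\Hb^{s-1}\Psib^{m-1}$; and (iii) the genuinely lower-order $R\in\Psib^{m-2;0}\Hb^{s-1}$. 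The key point is that by Proposition~\ref{PropMapping} (and Corollary~\ref{CorHbModule} for the composition symbols) each error is a sum of terms of the shape \eqref{EqDefTame}: either a continuous function of low-regularity norms of $P_m,P_{m-1},R$ times $\|A_\delta u\|^2_{\tilde s+m-1}$, which is absorbed into the left side once $\eps$ and $\delta$ are small, or a continuous function of low norms times a single high-regularity norm of $P_m$, $P_{m-1}$ or $R$, times a fixed low-regularity norm of $u$; and that fixed low norm of $u$ is itself estimated --- by first running the estimate at the lower order $\tilde s+m-3/2$, or by the elliptic estimate --- by $\|u\|_{\tilde s+m-3/2,r}+\|Au\|_{\tilde s+m-1,r}+\|Gf\|_{\tilde s,r}$. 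Letting $\delta\to0$ with a standard uniform-boundedness/weak-limit argument then gives $Bu\in\Hb^{\tilde s+m-1}$ and the estimate \eqref{EqTameRealPrTypeLow} in the regime $\tilde s\le n/2+1$ (where the ``high'' norms above are already comparable to fixed low norms, so no tame estimate is invoked) and \eqref{EqTameRealPrTypeHigh} in the regime $\tilde s>n/2+1$; the estimate near $\pa M$ follows by the same argument phrased with $\Vb$-vector fields, as in \cite{HintzQuasilinearDS} and as indicated at the end of Section~\ref{SubsecTameElliptic}.

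The step I expect to be the main obstacle is the bookkeeping of these commutator errors across the two regimes: deciding which norms must be treated as ``high'' (hence may appear only linearly, per \eqref{EqDefTame}), and checking in each term that the high-regularity norm of $u$ can be traded for the a priori quantities. The numerology $s>n/2+7/2+(2-\tilde s)_+$ is dictated by the worst such term, namely the second-order part $E_2$ of the symbolic expansion of $[P_m,A_\delta^*A_\delta]$, which pushes $\pa_\zeta^2 p_m\in\Hb^s\Psib^{m-2}$ against a function of regularity $\approx\tilde s+m-1$; closing this through Proposition~\ref{PropMapping} forces $s$ to exceed $n/2+7/2$ plus the low-regularity shortfall $(2-\tilde s)_+$, while the milder analogues for $P_{m-1}$ and $R$ produce, respectively, the $n/2+1+(3/2-\tilde s)_+$ and $n/2+1+(-\tilde s)_+$ appearing in \eqref{EqTameRealPrTypeLow}. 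Everything else is exactly parallel to \cite[Theorem~6.6]{HintzQuasilinearDS}, with the bilinear operator bounds there replaced throughout by the tame ones of Section~\ref{SecNonsmoothCalc}.
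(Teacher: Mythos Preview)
Your proposal is correct and follows essentially the same route as the paper: both run the positive-commutator argument of \cite[Theorem~6.6]{HintzQuasilinearDS} verbatim, replacing every non-smooth mapping/composition bound encountered there by its tame counterpart from Section~\ref{SecNonsmoothCalc}, and tracking which coefficient norm each step consumes.

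One point of detail: your attribution of the top exponent $n/2+7/2+(2-\tilde s)_+$ to the second-order remainder in the symbolic expansion of $[P_m,A_\delta^*A_\delta]$ is not quite where the paper locates it. In the paper's bookkeeping the dominant requirement on $P_m$ comes from the sharp G\aa rding step, which needs $\|P_m\|_{\Hb^{n/2+3+m/2+\eps}\Psib^m}$; combined with the order-reduction trick (rewriting $Pu=f$ as $P\Lambda^+(\Lambda^-u)=f+PRu$ and choosing $m_0=1+2(2-\tilde s)_+$, exactly as in \cite{HintzQuasilinearDS}) this produces $n/2+7/2+(2-\tilde s)_+$. The second-order expansion term you single out only costs $n/2+5/2+(2-\tilde s)_+$. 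This does not affect the validity of your argument, only the explanation of the numerology. Likewise, the absorption of the $\|A_\delta u\|_{(m-1)/2}^2$-type errors is achieved in the paper by the large parameter $M$ built into the commutant rather than by taking $\delta$ small; you should make sure your commutant carries such a parameter.
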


A simpler, but less precise, statement of Proposition~\ref{PropTameRealPrType}, ignoring $P_{m-1}$ and $R$ for brevity, is the following: For sufficiently large $d>0$, and for $s,\tilde s$ with $s\geq\tilde s+d$, $\tilde s\geq d$, one has an estimate
\begin{align*}
  \|Bu\|_{\tilde s+m-1} \leq C(\|P_m\|_{\Hb^d\Psib^m})\bigl(&\|u\|_{\tilde s+m-3/2} + \|f\|_{\tilde s} \\
    &\qquad + \|P_m\|_{\Hb^s\Psib^m}(\|Au\|_d+\|Gf\|_d)\bigr),
\end{align*}
likewise with the weight $r$ added.

\begin{proof}[Proof of Proposition~\ref{PropTameRealPrType}]
  We follow the proof of the regularity result in \cite{HintzQuasilinearDS} and state the estimates needed to establish \eqref{EqTameRealPrTypeLow} and \eqref{EqTameRealPrTypeHigh} along the way. Using the notation of the proof of \cite[Theorem~6.6]{HintzQuasilinearDS}, but now calling the regularization parameter $\delta$, in particular $\check A_\delta\in\Psib^{\tilde s+(m-1)/2}$ is the regularized commutant, which depends on a positive constant $M$ chosen below, and putting $\tilde f=f-Ru$, we have, assuming $m\geq 1$ and $\tilde s\geq(5-m)/2$ for now,
  \begin{align*}
    \Re&\la i\check A_\delta^*[P_m,\check A_\delta]u,u\ra \\
	  &=\frac{1}{2}\la i(P_m-P_m^*)\check A_\delta u,\check A_\delta u\ra - \Re\la i\check A_\delta\tilde f,\check A_\delta u\ra+\Re\la i\check A_\delta P_{m-1}u,\check A_\delta u\ra \\
	  &\equiv I+II+III.
  \end{align*}
  For $\eps>0$, we can bound the first term by
  \[
    |I| \lesssim \|P_m\|_{\Hb^{n/2+1+(m-1)/2+\eps}\Psib^m}\|\check A_\delta u\|_{(m-1)/2}^2,
  \]
  the second one by
  \[
	|II| \lesssim \|\check A_\delta f\|_{-(m-1)/2}^2 + \|Ru\|_{\tilde s}^2+\|\check A_\delta u\|_{(m-1)/2}^2,
  \]
  where in turn
  \[
    \|Ru\|_{\tilde s}\lesssim
	  \begin{cases}
	    \|R\|_{m-2;n/2+(-\tilde s)_+ +t}\|u\|_{\tilde s+m-2}, & \tilde s\leq n/2+t, \\
		\|R\|_{m-2;n/2+\eps}\|u\|_{\tilde s+m-2} + \|R\|_{m-2;s-1}\|u\|_{n/2+m-2+\eps}, & \tilde s\geq 0
	  \end{cases}
  \]
  for $t>0$ by Proposition~\ref{PropMapping}. We estimate the third term by
  \[
    |III| \lesssim \|P_{m-1}\|_{\Hb^{\max(n/2+\eps,(m-1)/2)}\Psib^{m-1}}\|\check A_\delta u\|_{(m-1)/2}^2+|\la[\check A_\delta,P_{m-1}]u,\check A_\delta u\ra|
  \]
  and further, with $R_2\in\Psib^{\tilde s+(m-1)/2-1}\circ\Psi^{m-1;0}\Hb^{s-2}$ denoting a part of the expansion of $[\check A_\delta,P_{m-1}]$ as defined after \cite[Footnote~28]{HintzQuasilinearDS},
  \begin{align*}
    |\la[\check A_\delta,P_{m-1}]u,\check A_\delta u\ra|&\leq C(M)\|P_{m-1}\|_{\Hb^{n/2+1+(m/2-1)_+ +\eps}\Psib^{m-1}}\|u\|_{\tilde s+m-3/2}^2 \\
	  &\hspace{3cm}+\|R_2 u\|_{-(m-1)/2}^2+\|\check A_\delta u\|_{(m-1)/2}^2,
  \end{align*}
  where
  \begin{align*}
    \|R_2&u\|_{-(m-1)/2} \\
	& \leq C(M)
	   \begin{cases}
	     \|P_{m-1}\|_{\Hb^{n/2+1+(1-\tilde s)_+ +\eps}\Psib^{m-1}}\|u\|_{\tilde s+m-2}, & \tilde s\leq n/2+1+\eps,\\
		 \|P_{m-1}\|_{\Hb^{n/2+1+\eps}\Psib^{m-1}}\|u\|_{\tilde s+m-2} \\
		 \hspace{1cm}+\|P_{m-1}\|_{\Hb^{s-1}\Psib^{m-1}}\|u\|_{n/2+m-1+\eps}, & \tilde s\geq 1.
	   \end{cases}
  \end{align*}
  Therefore, we obtain, see \cite[Equation~(6.24)]{HintzQuasilinearDS},
  \begin{equation}
  \label{EqOperatorCommutator}
  \begin{split}
    \Re&\left\la\Bigl(i\check A_\delta^*[P_m,\check A_\delta]+B_\delta^*B_\delta+M^2(\Lambda\check A_\delta)^*(\Lambda\check A_\delta)-E_\delta\Bigr)u,u\right\ra \\
	  &\hspace{2cm}\geq -|\la E_\delta u,u\ra|-\|\check A_\delta f\|_{-(m-1)/2}^2-L^2+\|B_\delta u\|_{L^2_\bl}^2,
  \end{split}
  \end{equation}
  where
  \[
    M=M(\|P_m\|_{\Hb^{n/2+1+(m-1)/2+\eps}\Psib^m}, \|P_{m-1}\|_{\Hb^{\max(n/2+\eps,(m-1)/2)}\Psib^{m-1}}),
  \]
  and $L$ is `tame'; more precisely, for $\tilde s\leq n/2+t$, $t>0$,
  \begin{align*}
    L&\leq C(M,\|P_{m-1}\|_{\Hb^{n/2+1+\max(m/2-1,1-\tilde s)_+ +\eps}\Psib^{m-1}},\|R\|_{m-2;n/2+(-\tilde s)_+ +t})\|u\|_{\tilde s+m-3/2},
  \end{align*}
  and for $\tilde s\geq 1$,
  \begin{align*}
    L&=L(M,\|P_{m-1}\|_{\Hb^{n/2+1+(m/2-1)_+ +\eps}\Psib^{m-1}},\|R\|_{m-2,n/2+\eps}; \\
	  &\hspace{1cm}\|P_{m-1}\|_{\Hb^{s-1}\Psib^{m-1}},\|R\|_{m-2;s-1}; \|u\|_{n/2+m-1+\eps};\|u\|_{\tilde s+m-3/2}).
  \end{align*}
  Next, in order to exploit the positive commutator of the principal symbols of $P_m$ and $\check A_\delta$ in the estimate \eqref{EqOperatorCommutator}, we introduce operators $J^\pm\in\Psib^{\pm(\tilde s+(m-1)/2-1)}$ with principal symbols $j^\pm$ such that $J^+J^- -I\in\Psib^{-\infty}$; then
  \[
    iJ^-\check A_\delta^*[P_m,\check A_\delta]=\Op(j^-\check a_\delta H_{p_m}\check a_\delta)+R_1+R_2+R_3+R_4,
  \]
  see \cite[Equation~(6.27)]{HintzQuasilinearDS}, where
  \[
    |\la R_j u,(J^+)^*u\ra|\leq C(M) \|P_m\|_{\Hb^{n/2+2+m/2+\eps}\Psib^m}\|u\|_{\tilde s+m-3/2}^2,\quad j=1,3,4,
  \]
  and $R_2\in\Psib^{\tilde s+(m-1)/2-1}\circ\Psi^{m;0}\Hb^{s-2}$, hence
  \begin{align*}
    |\la R_2&u,(J^+)^*u\ra| \\
	  &\leq C(M)
	    \begin{cases}
		  (1+\|P_m\|_{\Hb^{n/2+2+(3/2-\tilde s)_+ +\eps}\Psib^m}^2)\|u\|_{\tilde s+m-3/2}^2 & \forall\ \tilde s, \\
		  (1+\|P_m\|_{\Hb^{n/2+2+\eps}\Psib^m}^2)\|u\|_{\tilde s+m-3/2}^2 \\
		  \hspace{3cm}+\|P_m\|_{\Hb^s\Psib^m}^2\|u\|_{n/2-1/2+m+\eps}^2 & \tilde s\geq 3/2.
		\end{cases}
  \end{align*}
  Thus, further following the proof in \cite{HintzQuasilinearDS} to equation (6.28) and beyond, it remains to bound
  \[
    \Re\la\Op(j^-f_\delta/j^+)(J^+)^*u,(J^+)^*u\ra + \Re\la R'u,(J^+)^*u\ra,\quad R'\in\Psi^{\tilde s+3(m-1)/2;0}\Hb^{s-1},
  \]
  from below, which is accomplished by
  \begin{gather*}
    |\la R'u,(J^+)^*u\ra|\leq C(M)\|P_m\|_{\Hb^{n/2+1+m/2+\eps}\Psib^m}\|u\|_{\tilde s+m-3/2}^2, \\
	\Re\la\Op(j^-f_\delta/j^+)(J^+)^*u,(J^+)^*u\ra \geq -C(M)\|P_m\|_{\Hb^{n/2+3+m/2+\eps}\Psib^m}\|u\|_{\tilde s+m-3/2}^2.
  \end{gather*}
  Lastly, for general $m\in\R$, we rewrite the equation $Pu=f$ as $P\Lambda^+(\Lambda^- u)=f+PRu$ with $\Lambda^\pm\in\Psib^{\mp(m-m_0)}$, $R\in\Psib^{-\infty}$, where $m_0\geq 1$; hence, replacing $P$ by $P\Lambda^+$, $u$ by $\Lambda^- u$ and $m$ by $m_0$ in the above estimates is equivalent to just replacing $m$ by $m_0$ in the b-Sobolev norms of the coefficients of $P$. Choosing $m_0=1+2(2-\tilde s)_+$ as in \cite{HintzQuasilinearDS} then implies the estimates \eqref{EqTameRealPrTypeLow} and \eqref{EqTameRealPrTypeHigh} with $B=B_0$, $G$ an elliptic multiple of $\check A_0$, and $A$ elliptic on the microsupport of $E_0$.
\end{proof}

\begin{figure}[!ht]
  \centering
  \includegraphics{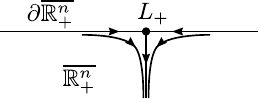}
  \caption{Null-bicharacteristic flow near a radial set, here a sink $L_+\subset\Sb^*_{\pa\Rnhalf}\Rnhalf$ for the flow within the boundary $\Sb^*_{\pa\Rnhalf}\Rnhalf$, with an unstable direction transversal to the boundary. At a source $L_-$ within the boundary, with transversal stable direction, the direction of the flow would be reversed.}
\label{FigRadial}
\end{figure}

In a similar manner, we can analyze the proof of the radial point estimate, see Figure~\ref{FigRadial} for an illustration, obtaining, in the notation of \cite[\S{6.4}]{HintzQuasilinearDS}:
\begin{prop}
\label{PropTameRadial}
  Let $m,r,s,\tilde s\in\R$, $\alpha>0$. Let $P=P_m+P_{m-1}+R$, where $P_j=P'_j+P''_j$, $j=m,m-1$, with $P'_m\in\Hb^{s,\alpha}\Psib^m(\Rnhalf)$ and $P''_m\in\Psib^m(\Rnhalf)$ having real, scalar, homogeneous principal symbols $p'_m$ and $p''_m$, respectively; moreover $P'_{m-1}\in \Hb^{s-1,\alpha}\Psib^{m-1}(\Rnhalf)$, $P''_{m-1}\in\Psib^{m-1}(\Rnhalf)$ and $R=R'+R''$ with $R'\in\Psib^{m-2;0}\Hb^{s-1,\alpha}(\Rnhalf)$ and $R''\in\Psib^{m-2}(\Rnhalf)$. Suppose that the conditions (1)-(4) in \cite[\S{6.4}]{HintzQuasilinearDS} hold for $p=p''_m$, and
  \begin{equation}
  \label{EqRadialSubpr}
    \sigma_{\bl,m-1}\left(\frac{1}{2i}\Bigl((P''_m+P''_{m-1})-(P''_m+P''_{m-1})^*\Bigr)\right)=\pm\hat\beta\beta_0\rho^{m-1}\tn{ at }L_\pm,
  \end{equation}
  where $\hat\beta\in\CI(L_\pm)$ is self-adjoint at every point. Finally, assume that $s$ and $\tilde s$ satisfy
  \begin{equation}
  \label{EqRadialCond}
    \tilde s\leq s-1, \quad s>n/2+7/2+(2-\tilde s)_+.
  \end{equation}
  Suppose $u\in\Hb^{\tilde s+m-3/2,r}(\Rnhalf)$ is such that $Pu=f\in\Hb^{\tilde s,r}(\Rnhalf)$.
  \begin{enumerate}
    \item \label{EnumThmRadIntoBdy} If $\tilde s+(m-1)/2-1+\inf_{L_\pm}(\hat\beta-r\tilde\beta)>0$, let us assume that in a neighborhood of $L_\pm$, $\cL_\pm\cap\{x>0\}$ is disjoint from $\WFb^{\tilde s+m-1,r}(u)$.
	\item \label{EnumThmRadFromBdy} If $\tilde s+(m-1)/2+\sup_{L_\pm}(\hat\beta-r\tilde\beta)<0$, let us assume that a punctured neighborhood of $L_\pm$, with $L_\pm$ removed, in $\Sigma\cap\Sb^*_{\pa\Rnhalf}\Rnhalf$ is disjoint from $\WFb^{\tilde s+m-1,r}(u)$.
  \end{enumerate}
  Then in both cases, $L_\pm$ is disjoint from $\WFb^{\tilde s+m-1,r}(u)$.
  
  Quantitatively, for every neighborhood $U$ of $L_\pm$, there exist $B_0,B_1\in\Psib^0$ elliptic at $L_\pm$, $A\in\Psib^0$ with microsupport in the respective a priori control region in the two cases above, with\footnote{We recall that for an operator $A=\Op(a)\in\Psib^m$, the operator wave front set $\WFb'(A)\subset\Sb^*\Rnhalf$ is the complement of the largest open set in which the symbol $a$ is of order $-\infty$.} $\WFb'(A),\WFb'(B_j)\subset U$, $j=1,2$, and $\chi\in\CIc(U)$, such for $\tilde s\leq n/2+1$, $\eps>0$, we have, with implicit dependence of the appearing constants on seminorms of the smooth operators $P''_m,P''_{m-1}$ and $R''$:
  \begin{equation}
  \label{EqTameRadialLow}
  \begin{split}
    \|B_0&u\|_{\tilde s+m-1,r}\leq C(\|P'_m\|_{\Hb^{n/2+7/2+(2-\tilde s)_+ +\eps,\alpha}\Psib^m}, \\
	  &\hspace{1cm}\|P'_{m-1}\|_{\Hb^{n/2+1+(3/2-\tilde s)_+ +\eps,\alpha}\Psib^{m-1}},\|R'\|_{m-2,n/2+1+(-\tilde s)_+}) \\
	  & \times (\|u\|_{\tilde s+m-3/2,r}+\|Au\|_{\tilde s+m-1,r}+\|B_1 f\|_{\tilde s,r}+\|\chi f\|_{\tilde s-1,r}).
  \end{split}
  \end{equation}
  Moreover, for $\tilde s>n/2+1$, $\eps>0$, there is a tame estimate
  \begin{equation}
  \label{EqTameRadialHigh}
  \begin{split}
    \|B_0&u\|_{\tilde s+m-1,r}\leq L(\|P'_m\|_{\Hb^{n/2+7/2+\eps,\alpha}\Psib^m}, \|P'_{m-1}\|_{\Hb^{n/2+1+\eps,\alpha}\Psib^{m-1}}, \|R'\|_{m-2,n/2+\eps}; \\
	  & \|P'_m\|_{\Hb^{s,\alpha}\Psib^m},\|P'_{m-1}\|_{\Hb^{s-1,\alpha}\Psib^{m-1}},\|R'\|_{m-2,s-1}; \|u\|_{n/2-1/2+m+\eps}, \|f\|_{n/2-1+\eps}; \\
	  & \|u\|_{\tilde s+m-3/2,r},\|Au\|_{\tilde s+m-1,r},\|B_1 f\|_{\tilde s,r},\|\chi f\|_{\tilde s-1,r}).
  \end{split}
  \end{equation}
\end{prop}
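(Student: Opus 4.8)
The plan is to follow the proof of the radial point estimate in \cite[\S6.4]{HintzQuasilinearDS} line by line, replacing each use of a mapping, composition, or reciprocal property of the non-smooth calculus by the corresponding tame statement from Section~\ref{SecNonsmoothCalc}, and keeping track of the b-Sobolev order of the coefficients needed at each step. As in the proof of Proposition~\ref{PropTameRealPrType}, one first reduces to the case $m=m_0:=1+2(2-\tilde s)_+$ by rewriting $Pu=f$ as $P\Lambda^+(\Lambda^-u)=f+PRu$ with $\Lambda^\pm\in\Psib^{\mp(m-m_0)}$; this amounts to replacing $m$ by $m_0$ in all b-Sobolev orders of the coefficients, which is exactly what produces the exponents $n/2+7/2+(2-\tilde s)_+$ and $n/2+1+(3/2-\tilde s)_+$ appearing in \eqref{EqTameRadialLow}. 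One then takes the regularized commutant $\check A_\delta\in\Psib^{\tilde s+(m-1)/2}$ of \cite[\S6.4]{HintzQuasilinearDS}, microlocalized in $U$, built from a homogeneous function carrying both the cutoff to a neighborhood of $L_\pm$ and the weight $x^{-2r}$, so that $H_{p''_m}$ acting on the weight produces the term $-r\tilde\beta$ in the threshold conditions, and expands
\[
  \Re\la i\check A_\delta^*[P,\check A_\delta]u,u\ra=\tfrac{1}{2}\la i(P-P^*)\check A_\delta u,\check A_\delta u\ra-\Re\la i\check A_\delta(f-Ru),\check A_\delta u\ra+(\text{lower order}).
\]

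The decisive structural point is that, since $L_\pm\subset\Sb^*_{\pa\Rnhalf}\Rnhalf$ and the rough parts $P'_m,P'_{m-1},R'$ carry the positive weight $\alpha$, the principal symbol of $H_{p''_m}$ near $L_\pm$ together with the smooth skew-adjoint contribution $\tfrac{1}{2i}((P''_m+P''_{m-1})-(P''_m+P''_{m-1})^*)$---which by \eqref{EqRadialSubpr} equals $\pm\hat\beta\beta_0\rho^{m-1}$ at $L_\pm$---governs the sign of the commutator term, and the threshold inequalities in cases \itref{EnumThmRadIntoBdy} and \itref{EnumThmRadFromBdy} are precisely the conditions under which a positive commutator argument runs with a priori control coming, respectively, from $\cL_\pm\cap\{x>0\}$ or from the punctured boundary neighborhood. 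The rough operators then contribute only absorbable errors: (i) $\tfrac{1}{2}\la i(P'_m-(P'_m)^*)\check A_\delta u,\check A_\delta u\ra$, estimated by Proposition~\ref{PropMapping} by $\|P'_m\|_{\Hb^{n/2+1+(m-1)/2+\eps,\alpha}\Psib^m}\|\check A_\delta u\|_{(m-1)/2}^2$ in the low regime and by a tame expression $L(\dots)$ in the high regime; (ii) $\la i\check A_\delta P'_{m-1}u,\check A_\delta u\ra$ and the commutator $[\check A_\delta,P'_{m-1}]$, handled exactly as the term $III$ in the proof of Proposition~\ref{PropTameRealPrType} via Propositions~\ref{PropMapping} and \ref{PropNonsmoothComp}; and (iii) $\|Ru\|_{\tilde s}$ and the remainders $R_1,\dots,R_4,R'$ of the symbol expansions, controlled by Proposition~\ref{PropMapping} in the same way. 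The crucial gain from $\alpha>0$ is that these rough contributions vanish at $L_\pm$, so they do not alter the threshold and enter only as terms $C\|u\|_{\tilde s+m-3/2,r}^2$ plus, in the high regime, a tame term linear in $\|P'_\bullet\|_{\Hb^{s,\alpha}\Psib^\bullet}$ or $\|R'\|_{m-2,s-1}$ times a low-regularity norm of $u$.

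Assembling these as in \cite[(6.24)--(6.28)]{HintzQuasilinearDS} yields an operator inequality of the form
\[
  \Re\bigl\la(i\check A_\delta^*[P,\check A_\delta]+B_{1,\delta}^*B_{1,\delta}+M^2(\Lambda\check A_\delta)^*(\Lambda\check A_\delta)-E_\delta)u,u\bigr\ra\geq-|\la E_\delta u,u\ra|-\|\check A_\delta f\|_{-(m-1)/2}^2-\|\chi f\|_{\tilde s-1}^2-L^2+\|B_{0,\delta}u\|_{L^2_\bl}^2,
\]
where $M$ depends on low-regularity norms of $P'_m,P'_{m-1}$ and on seminorms of the smooth parts, $B_{0,\delta}$ is elliptic at $L_\pm$, $E_\delta$ has microsupport in the a priori control region, and the term $\|\chi f\|_{\tilde s-1}$ reflects the fact that the forcing enters the radial commutant estimate with one derivative to spare. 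A standard regularization argument ($\delta\to 0$) then gives $L_\pm\cap\WFb^{\tilde s+m-1,r}(u)=\emptyset$ together with \eqref{EqTameRadialLow} and \eqref{EqTameRadialHigh}, with $B_0=B_{0,0}$, $B_1$ elliptic on $\WFb'(\chi)$, and $A$ elliptic on $\WFb'(E_0)$. I expect the main obstacle to be purely bookkeeping: checking at each of the many terms that the required b-Sobolev order of the (conjugated, order-$m_0$-reduced) coefficients is at most $s$, so that the genuinely high-regularity norms appear only to first power and only against low-regularity norms of $u$, and confirming that the threshold computation---which decides between absorption and a priori control---is untouched by the rough perturbation thanks to $\alpha>0$. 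Once this is verified, no analytic difficulty arises beyond what is already present in \cite[\S6.4]{HintzQuasilinearDS} and in Section~\ref{SecNonsmoothCalc}.
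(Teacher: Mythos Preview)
Your proposal is correct and follows essentially the same approach as the paper, which is itself just a one-sentence remark that the proof of Proposition~\ref{PropTameRealPrType} carries over with a single modification. The one point worth sharpening is your explanation of the extra term $\|\chi f\|_{\tilde s-1,r}$: the paper attributes it specifically to the use of \emph{elliptic regularity} in the radial point argument of \cite[\S6.4]{HintzQuasilinearDS} (off the characteristic set one needs local, not just microlocal, control of $f$), rather than to the forcing ``entering with one derivative to spare''; identifying this mechanism correctly is what tells you that $\chi$ is a spatial cutoff and that the order is $\tilde s-1$ rather than $\tilde s$.
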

\begin{proof}
  One detail changes as compared to the previous proof: While it still suffices to only assume microlocal regularity $B_2 f\in\Hb^{\tilde s,r}$ at $L_\pm$, we now in addition need to assume local regularity $\chi f\in\Hb^{\tilde s-1,r}$, which is due to the use of elliptic regularity in the proof given in \cite{HintzQuasilinearDS}.
\end{proof}

\begin{rmk}
\label{RmkRadialPointsMfds}
  The above proposition provides tame estimates at (generalized) radial sets provided an open neighborhood of the radial set can be embedded as an open subset of $\Rnhalf$. While this holds for Kerr-de Sitter spaces, it does not hold in general. Thus, making the non-smooth positive commutator argument (which is not (micro)local within the radial set) work in general either requires analyzing the behavior of non-smooth operators under changes of coordinates and building a suitable invariant calculus; or, more easily, if we assume that $P=P_m+P_{m-1}+R$ with $P_{m-j}\in\Hb^{s-j,\alpha}\Psib^{m-j}$ (as before), $j=0,1$, and $R\in\Hb^{s-1,\alpha}\Psib^{m-2}$, thus making a manifestly invariant assumption on $R$ and thus on $P$, one can directly introduce localizers in the argument and estimate the resulting error terms, see Remark~\ref{RmkNonsmoothTrappingMfds} for details.
\end{rmk}

%%%%%%%%%%%%%%%%%%%%%%%%%%%%%%%%%%%%%%%%%%%%%%%%%%
\subsection{Non-trapping estimates at normally hyperbolic trapping}
\label{SubsecNontrapping}

We now extend the proof of non-trapping estimates on weighted b-Sobolev spaces at normally hyperbolically trapped sets given in \cite[Theorem~3.2]{HintzVasyNormHyp} to the non-smooth setting.

To set this up, let $P_0\in\Psib^m(\Rnhalf)$ with
\begin{equation}
\label{EqP0Subprincipal}
  \frac{1}{2i}(P_0-P_0^*)=E_1\in\Psib^{m-1}(\Rnhalf),
\end{equation}
where the adjoint is taken with respect to a fixed smooth b-density; an example to keep in mind here and in what follows is $P_0=\Box_g$ for a smooth Lorentzian b-metric $g$ on $\Rnhalf$, considered a coordinate patch of Kerr-de Sitter space, in which case $E_1=0$, and the threshold weight in Theorem~\ref{ThmTameHypTr} below is $r=0$. Let $p_0$ be the principal symbol of $P_0$. Let us use the coordinates $(z;\zeta)=(x,y;\lambda,\eta)$ on $\Tb^*\Rnhalf$ and write $M=\Rnhalf,X=\pa\Rnhalf$. With $\Sigma\subset\Sb^* M$ denoting the characteristic set of $P_0$, we make the following assumptions:
\begin{enumerate}
  \item \label{EnumNTGamma} $\Gamma\subset\Sigma\cap\Sb^*_X M$ is a smooth submanifold disjoint from the image of $T^*X\wozero$, so $xD_x$ is elliptic near $\Gamma$,
  \item $\Gamma_+$ is a smooth submanifold of $\Sigma\cap\Sb^*_X M$ in a neighborhood $U_1$ of $\Gamma$,
  \item $\Gamma_-$ is a smooth submanifold of $\Sigma$ transversal to $\Sigma\cap\Sb^*_X M$ in $U_1$,
  \item $\Gamma_+$ has codimension $2$ in $\Sigma$, $\Gamma_-$ has codimension $1$,
  \item $\Gamma_+$ and $\Gamma_-$ intersect transversally in $\Sigma$ with $\Gamma_+\cap\Gamma_-=\Gamma$,
  \item the vector field $V$ is tangent to both $\Gamma_+$ and $\Gamma_-$, and thus to $\Gamma$,
  \item \label{EnumNTTrapped} $\Gamma_+$ is backward trapped for the Hamilton flow, $\Gamma_-$ is forward trapped; in particular, $\Gamma$ is a trapped set.
\setcounter{CounterEnumi}{\value{enumi}}
\end{enumerate}

See Figure~\ref{FigTrapping} for the setting.

\begin{figure}[!ht]
  \centering
  \includegraphics{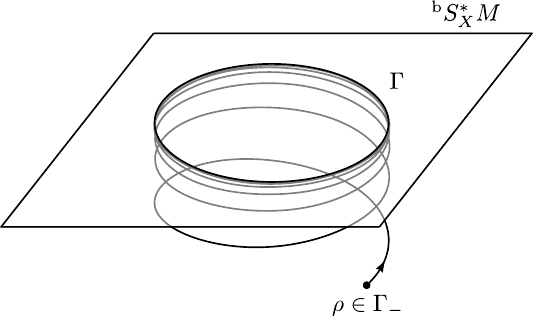}
  \caption{An exemplary situation with trapping: Shown are the (projection from $\Sb^*M$ to the base $M$ of the) trapped set $\Gamma$, the b-cosphere bundle over $X$ as well as a forward bicharacteristic starting at a point $\rho\in\Gamma_-$.}
\label{FigTrapping}
\end{figure}

In view of condition \itref{EnumNTGamma}, we can take
\[
  \rho=\la\lambda\ra=(1+\lambda^2)^{1/2}\tn{ near }\Gamma,
\]
appropriately extended to $\rcTb^*M$, as the inverse of a boundary defining function of fiber infinity $\Sb^*M$ in $\rcTb^*M$.\footnote{We are working locally here, but we remark that invariantly, $\lambda$ is the b-principal symbol (restricted to $\Tb^*_X M\wozero$) of the operator $xD_x$, $x$ a boundary defining function, which is independent of the choice of $x$.} Then, let
\[
  V=\rho^{-m+1}H_{p_0},
\]
be the rescaled Hamilton vector field of $p_0$. We make quantitative assumptions related to condition \itref{EnumNTTrapped}: Let $\phi_+\in\CI(\Sb^*M)$ be a defining function of $\Gamma_+$ in $\Sb^*_X M$, and let $\phi_-\in\CI(\Sb^*M)$ be a defining function of $\Gamma_-$. Thus, $\Gamma_+$ is defined within $\Sb^*M$ by $x=0,\phi_+=0$. Let
\[
  \hat p_0=\rho^{-m}p_0.
\]
We then assume that
\begin{enumerate}
\setcounter{enumi}{\value{CounterEnumi}}
  \item $\phi_+$ and $\phi_-$ satisfy
    \begin{equation}
	\label{EqVphipm}
	  V\phi_+=-c_+^2\phi_+ + \mu_+ x + \nu_+\hat p_0, \quad V\phi_-=c_-^2\phi_- + \nu_-\hat p_0,
	\end{equation}
	with $c_\pm>0$ smooth near $\Gamma$ and $\mu_+,\nu_\pm$ smooth near $\Gamma$. This is consistent with the (in)stability of $\Gamma_-$ ($\Gamma_+$),
  \item $x$ satisfies
    \begin{equation}
	\label{EqVx}
	  Vx=-c_\pa x, \quad c_\pa > 0,
	\end{equation}
	which is consistent with the stability of $\Gamma_-$,
  \item \label{EnumNTQuant3} near $\Gamma$,
    \begin{equation}
	\label{EqVrho}
	  \rho^{-1}V\rho=c_f x
	\end{equation}
	for some smooth $c_f$, which holds in view of our choice of $\rho$.
\end{enumerate}

Here we recall from \cite[Lemma~5.1]{DyatlovResonanceProjectors}, see also \cite[Lemma~2.4]{DyatlovNormally}, that in the closely related semiclassical setting (see the discussion prior to Theorem~\ref{ThmNHypSemiGlobal}) one can arrange for any $\ep>0$ that
\begin{equation}
\label{EqNormalExpRates}
  0<\numin -\ep<c_\pm^2<\nu_{\max} +\ep,
\end{equation}
where $\numin$ and $\nu_{\max}$ are the minimal and maximal normal expansion rates; see \cite[Equations~(5.1) and (5.2)]{DyatlovResonanceProjectors} for the definition of the latter, with $\numin$ also given in \eqref{EqNuMinDef} below.\footnote{The functions $c_\pm$ in \cite{DyatlovResonanceProjectors,DyatlovNormally} are called $c_\pm^2$ here.} In particular, if $M$ is replaced by $[0,\infty)\times X$, and if $P_0$ is dilation invariant, then the semiclassical and the b-settings are equivalent via the Mellin transform and a rescaling, see e.g.\ \cite[\S3.1]{VasyMicroKerrdS}; since in our general case $c_\pm|_{\Sb^*_XM}$ is what matters, we can replace $P_0$ by $N(P_0)$, and in particular \eqref{EqNormalExpRates} applies, with the expansion rate calculated using $p_0|_{\Tb^*_XM}$.\footnote{We rely on the setup described in \cite{DyatlovNormally} here, which, in contrast to \cite{DyatlovResonanceProjectors}, does not use a normal form for the operator $P_0$.}

We now perturb $P_0$ by a non-smooth operator $\tilde P$, that is, we consider the operator
\begin{equation}
\label{EqNTP} P=P_0+\tilde P,\quad \tilde P=\tilde P_m+\tilde P_{m-1}+\tilde R,
\end{equation}
where for some fixed $\alpha>0$, we have $\tilde P_{m-j}\in\Hb^{s-j,\alpha}\Psib^{m-j}$, $j=0,1$, and $\tilde R\in\Psib^{m-2;0}\Hb^{s-1,\alpha}$.

We then have the following tame non-trapping estimate at $\Gamma$:
\begin{thm}
\label{ThmTameHypTr}
  Using the above notation and making the above assumptions, let $s,\tilde s\in\R$ be such that
  \begin{equation}
  \label{EqNTCond}
    \tilde s\leq s-1, \quad s>n/2+7/2+(2-\tilde s)_+.
  \end{equation}

  Then for $r<-\sup_\Gamma \rho^{-m+1}\sigma_{\bl,m-1}(E_1)/c_\pa$ and for any neighborhood $U$ of $\Gamma$, there exist $B_0\in\Psib^0(M)$ elliptic at $\Gamma$ and $B_1,B_2\in\Psib^0(M)$ with $\WFb'(B_j)\subset U$, $j=0,1,2$, $\WFb'(B_2)\cap\Gamma_+=\emptyset$, and $\chi\in\CIc(U)$, such that the following holds for $u\in\Hb^{\tilde s+m-3/2,r}(\Rnhalf)$ solving the equation $Pu=f\in\Hb^{\tilde s,r}(\Rnhalf)$: For $\tilde s\leq n/2+1$, $\eps>0$,
  \begin{equation}
  \label{EqTameNHypLow}
  \begin{split}
    \|B_0&u\|_{\tilde s+m-1,r}\leq C(\|\tilde P_m\|_{\Hb^{n/2+7/2+(2-\tilde s)_+ +\eps,\alpha}\Psib^m}, \\
	  &\hspace{1cm}\|\tilde P_{m-1}\|_{\Hb^{n/2+1+(3/2-\tilde s)_+ +\eps,\alpha}\Psib^{m-1}},\|\tilde R\|_{m-2,n/2+1+(-\tilde s)_+}) \\
	  & \times (\|u\|_{\tilde s+m-3/2,r}+\|B_2 u\|_{\tilde s+m-1,r}+\|B_1 f\|_{\tilde s,r}+\|\chi f\|_{\tilde s-1,r}).
  \end{split}
  \end{equation}
  Moreover, for $\tilde s>n/2+1$, $\eps>0$, there is a tame estimate
  \begin{equation}
  \label{EqTameNHypHigh}
  \begin{split}
    \|B_0&u\|_{\tilde s+m-1,r}\leq L(\|\tilde P_m\|_{\Hb^{n/2+7/2+\eps,\alpha}\Psib^m}, \|\tilde P_{m-1}\|_{\Hb^{n/2+1+\eps,\alpha}\Psib^{m-1}}, \|\tilde R\|_{m-2,n/2+\eps}; \\
	  & \|\tilde P_m\|_{\Hb^{s,\alpha}\Psib^m},\|\tilde P_{m-1}\|_{\Hb^{s-1,\alpha}\Psib^{m-1}},\|\tilde R\|_{m-2,s-1}; \|u\|_{n/2-1/2+m+\eps}, \|f\|_{n/2-1+\eps}; \\
	  & \|u\|_{\tilde s+m-3/2,r},\|B_2 u\|_{\tilde s+m-1,r},\|B_1 f\|_{\tilde s,r},\|\chi f\|_{\tilde s-1,r}).
  \end{split}
  \end{equation}

  On the other hand, for $r>-\inf_\Gamma \rho^{-m+1}\sigma_{\bl,m-1}(E_1)/c_\pa$ and for appropriate $B_2$ with $\WFb'(B_2)\cap\Gamma_-=\emptyset$, the estimates \eqref{EqTameNHypLow} and \eqref{EqTameNHypHigh} hold as well. These estimates are understood in the sense that if all quantities on the right hand side are finite, then so is the left hand side, and the inequality holds.
\end{thm}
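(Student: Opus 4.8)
The plan is to run the positive-commutator proof of the smooth-coefficient weighted estimate \cite[Theorem~3.2]{HintzVasyNormHyp}, inserting the non-smooth perturbations $\tilde P_m,\tilde P_{m-1},\tilde R$ exactly as such terms are handled in the proofs of Propositions~\ref{PropTameRealPrType} and \ref{PropTameRadial}, and estimating every resulting contribution by the tame mapping and composition bounds of Propositions~\ref{PropMapping} and \ref{PropNonsmoothComp}. As a first step I would conjugate $P$ by $x^{-r}$; by the convention stated before Proposition~\ref{PropMapping} this leaves all b-Sobolev operator norms on the right-hand sides unchanged, and, since $Vx=-c_\pa x$ by \eqref{EqVx}, its net effect is to replace $\rho^{-m+1}\sigma_{\bl,m-1}(E_1)$ by $\rho^{-m+1}\sigma_{\bl,m-1}(E_1)+rc_\pa$ in the skew-adjoint symbol of $P_0$ at $\Gamma$. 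The two threshold inequalities then become the sign conditions that this modified quantity be negative, resp.\ positive, at $\Gamma$, and one may assume $r=0$ henceforth. I would also note here that because $\alpha>0$ and $\Gamma\subset\Sb^*_XM$, the perturbation $\tilde P$ contributes nothing to any symbol at $\Gamma$; this is why the thresholds only involve $E_1$ (and, after conjugation, the weight).

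Next, following \cite{HintzVasyNormHyp}, I would build a regularized commutant $\check A_\delta=\Op(\check a_\delta)\in\Psib^{\tilde s+(m-1)/2}$, microsupported in $U$ and elliptic at $\Gamma$, out of $\phi_\pm$, $x$, $\rho$ and the structure equations \eqref{EqVphipm}, \eqref{EqVx}, \eqref{EqVrho} together with the normal-expansion bound \eqref{EqNormalExpRates}, arranged so that the principal symbol of $\tfrac1i[P_0,\check A_\delta^*\check A_\delta]$ is negative definite near $\Gamma$ modulo terms supported in the a priori control region (where $\WFb'(B_2)$ sits) and, after the one application of elliptic regularity near $\Gamma$ used in \cite{HintzVasyNormHyp} — this, as in Proposition~\ref{PropTameRadial}, is what produces the extra local term $\|\chi f\|_{\tilde s-1,r}$ — in the elliptic set of $xD_x$. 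Expanding $\Re\la i\check A_\delta^*[P,\check A_\delta]u,u\ra$ as in the proof of Proposition~\ref{PropTameRealPrType}: the principal commutator of $P_0$ with $\check A_\delta$ gives the main positive term $\|B_\delta u\|_{L^2_\bl}^2$; the skew-adjoint part $E_1$ of $P_0$ contributes a term of the sign just fixed (here one uses \eqref{EqNormalExpRates}); the smooth operators $P''_m,P''_{m-1},R''$ and the regularization error $E_\delta$ contribute terms whose implicit constants depend only on their seminorms; $f=Pu$ contributes $\|\check A_\delta f\|_{-(m-1)/2}^2$ plus the local term above; and $u$ is controlled a priori through $\|u\|_{\tilde s+m-3/2,r}$ and $\|B_2u\|_{\tilde s+m-1,r}$.

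The new terms — $\la i(\tilde P_m-\tilde P_m^*)\check A_\delta u,\check A_\delta u\ra$, the commutators $[\check A_\delta,\tilde P_{m-1}]$ and $[\check A_\delta,\tilde P_m]$ expanded to finite order with remainders in non-smooth operator classes of regularity $\Hb^{s-2}$ as in the proof of Proposition~\ref{PropTameRealPrType}, and $\check A_\delta\tilde R u$ — are then estimated term by term via Propositions~\ref{PropMapping} and \ref{PropNonsmoothComp}, the b-Sobolev regularity of the coefficients needed being governed, as explained in Section~\ref{SubsecTamePropagation}, by the rule that $s$ must exceed the regularity of the relevant target space; the worst such term, a remainder of $[\check A_\delta,\tilde P_m]$ between spaces of order near $\tilde s+(m-1)/2$, forces precisely the real-principal-type conditions \eqref{EqNTCond}. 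One then reduces a general order $m$ to $m_0=1+2(2-\tilde s)_+$ by the device $P\Lambda^+(\Lambda^-u)=f+PRu$ as in \cite{HintzQuasilinearDS}; for $\tilde s\le n/2+1$ only crude operator bounds are used, giving \eqref{EqTameNHypLow}, while for $\tilde s>n/2+1$ the tame bounds together with Corollary~\ref{CorHbModule} produce an estimate of the form \eqref{EqDefTame}, i.e.\ \eqref{EqTameNHypHigh}. Letting $\delta\to0$ with the uniform bounds supplied by $u\in\Hb^{\tilde s+m-3/2,r}$ then yields $B_0u\in\Hb^{\tilde s+m-1,r}$ and the stated inequalities, with $B_0$ elliptic at $\Gamma$, $B_1$ an elliptic multiple of $\check A_0$, $\chi$ a cutoff near $\Gamma$, and $B_2$ microsupported in the a priori region; the second sign case follows from the analogous construction with the sign of the commutant reversed, which interchanges $\Gamma_+$ and $\Gamma_-$ and flips the threshold inequality, so that the a priori hypothesis becomes $\WFb'(B_2)\cap\Gamma_-=\emptyset$.

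The main obstacle is the bookkeeping in the previous paragraph: one must check that \emph{every} term produced by $\tilde P_m,\tilde P_{m-1},\tilde R$ — including the subleading pieces of the commutators $[\check A_\delta,\tilde P_m]$ and $[\check A_\delta,\tilde P_{m-1}]$, which were harmless lower-order operators in the smooth case — admits a bound that is both tame and linear in the high-regularity norms, under the single hypothesis \eqref{EqNTCond}. Conceptually this is no harder than for real-principal-type propagation, since for the range of $r$ in the theorem the trapped directions along $\Gamma$ do not spoil the positivity of the commutator (the weight, or $E_1$, supplies the needed gap via \eqref{EqNormalExpRates}); the genuinely new points to verify are only that the conjugation by $x^{-r}$ interacts correctly with the threshold computation and that the $\alpha>0$ decay of $\tilde P$ keeps it out of the symbol at $\Gamma$, so that the non-smooth coefficients never enter the sign condition.
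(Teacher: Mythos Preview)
Your approach is essentially the paper's, and your identification of the key mechanism (conjugation by $x^{-r}$ shifts the subprincipal symbol at $\Gamma$ by $rc_\pa$, turning the threshold inequality into a sign condition) is correct. Two points, however, are glossed over in ways that would cause trouble if carried out literally.

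First, the order-reduction step cannot be done ``as in \cite{HintzQuasilinearDS}'' without modification. In the real-principal-type and radial-point settings, the choice of $\Lambda^+\in\Psib^{m_0-m}$ is essentially free because nothing in the estimate depends on the subprincipal symbol at a distinguished set. Here the entire argument hinges on the sign of $\rho^{-m+1}\sigma_{\bl,m-1}(E_1)+rc_\pa$ at $\Gamma$, and a generic $\Lambda^+$ will contribute to the subprincipal symbol of $P_0\Lambda^+$ at $\Gamma$ via $[P_0,\Lambda^+]$. The paper handles this by choosing $\Lambda^+$ self-adjoint with principal symbol $\rho^{m_0-m}$ near $\Gamma$; then $\sigma_{\bl,m_0-1}([P_0,\Lambda^+])=-iH_{p_0}\rho^{m_0-m}=-ix(m_0-m)\rho^{m_0-1}c_f$ by \eqref{EqVrho}, which vanishes at $x=0$ and hence at $\Gamma$, so the threshold is preserved. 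Your sketch should flag this.

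Second, the commutant in \cite{HintzVasyNormHyp} involves $x^\beta$ with $0<\beta<\min(1,\alpha)$ (through $\rho_+=\phi_+^2+x^\beta$), so $a$ is only a conormal symbol, not a smooth one. This is not cosmetic: the non-smooth composition results you invoke were stated for smooth-coefficient $\Psib$ acting on $\Hb^s$-coefficient operators, and one must either observe that the proofs go through for non-growing conormal coefficients (a logarithmic change of variables reduces to uniform symbols on $\R^n$), or pass to the $\ell$-fold blow-up $M_\ell$ with $\ell=\beta^{-1}\in\N$ on which $a$ becomes smooth. The paper does exactly this; your sketch is silent on it.

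A minor point: the parenthetical ``here one uses \eqref{EqNormalExpRates}'' attached to the subprincipal sign is misplaced. The sign of the $E_1$-contribution at $\Gamma$ comes purely from the threshold condition on $r$; the bounds on $c_\pm$ in \eqref{EqNormalExpRates} are used only to ensure the transverse commutator terms $c_\pm^2 a_{\pm,\delta}^2$ are genuinely positive and can absorb the $\mu\|A_{\pm,\delta}u\|^2$ errors.
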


We leave the statement of a less precise version of this Theorem in the spirit of the discussion following the statement of Proposition~\ref{PropTameRealPrType} to the reader.

\begin{proof}[Proof of Theorem~\ref{ThmTameHypTr}]
  The main part of the argument, in particular the choice of the commutant, is a slight modification of the positive commutator argument of \cite[Theorem~3.2]{HintzVasyNormHyp}; the handling of the non-smooth terms is a modification of the proof of the radial point estimate, \cite[Theorem~6.10]{HintzQuasilinearDS}. In particular, the positivity comes from differentiating the weight $x^{-r}$ in the commutant. To avoid working in weighted b-Sobolev spaces for the non-smooth problem, we will conjugate $P$ by $x^{-r}$, giving an advantageous (here meaning negative) contribution to the imaginary part of the subprincipal symbol of the conjugated operator near $\Gamma$.

  \emph{Throughout this proof, we denote operators and their symbols by the corresponding capital and lower case letters, respectively.}
  
  Concretely, put $\sigma=\tilde s+m-1$, and define
  \begin{gather*}
    u_r:=x^{-r}u \in\Hb^{\sigma-1/2}, \quad f_r:=x^{-r}f \in\Hb^{\sigma-m+1}, \\
	  P_r:=x^{-r}Px^r=P_{0,r}+\tilde P_r,\quad P_{0,r}=x^{-r}P_0x^r,\tilde P_r=x^{-r}\tilde Px^r,
  \end{gather*}
  where
  \[
    \tilde P_r=\tilde P_{m,r}+\tilde P_{m-1,r}+\tilde R_r,\quad \tilde P_{m-j,r}\in\Hb^{s-j,\alpha}\Psib^{m-j},\tilde R_r\in\Psib^{m-2;0}\Hb^{s-1,\alpha};
  \]
  then $P_r u_r = f_r$, and we must show a non-trapping estimate for $u_r$ on \emph{unweighted} b-Sobolev spaces. A simple computation shows that
  \[
    \frac{1}{2i}(P_{0,r}-P_{0,r}^*)-\left(\frac{1}{2i}(P_0-P_0^*)-\Op(rx^{-1}H_{p_0} x)\right) \in \Psib^{m-2};
  \]
  but $x^{-1}H_{p_0}x=-\rho^{m-1}c_\pa$ with $c_\pa>0$ near $\Gamma$ by \eqref{EqVx}, hence, using \eqref{EqP0Subprincipal},
  \begin{equation}
  \label{EqNTSubprincipal}
    \frac{1}{2i}(P_{0,r}-P_{0,r}^*)=E_1 + E_1' + B
  \end{equation}
  with $B,E_1'\in\Psib^{m-1}$, where $B$ has principal symbol $b=rc_\pa\rho^{m-1}$ near $\Gamma$, and $\WFb'(E_1')\cap\Gamma=\emptyset$. Notice that by assumption on $r$, $B+E_1$ is elliptic on $\Gamma$.

  We now turn to the positive commutator argument: Fix $0<\beta<\min(1,\alpha)$ and define
  \[
    \rho_+ = \phi_+^2 + x^\beta.
  \]
  Let $\chi_0(t)=e^{-1/t}$ for $t>0$ and $\chi_0(t)=0$ for $t<0$, further $\chi\in\CIc([0,R))$ for $R>0$ to be chosen below, $\chi\equiv 1$ near $0$, $\chi'\leq 0$, and finally $\psi\in\CIc((-R,R))$, $\psi\equiv 1$ near $0$. Define for $\kappa>0$, specified later,
  \[
    a=\rho^{\sigma-(m-1)/2}\chi_0(\rho_+ - \phi_-^2 + \kappa)\chi(\rho_+)\psi(\hat p_0).
  \]
  On $\supp a$, we have $\rho_+\leq R$, thus the argument of $\chi_0$
  is bounded above by $R+\kappa$. Moreover, $\phi_-^2\leq R+\kappa$
  and $x\leq R^{1/\beta}$, therefore $a$ is supported in any given
  neighborhood of $\Gamma$ if one chooses $R$ and $\kappa$
  small. Notice that $a$ is merely a \emph{conormal} symbol which does not grow at the boundary. However, b-analysis for operators with conormal coefficients can easily be seen to work without much additional work, in fact, a logarithmic change of variables essentially reduces such a b-analysis on $\Rnhalf$ to the analysis of operators corresponding to uniform symbols on $\R^n$. Moreover, the proofs of composition results of smooth and non-smooth b-ps.d.o.s presented in \cite{HintzQuasilinearDS} go through without changes if one uses b-ps.d.o.s with non-growing conormal, instead of smooth, symbols.\footnote{A somewhat more direct way of dealing with this issue goes as follows: Assume, as one may, that $\ell:=\beta^{-1}\in\N$. Then even though $a$ is not a smooth symbol of $\Rnhalf$ with the standard smooth structure, it becomes smooth if one changes the smooth structure of $\Rnhalf$ by blowing up the boundary to the $\ell$-th order, i.e.\ by taking $x'=x^\beta$ as a boundary defining function, thus obtaining a manifold $M_\ell$, which is $\Rnhalf$ as a topological manifold, but with a different smooth structure; in particular, the function $x=(x')^\ell$ is smooth on $M_\ell$ in view of $\ell\in\N$. Moreover, the blow-down map $M_\ell\to\Rnhalf$ induces isomorphisms (see e.g.\ \cite[\S{4.18}]{MelroseAPS})
  \[
    \Hb^{s',\gamma}(\Rnhalf)\cong\Hb^{s',\ell\gamma}(M_\ell),\quad s',\gamma\in\R.
  \]
  Therefore, one can continue to work on $\Rnhalf$, tacitly assuming that all functions and operators live on, and all computations are carried out on, $M_\ell$.}

  Define the regularizer $\varphi_\delta(\zeta)=(1+\delta\rho)^{-1}$ near $\Gamma$, and put $a_\delta=\varphi_\delta a$. Put $\tilde V=\rho^{-m+1}H_{\tilde p_{m,r}}$ and define $\tilde c_\pa,\tilde c_f\in\Hb^{s-1,\alpha}$ near $\Gamma$ by $\tilde Vx=-\tilde c_\pa x$, $\rho^{-1}\tilde V\rho=\tilde c_f x$. Then, with $p_{m,r}=p_{0,r}+\tilde p_{m,r}$, we obtain, using \eqref{EqVphipm}-\eqref{EqVrho}:
  \begin{align}
    a_\delta & H_{p_{m,r}} a_\delta = \varphi_\delta^2\rho^{2\sigma}\chi_0^2\chi^2\psi^2(\sigma-(m-1)/2-\delta\rho\varphi_\delta)(c_f+\tilde c_f)x \nonumber\\
	  &\hspace{1cm} - \varphi_\delta^2\rho^{2\sigma}\chi_0\chi_0'\chi^2\psi^2(2c_+^2\phi_+^2+\beta c_\pa x^\beta-2\mu_+\phi_+ x-2\nu_+\phi_+\hat p_0 \nonumber\\
	  &\hspace{4cm}+ 2c_-^2\phi_-^2 +2\nu_-\phi_-\hat p_0 - \tilde V\phi_+^2 + \beta\tilde c_\pa x^\beta + \tilde V\phi_-^2) \nonumber\\
	  &\hspace{1cm}+\varphi_\delta^2\rho^{2\sigma}\chi_0^2\chi\chi'\psi^2(V\rho_++\tilde V\rho_+) + \varphi_\delta^2\rho^{2\sigma}\chi_0^2\chi^2\psi\psi'(V\hat p_0+\tilde V\hat p_0) \nonumber\\
  \label{EqNTCommutator}&=-c_+^2 a_{+,\delta}^2 - c_-^2 a_{-,\delta}^2 + a_{+,\delta}h_{+,\delta}p_{m,r}+a_{-,\delta}h_{-,\delta}p_{m,r} + e_\delta + g_\delta - f_\delta,
  \end{align}
  where, writing $\hat p_0=\rho^{-m}p_{m,r}-\rho^{-m}\tilde p_{m,r}$ in the second and third line,
  \begin{align*}
    a_{\pm,\delta}&=\varphi_\delta\rho^\sigma\sqrt{2\chi_0\chi_0'}\chi\psi\phi_\pm, \\
	h_{\pm,\delta}&=\pm\varphi_\delta\rho^{\sigma-m}\sqrt{2\chi_0\chi_0'}\chi\psi\nu_\pm, \\
	e_\delta&=\varphi_\delta^2\rho^{2\sigma}\chi_0^2\chi\chi'\psi^2(V\rho_+ +\tilde V\rho_+), \\
	g_\delta&=\varphi_\delta^2\rho^{2\sigma}\chi_0^2\chi^2\psi\psi'(V\hat p_0+\tilde V\hat p_0), \\
	f_\delta&=\varphi_\delta^2\rho^{2\sigma}\chi_0\chi^2\psi^2\Bigl[\bigl(\beta(c_\pa+\tilde c_\pa)x^\beta-2\mu_+\phi_+ x-\tilde V\phi_+^2+\tilde V\phi_-^2 \\
	  &\hspace{3cm} + 2(\nu_+\phi_+ - \nu_-\phi_-)\rho^{-m}\tilde p_{m,r}\bigr)\chi_0' \\
	  &\hspace{4cm} -(\sigma-(m-1)/2-\delta\rho\varphi_\delta)(c_f+\tilde c_f)x\chi_0\Bigr]
  \end{align*}
  Note that in the definition of $f_\delta$, by the choice of $\beta$ and using the fact that $\chi_0$ is bounded by a constant multiple of $\chi_0'$ on its support, the constant being uniform for $R+\kappa<1$, the term $c_\pa x^\beta$ dominates all other terms on the support of $f_\delta\in S^{2\sigma;\infty}\Hb^{s-1}$ for $R$ and $\kappa$ small enough, hence $f_\delta\geq 0$, and its contribution will be controlled by virtue of the sharp G\aa rding inequality. The term arising from $e_\delta$ will be controlled using the a priori regularity assumption of $u_r$ on $\Gamma_-$, and $g_\delta$, which is supported away from the characteristic set, will be controlled using elliptic regularity.
  
  Proceeding with the argument, we first make the simplification $\tilde R_r=0$ by replacing $f$ by $f-\tilde R_r u_r$, and we assume $m\geq 1$ and $\tilde s\geq(5-m)/2$ for now. Then we have, as in the proof of \cite[Theorem~6.10]{HintzQuasilinearDS},
  \begin{align*}
    \Re\la i A_\delta^*&[P_{0,r}+\tilde P_{m,r},A_\delta]u_r,u_r\ra+\left\la\frac{1}{2i}(P_{0,r}-P_{0,r}^*)A_\delta u_r,A_\delta u_r\right\ra \\
	  & = - \left\la\frac{1}{2i}(\tilde P_{m,r}-\tilde P_{m,r}^*)A_\delta u_r,A_\delta u_r\right\ra \\
	  &\hspace{4cm} - \Re\la iA_\delta f,A_\delta u_r\ra+\Re\la iA_\delta\tilde P_{m-1,r}u_r,A_\delta u_r\ra.
  \end{align*}
  Estimating each term on the right hand side as in the proof of \cite[Theorem~6.10]{HintzQuasilinearDS} and using \eqref{EqNTSubprincipal}, we obtain for any $\mu>0$:
  \begin{equation}
  \label{EqNhypPairing}
    \Re\Big\la\bigl(A_\delta^*(i[P_{0,r}+\tilde P_{m,r},A_\delta] + E_1+E_1'+B)A_\delta\bigr)u_r,u_r\Big\ra \geq -C_\mu - \mu\|A_\delta u_r\|_{(m-1)/2}^2.
  \end{equation}
  Here and in what follows, we in particular absorb all terms involving $\|u_r\|_{\sigma-1/2}$ into the constant $C_\mu$. On the left hand side, the $E_1'$-term can be dropped because of $\WFb'(E_1')\cap\WFb'(A)=\emptyset$ for sufficiently localized $a$. Moreover, the principal symbol of $E_1+B$ near $\Gamma$ is $e_1+b=-q^2$ with $q$ an elliptic symbol of order $(m-1)/2$, since, by assumption on $r$, we have $e_1+rc_\pa\rho^{m-1}<0$ near $\Gamma$. Therefore, we can write $E_1+B=-Q^*Q+E_1''+E_2$, where $E_1''\in\Psib^{m-1}$, $E_2\in\Psib^{m-2}$, $\WFb'(E_1'')\cap\Gamma=\emptyset$. Again, the resulting term in the pairing \eqref{EqNhypPairing} involving $E_1''$ can be dropped; also, the term involving $E_2$ can be dropped at the cost of changing the constant $C_\mu$, since $u_r\in\Hb^{\sigma-1/2}$.
  
  Hence, introducing $J^\pm\in\Psib^{\pm(\sigma-(m-1)/2-1)}$, with real principal symbols, satisfying $I-J^+J^-\in\Psib^{-\infty}$, we get
  \begin{equation}
  \label{EqNTPosCommIM}
    \Re\left\la\Op(j^-a_\delta H_{p_{m,r}} a_\delta)u_r,(J^+)^*u_r\right\ra - \|QA_\delta u_r\|_0^2 \geq -C_\mu - \mu\|A_\delta u_r\|_{(m-1)/2}^2.
  \end{equation}
  We now plug the commutator relation \eqref{EqNTCommutator} into this estimate. We obtain several terms, which we bound as follows: First, since $j^-e_\delta\in (\CI+\Hb^{s-1,\alpha})S^{\sigma+(m-1)/2+1}$ uniformly, $\Op(j^-e_\delta)$ is a uniformly bounded family of maps $\Hb^\sigma\to\Hb^{-(m+1)/2}$; thus, choosing $\tilde E\in\Psib^0$ with $\WFb'(\tilde E)\subset U$ and with $\WFb'(I-\tilde E)$ disjoint from $\supp e_\delta$, we conclude
  \[
    |\la\Op(j^-e_\delta)u_r,(J^+)^*u_r\ra|\leq C+|\la\Op(j^-e_\delta)u_r,(J^+)^*\tilde E u_r\ra|\leq C+\|B_2 u_r\|_\sigma^2
  \]
  for some $B_2\in\Psib^0$ with $\WFb'(B_2)\cap\Gamma_+=\emptyset$.

  Next, the term $\la\Op(j^-g_\delta)u_r,(J^+)^*u_r\ra$ is uniformly bounded, as detailed in the proof of \cite[Theorem~6.10]{HintzQuasilinearDS}. Moreover, by the sharp G\aa rding inequality, see the argument in the proof of \cite[Theorem~6.6]{HintzQuasilinearDS},
  \[
    \Re\la \Op(-j^-f_\delta)u_r,(J^+)^*u_r\ra \leq C.
  \]

  Further, we obtain two terms involving $h_{\pm,\delta}$; introducing $B_3\in\Psib^0$ elliptic on $\WFb'(A)$, these can be bounded for $\mu>0$ by
  \begin{align*}
    |\la\Op(&j^-a_{\pm,\delta}h_{\pm,\delta}p_{m,r})u_r,(J^+)^*u_r\ra| \\
	  &\leq C+|\la\Op(j^-a_{\pm,\delta}h_{\pm,\delta})(P_{0,r}+\tilde P_{m,r})u_r,(J^+)^*u_r\ra| \\
	  &\leq C+|\la H_{\pm,\delta}f_r,A_{\pm,\delta}u_r\ra|+|\la\Op(j^-a_{\pm,\delta}h_{\pm,\delta})\tilde P_{m-1,r}u_r,(J^+)^*u_r\ra| \\
	  &\leq C+\mu\|A_{\pm,\delta}u_r\|_0^2+C_\mu\|B_3 f_r\|_{\sigma-m}^2.
  \end{align*}
  Here, for the first estimate, we employ \cite[Theorem~3.12 (3)]{HintzQuasilinearDS} to obtain
  \begin{align*}
    \Op(j^-a_{\pm,\delta}h_{\pm,\delta})&\tilde P_{m,r}-\Op(j^-a_{\pm,\delta}h_{\pm,\delta}\tilde p_{m,r}) \\
	  &=:\Upsilon_\delta \in \Psib^{\sigma+(m-1)/2;0}\Hb^{s-1}+\Psib^{\sigma-(m-1)/2-1}\circ\Psi^{m;0}\Hb^{s-1},
  \end{align*}
  and $\Upsilon_\delta$ is easily seen to be uniformly bounded from $\Hb^{\sigma-1/2}$ to $\Hb^{-m/2}$, whereas $(J^+)^*u_r\in\Hb^{m/2}$, thus $|\la\Upsilon_\delta u_r,(J^+)^*u_r\ra|\leq C$. For the second estimate, we simply use $(P_{0,r}+\tilde P_{m,r})u_r=f_r-\tilde P_{m-1,r}u_r$, and for the third estimate, we apply the Peter--Paul inequality to the first pairing; to bound the second pairing, we use the boundedness of $\tilde P_{m-1,r}\colon\Hb^{\sigma-1/2}\to\Hb^{\sigma-m+1/2}$.
  
  Finally, including the terms $c_\pm^2 a_{\pm,\delta}^2$ into the estimate obtained from \eqref{EqNTPosCommIM} by making use of the above estimates, we obtain
  \begin{align*}
    \|C_+A_{+,\delta}u_r\|_0^2&+\|C_- A_{-,\delta} u_r\|_0^2 + \|QA_\delta u_r\|_0^2 \\
	 &\leq C_\mu+\mu\|A_{+,\delta}u_r\|_0^2 + \mu\|A_{-,\delta}u_r\|_0^2 + \mu\|A_\delta u_r\|_{(m-1)/2}^2 \\
	 &\hspace{1.7cm} + \|B_2 u_r\|_\sigma^2 + \|B_1 f_r\|_{\sigma-m+1}^2 + C_\mu\|\chi f_r\|_{\sigma-m}^2,
  \end{align*}
  where $B_1\in\Psib^0$ is elliptic on $\WFb'(A)$ with $\WFb'(B_1)\subset U$, and $\chi\in\CIc(M)$ is identically $1$ near the projection of $\Gamma\subset\Sb^*M$ to the base $M$. Since $c_+$ and $c_-$ have positive lower bounds near $\Gamma$, we can absorb the terms on the right involving $A_{\pm,\delta}$ into the left hand side by choosing $\mu$ sufficiently small, at the cost of changing the constant $C_\mu$; likewise, $\rho^{-(m-1)/2}q$ has a positive lower bound near $\supp a$, hence the term on the right involving $A_\delta$ can be absorbed into the left hand side for small $\mu$. Dropping the first two terms on the left hand side, we obtain the $\Hb^\sigma$-regularity of $u_r$ at $\Gamma$, hence $\WFb^{\sigma,r}(u)\cap\Gamma=\emptyset$, and a corresponding tame estimate, which follows from a careful analysis of the above argument as in the proof of Proposition~\ref{PropTameRealPrType}.

  Next, we remove the restriction $m\geq 1$: Let $m_0\geq 1$. The idea, as before, is to rewrite $Pu=f$ as $P\Lambda^+(\Lambda^-u)=f+PRu$, where $\Lambda^\pm\in\Psib^{\pm(m_0-m)}$, with real principal symbols, satisfy $\Lambda^+\Lambda^-=I+R$. We now have to be a bit careful though to not change the imaginary part of the subprincipal symbol of $P\Lambda^+$ at $\Gamma$. Concretely, we choose $\Lambda^+$ self-adjoint with principal symbol $\lambda^+=\rho^{m_0-m}$ near $\Gamma$; then
  \[
    P_0\Lambda^+-(P_0\Lambda^+)^* = \Lambda^+(P_0-P_0^*) + [P_0,\Lambda^+].
  \]
  Clearly, $\Lambda^+(P_0-P_0^*)\in x\Psib^{m_0-1}+\Psib^{m_0-2}$, and the principal symbol of the second term is
  \[
    \sigma_{\bl,m_0-1}([P_0,\Lambda^+])= -iH_{p_0}\lambda^+ = -ix(m_0-m)\rho^{m_0-1}c_f
  \]
  near $\Gamma$ by \eqref{EqVrho}, hence, using \eqref{EqP0Subprincipal},
  \[
    P_0\Lambda^+-(P_0\Lambda^+)^* = \Lambda^+E_1 + xE_1'+E_1''+E_2
  \]
  with $E_1',E_1''\in\Psib^{m_0-1},E_2\in\Psib^{m_0-2}$ and $\WFb'(E_1'')\cap\Gamma=\emptyset$; therefore, the first part of the proof with $P$ and $u$ replaced by $P\Lambda^+$ and $\Lambda^- u$, respectively, applies. The proof of the theorem in the case $r<-\sup_\Gamma \rho^{-m+1}e_1/c_\pa$ is complete.

  When the role of $\Gamma_+$ and $\Gamma_-$ is reversed, there is an overall sign change, and we thus get a advantageous (now meaning positive) contribution to the subprincipal part of the conjugated operator $P_r$ for $r>-\inf_\Gamma \rho^{-m+1}e_1/c_\pa$; the rest of the argument is unchanged.
\end{proof}

\begin{rmk}
\label{RmkNonsmoothTrappingMfds}
  As in the radial point estimate, see Remark~\ref{RmkRadialPointsMfds}, the assumption that an open neighborhood of the trapped set embeds into $\Rnhalf$ as an open set, true for Kerr-de Sitter space but false for general operators on a manifold $M$, can be removed easily if we assume $\wt R\in\Hb^{s-1,\alpha}\Psib^{m-2}(M)$: To see this, one needs to make sense of the commutator computation involving the non-smooth part $\tilde P$ of $P$. We begin by establishing a bound on $\tilde P_{m,r}-\tilde P_{m,r}^*\in\cL(\Hb^{(m-1)/2}(M),\Hb^{-(m-1)/2}(M))$, which we obtain by writing (we drop the subscripts for brevity)
\[
  \tilde P - \tilde P^* = \sum_j \phi_j \tilde P-\tilde P^*\phi_j,
\]
where $\{\phi_j\}$ is a partition of unity on $M$ subordinated to a cover by local coordinate charts; then, writing $\phi^{(0)}=\phi_j$ for any fixed $j$, and choosing $\phi^{(1)}\in\CIc(M)$ supported in the same coordinate chart as $\phi^{(0)}$ and identically $1$ near $\supp\phi^{(0)}$, we further write
\begin{align*}
  \phi^{(0)}\tilde P - \tilde P^*\phi^{(0)} &= \bigl(\phi^{(0)}\tilde P\phi^{(1)}-(\phi^{(0)}\tilde P\phi^{(1)})^*\bigr) \\
   &\quad + \bigl(\phi^{(0)}\tilde P(1-\phi^{(1)}) - (1-\phi^{(1)})\tilde P^*\phi^{(0)}\bigr),
\end{align*}
where the first term is a bounded operator between the aforementioned spaces by the local (in $\Rnhalf$) argument, while the second term belongs to the class $\Hb^s\Psib^{-\infty}+\Psib^{-\infty}\Hb^s$ and is thus bounded on the relevant spaces as well. Here, we remark that the regularity requirements on $s$ for $\Hb^s\Psib^{-\infty}$ to map $\Hb^{(m-1)/2}$ into $\Hb^{-(m-1)/2}$ are the same as or weaker than the requirements on $s$ from the local argument, while there are \emph{no requirements} other than, say, $s>n/2$, in order to have $\Psib^{-\infty}\Hb^s$ map $\Hb^{\sigma_1}$ into $\Hb^{\sigma_2}$ for \emph{any} $\sigma_1,\sigma_2\in\R$, $\sigma_1\geq 0$, since $\Hb^s\cdot\Hb^{\sigma_1}\subset\Hb^{\min(s,\sigma_1)}$ (with a tame bound for the product) gets mapped into $\Hb^\infty\subset\Hb^{\sigma_2}$ by $\Psib^{-\infty}$.

  Similarly, one can introduce localizers in all commutator estimates, with the resulting error terms satisfying tame bounds by the a priori assumptions on $u$; for example, the pairing $\la J^-A^*[P,A]u,u\ra$ (with $J^-$ as in the proof) is equal to an error term (generated by the localizers as above) plus the sum of
  \[
    \la\phi^{(0)}J^-A^*\phi^{(1)}[\phi^{(2)}P\phi^{(3)},\phi^{(4)}A\phi^{(5)}]\phi^{(6)}u,\phi^{(7)}u\ra,
  \]
  where $\phi^{(0)}$ runs over a partition of unity on $M$ subordinated to a cover by coordinate charts, and where for $k=1,\ldots,7$, $\phi^{(k)}\in\CIc(M)$ is a cutoff, with $\phi^{(k)}\equiv 1$ on $\supp\phi^{(k-1)}$, and $\supp\phi^{(k)}$ contained in the same coordinate chart as $\phi^{(0)}$; the error term can be estimated by a tame bound involving the a priori regularity of $u$. All operators now have Schwartz kernels supported within a single coordinate chart, and all functions on which the operators act have support in the same coordinate chart, hence the local non-smooth theory, as used in the local positive commutator estimate, shows that the operator acting on $u$ in the above pairing is equal to
  \[
    \Op(\phi^{(0)}j^-a H_{\phi^{(2)}p}\phi^{(4)}a)\phi^{(6)} = \Op(\phi^{(0)}j^-a H_p a)\phi^{(6)}.
  \]
  At this point, one can plug in the terms of the symbolic positive commutator calculation of $a H_p a$, all of which now get multiplied by $\phi^{(0)}$. Since we have an invariant calculus for smooth operators, summing over the partition of unity (of which $\phi^{(0)}$ is a member) recovers the usual positive commutator calculation, with a non-smooth error term of the form $\Op(-\phi^{(0)}j^-f)\phi^{(6)}$ coming from each coordinate chart; but each of these error terms separately has a sign (since $f$ does) and is thus controlled by the sharp G\aa rding inequality for non-smooth operators in a coordinate chart.

  In particular, we see that there are no further regularity requirements for the invariant estimate, the requirements given in the statement of the theorem being sufficient.
\end{rmk}

%%%%%%%%%%%%%%%%%%%%%%%%%%%%%%%%%%%%%%%%%%%%%%%%%%
\subsection{Trapping estimates at normally hyperbolic trapping}
\label{SubsecTrapping}

Complementing the results above on negatively weighted spaces, we
recall results of Dyatlov from
\cite{DyatlovResonanceProjectors,DyatlovNormally} on semiclassical
estimates for smooth operators at normally hyperbolic trapping, which
via the Mellin transform correspond to estimates on non-negatively
weighted spaces. Here we present the results in the semiclassical
setting, then in \S\ref{SubsecForward} we relate this to the
solvability of linear equations with Sobolev coefficients in
Theorem~\ref{ThmNHypSemiGlobal} and Theorem~\ref{ThmExpansion}. The
results of Dyatlov we use followed and built on the breakthrough earlier work of Wunsch and Zworski
\cite{WunschZworskiNormHypResolvent}, as well as Nonnenmacher and Zworski
\cite{Nonnenmacher-Zworski:Correlations}. (In the analytic category
there were earlier results of G\'erard and Sj\"ostrand
\cite{Gerard-Sjostrand:Resonances}.) The
advantage of the framework of \cite{DyatlovResonanceProjectors,DyatlovNormally} for us, especially as espoused in \cite{DyatlovNormally}, is the explicit size of the `spectral gap' (discussed below), which was also shown by Nonnenmacher and Zworski \cite{Nonnenmacher-Zworski:Correlations}, the explicit inclusion of a subprincipal term of the correct sign, and the relative ease with which the parameter dependence can be analyzed.

We first recall the semiclassical setting of \cite{DyatlovNormally} for\footnote{We write `$h$' for the semiclassical parameter, and use the subscript `$\semi$' for spaces of semiclassical operators, symbols and distributions.}
$$
\tilde P_0=\tilde P_0(h),\tilde Q_0=\tilde Q_0(h)\in\Psih^m(X),
$$
both formally self-adjoint, with $\tilde
Q_0$ having non-negative principal symbol,
$\tilde P_0-i\tilde Q_0$ elliptic in the standard sense.
In fact, the results in \cite{DyatlovNormally} are stated in the special case $m=0$, but by 
ellipticity of $\tilde P_0-i\tilde Q_0$ in the standard sense, it is 
straightforward to allow general $m$; see also the remark \cite[bottom of p.~2]{DyatlovNormally}.
The main assumption, see \cite[p.~3]{DyatlovNormally}, then is that $\tilde P_0$ has
normally hyperbolic trapping semiclassically at $\tilde\Gamma\subset
T^*X$ compact,\footnote{Our $\tilde\Gamma$ is the intersection of
  what is called $K$ in \cite{DyatlovNormally} with the semiclassical characteristic set of $P$, and
  similarly our $\tilde\Gamma_\pm$ are the intersection of what is called $\Gamma_\pm$ in \cite{DyatlovNormally} with the characteristic set of $P$.} with all bicharacteristics of $\tilde
P_0$, except those in the stable ($-$) and unstable ($+$)
submanifolds $\tilde\Gamma_\pm$, entering the elliptic set of
$\tilde Q_0$ in the forward (the exception being for only the $-$ sign), resp.\ backward ($+$) direction,
and $\gamma<\numin/2$, where $\numin>0$ is the minimal normal
expansion rate of the flow at $\tilde\Gamma$, discussed above and in \eqref{EqNuMinDef}. If $\tilde Q_0$ is microlocally in
$h\Psih(X)$ near $\tilde\Gamma$, with $h^{-1}\tilde Q_0$ having a
non-negative principal symbol there,
the Theorem 1 in \cite{DyatlovNormally} shows that there is $h_0>0$ such that for $\Im z>-\gamma$,
\begin{equation}\label{eq:tilde-P_0-est-orig}
  \|v\|_{H^s_h}\lesssim h^{-2}\|(\tilde P_0-i\tilde Q_0-hz)v\|_{H^{s-m}_h},\ h<h_0.
\end{equation}
In view of $\tilde\Gamma$ lying in a compact subset of $T^*X$, the
order $s$ is irrelevant in the sense that the estimate for one value
of $s$ implies that for all other via elliptic estimates; thus, one
may just take $s=0$, and even replace $s-m$ by $0$, in which case this
is an $L^2$-estimate, as stated explicitly in
\cite{DyatlovNormally}. We emphasize that while parts of
\cite{DyatlovResonanceProjectors} use the model forms of operators, this is no longer
the case in \cite{DyatlovNormally}, thus apart from the already
discussed modifications for the differential operator and Sobolev
space orders, the results apply verbatim in our setting, without
the necessity to arrange these model forms. Furthermore,
\cite[Lemma~5.1]{DyatlovResonanceProjectors}, which we use below, is also directly
applicable in our setting.

Suppose now that one has a family of operators $\tilde P_0(\omega)$ depending on another parameter, $\omega$, in
a compact space $S$, with $\tilde P_0,\tilde Q_0$ depending continuously on
$\omega$, with values in $\Psih^m(X)$, satisfying all of the
assumptions listed above. Suppose moreover that this family is \emph{uniformly normally hyperbolic}, i.e.\ satisfies the normally
hyperbolic assumptions with $\tilde\Gamma,\tilde\Gamma_\pm$
continuously depending on $\omega$ in the $\CI$ topology, and uniform
bounds for the normal expansion rates in the sense that both
$\nu$ and the constant $C'$ in
\begin{equation}
\label{EqNuMinDef}
  \sup_{\rho\in\Gamma}\|d e^{\mp tH_p}(\rho)|_{\cV_\pm}\|\leq C'e^{-\nu t},\ t\geq 0,
\end{equation}
with $\cV_\pm$ the unstable and stable normal tangent bundles at
$\Gamma$, can be chosen uniformly (cf.\ \cite[Equation~(5.1)]{DyatlovResonanceProjectors}); $\numin$ is then the $\sup$ of
these possible choices of $\nu$. (Note that since the trapped set
dynamics involves arbitrarily large times, it is {\em not}
automatically stable, unlike the dynamics away from the trapped set.) In this case the implied constant $C$
in \eqref{eq:tilde-P_0-est-orig}, as well as $h_0$, is uniform in $\omega$. Note that
$r$-normal hyperbolicity for every $r$ implies the local uniformity of the
normal dynamics by structural stability; see
\cite[\S1]{WunschZworskiNormHypResolvent} and
\cite[\S5.2]{DyatlovResonanceProjectors}.

To see this uniformity in $C$, we first point out that in
\cite[Lemma~5.1]{DyatlovResonanceProjectors} the construction of
$\phi_\pm$ can be done continuously with values in $\CI$ in this case.
Then in the proof of \eqref{eq:tilde-P_0-est-orig} given in \cite{DyatlovNormally},
we only need to observe that the direct estimates provided are
certainly uniform in this case for families $\tilde P_0,\tilde Q_0$, and furthermore for the main argument,
using semiclassical defect measures, one can pass to an $L^2$-bounded subsequence
$u_j$ such that $(\tilde P_0(\omega_j)-i\tilde Q_0(\omega_j)-\lambda_j)u_j=O(h^2)$,
with $\omega_j\to\omega$ for some $\omega\in S$ in addition to
$h^{-1}\lambda_j$ converging to some $\tilde\lambda$.
Concretely, all results in \cite[\S2]{DyatlovNormally} are based on elliptic or (positive) commutator
identities or estimates which are uniform in this setting. In
particular, \cite[Lemma~2.3]{DyatlovNormally} is valid with
$P_j=P(\omega_j)\to P$, $W_j=W(\omega_j)\to W$ with convergence in
$\Psih(X)$.
(This uses that one can take
$A_j(h_j)$ in Definition~2.1, with $A_j\to A$, since the difference between
$A_j(h_j)$ and $A(h_j)$ is bounded by a constant times the squared
$L^2$-norm of $u_j$ times the operator norm bound of $A_j(h_j)-A(h_j)$, with
the latter going to $0$.)
Then with $\Theta_{+,j}$ in place of $\Theta_+$, one still gets Lemma~3.1, which means that Lemma~3.2 still holds with $\phi_+$ (the limiting
$\phi_{+,j}$) using Lemma~2.3. Then the displayed equation above \cite[Equation~(3.9)]{DyatlovNormally} still
holds with the limiting $\tilde P_0=\tilde P_0(\omega)$, again by Lemma~2.3, and then one can finish the
argument as in \cite{DyatlovNormally}.
With this
modification, one obtains the desired uniformity. This in particular
allows one to apply \eqref{eq:tilde-P_0-est-orig} even if $\tilde
P_0$ and
$\tilde Q_0$ depend on $z$ (in a manner consistent with the other
requirements), which can also be dealt with more directly using
the model form in \cite[Lemma~4.3]{DyatlovResonanceProjectors}. It
also allows for uniform estimates for families depending on a small
parameter in $\Cx$, denoted by $v_0$ below, needed in \S\ref{SecQuasilinear}.

Allowing $\tilde P_0$ and $\tilde Q_0$ depending on $z$ means, in
particular, that we can replace the requirement on $h^{-1}\tilde Q_0$
by the principal symbol of $h^{-1}\tilde Q_0$ being $>-\beta$,
$\beta<\numin/2$, and drop $z$, so one has
\begin{equation}\label{eq:tilde-P_0-est}
  \|v\|_{H^s_h}\lesssim h^{-2}\|(\tilde P_0-i\tilde Q_0)v\|_{H^{s-m}_h},\ h<h_0.
\end{equation}

At this point it is convenient to rewrite this estimate, removing $\tilde Q_0$ from the right hand side at the cost (or benefit!) of making it microlocal.\footnote{An alternative would be using the gluing result of Datchev and Vasy \cite{DatchevVasyGluing}, which is closely related in approach.} {\em From here on it is convenient to change the conventions and not require that $\tilde P_0$ is formally self-adjoint (though it is at the principal symbol level, namely it has a real principal symbol); translating back into the previous notation, one would replace $\tilde P_0$ by its (formally) self-adjoint part, and absorb its skew-adjoint part into $\tilde Q_0$.} Namely, we have

\begin{thm}\label{ThmNHypSemiMic}
Suppose $\tilde P_0$ satisfies the above assumptions, in particular
the semiclassical principal symbol of $\frac{1}{2ih}(\tilde P_0-\tilde P_0^*)$ being $<\beta<\numin/2$ at $\tilde\Gamma$.\footnote{The apparent sign change here as compared to before comes from the fact that for formally self-adjoint $\tilde P_0,\tilde Q_0$, one has $\frac{1}{2ih}\bigl((\tilde P_0-i\tilde Q_0)-(\tilde P_0-i\tilde Q_0)^*\bigr)=-h^{-1}\tilde Q_0$; notice the minus sign on the right hand side.}
With $\tilde B_j$ analogous to Theorem~\ref{ThmTameHypTr}, with wave front set sufficiently close to $\tilde\Gamma$, we have, for sufficiently small $h>0$ and for all $N$ and $s_0$,
\begin{equation}\begin{aligned}\label{EqNHypSemiMic}
\|\tilde B_0 u\|_{H^s_h}
&\lesssim h^{-2}\| \tilde B_1 \tilde P_0 u\|_{H^{s-m+1}_h}+h^{-1}\|\tilde B_2 u\|_{H^s_h}+h^N\|u\|_{H^{s_0}_h}.
\end{aligned}\end{equation}
\end{thm}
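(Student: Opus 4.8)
The plan is to deduce \eqref{EqNHypSemiMic} from the non-microlocal estimate \eqref{eq:tilde-P_0-est} by localizing near $\tilde\Gamma$. First I would fix a complex absorbing potential $\tilde Q_0\in\Psih^m(X)$, formally self-adjoint with non-negative principal symbol, arranged to be microsupported in the complement of a fixed neighborhood $U_0$ of $\tilde\Gamma$ while still satisfying all of Dyatlov's hypotheses: this is possible because $\tilde\Gamma$ is the full trapped set, so it suffices to take $\tilde Q_0$ elliptic off the characteristic set of $\tilde P_0$ and on a collar region met appropriately by every bicharacteristic not contained in $\tilde\Gamma_\pm$. With the conventions of the theorem the skew-adjoint part of $\tilde P_0$ supplies the needed subprincipal sign near $\tilde\Gamma$, so \eqref{eq:tilde-P_0-est} reads $\|v\|_{H^s_\semi}\lesssim h^{-2}\|(\tilde P_0-i\tilde Q_0)v\|_{H^{s-m}_\semi}$. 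Choosing $\chi_0\in\Psih^0(X)$ microlocally equal to the identity near $\tilde\Gamma$ with $\WFh'(\chi_0)\subset U_0$ and $\WFh'(\chi_0)\cap\WFh'(\tilde Q_0)=\emptyset$, I would apply this to $v=\chi_0 u$; since $\tilde Q_0\chi_0$ and $[\tilde Q_0,\chi_0]$ are $O(h^\infty)$ in $\Psih^{-\infty}(X)$, the expansion $(\tilde P_0-i\tilde Q_0)\chi_0 u=\chi_0\tilde P_0 u+[\tilde P_0,\chi_0]u+Ru$ with $R$ negligible yields
\[
  \|\chi_0 u\|_{H^s_\semi}\lesssim h^{-2}\|\chi_0\tilde P_0 u\|_{H^{s-m}_\semi}+h^{-2}\|[\tilde P_0,\chi_0]u\|_{H^{s-m}_\semi}+h^N\|u\|_{H^{s_0}_\semi}.
\]

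The first term is bounded by $h^{-2}\|\tilde B_1\tilde P_0 u\|_{H^{s-m}_\semi}+h^N\|u\|_{H^{s_0}_\semi}$ once $\tilde B_1$ is elliptic on $\WFh'(\chi_0)$, and, $\tilde B_0$ being elliptic at $\tilde\Gamma$ with $\WFh'(\tilde B_0)$ contained in the set where $\chi_0$ is microlocally the identity, the left side dominates $\|\tilde B_0 u\|_{H^s_\semi}$ modulo $h^N\|u\|_{H^{s_0}_\semi}$. The substantive point --- which I expect to be the main obstacle --- is the commutator: $[\tilde P_0,\chi_0]\in h\Psih^{m-1}(X)$ is microsupported on the shell $\WFh'([\tilde P_0,\chi_0])\subset U_0\setminus\tilde\Gamma$, so $\|[\tilde P_0,\chi_0]u\|_{H^{s-m}_\semi}\lesssim h\|u\|_{H^{s-1}_\semi}$ microlocally there, and $u$ on that shell must be controlled using only the a priori term $\|\tilde B_2 u\|_{H^s_\semi}$ with $\WFh'(\tilde B_2)$ disjoint from $\tilde\Gamma_+$.

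To handle this I would use the flow-invariance of $\tilde\Gamma_\pm$: a bicharacteristic of $\tilde P_0$ meeting the shell off $\tilde\Gamma_+$ stays off $\tilde\Gamma_+$, and by the normally hyperbolic structure it remains a definite distance from $\tilde\Gamma$ over a time interval along which it meets the a priori region. Choosing $\chi_0$ and the $\tilde B_j$ compatibly --- with $\WFh'(\tilde B_2)$ a shell around $\tilde\Gamma$ from which a thin conic neighborhood of $\tilde\Gamma_+$ has been removed --- the part of $[\tilde P_0,\chi_0]u$ over $\WFh'([\tilde P_0,\chi_0])\setminus\tilde\Gamma_+$ is controlled by the standard semiclassical elliptic and real principal type propagation estimates (each losing only $O(h^{-1})$ and bringing in $\tilde P_0 u$ in $H^{s-m+1}_\semi$), hence by $\|\tilde B_1\tilde P_0 u\|_{H^{s-m+1}_\semi}+\|\tilde B_2 u\|_{H^s_\semi}+h^N\|u\|_{H^{s_0}_\semi}$; the commutator's factor of $h$ against the $h^{-2}$ prefactor then produces exactly the $h^{-1}\|\tilde B_2 u\|_{H^s_\semi}$ appearing in \eqref{EqNHypSemiMic}. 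The genuinely delicate residue is the piece microsupported near $\tilde\Gamma_+$, where the shell grazes $\tilde\Gamma_+$ and no a priori control is available; this is dealt with by a more careful, nested choice of cutoffs together with forward propagation along $\tilde\Gamma_+$ out of $\tilde\Gamma$, or, alternatively and more cleanly, by invoking the gluing theorem of Datchev and Vasy \cite{DatchevVasyGluing} to splice \eqref{eq:tilde-P_0-est} directly to the smooth semiclassical propagation estimates away from $\tilde\Gamma$. Assembling these bounds gives \eqref{EqNHypSemiMic}.
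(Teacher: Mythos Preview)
Your approach is essentially the paper's: apply the global estimate \eqref{eq:tilde-P_0-est} to a microlocal cutoff of $u$, absorb the $\tilde Q_0$-terms since $\tilde Q_0$ has disjoint wave front set from the cutoff, and then deal with the commutator by splitting it into a piece off $\tilde\Gamma_+$ (handled by real principal type propagation from the a priori region $\tilde B_2$) and a piece near $\tilde\Gamma_+$. Your identification of the latter as the genuinely delicate part is exactly right.

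Where your proposal stops short is the actual treatment of that $\tilde\Gamma_+$-piece. The phrase ``forward propagation along $\tilde\Gamma_+$ out of $\tilde\Gamma$'' is suggestive but not an argument: the trapping resolvent $(\tilde P_0-i\tilde Q_0)^{-1}$ does not a priori have clean wave front set mapping properties (that is precisely the content of trapping), so you cannot simply assert that its output on the $\tilde\Gamma_+$-commutator piece misses $\WFh'(\tilde B_0)$. The paper resolves this concretely by introducing a \emph{second} absorber $\tilde Q_1$, elliptic on $\tilde\Gamma$, to build a genuinely non-trapping model $\tilde P_0-i(\tilde Q_0+\tilde Q_1)$. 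One then shows that the difference between the two resolvents applied to the $\tilde\Gamma_+$-commutator piece is $O(h^\infty)$: writing $v$ for this difference, $(\tilde P_0-i\tilde Q_0)v=i\tilde Q_1(\tilde P_0-i(\tilde Q_0+\tilde Q_1))^{-1}(\cdots)$, and since the non-trapping resolvent \emph{does} propagate wave front sets along forward bicharacteristics, and no backward bicharacteristic from $\WFh'(\tilde Q_1)$ (near $\tilde\Gamma$) reaches the commutator support (on the shell, along $\tilde\Gamma_+$, where backward flow goes \emph{toward} $\tilde\Gamma$), this right-hand side is $O(h^\infty)$; feeding back into \eqref{eq:tilde-P_0-est} gives $v=O(h^\infty)$. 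Then for the non-trapping model one can legitimately propagate from $\WFh'(\tilde B_0)$ backward to the absorbers without encountering the commutator support, completing the bound. Your Datchev--Vasy alternative is indeed viable (the paper notes this too), but as written you have deferred rather than resolved the key step.
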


Note that the differential orders are actually irrelevant here due to
wave front set conditions.

\begin{proof}
Take $\tilde Q_0\in\Psih^0(X)$ with non-negative principal symbol such that $\WFh'(\tilde Q_0)$ is disjoint from
$\WFh'(\tilde B_0)$, and so that all
backward bicharacteristics from points not in $\tilde\Gamma_+$, as well as
forward bicharacteristics from points not in $\tilde\Gamma_-$, reach the
elliptic set of $\tilde Q_0$, and with $\tilde B_1$ elliptic on the complement of
the elliptic set of $\tilde Q_0$.
Let $\tilde B_3\in\Psih^0(X)$ to be such that $\WFh'(I-\tilde B_3)$ is disjoint
from $\WFh'(\tilde B_0)$ but $\WFh'(\tilde Q_0)\cap\WFh'(\tilde B_3)=\emptyset$. Let
$\tilde A_+\in\Psih^0(X)$ have wave front set near $\tilde\Gamma_+$, with
$$
\WFh'(I-\tilde A_+) \cap\WFh'(\tilde B_3)\cap\tilde\Gamma_+=\emptyset
$$
and with
$$
\WFh'(\tilde A_+)\cap\WFh'(I-\tilde B_3)\cap\tilde\Gamma_-=\emptyset,
$$
and with no
backward bicharacteristic from $\WFh'(\tilde B_0)$ reaching
$$
\WFh'(\tilde A_+)\cap\WFh'(I-\tilde B_3)\cap\tilde\Gamma_+.
$$
Take $\tilde Q_1$
elliptic on $\tilde\Gamma$, with $\WFh'(\tilde Q_1)\cap\WFh'(I-\tilde B_3)=\emptyset$,
again with non-negative principal symbol, with no backward bicharacteristic from $\WFh'(\tilde Q_1)$ reaching
\[
  \WFh'(\tilde A_+)\cap\WFh'(I-\tilde B_3).
\]
Thus,
all backward and forward bicharacteristics of $\tilde P_0$ reach
the elliptic set of $\tilde Q_1$ or $\tilde Q_0$. See Figure~\ref{FigNhypMic} for the setup.

\begin{figure}[!ht]
  \centering
  \includegraphics{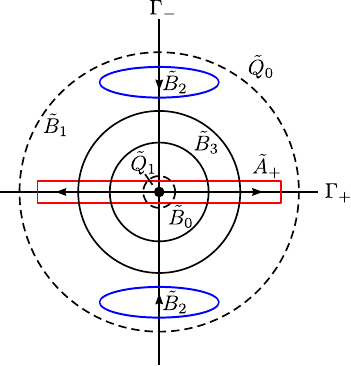}
  \caption{Setup for the proof of the microlocalized normally hyperbolic trapping estimate \eqref{EqNHypSemiMic}: Indicated are the backward and forward trapped sets $\Gamma_+$ and $\Gamma_-$, respectively, which intersect at $\Gamma$ (large dot). We use complex absorbing potentials $\tilde Q_0$ (with $\WFh'(\tilde Q_0)$ outside the large dashed circle) and $\tilde Q_1$ (with $\WFh'(\tilde Q_1)$ inside the small dashed circle). We obtain an estimate for $\tilde B_0 u$ by combining \eqref{eq:tilde-P_0-est} with microlocal propagation from the elliptic set of $\tilde B_2$.}
\label{FigNhypMic}
\end{figure}

Then
\begin{equation*}
(\tilde P_0-i\tilde Q_0) \tilde B_3 u=\tilde B_3 \tilde P_0 u+\tilde A_+[\tilde P_0,\tilde B_3]u+(I-\tilde A_+)[\tilde P_0,\tilde B_3]u-i\tilde Q_0\tilde B_3u,
\end{equation*}
so
\begin{equation}\begin{aligned}\label{EqCxAbsorbError}
\tilde B_0 u&=\tilde B_0\tilde B_3 u+\tilde B_0(I-\tilde B_3)u\\
&=\tilde B_0(\tilde P_0-i\tilde Q_0)^{-1}\tilde B_3 \tilde P_0 u+\tilde B_0(\tilde P_0-i\tilde Q_0)^{-1}\tilde A_+[\tilde P_0,\tilde B_3]u\\
&\qquad+\tilde B_0(\tilde P_0-i\tilde Q_0)^{-1}(I-\tilde A_+)[\tilde P_0,\tilde B_3]u\\
&\qquad-i\tilde B_0(\tilde P_0-i\tilde Q_0)^{-1}\tilde Q_0\tilde B_3u+\tilde B_0(I-\tilde B_3)u,
\end{aligned}\end{equation}
and by \eqref{eq:tilde-P_0-est}, for $h<h_0$,
$$
\|(\tilde P_0-i\tilde Q_0)^{-1}\tilde B_3 \tilde P_0u\|_{H^s_h}\lesssim h^{-2}\|\tilde B_3 \tilde P_0 u\|_{H^{s-m}_h}.
$$
Now, $\tilde Q_0\tilde B_3,\tilde B_0(I-\tilde B_3)\in h^\infty\Psih^{-\infty}(X)$, so the corresponding terms in
\eqref{EqCxAbsorbError} can be
absorbed into $h^N\|u\|_{H^{s_0}_h}$. On the other hand, since
$\WFh'((I-\tilde A_+)[\tilde P_0,\tilde B_3])$ is disjoint from $\tilde\Gamma_+$, the
backward bicharacteristics from it reach the elliptic set of $\tilde B_2$,
and so we have the microlocal real principal type estimate for $u$:
$$
\|(I-\tilde A_+)[\tilde P_0,\tilde B_3]u\|_{H^{s-m}_h}\lesssim h\|\tilde B_2 u\|_{H^{s-1}_h}+\|\tilde B_1\tilde P_0 u\|_{H^{s-m}_h}
$$
as $(I-\tilde A_+)[\tilde P_0,\tilde B_3]\in h\Psih^{m-1}(X)$, so by
\eqref{eq:tilde-P_0-est},
$$
\|(\tilde P_0-i\tilde Q_0)^{-1}(I-\tilde A_+)[\tilde P_0,\tilde B_3]u\|_{H^s_h}\lesssim h^{-1}\|\tilde B_2 u\|_{H^{s-1}_h}+h^{-2}\|\tilde B_1\tilde P_0 u\|_{H^{s-m}_h}.
$$
Thus, 
\eqref{EqNHypSemiMic}
follows if we can estimate
$\|\tilde B_0(\tilde P_0-i\tilde Q_0)^{-1}\tilde A_+[\tilde P_0,\tilde B_3]u\|_{H^s_h}$. Now,
$\WFh'(\tilde A_+[\tilde P_0,\tilde B_3])\cap\tilde\Gamma_-=\emptyset$ by arrangement. In order
to microlocalize, we now introduce a nontrapping model,
$\tilde P_0-i(\tilde Q_0+\tilde Q_1)$. We claim that
$$
v=\bigl(\tilde P_0-i(\tilde Q_0+\tilde Q_1)\bigr)^{-1}\tilde A_+[\tilde P_0,\tilde B_3]u-(\tilde P_0-i\tilde Q_0)^{-1}\tilde A_+[\tilde P_0,\tilde B_3]u
$$
satisfies
\begin{equation}\label{EqRoundOff}
\|v\|_{H^{s'}_h}\lesssim
h^N\|u\|_{H^{s_0}_h}
\end{equation}
for all $s',N$. Notice that for any $s''$ one certainly has
$$
\|v\|_{H^{s''}_h}\lesssim h^{-1}\|u\|_{H^{s''-1}_h}
$$
by \eqref{eq:tilde-P_0-est} plus its non-trapping analogue.
To see \eqref{EqRoundOff}, notice that
$$
(\tilde P_0-i\tilde Q_0)v=i\tilde Q_1\bigl(\tilde P_0-i(\tilde Q_0+\tilde Q_1)\bigr)^{-1} \tilde A_+[\tilde P_0,\tilde B_3]u,
$$
so by \eqref{eq:tilde-P_0-est}, with $s_0$ replaced by any $s_0'$ (since $s_0$ was arbitrary), and for any $N$,
$$
\|v\|_{H^{s'}_h}\lesssim h^{-2}\|\tilde Q_1\bigl(\tilde P_0-i(\tilde Q_0+\tilde Q_1)\bigr)^{-1}\tilde A_+[\tilde P_0,\tilde B_3]u\|_{H^{s'-m}_h}\lesssim h^{N}\|u\|_{H^{s_0}_h},
$$
since $\tilde P_0-i(\tilde Q_0+\tilde Q_1)$ is non-trapping, hence $\bigl(\tilde P_0-i(\tilde Q_0+\tilde Q_1)\bigr)^{-1}$ propagates semiclassical wave front sets along forward bicharacteristics, and no backward bicharacteristic from $\WFh'(\tilde Q_1)$ can reach $\WFh'(\tilde A_+[\tilde P_0,\tilde B_3])\subset\WFh'(\tilde A_+)\cap\WFh'(I-\tilde B_3)$, proving the claim. Then, since backward bicharacteristics
from $\WFh'(\tilde B_0)$ do not encounter $\WFh'(\tilde A_+[\tilde P_0,\tilde B_3])\cap\tilde\Gamma_+$ before
reaching the elliptic set of $\tilde Q_0$ or $\tilde Q_1$,
we conclude that
\begin{equation*}\begin{aligned}
&\|\tilde B_0(\tilde P_0-i\tilde Q_0)^{-1} \tilde A_+[\tilde P_0,\tilde B_3]u\|_{H^s_h}\\
&\qquad\leq
\|\tilde B_0
(\tilde P_0-i\tilde Q_0-i\tilde Q_1)^{-1}\tilde A_+[\tilde P_0,\tilde B_3]u\|_{H^s_h}+\|\tilde B_0v\|_{H^s_h}\\
&\qquad\lesssim h\|\tilde B_2 u\|_{H^s_h}+\|\tilde B_1\tilde P_0 u\|_{H^{s-m+1}_h}+h^N\|u\|_{H^{s_0}_h}.
\end{aligned}\end{equation*}
This proves \eqref{EqNHypSemiMic}, and thus the theorem.
\end{proof}

%%%%%%%%%%%%%%%%%%%%%%%%%%%%%%%%%%%%%%%%%%%%%%%%%%%%%%%%%%%%%%%%%%%%%%
\section{Quasilinear wave and Klein-Gordon equations}
\label{SecQuasilinear}

%%%%%%%%%%%%%%%%%%%%%%%%%%%%%%%%%%%%%%%%%%%%%%%%%%
\subsection{Forward solution operators}
\label{SubsecForward}

We now generalize the setting considered in \cite[\S{7.2}]{HintzQuasilinearDS} for the study of quasilinear equations on static asymptotically de Sitter spaces to allow for normally hyperbolic trapping, as discussed in the previous section.

For simplicity, we first describe the scalar setup: Working on a compact manifold $M$ with boundary $X$, we assume that the operator $P$ is of the form $P=P_0+\tilde P$, continuously depending on a small parameter
\[
  v=(v_0,\tilde v)\in\cX^{\tilde s,\alpha}:=\C\oplus\Hb^{\tilde s,\alpha},\quad \alpha>0,
\]
where
\begin{align}
\label{EqP0Form} P_0&=P_0(v_0)=\Box_{g(v_0)}+L(v_0)\in\Diffb^2(M), \\
    &\hspace{1cm}L(v_0)\in\Diffb^1(M),\quad L(0)-L(0)^*\in\Diffb^0(M), \nonumber\\
  \tilde P&=\tilde P(v)\in\Hb^{\tilde s,\alpha}\Diffb^2(M)+\Hb^{\tilde s-1,\alpha}\Diffb^1(M) \nonumber
\end{align}
for a smooth b-metric $g$ on $M$ that continuously depends on one complex parameter; here, we identify $v=(v_0,\tilde v)$ with the distribution $v_0\chi+\tilde v$ on $M$, where $\chi\in\CI(M)$ is a cutoff, identically $1$ near $X$. An example to keep in mind for the remainder of the section is the wave operator on an (asymptotically) Kerr-de Sitter space or a metric $\Hb^{\tilde s+1,\alpha}$-perturbation thereof. We assume:
\begin{enumerate}
  \item The characteristic set $\Sigma\subset\Sb^*_X M$ of $P_0$ has the form $\Sigma=\Sigma_+\cup\Sigma_-$ with $\Sigma_\pm$ a union of connected components of $\Sigma$,
  \item $P_0$ has normally hyperbolic trapping at $\Gamma^\pm\subset\Sigma_\pm$ for small $v_0$, as detailed in assumptions \itref{EnumNTGamma}-\itref{EnumNTQuant3} in \S\ref{SubsecTamePropagation},
  \item $P_0$ has radial sets $L_\pm\subset\Sb^*_X M$, which, in appropriate directions transverse to $L_\pm$, are sources ($-$)/sinks ($+$) for the null-bicharacteristic flow within $\Sb^*_X M$, with a one-dimensional stable/unstable manifold intersecting $\Sb^*_X M$ trans\-versally; for details, see \cite[\S{6.4}]{HintzQuasilinearDS}. In particular, there are $\beta_0,\tilde\beta\in\CI(L_\pm)$, $\beta_0,\tilde\beta>0$, such that for a homogeneous degree $-1$ boundary defining function $\rho$ of fiber infinity in $\rcTb^*M$ and with $V=\rho H_{p_0}$,
    \begin{equation}
	\label{EqRadialParams}
	  \rho^{-1} V\rho|_{L_\pm}=\mp\beta_0, \qquad -x^{-1}Vx|_{L_\pm}=\mp\tilde\beta\beta_0.
	\end{equation}
\setcounter{CounterEnumi}{\value{enumi}}
\end{enumerate}

We will set up initial value problems by introducing artificial
boundaries as in \cite{HintzQuasilinearDS,HintzVasySemilinear}: We
denote by $\ft_1$ and $\ft_2$ two smooth functions on $M$ and
put\footnote{In the Kerr-de Sitter setting described in the
  introduction, we can take $\ft_1=1-x$, which equals
  $\ft_1=1-e^{-t_*}$ in $M^\circ$, and
  $\ft_2=\mp(r-(r_\pm\pm\delta))$, $\delta>0$ small, near $r=r_\pm$,
  or indeed $\ft_2=-(r-(r_++\delta))(r-(r_--\delta))$ globally, which
  is positive precisely in $(r_--\delta,r_++\delta)$.}
\[
  \Omega=\ft_1^{-1}([0,\infty))\cap\ft_2^{-1}([0,\infty)),
\]
where we assume that:
\begin{enumerate}
\setcounter{enumi}{\value{CounterEnumi}}
  \item $\Omega$ is compact,
  \item putting $H_j:=\ft_j^{-1}(0)$, the $H_j$ intersect the boundary $\pa M$ transversally, and $H_1$ and $H_2$ intersect only in the interior of $M$, and they do so transversally,
  \item the differentials of $\ft_1$ and $\ft_2$ have opposite timelike characters near their respective zero sets within $\Omega$; more specifically, $d\ft_1$ is future timelike, $d\ft_2$ past timelike,
  \item there is a boundary defining function $x$ of $M$ such that $dx/x$ is timelike on $\Omega\cap\pa M$ with timelike character opposite to the one of $d\ft_1$, i.e.\ $dx/x$ is past oriented,
  \item the dynamical structure of the null-geodesic flow (within $\Sb^*_\Omega M$) of the metric $g$ is as follows:
    All bicharacteristics in $\Sigma_\Omega:=\Sigma\cap\Sb^*_\Omega M$
    from any point in
    $\Sigma_\Omega\cap(\Sigma_+\setminus(L_+\cup\Gamma^+))$ flow
    (within $\Sigma_\Omega$) to $\Sb^*_{H_1}M\cup L_+\cup\Gamma^+$ in
    the forward direction (i.e.\ either enter $\Sb^*_{H_1}M$ in finite
    time or tend to the radial set $L_+$ or the trapped set
    $\Gamma^+$) and to $\Sb^*_{H_2}M\cup L_+\cup\Gamma^+$ in the
    backward direction, and from any point in
    $\Sigma_\Omega\cap(\Sigma_-\setminus(L_-\cup\Gamma^-))$ to
    $\Sb^*_{H_2}M\cup L_-\cup\Gamma^-$ in the forward direction and to
    $\Sb^*_{H_1}M\cup L_-\cup\Gamma^-$ in the backward direction, with
    tending to $\Gamma^\pm$ allowed in only one of the two
    directions. In conventional scattering theory `non-trapping' is usually
    understood to mean that all bicharacteristics in the
    characteristic set escape to infinity; in microlocal terms
    \cite{Me94} this
    means exactly that they tend to radial sets in both the forward
    and backward directions. Thus, in the sense of scattering theory
    our assumption is that the normally hyperbolic trapping describes all the trapping present here.
\setcounter{CounterEnumi}{\value{enumi}}
\end{enumerate}

Recall the space $\Hb^{s,r}(\Omega)^{\bullet,-}$ of distributions which are supported ($\bullet$) at the `artificial' boundary hypersurface $H_1$ and extendible ($-$) at $H_2$, and the other way around for $\Hb^{s,r}(\Omega)^{-,\bullet}$, see \cite[Appendix~B]{HormanderAnalysisPDE}: Thus, elements of $\Hb^{s,r}(\Omega)^{\bullet,-}$ are restrictions to $\Omega^\circ$ of functions in $\Hb^{s,r}(M)$ which are supported in the future of $H_1$, i.e.\ in $\ft_1^{-1}([0,\infty))$. Then we have the following global energy estimates, with the proof exactly as in the reference:
\begin{lemma}
\label{LemmaWaveGlobalEnergy}
  (Cf.\ \cite[Lemma~7.3]{HintzQuasilinearDS}.) Suppose $\tilde s>n/2+2$. There exists $r_0<0$ such that for $r\leq r_0$, $-\tilde r\leq r_0$, there is $C>0$ such that for $u\in\Hb^{2,r}(\Omega)^{\bullet,-}$, $v\in\Hb^{2,\tilde r}(\Omega)^{-,\bullet}$, one has
  \begin{align*}
    \|u\|_{\Hb^{1,r}(\Omega)^{\bullet,-}} & \leq C \|Pu\|_{\Hb^{0,r}(\Omega)^{\bullet,-}}, \\
	\|v\|_{\Hb^{1,\tilde r}(\Omega)^{-,\bullet}} & \leq C \|P^*v\|_{\Hb^{0,\tilde r}(\Omega)^{-,\bullet}}.
  \end{align*}
\end{lemma}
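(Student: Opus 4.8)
The plan is to reduce Lemma~\ref{LemmaWaveGlobalEnergy} to a classical weighted energy estimate for a wave operator, in the spirit of \cite[Lemma~7.3]{HintzQuasilinearDS}. The first move is to fold the top-order part of the non-smooth perturbation $\tilde P$ into the metric. Since $P$ is a perturbation of $\Box_{g(v_0)}$ and $\tilde v$ (hence $\tilde P$) is small, one can write $P=\Box_{g_v}+\tilde L$, where $g_v$ is a Lorentzian b-metric whose dual is $g(v_0)^{-1}$ plus the top-order symbol of $\tilde P$, and $\tilde L\in(\CI+\Hb^{\tilde s-1,\alpha})\Diffb^1(M)$ is only first order; similarly $P^*=\Box_{g_v}+\tilde L'$ with $\tilde L'\in(\CI+\Hb^{\tilde s-2,\alpha})\Diffb^1(M)$ having bounded b-coefficients. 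Here the hypothesis $\tilde s>n/2+2$ enters twice: it makes the coefficients of $g_v$ regular enough in the b-sense for a classical energy estimate, and --- since taking an adjoint costs two b-derivatives on the coefficients of $\tilde P$ --- it keeps the coefficients of $\tilde L'$ bounded, which is why $\tilde s>n/2+1$ would not suffice. Because the coefficients of $\tilde P$ lie in $x^\alpha\Hb^{\tilde s}$ with $\alpha>0$, $g_v$ agrees with $g(v_0)$ at $\pa M$; hence, by openness of the timelike conditions, for small $\tilde v$ the differentials $\bdiff\ft_1$ (future) and $\bdiff\ft_2$ (past) stay timelike for $g_v$ near $H_1,H_2$, and $dx/x$ stays past timelike on $\Omega\cap\pa M$.

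Next I would prove the estimate for $P$. Fix a $g_v$-timelike, past-oriented b-vector field $W$ on $\Omega$; such a $W$ exists because past causal vectors form convex cones, so one patches by a partition of unity the vector fields dual to $dx/x$ near $\pa M$, to $-\bdiff\ft_1$ near $H_1$, to $\bdiff\ft_2$ near $H_2$, and any global time orientation elsewhere. For $u\in\Hb^{2,r}(\Omega)^{\bullet,-}$ let $T=T[u]$ denote the stress-energy tensor of $\Box_{g_v}$, and integrate the b-divergence of the current $J$, $J^\mu=x^{-2r}T^{\mu\nu}W_\nu$, over the sub-regions of $\Omega$ cut out by a spacelike foliation interpolating between $H_1$ and $H_2$ (which exists by the causal structure and the non-trapping hypothesis on $g$). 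The flux through $H_1$ vanishes because $u$ is supported there; the flux through the slices of the foliation, and through $H_2$, has a favorable sign, as $\bdiff\ft_2$ and $W$ are both past-directed; and, working with the b-density, there is no boundary contribution at $\pa M$. The interior term is the sum of (i) the deformation term of $W$, of size $\lesssim\int x^{-2r}(|\bdiff u|^2+|u|^2)$ with an $r$-independent constant; (ii) the weight-derivative term $-2r\int x^{-2r}\,T\bigl((dx/x)^\sharp,W\bigr)$, which by the dominant energy condition applied to the two past-timelike arguments is $\geq c(-r)\int x^{-2r}|\bdiff u|^2$; and (iii) the contributions of $\tilde L$ and of the subprincipal part of $\Box_{g_v}$, again $\lesssim\int x^{-2r}(|\bdiff u|^2+|u|^2)$.

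For $r\leq r_0$ with $-r_0$ large, term (ii) absorbs the first-order parts of (i) and (iii), leaving a Gronwall inequality in the foliation parameter for the weighted energy whose only inhomogeneity is $\int x^{-2r}|Pu|^2$; this gives $\|u\|_{\Hb^{1,r}(\Omega)^{\bullet,-}}\leq C\|Pu\|_{\Hb^{0,r}(\Omega)^{\bullet,-}}$. The $P^*$-estimate follows by the same argument with the time orientation reversed: $H_1$ and $H_2$ trade places, $W$ is taken future-oriented, and one uses the weight $x^{-2\tilde r}$ with $\tilde r\geq-r_0>0$. The weight-derivative term is then $-2\tilde r\int x^{-2\tilde r}\,T[v]\bigl((dx/x)^\sharp,W\bigr)$ with $(dx/x)^\sharp$ past and $W$ future, so the stress-energy pairing is now $\leq 0$, and $-2\tilde r$ times it is again $\geq c\tilde r\int x^{-2\tilde r}|\bdiff v|^2$; this sign reversal in the admissible weight is exactly why the hypothesis reads $r\leq r_0$ for $P$ but $-\tilde r\leq r_0$ for $P^*$.

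The hard part will be the preliminary step: one must check carefully that folding the principal part of $\tilde P$ into $g_v$ genuinely yields a Lorentzian b-metric for which all the causal hypotheses of the setup persist --- this is where the smallness of $v$, the sign of $\alpha$, and $\tilde s>n/2+2$ are used --- and that the leftover operators $\tilde L,\tilde L'$ are truly first order with bounded b-coefficients. Once this is granted, the remainder is the textbook convexity energy estimate for a wave operator with sufficiently regular b-coefficients, the non-smooth first-order remainders entering only through pairings bounded by $\int x^{-2r}(|\bdiff u|^2+|u|^2)$ and thus absorbed exactly like the smooth lower-order terms.
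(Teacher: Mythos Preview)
Your approach is correct and is essentially what the paper has in mind: its proof is a two-line reference to \cite[Lemma~7.3]{HintzQuasilinearDS}, noting only that the argument uses the timelike nature of $dx/x$ and of $d\ft_1,d\ft_2$, not the dynamical structure at $\pa M$. Your write-up fills in exactly the standard stress-energy/positive commutator argument that underlies that reference. One small remark: you invoke the non-trapping hypothesis when constructing the spacelike foliation, but that hypothesis concerns the null-bicharacteristic flow at fiber infinity and is not needed here---the foliation comes purely from the causal (Lorentzian) structure and the timelike character of $H_1,H_2$, which is precisely the paper's point in saying the lemma ``does not rely on the dynamical structure of $P$ at the boundary.''
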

We recall from \cite{HintzQuasilinearDS} that this result does not rely on the dynamical structure of $P$ at the boundary, but only on the timelike nature of $dx/x$ and of $d\ft_1$ and $d\ft_2$ near $H_1$ and $H_2$, respectively, see also \cite[Remark~7.4]{HintzQuasilinearDS}.

Let us stress that we assume the parameter $v$ to be \emph{small} so that in particular the skew-adjoint part of $P_0(v_0)$ is small and does not affect the radial point and normally hyperbolic trapping estimates which are used in what follows; the general case without symmetry assumptions on $P_0(0)$ will be discussed in \S\ref{SubsecGeneralProofs}. Using a duality argument and the tame estimates for elliptic regularity and the propagation of singularities (real principal type, radial points, normally hyperbolic trapping) given in Propositions~\ref{PropTameElliptic}, \ref{PropTameRealPrType} and \ref{PropTameRadial} and Theorem~\ref{ThmTameHypTr}, we thus obtain solvability and higher regularity:

\begin{lemma}
\label{LemmaWaveGlobalSolve}
  (Cf.\ \cite[Lemma~7.5]{HintzQuasilinearDS}.) Let $0\leq s\leq\tilde s$ and assume $\tilde s>n/2+6$, $s_0>n/2+1/2$. There exists $r_0<0$ such that for $r\leq r_0$, there is $C>0$ with the following property: If $f\in\Hb^{s-1,r}(\Omega)^{\bullet,-}$, then there exists a unique $u\in\Hb^{s,r}(\Omega)^{\bullet,-}$ such that $Pu=f$, and $u$ moreover satisfies
  \begin{equation}
  \label{EqGlobalTame}
    \|u\|_{\Hb^{s,r}(\Omega)^{\bullet,-}} \lesssim \|f\|_{\Hb^{s-1,r}(\Omega)^{\bullet,-}} + \|f\|_{\Hb^{s_0,r}(\Omega)^{\bullet,-}}\|v\|_{\cX^{\tilde s,\alpha}}.
  \end{equation}
  Here, the implicit constant depends only on $s$ and $\|v\|_{\cX^{n/2+6+\eps,\alpha}}$ for $\eps>0$.
\end{lemma}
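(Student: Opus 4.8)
The plan is to follow the proof of \cite[Lemma~7.5]{HintzQuasilinearDS}, the novelty being to carry the tame dependence on the coefficients of $P$ — equivalently on $v$ — through every step. First I would produce a low regularity forward solution by a duality argument, and then bootstrap its regularity by propagation of singularities, preserving the tame structure \eqref{EqDefTame} at each stage.

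\emph{Existence and uniqueness at the base level.} Since $\tilde s>n/2+2$, the coefficients of $\tilde P$ are $C^1$, so Lemma~\ref{LemmaWaveGlobalEnergy} applies: there is $r_0<0$ such that for $r\leq r_0$ both $P$ and $P^*$ satisfy the stated energy estimates. The estimate for $P^*$ shows that $P^*\colon\Hb^{1,\tilde r}(\Omega)^{-,\bullet}\to\Hb^{0,\tilde r}(\Omega)^{-,\bullet}$ (with $\tilde r=-r$) is injective with closed range; dualizing, $P$ maps onto $\Hb^{-1,r}(\Omega)^{\bullet,-}$. Hence for $f\in\Hb^{s-1,r}(\Omega)^{\bullet,-}\subset\Hb^{-1,r}(\Omega)^{\bullet,-}$ there is $u\in\Hb^{0,r}(\Omega)^{\bullet,-}$ with $Pu=f$, and $\|u\|_{\Hb^{0,r}}\lesssim\|f\|_{\Hb^{-1,r}}$ with a constant depending only on a low regularity norm of $v$. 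Uniqueness in $\Hb^{s,r}(\Omega)^{\bullet,-}$ follows by applying the regularity statement below to the difference of two solutions of $Pu=0$ (bootstrapping it up to $\Hb^{2,r}$) and then the $P$-energy estimate.

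\emph{Regularity bootstrap with tame estimates.} Near $H_1$ the supported character of $u$ together with the future timelike character of $d\ft_1$ give, by a localized energy estimate, control of $u$ in $\Hb^{s,r}$ microlocally near $H_1$ in terms of $\|f\|_{\Hb^{s-1,r}(\Omega)^{\bullet,-}}$ up to tame terms, and similarly near $H_2$ via extendibility and $d\ft_2$; this is the a priori controlled region. Away from the characteristic set I would use the tame microlocal elliptic estimate, Proposition~\ref{PropTameElliptic}. On $\Sigma_\Omega$ the non-trapping hypothesis~(9) says that every bicharacteristic in $\Sigma_\Omega\cap(\Sigma_\pm\setminus(L_\pm\cup\Gamma^\pm))$, followed in the appropriate direction, reaches $\Sb^*_{H_1}M$ or $\Sb^*_{H_2}M$ or limits onto $L_\pm$ or $\Gamma^\pm$; accordingly I would first establish regularity at the radial sets $L_\pm$ (Proposition~\ref{PropTameRadial}) and at the trapped sets $\Gamma^\pm$ (Theorem~\ref{ThmTameHypTr}) from their respective a priori control regions, and then propagate outward with the tame real principal type estimate, Proposition~\ref{PropTameRealPrType}, using a finite subcover of the compact set $\Sigma_\Omega$. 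With $m=2$ each such step advances regularity by $\tfrac12$ (from $\Hb^{\tilde s_0+1/2,r}$ to $\Hb^{\tilde s_0+1,r}$, given $f\in\Hb^{\tilde s_0,r}$), so finitely many rounds carry $u$ from $\Hb^{0,r}$ up to $\Hb^{s,r}$ as long as $s\leq\tilde s$; the worst round, $\tilde s_0=-\tfrac12$, needs coefficient regularity $>n/2+7/2+\tfrac52=n/2+6$, which is exactly the hypothesis $\tilde s>n/2+6$. Since the composition of finitely many tame estimates of the form \eqref{EqDefTame} is again of that form, and the $\Hb^{\sigma,\alpha}\Diffb^2$-coefficient norms of $\tilde P(v)$ are controlled by $\|v\|_{\cX^{\sigma',\alpha}}$ with a fixed shift $\sigma'-\sigma$, one obtains \eqref{EqGlobalTame}: $\|v\|_{\cX^{\tilde s,\alpha}}$ enters only linearly, the low regularity norms of $u$ occurring in the microlocal estimates are re-expressed via $Pu=f$ — and a preliminary non-tame run of the same estimates, whose constants depend only on low regularity norms of $v$ — in terms of $\|f\|_{\Hb^{s_0,r}}$ for $s_0>n/2+1/2$, and all low regularity coefficient thresholds $n/2+7/2+\eps$, $n/2+1+\eps$, $\ldots$ are $<n/2+6$, so the constant depends only on $s$ and $\|v\|_{\cX^{n/2+6+\eps,\alpha}}$.

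The sign $r\leq r_0<0$ is essential: it is needed in Lemma~\ref{LemmaWaveGlobalEnergy}, it places the radial point estimate of Proposition~\ref{PropTameRadial} in the variant where $L_\pm$ acts as a source of regularity for the relevant propagation direction (for $r$ negative enough the threshold inequalities there hold), and — most importantly — it puts us below the threshold $r<-\sup_\Gamma\rho^{-m+1}\sigma_{\bl,m-1}(E_1)/c_\pa$ of Theorem~\ref{ThmTameHypTr}; since $E_1=\tfrac1{2i}(P_0-P_0^*)\in\Diffb^0$ by the assumption $L(0)-L(0)^*\in\Diffb^0$ and $v$ is small, this threshold lies near $0$, so any sufficiently negative $r_0$ works, and then the normally hyperbolic trapping estimate behaves exactly like a real principal type estimate, allowing regularity to be propagated through $\Gamma^\pm$. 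I expect the main obstacle to be the global organization of this propagation: one must order the finitely many microlocal steps so that the a priori term in each is controlled either by the $H_1$/$H_2$ energy estimates or by an already established estimate, without circularity, while keeping the accumulated constants of the form \eqref{EqDefTame}. This is precisely where the dynamical hypothesis~(9) and the bookkeeping afforded by \eqref{EqDefTame} do the real work, the individual microlocal estimates being available off the shelf from Sections~\ref{SubsecTameMicrolocal} and \ref{SubsecTrapping}.
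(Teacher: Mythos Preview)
Your proposal is correct and follows essentially the same approach as the paper's (very brief) proof, which simply refers to \cite[Lemma~7.5]{HintzQuasilinearDS} and notes that \eqref{EqGlobalTame} arises from iterating the tame microlocal estimates of Propositions~\ref{PropTameElliptic}--\ref{PropTameRadial} and Theorem~\ref{ThmTameHypTr}, with the threshold $s_0>n/2+1/2$ read off from the low-regularity $u$-norms appearing there. One minor slip: for the forward problem only $H_1$ (the supported side) is an a priori controlled hypersurface, and regularity is propagated \emph{toward} $H_2$, not obtained from it.
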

\begin{proof}
  The proof proceeds as the proof given in the reference, using Theorem~\ref{ThmTameHypTr} to obtain microlocal regularity at the trapped set. The tame estimate \eqref{EqGlobalTame} in particular is obtained by iterative use of the aforementioned microlocal regularity estimates; the given bound for $s_0$ comes from an inspection of the norms in these estimates which correspond to the terms called $u^\ell_*$ in \eqref{EqDefTame}.
\end{proof}

We deduce analogues of \cite[Corollaries~7.6-7.7]{HintzQuasilinearDS}:
\begin{cor}
\label{CorWaveGlobalCont}
  Let $0\leq s\leq\tilde s$ and assume $\tilde s>n/2+6$, $s_0>n/2+1/2$. There exists $r_0<0$ such that for $r\leq r_0$, there is $C>0$ with the following property: If $u\in\Hb^{s,r}(\Omega)^{\bullet,-}$ is such that $Pu\in\Hb^{s-1,r}(\Omega)^{\bullet,-}$, then the estimate \eqref{EqGlobalTame} holds.
\end{cor}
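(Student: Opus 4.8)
The plan is to obtain this as an \emph{a priori} reformulation of Lemma~\ref{LemmaWaveGlobalSolve}, exploiting that the solution produced there is unique in $\Hb^{s,r}(\Omega)^{\bullet,-}$. Concretely, given $u\in\Hb^{s,r}(\Omega)^{\bullet,-}$ with $f:=Pu\in\Hb^{s-1,r}(\Omega)^{\bullet,-}$, I would first apply Lemma~\ref{LemmaWaveGlobalSolve} with this $f$ as forcing term: for $r\le r_0$ (with $r_0<0$ as in that lemma) it yields a \emph{unique} $u'\in\Hb^{s,r}(\Omega)^{\bullet,-}$ solving $Pu'=f$ and satisfying the tame bound \eqref{EqGlobalTame}, with implicit constant depending only on $s$ and on $\|v\|_{\cX^{n/2+6+\eps,\alpha}}$, $\eps>0$.

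It then remains to identify $u$ with $u'$. Both lie in $\Hb^{s,r}(\Omega)^{\bullet,-}$ and solve $Pu=f$, $Pu'=f$, so $w:=u-u'\in\Hb^{s,r}(\Omega)^{\bullet,-}$ satisfies $Pw=0$. The uniqueness statement in Lemma~\ref{LemmaWaveGlobalSolve} forces $w=0$; unwinding it, one first bootstraps $w$ from $\Hb^{s,r}$ to $\Hb^{2,r}$ by iterating the microlocal propagation estimates of Propositions~\ref{PropTameElliptic}, \ref{PropTameRealPrType}, \ref{PropTameRadial} and Theorem~\ref{ThmTameHypTr} (the right-hand sides are finite since $Pw=0$ is smooth, $w$ has a priori regularity $\Hb^{s,r}$ with $s\ge 0$, and the non-trapping hypothesis routes every bicharacteristic either into $H_1$ or into the radial/trapped sets), and then applies the forward energy estimate of Lemma~\ref{LemmaWaveGlobalEnergy}, using that $w$ vanishes at $H_1$, to get $\|w\|_{\Hb^{1,r}(\Omega)^{\bullet,-}}\le C\|Pw\|_{\Hb^{0,r}(\Omega)^{\bullet,-}}=0$. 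Hence $u=u'$, and $u$ inherits the estimate \eqref{EqGlobalTame}, which is the assertion.

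There is essentially no new computational content here beyond Lemma~\ref{LemmaWaveGlobalSolve}: the corollary is simply that lemma's estimate applied to $f=Pu$ once one knows $u$ \emph{is} the solution it constructs. The only point requiring mild care — and the closest thing to an obstacle — is the uniqueness/bootstrap step, which must survive the skew-adjoint part of $P_0(v_0)$; but this is handled by the running assumption that $v$ is small, so that the radial-point and normally hyperbolic trapping thresholds, hence the relevant propagation estimates, are undisturbed.
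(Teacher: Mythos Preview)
Your approach is correct and is essentially what the paper does (it simply cites the analogous corollary in \cite{HintzQuasilinearDS}): the corollary is the a~priori reformulation of Lemma~\ref{LemmaWaveGlobalSolve}, obtained by applying that lemma to $f=Pu$ and invoking its uniqueness clause to identify $u$ with the solution it produces. One minor remark: the ``unwinding'' paragraph is superfluous, since uniqueness in $\Hb^{s,r}(\Omega)^{\bullet,-}$ is already part of the statement of Lemma~\ref{LemmaWaveGlobalSolve}; you can simply quote it rather than re-deriving it via bootstrap and the energy estimate.
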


\begin{cor}
\label{CorWavePropSing}
  Let $s_0>n/2+1/2$, $s_0\leq s'\leq s\leq\tilde s$, and assume $\tilde s>n/2+6$; moreover, let $r<0$. Then there is $C>0$ such that the following holds: Any $u\in\Hb^{s',r}(\Omega)^{\bullet,-}$ with $Pu\in\Hb^{s-1,r}(\Omega)^{\bullet,-}$ in fact satisfies $u\in\Hb^{s,r}(\Omega)^{\bullet,-}$, and obeys the estimate
  \begin{align*}
    \|u\|_{\Hb^{s,r}(\Omega)^{\bullet,-}}&\lesssim \|Pu\|_{\Hb^{s-1,r}(\Omega)^{\bullet,-}}+\|u\|_{\Hb^{s',r}(\Omega)^{\bullet,-}} \\
	  &\hspace{1cm}+(\|Pu\|_{\Hb^{s_0,r}(\Omega)^{\bullet,-}}+\|u\|_{\Hb^{s_0+1,r}(\Omega)^{\bullet,-}})\|v\|_{\cX^{\tilde s,\alpha}}.
  \end{align*}
\end{cor}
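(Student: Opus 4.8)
The plan is to deduce this from a finite iteration of the microlocal regularity results of Section~\ref{SubsecTameMicrolocal}. Since $P$ is second order, each of Proposition~\ref{PropTameElliptic} (elliptic regularity), Proposition~\ref{PropTameRealPrType} (real principal type), Proposition~\ref{PropTameRadial} (radial points), and Theorem~\ref{ThmTameHypTr} (normally hyperbolic trapping, which for $r<0$ loses only the a priori control and hence behaves like a real principal type estimate) upgrades a microlocal a priori bound $u\in\Hb^{\sigma-1/2,r}$ together with $Pu\in\Hb^{\sigma-1,r}$ to $u\in\Hb^{\sigma,r}$ microlocally, modulo a priori control on the relevant micro-region, and does so with a tame estimate in the sense of \eqref{EqDefTame}. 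So I would first reduce the corollary to the one-step claim: if $u\in\Hb^{\sigma-1/2,r}(\Omega)^{\bullet,-}$, $Pu\in\Hb^{\sigma-1,r}(\Omega)^{\bullet,-}$ with $s_0\le\sigma-1/2$ and $\sigma\le s$, then $u\in\Hb^{\sigma,r}(\Omega)^{\bullet,-}$ and the estimate of the corollary holds with $s$ replaced by $\sigma$; then iterate, starting at $\sigma=\min(s'+1/2,s)$ with a priori control $\|u\|_{\Hb^{s',r}(\Omega)^{\bullet,-}}$ and raising $\sigma$ by at most $1/2$ per step until $\sigma=s$ (the initial steps, where the target order is small, being covered by the low-regularity forms of the same estimates). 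The number of steps depends only on $s-s'$, a composition of finitely many tame estimates is again tame, and the hypotheses \eqref{EqNTCond} together with the orders required in Propositions~\ref{PropTameRealPrType}--\ref{PropTameRadial} all hold in the stated range $\tilde s>n/2+6$; the threshold $s_0>n/2+1/2$ and the appearance of $\|u\|_{\Hb^{s_0+1,r}}$ one order above the forcing norm $\|Pu\|_{\Hb^{s_0,r}}$ (reflecting the half-order a priori loss of the second-order propagation estimates) are then read off from the low-regularity slots, exactly as for the corresponding bound in Lemma~\ref{LemmaWaveGlobalSolve}.

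For the single step I would establish $\Hb^{\sigma,r}$-regularity microlocally over all of $\Sb^*_\Omega M$ by organizing the propagation along the non-trapping dynamics of assumption~(9): elliptic regularity (Proposition~\ref{PropTameElliptic}) covers $\Sb^*_\Omega M\setminus\Sigma_\Omega$; near $H_1$, where $u$ is supported and $d\ft_1$ is timelike, a standard hyperbolic energy estimate in the supported sense at order $\sigma$ (cf.\ Lemma~\ref{LemmaWaveGlobalEnergy} and \cite[Remark~7.4]{HintzQuasilinearDS}) yields $u\in\Hb^{\sigma,r}$ near $\Sb^*_{H_1}M$; near $H_2$, extendibility of $u$ and timelikeness of $d\ft_2$ let regularity be propagated up to and into $\Sb^*_{H_2}M$; and on $\Sigma_\Omega$, every bicharacteristic through a point of $\Sigma_\Omega\cap\bigl(\Sigma_\pm\setminus(L_\pm\cup\Gamma^\pm)\bigr)$ reaches $\Sb^*_{H_1}M$ or $\Sb^*_{H_2}M$ in one direction --- unless it tends to $L_\pm$ or $\Gamma^\pm$ --- so that Proposition~\ref{PropTameRealPrType}, fed by the a priori bound $u\in\Hb^{\sigma-1/2,r}$ along the way, carries the $\Hb^{\sigma,r}$-control there to the whole segment. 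At the radial sets $L_\pm$ I would invoke Proposition~\ref{PropTameRadial} in the form adapted to the transverse dynamics --- the interior branch $\cL_\pm\cap\{x>0\}$ flows out to an artificial boundary and hence already carries $\Hb^{\sigma,r}$-control, and the complementary in-boundary control is produced likewise --- with the threshold conditions attached to \eqref{EqRadialParams} satisfied for $r<0$ since $\tilde\beta>0$ and the skew-adjoint part of $P_0$ is small, precisely as in the forward setup of \cite{VasyMicroKerrdS} and \cite[\S7.2]{HintzQuasilinearDS}. At the trapped sets $\Gamma^\pm$, Theorem~\ref{ThmTameHypTr} applies for $r<0$ (below the threshold weight there, which is $0$ in the principal model case $L=0$) and propagates control from the neighbouring bicharacteristics, losing only the a priori $\Hb^{\sigma-1/2,r}$ bound already at hand. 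Piecing the resulting microlocal estimates together over $\Sb^*_\Omega M$ then gives $u\in\Hb^{\sigma,r}(\Omega)^{\bullet,-}$ with the claimed bound.

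Up to replacing the smooth-coefficient and non-tame microlocal estimates by their tame analogues from Section~\ref{SubsecTameMicrolocal}, this is the argument behind \cite[Lemma~7.5 and Corollary~7.7]{HintzQuasilinearDS} and \cite[Theorem~3.2]{HintzVasyNormHyp}; in particular, since --- unlike in Lemma~\ref{LemmaWaveGlobalSolve} --- no solvability is asserted here, the energy estimate enters only to seed the propagation near $H_1$, which is precisely why the conclusion holds for \emph{all} $r<0$ rather than merely for $r\le r_0$. The main obstacle I anticipate is purely organizational: one must fix a finite cover of $\Sb^*_\Omega M$ and an order for the propagation steps so that each step draws only on control already established (or on the support/extendibility hypotheses), and must check that every step falls under one of Propositions~\ref{PropTameElliptic}, \ref{PropTameRealPrType}, \ref{PropTameRadial} or Theorem~\ref{ThmTameHypTr} with admissible differential orders. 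This bookkeeping is exactly what the non-trapping assumption~(9) is designed to make possible, and it runs as in the cited works; the genuinely new point is only that each constituent estimate is tame, whence so is their finite composition.
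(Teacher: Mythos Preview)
Your proposal is correct and takes essentially the same approach as the paper's (very brief) proof, which simply defers to \cite[Corollary~7.7]{HintzQuasilinearDS} and singles out the one additional check: that the threshold regularity condition at the radial sets $L_\pm$ (case~\itref{EnumThmRadIntoBdy} of Proposition~\ref{PropTameRadial}) is met at the initial step of the iteration, which holds since $s'\geq s_0>n/2+1/2\geq 1$ and the contribution $-r\tilde\beta$ is positive for $r<0$. You address exactly this point when you note that the threshold conditions attached to \eqref{EqRadialParams} are satisfied for $r<0$.
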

\begin{proof}
  The proof of the two corollaries is as in the cited reference. For the radial point estimate involved in the proof of Corollary~\ref{CorWavePropSing}, we need the additional assumption $s'-1+\sup_{L_\pm}(r\tilde\beta)>0$, which however is automatically satisfied since $s'\geq 1$ and the $\sup$ is negative for $r<0$.
\end{proof}

We now note that the Mellin transformed normal operator $\hat N(P)(\sigma)$ satisfies global large parameter estimates corresponding to the semiclassical microlocal estimates of Theorem~\ref{ThmNHypSemiMic}. In order to state this precisely we recall the connection between the b-structure, the normal operator (and the large parameter algebra) and the Mellin transform of the latter.

The weighted b-Sobolev spaces $\Hb^{s,\gamma}([0,\infty)\times X)$ are isometric to the large parameter Sobolev spaces on $X$ on the line $\Im\sigma=-\gamma$ in $\Cx$ via the Mellin transform $\cM$; see \cite[Equation~(3.8)]{VasyMicroKerrdS}. Further, the latter can be described in terms of semiclassical Sobolev spaces, namely the restriction $r_{-\gamma}\circ\cM$ to $\Im\sigma=-\gamma$ of the Mellin transform identifies $\Hb^{s,\gamma}([0,\infty)\times X)$ with
\[
\langle|\sigma|\rangle^{-s} L^2(\RR; H^s_{\langle|\sigma|\rangle^{-1}}(X));
\]
see \cite[Equation~(3.9)]{VasyMicroKerrdS}. Here, the norm on the space $H_h^s$ on $\R^n$ is defined by
\[
  \|u\|_{H_h^s}^2 = \int \la h\xi\ra^{2s}|\hat u(\xi)|^2\,d\xi,\quad h>0,
\]
and in general using a partition of unity.

Now, in order to relate b-microlocal analysis with semiclassical analysis, we first identify $\varpi+\sigma\frac{dx}{x}\in\Tb^*([0,\infty)\times X)$, $\varpi\in T^*X$, with $(\sigma,\varpi)\in \RR\times T^*X$. Under the semiclassical rescaling, say by $|\sigma|^{-1}$, one identifies the latter with $h=|\sigma|^{-1}$, $\tilde\varpi=|\sigma|^{-1}\varpi$. In particular, if a conic set is disjoint from $T^*X$ in $\Tb^*([0,\infty)\times X)$, then its image under the semiclassical identification lies in a compact subset of $T^*X$. Thus, for $B\in\Psib^0([0,\infty)\times X)$ dilation invariant, the large parameter principal symbol and wave front set of the Mellin conjugate $\cM B\cM^{-1}=\hat B$ of $B$ are exactly those of $B$ under the above identification of $\varpi+\sigma\frac{dx}{x}\in\Tb^*([0,\infty)\times X)$, $\varpi\in T^*X$, with $(\sigma,\varpi)\in \RR\times T^*X$, and then the analogous statement also holds for $\hat B$ considered as an element $\tilde B$ of $\Psih(X)$ under the semiclassical identification.  In particular, one has, for $B\in\Psib^0([0,\infty)\times X)$ dilation invariant, with $\WFb'(B)\cap T^*X=\emptyset$, that $\tilde B\in\Psi_{|\sigma|^{-1}}^{-\infty}(X)$, with semiclassical wave front set in a compact subset of $T^*X$. Correspondingly, for any $s_0$,
$$
\|Bu\|_{\Hb^{s,\gamma}}^2\lesssim \int_{\sigma\in\RR,\ |\sigma|>h_0^{-1}} |\sigma|^{2s}\|\hat B \cM u(.-i\gamma)\|^2_{L^2}\,d\sigma+\|u\|_{\Hb^{s_0,\gamma}}^2.
$$

Now if $P_0=P_0(v_0)\in\Psib^m(M)$, then $N(P_0)$ is dilation invariant on
$[0,\infty)\times X$, and its conjugate by the Mellin transform is
$\hat P_0=\hat N(P_0)$, whose rescaling $\tilde P_0=|\sigma|^{-m}\hat
P_0$ is an element of $\Psih^m(X)$. Further, with $P_0$ b-normally
hyperbolic in the sense discussed above (with the convention changed
regarding formal self-adjointness, as stated before Theorem~\ref{ThmNHypSemiMic}), $\tilde P_0$ is normally
hyperbolic in the sense of \cite{DyatlovNormally}. Fix a smooth b-density on $M$ near
$X$, identified with $[0,\ep_0)\times X$ as above; we require this to
be of the product form $\frac{|dx|}{x}\,\nu$, $\nu$ a smooth density
on $X$; we compute adjoints with respect to this density. Then for any
$B\in\Psib^m(M)$, $\widehat{B^*}(\sigma)=(\hat B(\overline{\sigma}))^*$, see \cite[\S3.3]{VasyMicroKerrdS} for
differential operators, and by a straightforward calculation using the
Mellin transform in general. In particular, if $B=B^*$, then $\hat B(\sigma)=(\hat B(\sigma))^*$ for $\sigma\in\RR$.
Relaxing \eqref{EqP0Form} momentarily, we then assume that
\begin{equation}
\label{EqSkewAdjointPart}
  \frac{1}{2i}(P_0-P_0^*)\in \Psib^{m-1}(M),\qquad \sigma_{\bl,m-1}\Big(\frac{1}{2i}(P_0-P_0^*)\Big)\Big|_\Gamma<|\sigma|^{m-1}\numin/2,
\end{equation}
with $\numin$ the minimal normal expansion rate for the Hamilton flow of the principal symbol of $P_0$ at $\Gamma\subset\Tb^*_XM$, as above; note that $\sigma$ is elliptic on $\Gamma$. This gives that for $\sigma\in\RR$, $\hat P_0(\sigma)-\hat P_0(\sigma)^*$ is order $m-1$ in the large parameter pseudodifferential algebra, so, defining $z=\sigma/|\sigma|$, the semiclassical version gives
$$
\tilde P_0-\tilde P_0^*\in h\Psih^{m-1}(X),\ z\in\RR,
$$
with
$$
\sigma_{\semi,m-1}\Big(\frac{1}{2ih}(\tilde P_0-\tilde P_0^*)\Big)\Big|_{\tilde\Gamma}<\numin/2,\ z\in\RR,
$$
where $\tilde\Gamma$ is the image of $\Gamma$ under the semiclassical identification.
In particular, there is $\gamma_\Gamma>0$ and $\beta_\Gamma<\numin/2$ such that
if $|\Im z|< h\gamma_\Gamma$ then
\begin{equation}\label{EqMaxWeight}
\sigma_{\semi,m-1}\Big(\frac{1}{2ih}(\tilde P_0-\tilde P_0^*)\Big)\Big|_{\tilde\Gamma}<\beta_\Gamma.
\end{equation}
With this background, under our assumptions on the dynamics, propagating estimates from the radial points towards $H_2$, in particular through $\tilde\Gamma$, and using the uniformity in parameters described above Theorem~\ref{ThmNHypSemiMic}, we have:

\begin{thm}
\label{ThmNHypSemiGlobal}
Let $C_0>0$. Suppose $P_0=P_0(v_0)$ satisfies \eqref{EqSkewAdjointPart} at $\Gamma$, $\tilde P_0$ is the semiclassical rescaling of $\hat P_0=\hat N(P_0)$, $s>1/2+\sup(\tilde\beta)\gamma$, $s>1$, $\gamma<\gamma_\Gamma$, $\gamma_\Gamma>0$ as in \eqref{EqMaxWeight}. Then there is $h_0>0$ such that for $h<h_0$, $|\Im z|<h\gamma$,
\begin{equation}
\label{EqNHypSemiGlobal}
  \| u\|_{H^s_h}\lesssim h^{-2}\| \tilde P_0 u\|_{H^{s-m+1}_h},
\end{equation}
with the implied constant and $h_0$ uniform in $v_0$ with $|v_0|\leq C_0$.
\end{thm}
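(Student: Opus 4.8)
The plan is to obtain \eqref{EqNHypSemiGlobal} as a global large‑parameter estimate for the Mellin‑transformed normal operator, via its semiclassical rescaling, by gluing together along the bicharacteristic flow of $\tilde P_0$ the semiclassical microlocal estimates assembled above — elliptic regularity, real principal type propagation, radial point estimates, and the normally hyperbolic trapping estimate of Theorem~\ref{ThmNHypSemiMic} — and then checking that the resulting total loss is $h^{-2}$. The preliminary step is the semiclassical reduction: by the Mellin transform and the rescaling $h=|\sigma|^{-1}$, $z=\sigma/|\sigma|$ discussed above, $\tilde P_0=\tilde P_0(v_0,z)=|\sigma|^{-m}\hat N(P_0)(\sigma)\in\Psih^m(X)$ has a real principal symbol, depends continuously on $(v_0,z)$ over the relevant compact parameter set, and by \eqref{EqSkewAdjointPart}--\eqref{EqMaxWeight} its skew‑adjoint part satisfies $\frac{1}{2ih}(\tilde P_0-\tilde P_0^*)\in h\Psih^{m-1}(X)$ with semiclassical principal symbol $<\beta_\Gamma<\nu_{\min}/2$ at $\tilde\Gamma$ whenever $|\Im z|<h\gamma$. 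Splitting this skew‑adjoint part off as a complex absorbing term $-i\tilde Q_0$, microlocally supported near $\tilde\Gamma$ with $h^{-1}\tilde Q_0$ of principal symbol $>-\beta_\Gamma$ there, makes $\tilde P_0$ normally hyperbolic in Dyatlov's sense; by the uniformity discussion preceding Theorem~\ref{ThmNHypSemiMic}, all constants and the threshold $h_0$ are then uniform over $|v_0|\le C_0$ and over $z$ with $|\Im z|<h\gamma$.

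Next I would record the dynamical picture on the boundary. Restricting the non‑trapping assumption on $g$ to $\Sb^*_X M$ (which is flow‑invariant) and transporting it to $T^*(\Omega\cap X)$ under the semiclassical identification, every semiclassical null‑bicharacteristic of $\tilde P_0$ not contained in a radial set $\tilde L_\pm$ or in $\tilde\Gamma$ tends, in one of the two directions, to $\tilde L_\pm\cup\tilde\Gamma$, and in the \emph{other} direction either reaches a radial set $\tilde L_\pm$ or exits transversally through the extendible artificial face $H_2\cap X$ — recall $H_1\cap X=\emptyset$, so at $X$ this is the only artificial boundary, and it is outflow — while bicharacteristics tend to $\tilde\Gamma$ in only one of the two directions. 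I would then fix a finite partition of unity by semiclassical pseudodifferential operators subordinate to the elliptic set of $\tilde P_0$, to small neighborhoods of $\tilde L_\pm$, to a small neighborhood of $\tilde\Gamma$ as in Theorem~\ref{ThmNHypSemiMic}, and to finitely many tubes along compact bicharacteristic arcs covering the remaining part of the characteristic set; since $\tilde\Gamma$ lies in the interior of $T^*(\Omega\cap X)$, the trapping estimate there is unaffected by the face $H_2\cap X$.

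Then I would assemble the estimate piece by piece. On the elliptic set, semiclassical elliptic regularity gives microlocal $H^s_\semi$‑control by $\|\tilde P_0 u\|_{H^{s-m}_\semi}$. At the radial set which is a source for the flow, the semiclassical radial point estimate controls $u$ microlocally there by $h^{-1}\|\tilde P_0 u\|_{H^{s-m+1}_\semi}+h^N\|u\|_{H^{s_0}_\semi}$ with \emph{no} a priori control term, the hypotheses $s>1/2+\sup(\tilde\beta)\gamma$ and $s>1$ supplying exactly the regularity thresholds needed at the radial sets and where $u$ is propagated into $H_2\cap X$; real principal type propagation along the tubes (lossless apart from an $h^{-1}\|\tilde P_0 u\|$‑term per step) then carries this microlocal control to the whole characteristic set, in particular to the stable side of $\tilde\Gamma$ where $\WFh'(\tilde B_2)$ of Theorem~\ref{ThmNHypSemiMic} lives, so that the a priori term $h^{-1}\|\tilde B_2 u\|_{H^s_\semi}$ appearing there is bounded by $h^{-2}\|\tilde P_0 u\|_{H^{s-m+1}_\semi}+h^{N-1}\|u\|_{H^{s_0}_\semi}$ — matching the $h^{-2}$ produced directly by \eqref{eq:tilde-P_0-est} inside Theorem~\ref{ThmNHypSemiMic}; at the sink radial set the corresponding radial estimate applies, and at the outflow face $H_2\cap X$, where $u$ is extendible, the propagation closes with no further input. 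Summing the finitely many contributions and shrinking $h_0$ so as to absorb every error term $h^N\|u\|_{H^{s_0}_\semi}$ into the left‑hand side — legitimate since $N$ and $s_0$ are arbitrary and all cutoffs have semiclassical wave front set in a fixed compact subset of $T^*(\Omega\cap X)$ — yields \eqref{EqNHypSemiGlobal}, the uniformity in $v_0$ being inherited from the reduction step.

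The main obstacle is the bookkeeping at $\tilde\Gamma$: the propagation must be ordered so that the a priori region $\WFh'(\tilde B_2)$ of Theorem~\ref{ThmNHypSemiMic} is controlled \emph{before} the trapping estimate is invoked, which is possible exactly because bicharacteristics approach $\tilde\Gamma$ in only one of the two directions — otherwise the scheme would be circular — and one must check that feeding the propagated bound for $\tilde B_2 u$ back into the $h^{-1}\|\tilde B_2 u\|$‑term does not worsen the loss beyond $h^{-2}$. The remainder is a lengthy but routine gluing of the microlocal estimates, parallel to the non‑trapping Fredholm arguments of \cite{VasyMicroKerrdS} and \cite{HintzQuasilinearDS}, with the trapped region handled solely by Theorem~\ref{ThmNHypSemiMic}.
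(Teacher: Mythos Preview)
Your proposal is correct and follows essentially the same approach as the paper: piece together the semiclassical microlocal estimates --- elliptic regularity, radial point estimates (where $s>1/2+\sup(\tilde\beta)\gamma$ enters), real principal type propagation, and the normally hyperbolic trapping estimate of Theorem~\ref{ThmNHypSemiMic} (where $\gamma<\gamma_\Gamma$ enters and the $h^{-2}$ loss arises) --- along the bicharacteristic flow, terminating at $H_2\cap X$ (where $s>1$ enters). The paper's own proof is a terse one-paragraph version of exactly this gluing argument, and also notes the alternative route via Dyatlov's global estimate combined with the Datchev--Vasy gluing as in \cite[Theorem~2.17]{VasyMicroKerrdS}; your write-up is simply a more detailed execution of the same strategy.
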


\begin{proof}
This is immediate from piecing together the semiclassical propagation estimates from radial points (which is where $s>1/2+\sup(\tilde\beta)\gamma$ is used, see the corresponding statement in the b-setting given in \cite[Proposition~2.1, Footnote~20]{HintzVasySemilinear}) through $\tilde\Gamma$, using Theorem~\ref{ThmNHypSemiMic}, which is where $\gamma<\gamma_\Gamma$ is used and where $h^{-2}$, rather than $h^{-1}$, is obtained for the right hand side, to $H_2\cap X$, which is where $s>1$ is used.

An alternative proof would be using Dyatlov's setting \cite{DyatlovNormally} directly, together with the gluing of Datchev and Vasy \cite{DatchevVasyGluing}, exactly as described in \cite[Theorem~2.17]{VasyMicroKerrdS}.
\end{proof}

Going back to the operator $P_0(v_0)$ satisfying the conditions stated at the beginning of this section, and under the additional assumption of uniform normal hyperbolicity as explained above, we can now obtain partial expansions of solutions to $Pu=f$ at infinity, i.e.\ at $X$:

\begin{thm}
\label{ThmExpansion}
  (Cf.\ \cite[Theorem~7.9]{HintzQuasilinearDS}.) Let $0<\alpha<\min(1,\gamma_\Gamma)$. Suppose $P$ has a simple rank $1$ resonance at $0$ with resonant state $1$, and that all other resonances have imaginary part less than $-\alpha$. Let $\tilde s>n/2+6$, $s_0>\max(n/2+1/2,1+\sup(\tilde\beta)\alpha)$, and assume $s_0\leq s\leq\tilde s-4$. Let $0\neq r\leq\alpha$. Then any solution $u\in\Hb^{s+4,r_0}(\Omega)^{\bullet,-}$ of $Pu=f$ with $f\in\Hb^{s+3,r}(\Omega)^{\bullet,-}$ satisfies $u\in\cX^{s',r}$ with $s'=s+4$ for $r<0$ and $s'=s$ for $r>0$, and the following tame estimate holds:
  \begin{align*}
    \|u\|_{\cX^{s',r}}&\lesssim \|f\|_{\Hb^{s+3,r}(\Omega)^{\bullet,-}} + \|u\|_{\Hb^{s+4,r_0}(\Omega)^{\bullet,-}} \\
	  &\hspace{1cm} + (\|f\|_{\Hb^{s_0,r}(\Omega)^{\bullet,-}}+\|u\|_{\Hb^{s_0+1,r_0}(\Omega)^{\bullet,-}})\|v\|_{\cX^{\tilde s,\alpha}}.
  \end{align*}
\end{thm}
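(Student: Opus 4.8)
The plan is to adapt the proof of \cite[Theorem~7.9]{HintzQuasilinearDS}, the only structural change being that the non-trapping high energy estimate for the Mellin-transformed normal operator used there is replaced by the normally hyperbolic trapping estimate of Theorem~\ref{ThmNHypSemiGlobal}, with care taken throughout to preserve tameness. First I would record the relevant properties of $N(P_0)$. Since $P_0-N(P_0)\in x\Diffb^2(M)$ and $\tilde P\in\Hb^{\tilde s,\alpha}\Diffb^2(M)$ with $0<\alpha<1$, the normal operator of $P$ is $N(P_0)$. By the hypothesis on resonances, $\hat N(P_0)(\sigma)^{-1}$ is meromorphic on $\Im\sigma>-\alpha$ with a single simple rank-one pole at $\sigma=0$, the range of whose residue is spanned by the constant $1$; combining this with the bound \eqref{EqSkewAdjointPart} on the skew-adjoint part of $P_0$ at $\Gamma$, the condition $\alpha<\gamma_\Gamma$, and the propagation of singularities from the radial sets $L_\pm$ through $\Gamma$ towards $H_2$, Theorem~\ref{ThmNHypSemiGlobal} shows that $\hat N(P_0)(\sigma)^{-1}$ obeys polynomial high energy bounds on each line $\Im\sigma=-r'$, $0\ne r'\le\alpha$, with a fixed loss of powers of $\sigma$ and of derivatives (reflecting the $h^{-2}$ and the shift $H^{s-m}\to H^{s-m+1}$ in that theorem) caused by the trapping; these bounds are uniform for $|v_0|$ bounded. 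In the b-language these are precisely the estimates needed to invert $N(P_0)$ on weighted b-Sobolev spaces modulo the contribution of the pole at $0$.

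Next, I would run the weight-improvement bootstrap. Starting from the a priori control $u\in\Hb^{s+4,r_0}(\Omega)^{\bullet,-}$, rewrite $Pu=f$ as
\[
  N(P_0)u=f-\tilde P u-(P_0-N(P_0))u.
\]
By the module (weak Moser) estimate Corollary~\ref{CorHbModule} — using $\tilde s>n/2$ and $s+2\le\tilde s$ — one has $\tilde P u\in\Hb^{s+2,r_0+\alpha}$ and $(P_0-N(P_0))u\in\Hb^{s+2,r_0+1}$, so the right-hand side gains $\alpha$ in weight. Mellin-transforming and shifting the contour in the inversion formula from $\Im\sigma=-r_0$ down to $\Im\sigma=-\min(r,r_0+\alpha)$ — which crosses $\sigma=0$ only when this endpoint is positive, hence at most once, in the final step when $r>0$, where the residue contributes the term $c\chi$ with $\chi\in\CI(M)$ identically $1$ near $\pa M$ — yields $u$ (respectively $u-c\chi$) at the improved weight with a fixed bounded loss of regularity, hence still in $\Hb^{s_0,\cdot}$ since $s\ge s_0$. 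Corollary~\ref{CorWavePropSing}, applicable at any negative weight with forcing $Pu=f\in\Hb^{s+3,\cdot}$ (which is where the hypothesis $f\in\Hb^{s+3,r}$ enters), then restores the regularity $s+4$ at that weight. Iterating this bootstrap a fixed finite number ($\lceil(r-r_0)/\alpha\rceil$) of times reaches the decay rate $r$: for $r<0$ the pole is never crossed and $u\in\Hb^{s+4,r}$, while for $r>0$ the single crossing of $0$ produces $u=c\chi+\tilde u$ with $\tilde u$ in some $\Hb^{s_0,r}$.

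Finally, to close the case $r>0$, write $u=c\chi+\tilde u$ and note that $N(P_0)(c\chi)$ is supported away from $\pa M$, $(P_0-N(P_0))(c\chi)\in x\CI(M)$ and $\tilde P(c\chi)\in\Hb^{\tilde s,\alpha}(M)$, so $P\tilde u=f-P(c\chi)\in\Hb^{s+3,r}(\Omega)^{\bullet,-}$, using $s+3\le\tilde s$, i.e.\ the hypothesis $s\le\tilde s-4$; propagating regularity for $\tilde u$ at the positive weight $r$ (real principal type, radial points with threshold $s_0>1+\sup(\tilde\beta)\alpha$, and normally hyperbolic trapping via Theorem~\ref{ThmNHypSemiMic}) gives $\tilde u\in\Hb^{s,r}$, the four lost derivatives being the trapping ($h^{-2}$) and radial-point losses that cannot be absorbed when $r>0$. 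For tameness, every ingredient used above is tame — the microlocal estimates of Propositions~\ref{PropTameElliptic}, \ref{PropTameRealPrType} and \ref{PropTameRadial} and of Theorem~\ref{ThmTameHypTr}; the high energy estimates for $N(P_0)$, which involve only fixed smooth operators and are therefore trivially tame and uniform in $v_0$; and the products $\tilde P u$, $\tilde P(c\chi)$, estimated tamely by Corollary~\ref{CorHbModule} — so composing them through the finitely many bootstrap steps produces an estimate of the form \eqref{EqDefTame}, namely a high norm of $f$ and of the a priori quantity $\|u\|_{\Hb^{s+4,r_0}(\Omega)^{\bullet,-}}$ plus low-regularity norms of $f$ and $u$ times $\|v\|_{\cX^{\tilde s,\alpha}}$, with the implicit constant controlled by a low-regularity norm of $v$. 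The main obstacle is the regularity bookkeeping in the iteration: one must check that the derivative loss from the trapping-afflicted inversion of $N(P_0)$ at each step is recovered by Corollary~\ref{CorWavePropSing}, so that the total loss is a fixed number independent of the number of steps, and that these two rather different kinds of estimates compose tamely; a secondary point is the careful handling of the contour shift across $\sigma=0$, the extraction of $c\chi$, and the verification that $P(c\chi)$ has the decay $r$ needed for the $r>0$ case.
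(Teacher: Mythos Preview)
Your overall strategy matches the paper's: rewrite $Pu=f$ as $N(P_0)u=f-(P-N(P_0))u$, iterate the contour-shift argument to improve the weight by $\alpha$ at each step, use Corollary~\ref{CorWavePropSing} at negative weights to restore the regularity lost, and for $r>0$ pick up the constant $c\chi$ when crossing $\sigma=0$. The tameness discussion is also correct.

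There is however a genuine gap in your final step for $r>0$. After crossing the pole you claim $\tilde u\in\Hb^{s_0,r}$ and then propose to ``propagate regularity for $\tilde u$ at the positive weight $r$'' through real principal type, radial points, and ``normally hyperbolic trapping via Theorem~\ref{ThmNHypSemiMic}''. This last step does not work: Theorem~\ref{ThmNHypSemiMic} is a \emph{semiclassical} estimate on $X$ for the Mellin-transformed normal operator, not a b-estimate on $M$. The only b-propagation result through $\Gamma$ is Theorem~\ref{ThmTameHypTr}, and the direction relevant to the forward problem (a priori control off $\Gamma_+$) requires the weight to lie \emph{below} the threshold $-\sup_\Gamma\rho^{-m+1}\sigma_{\bl,m-1}(E_1)/c_\pa$, which here equals $0$ since $L(0)-L(0)^*\in\Diffb^0$ forces $\sigma_{\bl,1}(E_1)=0$. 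So there is no b-microlocal regularity propagation through $\Gamma$ at weight $r>0$, and hence no analogue of Corollary~\ref{CorWavePropSing} to invoke.

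The paper closes this by sharper bookkeeping rather than by a new estimate: once the negative-weight iteration gives $u\in\Hb^{s+4,r_0}$ for every $r_0<0$, at most two further normal-operator steps are needed to reach weight $r$ (one if $r<\alpha$, two if $r=\alpha$, since each step gains $\alpha$), and the high-energy trapping bound \eqref{EqNHypSemiGlobal} costs two derivatives per step. One therefore lands directly at $u'\in\Hb^{s+2,r}$ (respectively $\Hb^{s,r}$) with no regularity recovery needed at positive weight. Your paragraph~2 is thus too pessimistic in dropping to $\Hb^{s_0,r}$; track the loss per positive-weight step as $2$, and your problematic paragraph~3 becomes unnecessary.
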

\begin{proof}
  The proof works in the same way as in the reference by an iterative argument that consists of rewriting $Pu=f$ as $N(P)u=f-(P-N(P))u$ and employing a contour deformation argument, see \cite[Lemma~3.1]{VasyMicroKerrdS} (which uses high-energy estimates for the inverse normal operator family $\widehat P(\sigma)^{-1}$ and the location of resonances, i.e.\ of the poles of this family), to improve on the decay of $u$ by $\alpha$ in each step, but losing an order of differentiability as we are treating $P-N(P)$ as an error term; using tame microlocal regularity for the equation $Pu=f$, Corollary~\ref{CorWavePropSing}, one can regain this loss. We obtain $u\in\Hb^{s+4,r}$ after a finite number of iterations in case $r<0$, and $u\in\Hb^{s+4,r_0}$ for all $r_0<0$ in case $r>0$.

  Assuming we are in the latter case, the next step of the iteration gives a partial expansion $u=c+u'$ with $c\in\C$ (identified, as before, with $c\chi $, where $\chi$ is a smooth cutoff near the boundary) and $u'\in\Hb^{s+2,r'}$ for any $r'$ satisfying $r'\leq r$ and $r'<\alpha$; here, we need $0<\alpha<\gamma_\Gamma$ so that the normally hyperbolic trapping estimate \eqref{EqNHypSemiGlobal} holds with $\gamma>\alpha$, with loss of two derivatives. If $r=\alpha$, we can use this information to deduce
  \[
    N(P)u=f-(P-N(P))u=f-\tilde f,\quad \tilde f\in \Hb^{\tilde s,\alpha}+\Hb^{s,r'+\alpha}\subset\Hb^{s,r},
  \]
  which implies that the expansion $u=c+u'$ in fact holds with the membership $u'\in\Hb^{s,r}$; notice the improvement in the weight. Therefore, $u\in\cX^{s,r}$, finishing the proof.
\end{proof}

Pipelining this result with the existence of solutions, Lemma~\ref{LemmaWaveGlobalSolve}, we therefore obtain:
\begin{thm}
\label{ThmTameSolutionOp}
  Under the assumptions of Theorem~\ref{ThmExpansion} with $r>0$ and $s>n/2+2$, define the space
  \[
    \cY^{s,r}=\{u\in\cX^{s,r}\colon Pu\in\Hb^{s+3,r}(\Omega)^{\bullet,-}\}.
  \]
  Then the operator $P\colon\cY^{s,r}\to\Hb^{s+3,r}(\Omega)^{\bullet,-}$ has a continuous inverse $S$ that satisfies the tame estimate
  \begin{equation}
  \label{EqTameSolOp}
    \|Sf\|_{\cX^{s,r}}\leq C(s,\|v\|_{\cX^{s_0,\alpha}})(\|f\|_{\Hb^{s+3,r}(\Omega)^{\bullet,-}}+\|f\|_{\Hb^{s_0,r}(\Omega)^{\bullet,-}}\|v\|_{\cX^{s+4,\alpha}}).
  \end{equation}
\end{thm}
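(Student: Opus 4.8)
The plan is to realize $S$ as the composition of the low-weight forward solution operator furnished by Lemma~\ref{LemmaWaveGlobalSolve} with the asymptotic-expansion upgrade of Theorem~\ref{ThmExpansion}; no new analytic input is required, the content being the bookkeeping that keeps the resulting estimate tame. I would fix $\tilde s=s+4$: this is admissible precisely because the hypothesis $s>n/2+2$ forces $\tilde s>n/2+6$, so that both Lemma~\ref{LemmaWaveGlobalSolve} and Theorem~\ref{ThmExpansion} apply with this value, and the parameter $\tilde s$ appearing in the estimate of Theorem~\ref{ThmExpansion} is then exactly $s+4$.

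Given $f\in\Hb^{s+3,r}(\Omega)^{\bullet,-}$ with $0<r\leq\alpha$, I would first observe that, since $\Omega$ is compact and $r>0>r_0$, one has $\Hb^{s',r}(\Omega)^{\bullet,-}\subset\Hb^{s',r_0}(\Omega)^{\bullet,-}$ with norm control, for every $s'$. Hence Lemma~\ref{LemmaWaveGlobalSolve}, applied at regularity $s+4$ (legitimate since $\tilde s=s+4>n/2+6$), produces a unique $u\in\Hb^{s+4,r_0}(\Omega)^{\bullet,-}$ with $Pu=f$ and
\[
  \|u\|_{\Hb^{s+4,r_0}(\Omega)^{\bullet,-}}\lesssim \|f\|_{\Hb^{s+3,r}(\Omega)^{\bullet,-}}+\|f\|_{\Hb^{s_0,r}(\Omega)^{\bullet,-}}\|v\|_{\cX^{s+4,\alpha}},
\]
the implicit constant depending only on $s$ and a fixed finite-order norm of $v$; the same lemma at the low regularity $s_0+1$ gives $\|u\|_{\Hb^{s_0+1,r_0}(\Omega)^{\bullet,-}}\leq C(s,\|v\|_{\cX^{s_0,\alpha}})\|f\|_{\Hb^{s_0,r}(\Omega)^{\bullet,-}}$, with no high-order norm of $v$ on the right.

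Next I would feed this $u$ into Theorem~\ref{ThmExpansion} in the case $r>0$, so $s'=s$: since $u\in\Hb^{s+4,r_0}(\Omega)^{\bullet,-}$ solves $Pu=f$ with $f\in\Hb^{s+3,r}(\Omega)^{\bullet,-}$, that theorem gives $u\in\cX^{s,r}$ together with its estimate, whose right-hand side is a sum of $\|f\|_{\Hb^{s+3,r}(\Omega)^{\bullet,-}}$, of $\|u\|_{\Hb^{s+4,r_0}(\Omega)^{\bullet,-}}$, and of $\bigl(\|f\|_{\Hb^{s_0,r}(\Omega)^{\bullet,-}}+\|u\|_{\Hb^{s_0+1,r_0}(\Omega)^{\bullet,-}}\bigr)\|v\|_{\cX^{s+4,\alpha}}$. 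Substituting the two bounds from the previous step — the tame one for the high-order occurrence of $\|u\|$, the constant-absorbing low-order one for the low-order occurrence — collapses this to $\|f\|_{\Hb^{s+3,r}(\Omega)^{\bullet,-}}+\|f\|_{\Hb^{s_0,r}(\Omega)^{\bullet,-}}\|v\|_{\cX^{s+4,\alpha}}$, which is \eqref{EqTameSolOp}. Set $Sf:=u$; then $P(Sf)=f\in\Hb^{s+3,r}(\Omega)^{\bullet,-}$, so $Sf\in\cY^{s,r}$. For well-definedness: if $u_1,u_2\in\cY^{s,r}$ both solve $Pw=f$, then $w=u_1-u_2\in\cX^{s,r}$ with $Pw=0$; the finite-dimensional part of $\cX^{s,r}$ consists of fixed (conormal) resonant states times a cutoff near $\pa M$, hence lies in $\Hb^{\infty,r'}(\Omega)^{\bullet,-}$ for every $r'<0$, and the remainder lies in $\Hb^{s,r_0}$, so $w\in\Hb^{s,r_0}(\Omega)^{\bullet,-}$ and therefore $w=0$ by the uniqueness in Lemma~\ref{LemmaWaveGlobalSolve} (equivalently, by the energy estimate of Lemma~\ref{LemmaWaveGlobalEnergy}). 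Thus $S$ is a genuine two-sided inverse of $P\colon\cY^{s,r}\to\Hb^{s+3,r}(\Omega)^{\bullet,-}$; being linear in $f$ and bounded by \eqref{EqTameSolOp} (with $v$ fixed), it is continuous.

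The main obstacle is not analytic — the heavy estimates all live in Lemma~\ref{LemmaWaveGlobalSolve}, Theorem~\ref{ThmExpansion} and the microlocal results underlying them — but rather the check that composing the two tame estimates preserves tameness. The delicate point is that the low-order norm $\|u\|_{\Hb^{s_0+1,r_0}}$ of the intermediate solution, which in the expansion estimate gets multiplied by the high-order norm $\|v\|_{\cX^{s+4,\alpha}}$, must be bounded by a low-order norm of $f$ times a constant depending only on a fixed finite-order norm of $v$; otherwise a spurious factor $\|v\|_{\cX^{s+4,\alpha}}^2$ would appear. This is exactly what the constant-dependence clause in Lemma~\ref{LemmaWaveGlobalSolve} supplies.
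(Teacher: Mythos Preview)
Your proposal is correct and matches the paper's approach: the paper states the theorem as an immediate consequence of ``pipelining'' Theorem~\ref{ThmExpansion} with Lemma~\ref{LemmaWaveGlobalSolve}, and you have carried out precisely this composition, including the key observation (your last paragraph) that the low-regularity bound on $u$ must depend only on a fixed-order norm of $v$ so that no $\|v\|_{\cX^{s+4,\alpha}}^2$ term arises. One minor remark: the fixed-order norm of $v$ governing the constant is, from Lemma~\ref{LemmaWaveGlobalSolve}, $\|v\|_{\cX^{n/2+6+\eps,\alpha}}$ rather than $\|v\|_{\cX^{s_0,\alpha}}$, but this is a cosmetic point (and matches the paper's own level of precision) since all that matters for tameness is that it is $s$-independent.
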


%%%%%%%%%%%%%%%%%%%%%%%%%%%%%%%%%%%%%%%%%%%%%%%%%%
\subsection{Solving quasilinear wave equations}
\label{SubsecSolve}

We continue to work in the setting of the previous section. With the tame forward solution operator constructed in Theorem~\ref{ThmTameSolutionOp} in our hands, we are now in a position to use a Nash-Moser implicit function theorem to solve quasilinear wave equations. We use the following simple form of Nash-Moser, given in \cite{SaintRaymondNashMoser}:

\begin{thm}
\label{ThmNashMoser}
  Let $(B^s,|\cdot|_s)$ and $(\bB^s,\|\cdot\|_s)$ be Banach spaces for $s\geq 0$ with $B^s\subset B^t$ and indeed $|v|_t\leq|v|_s$ for $s\geq t$, likewise for $\bB^*$ and $\|\cdot\|_*$; put $B^\infty=\bigcap_s B^s$ and similarly $\bB^\infty=\bigcap_s\bB^s$. Assume there are smoothing operators $(S_\theta)_{\theta>1}\colon B^\infty\to B^\infty$ satisfying for every $v\in B^\infty$, $\theta>1$ and $s,t\geq 0$:
  \begin{equation}
  \label{EqSmoothing}
  \begin{split}
    |S_\theta v|_s \leq C_{s,t}\theta^{s-t}|v|_t \tn{ if } s\geq t, \\
	|v-S_\theta v|_s \leq C_{s,t}\theta^{s-t}|v|_t \tn{ if } s\leq t.
  \end{split}
  \end{equation}

  Let $\phi\colon B^\infty\to\bB^\infty$ be a $C^2$ map, and assume that there exist $u_0\in B^\infty$, $d\in\N$, $\delta>0$ and constants $C_1,C_2$ and $(C_s)_{s\geq d}$ such that for any $u,v,w\in B^\infty$,
  \begin{equation}
  \label{EqCondMapCont}
    |u-u_0|_{3d}<\delta \Rightarrow
	  \begin{cases}
	    \forall s\geq d,\quad \|\phi(u)\|_s\leq C_s(1+|u|_{s+d}), \\
		\|\phi'(u)v\|_{2d}\leq C_1|v|_{3d}, \\
		\|\phi''(u)(v,w)\|_{2d}\leq C_2|v|_{3d}|w|_{3d}.
	  \end{cases}
  \end{equation}
  Moreover, assume that for every $u\in B^\infty$ with $|u-u_0|_{3d}<\delta$ there exists an operator $\psi(u)\colon\bB^\infty\to B^\infty$ satisfying
  \[
    \phi'(u)\psi(u)h=h
  \]
  and the tame estimate
  \begin{equation}
  \label{EqCondTameSol}
    |\psi(u)h|_s\leq C_s(\|h\|_{s+d}+|u|_{s+d}\|h\|_{2d}),\quad s\geq d,
  \end{equation}
  for all $h\in\bB^\infty$. Then if $\|\phi(u_0)\|_{2d}$ is sufficiently small depending on $\delta,|u_0|_D$ and $(C_s)_{s\leq D}$, where $D=16d^2+43d+24$, there exists $u\in B^\infty$ such that $\phi(u)=0$.
\end{thm}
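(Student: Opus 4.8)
The plan is to run the standard Nash--Moser modified Newton iteration with smoothing, following \cite{SaintRaymondNashMoser} (see also \cite{HamiltonNashMoser}), and to carry out the bookkeeping that fixes the numerology $D=16d^2+43d+24$. Fix $\theta_0>1$ and the increasing sequence $\theta_{n+1}=\theta_n^{3/2}$, and define the iterates by
\[
  u_{n+1}=u_n-S_{\theta_n}\,\psi(u_n)\,\phi(u_n),
\]
starting from the given $u_0$. Since $\phi'(u_n)\psi(u_n)=\mathrm{id}$ on $\bB^\infty$, Taylor expanding $\phi$ to second order at $u_n$ yields the basic error identity
\[
  \phi(u_{n+1})=-\,\phi'(u_n)\bigl(S_{\theta_n}-I\bigr)\psi(u_n)\phi(u_n)+Q_n,\qquad Q_n=\int_0^1(1-t)\,\phi''\bigl(u_n+t\,\delta u_n\bigr)(\delta u_n,\delta u_n)\,dt,
\]
with $\delta u_n=u_{n+1}-u_n$. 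The first term on the right is small because $I-S_{\theta_n}$ trades a power of $\theta_n$ against a high norm of $\psi(u_n)\phi(u_n)$, and the second is quadratically small in $\delta u_n$; together these produce the superlinear convergence.

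The proof proceeds by induction on $n$, propagating three clauses: (a) $|u_n-u_0|_{3d}<\delta$, so that \eqref{EqCondMapCont} and \eqref{EqCondTameSol} are available at $u_n$; (b) polynomial control of the top norms, $|u_n|_s\le C\theta_n^{s-D_1}$ and $\|\phi(u_n)\|_s\le C\theta_n^{s-D_2}$ for $s$ in the range $D_i\le s\le D$, with base indices $D_1,D_2$ to be chosen; and (c) fast decay of the residual in low norm, $\|\phi(u_n)\|_{2d}\le\theta_n^{-\kappa}$ for a fixed $\kappa>0$. The smoothing estimates \eqref{EqSmoothing} turn (b) into smallness of the first term of the error identity in $\bB^{2d}$: by \eqref{EqCondMapCont}, \eqref{EqSmoothing} and \eqref{EqCondTameSol},
\[
  \bigl\|\phi'(u_n)(S_{\theta_n}-I)\psi(u_n)\phi(u_n)\bigr\|_{2d}\lesssim \theta_n^{\,3d-s}\,\bigl|\psi(u_n)\phi(u_n)\bigr|_s\lesssim \theta_n^{\,3d-s}\bigl(\|\phi(u_n)\|_{s+d}+|u_n|_{s+d}\|\phi(u_n)\|_{2d}\bigr),
\]
and by (b)--(c) the last expression is bounded by a power of $\theta_n$ which, for $s$ and the base indices large enough, is beaten by $\theta_n^{3d-s}$; similarly $\|Q_n\|_{2d}\lesssim C_2|\delta u_n|_{3d}^2$ with $|\delta u_n|_{3d}\lesssim|\psi(u_n)\phi(u_n)|_{3d}$ controlled the same way. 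Requiring that all these bounds be mutually consistent --- the residual decaying faster than $\theta_n^{-\kappa}$ to reproduce (c), the top norms growing no faster than the allowed powers to reproduce (b), and $\sum_n|\delta u_n|_{3d}<\delta$ to reproduce (a) --- yields a finite system of linear inequalities in the exponents; its solvability is what forces the value $D=16d^2+43d+24$ and the smallness threshold on $\|\phi(u_0)\|_{2d}$ in terms of $\delta$, $|u_0|_D$ and $(C_s)_{s\le D}$.

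Granting (a)--(c), $\sum_n|\delta u_n|_{3d}<\infty$, so $u_n\to u$ in $B^{3d}$; interpolating the polynomial top-norm bounds of (b) against the fast low-norm convergence of the increments (a standard convexity argument) upgrades this to convergence in $B^s$ for all $s$, hence $u\in B^\infty$. Finally $\phi(u_n)\to\phi(u)$ in $\bB^{2d}$ while $\|\phi(u_n)\|_{2d}\to 0$, so $\phi(u)=0$. The only real difficulty is the second step: choosing the growth exponent $3/2$, the base regularities $D_1,D_2$, the decay rate $\kappa$ and the overall regularity $D$ simultaneously so that the three inductive clauses reproduce themselves; everything else is a formal consequence of the tame hypotheses \eqref{EqCondMapCont}, \eqref{EqCondTameSol} and the smoothing properties \eqref{EqSmoothing}. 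Since this is exactly the abstract statement established in \cite{SaintRaymondNashMoser}, one may alternatively simply invoke that reference.
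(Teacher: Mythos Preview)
The paper does not prove this theorem; it simply quotes it from \cite{SaintRaymondNashMoser} and later (in the discussion of smoothing operators after the statement) refers back to the proof given there. So in that sense your closing sentence --- ``one may alternatively simply invoke that reference'' --- is precisely what the paper does, and your proposal is acceptable on those grounds alone.

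Your sketch of the iteration is correct in spirit, but one detail is off if you actually want to recover the stated constant $D=16d^2+43d+24$: you take $\theta_{n+1}=\theta_n^{3/2}$, whereas the paper explicitly notes (in the paragraph on smoothing operators following the theorem) that in \cite{SaintRaymondNashMoser} one uses $\theta_k=\theta_0^{(5/4)^k}$. The particular value of $D$ is tied to that choice of growth exponent; with exponent $3/2$ the system of linear inequalities in the exponents that you allude to would have a different solution and produce a different $D$. This does not affect the qualitative conclusion, but if you are going to claim the specific numerology you should either use Saint~Raymond's exponent $5/4$ or simply cite the reference, as the paper does.
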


To apply this in our setting, we let $B^s=\cX^{s,\alpha}(\Omega)=\C\oplus\Hb^{s,\alpha}(\Omega)^{\bullet,-}$ and $\bB^s=\Hb^{s,\alpha}(\Omega)^{\bullet,-}$ with the corresponding norms; $\phi(u)$ will be the quasilinear equation, with implicit dependence on the forcing term. We now construct the smoothing operators $S_\theta$; we may assume, using a partition of unity, that $\Omega$ is the closure of an open subset of $\Rnhalf$, say $\Omega=\Omega(1)$, where we let $\Omega(x_0)=\{x\leq x_0,|y|\leq 1\}$. Then there are bounded extension and restriction operators
\[
  E\colon\Hb^{s,\alpha}(\Omega)^{\bullet,-}\to\Hb^{s,\alpha}(\Rnhalf),\quad R\colon\Hb^{s,\alpha}(\Rnhalf)\to\Hb^{s,\alpha}(\Omega)^{-,-},
\]
for $s\geq 0$; the operator $E$ can be constructed such that $\supp Ev\subset\{x\leq 1\}$ for $v\in\Hb^{s,\alpha}(\Omega)^{\bullet,-}$. If we then define for $\theta>1$ and $v=(c,u)\in\cX^{s,\alpha}$:
\[
  S^1_\theta v=(c,RS'_\theta Ev),
\]
where $S'_\theta$ is a smoothing operator on $\Rnhalf$ with properties as in \eqref{EqSmoothing}, then $S^1_\theta$ satisfies \eqref{EqSmoothing} in view of $RE$ being the identity on $\Hb^{s,\alpha}(\Omega)^{\bullet,-}$ if the norms on the left hand side are understood to be $\Hb^{s,\alpha}(\Rnhalf)$-norms. However, note that $S^1_\theta$ does not map $\cX^{\infty,\alpha}$ into itself, since smoothing operators such as $S'_\theta$ enlarge supports; we will thus need to modify $S^1_\theta$ below to obtain the operators $S_\theta$. In order to construct $S'_\theta$ on weighted b-Sobolev spaces $\Hb^{s,\alpha}$, it suffices by conjugation by the weight to construct it on the unweighted spaces $\Hb^s$; then, by a logarithmic change of coordinates, we only need to construct the smoothing operator $\tilde S_\theta$ on the standard Sobolev spaces $H^s(\R^n)$, which we will do in Lemma~\ref{LemmaSmoothing} below. In order to deal with the issue of $S^1_\theta$ enlarging supports, we will define $\tilde S_\theta$ such that
\[
  v\in\CIc(\R_{x',y'}^n),\ \supp v\subset\{x'\leq 0\}\ \Rightarrow\ \supp\tilde S_\theta v\subset\{x'\leq\theta^{-1/2}\}.
\]
In particular, when one undoes the logarithmic change of coordinates, this implies
\[
  S^1_\theta\colon\cX^{s,\alpha}(\Omega(1))\to\cX^{s,\alpha}\left(\Omega\bigl(\exp(\theta^{-1/2})\bigr)\right);
\]
more generally, with $D_\lambda$ denoting dilations $D_\lambda(x,y)=(\lambda x,y)$ on $\Rnhalf$, we have
\begin{equation}
\label{EqSmoothingSupp}
  S^\lambda_\theta:=(D_\lambda^{-1})^* S^1_\theta(D_\lambda)^*\colon\cX^{s,\alpha}(\Omega(\lambda))\to\cX^{s,\alpha}\left(\Omega\bigl(\lambda\exp(\theta^{-1/2})\bigr)\right),\quad \lambda>0,
\end{equation}
with the operator norm independent of $\lambda$ near $1$. Now, in our application of Theorem~\ref{ThmNashMoser}, we will have
\[
  \phi\colon\cX^{\infty,\alpha}(\Omega(x_0))\to\Hb^{\infty,\alpha}(\Omega(x_0))^{\bullet,-} \tn{ for all }x_0 \tn{ near }1,
\]
and correspondingly we will have forward solution operators $\psi$ going in the reverse direction, with all relevant constants being uniform in $x_0$. Looking at the proof of Theorem~\ref{ThmNashMoser} in \cite{SaintRaymondNashMoser}, one only uses the smoothing operator $S_{\theta_k}$ with $\theta_k=\theta_0^{(5/4)^k}$ in the $k$-th step of the iteration, with $\theta_0$ chosen sufficiently large; in our situation, where we have \eqref{EqSmoothingSupp}, we can therefore use the smoothing operator
\[
  S_{\theta_k}:=S^{\lambda_k}_{\theta_k},\quad\lambda_k=\exp\left(\sum_{j=0}^{k-1} \theta_j^{-1/2}\right)
\]
in the $k$-th iteration step. Note that, for $\theta_0$ large, we have
\[
  1=\lambda_0\leq\lambda_1\leq\cdots\leq\lambda_\infty=\exp\left(\sum_{j=0}^\infty\theta_j^{-1/2}\right)\leq 1+2\theta_0^{-1/2}.
\]
The solution $u$ to $\phi(u)=0$, obtained as a limit of an iterative scheme (see \cite[Lemma~1]{SaintRaymondNashMoser}), therefore is an element of $\cX^{s,\alpha}(\Omega(\lambda_\infty))$. Taking the hyperbolic nature of the PDE $\phi(u)=0$ into account once more, it will then, in our concrete setting, be easy to conclude that in fact $u\in\cX^{s,\alpha}(\Omega)$.

We now construct the smoothing operators on $\R^n$; the first step of the argument follows the Appendix of \cite{SaintRaymondNashMoser}.

\begin{lemma}
\label{LemmaSmoothing}
  There is a family $(\tilde S_\theta)_{\theta>1}$ of operators on $H^\infty(\R^n)$ satisfying
  \begin{gather}
  \label{EqRnSmoothing1} \|\tilde S_\theta v\|_s \leq C_{s,t}\theta^{s-t}\|v\|_t \tn{ if } s\geq t\geq 0, \\
  \label{EqRnSmoothing2} \|v-\tilde S_\theta v\|_s \leq C_{s,t}\theta^{s-t}\|v\|_t \tn{ if } 0\leq s\leq t, \\
  \label{EqRnSmoothing3} \supp\tilde S_\theta v\subseteq\{x_1\leq\theta^{-1/2}\}
  \end{gather}
  for all $v\in H^\infty(\R^n)$ with $\supp v\subseteq H:=\{x_1\leq 0\}$. Here $\|\cdot\|_s$ denotes the $H^s(\R^n)$-norm, and we write $x=(x_1,x')\in\R^n$.
\end{lemma}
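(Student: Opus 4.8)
The plan is to manufacture $\tilde S_\theta$ from the standard Fourier-multiplier smoothing operator on $\R^n$, cut off in the $x_1$-variable at scale $\theta^{-1/2}$, and then show that the cutoff costs only an $O(\theta^{-\infty})$ error.

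First, following the Appendix of \cite{SaintRaymondNashMoser}, I would fix $\chi\in\CIc(\R^n)$ with $\chi\equiv 1$ near $0$ and $\supp\chi\subset\{|\xi|<2\}$, and put $S'_\theta=\chi(D/\theta)$, i.e.\ $\widehat{S'_\theta v}(\xi)=\chi(\xi/\theta)\hat v(\xi)$. Since $|\xi|\leq 2\theta$ on $\supp\chi(\cdot/\theta)$ and $|\xi|\geq\theta$ on $\supp(1-\chi(\cdot/\theta))$, the elementary bounds $\la\xi\ra^s\leq C\theta^{s-t}\la\xi\ra^t$ on $\{|\xi|\leq 2\theta\}$ for $s\geq t$, and $\la\xi\ra^s\leq\theta^{s-t}\la\xi\ra^t$ on $\{|\xi|\geq\theta\}$ for $s\leq t$ (valid for $\theta>1$), combined with Plancherel, give \eqref{EqRnSmoothing1} and \eqref{EqRnSmoothing2} with $S'_\theta$ in place of $\tilde S_\theta$, for all $v\in H^\infty(\R^n)$ and all $s,t\geq 0$; no support hypothesis enters here.

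Next I would fix $\eta\in\CI(\R)$ with $\eta\equiv 1$ on $(-\infty,\tfrac12]$ and $\eta\equiv 0$ on $[1,\infty)$, set $\eta_\theta(x)=\eta(\theta^{1/2}x_1)$, and define $\tilde S_\theta v=\eta_\theta\cdot S'_\theta v$. Then $\supp\eta_\theta\subseteq\{x_1\leq\theta^{-1/2}\}$ yields \eqref{EqRnSmoothing3} at once, while $\supp(1-\eta_\theta)\subseteq\{x_1\geq\tfrac12\theta^{-1/2}\}$ and $\|\pa^\alpha\eta_\theta\|_{L^\infty}\leq C_\alpha\theta^{|\alpha|/2}$. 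Writing $\tilde S_\theta v=S'_\theta v-(1-\eta_\theta)S'_\theta v$, the estimates \eqref{EqRnSmoothing1} and \eqref{EqRnSmoothing2} for $\tilde S_\theta$ will follow from the previous step once I establish the truncation bound
\[
  \|(1-\eta_\theta)S'_\theta v\|_s\leq C_{s,N}\,\theta^{-N}\|v\|_0,\qquad s\geq 0,\ N\in\N,\ \supp v\subseteq\{x_1\leq 0\};
\]
indeed, since $\theta>1$ this error is $\leq C\theta^{s-t}\|v\|_t$ for $s\geq t\geq 0$, and also for $0\leq s\leq t$ after taking $N\geq t-s$.

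The main work — and the step I expect to be the real obstacle — is this truncation bound. The map $v\mapsto(1-\eta_\theta)S'_\theta v$ has Schwartz kernel $(1-\eta_\theta(x))k_\theta(x-y)$, with $k_\theta(z)=\theta^n k(\theta z)$ and $k=\mathcal F^{-1}\chi$ Schwartz. When $\supp v\subseteq\{y_1\leq 0\}$, only $x_1\geq\tfrac12\theta^{-1/2}$ and $y_1\leq 0$ contribute, so $|x-y|\geq x_1-y_1\geq\tfrac12\theta^{-1/2}$, i.e.\ $|\theta(x-y)|\geq\tfrac12\theta^{1/2}$ on the effective support. This is where the tension lies: each $x_1$-derivative of $1-\eta_\theta$ costs a factor $\theta^{1/2}$, but the separation $\theta^{-1/2}$ is large relative to the scale $\theta^{-1}$ of $k_\theta$, so the rapid decay of $k$ and its derivatives dominates. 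Concretely, I would bound $\pa_x^\beta\bigl[(1-\eta_\theta(x))k_\theta(x-y)\bigr]$ by $C_{\beta,L}\,\theta^{|\beta|+n}\la\theta(x-y)\ra^{-L}$ on the effective support for arbitrary $L$, peel off one factor $\la\theta(x-y)\ra^{-L/2}\leq(c\theta^{1/2})^{-L/2}$, and keep $\theta^n\la\theta(x-y)\ra^{-L/2}$ as a convolution kernel that is uniformly bounded in $L^1$. Choosing $L=L(s,N)$ so large that $|\beta|+n-L/4\leq -N$ for all $|\beta|\leq\lceil s\rceil$, and then applying Schur's test to the kernels $\pa_x^\beta\bigl[(1-\eta_\theta(x))k_\theta(x-y)\bigr]$ restricted to $\{y_1\leq 0\}$, shows $v\mapsto(1-\eta_\theta)S'_\theta v$ is bounded $L^2\to H^{\lceil s\rceil}$ with norm $\leq C_{s,N}\theta^{-N}$; interpolation covers all real $s\geq 0$. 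Putting the three pieces together gives \eqref{EqRnSmoothing1}, \eqref{EqRnSmoothing2}, and \eqref{EqRnSmoothing3}.
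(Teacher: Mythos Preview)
Your proof is correct and follows essentially the same approach as the paper: define the standard Fourier-multiplier smoothing $S'_\theta$, cut off in $x_1$ at scale $\theta^{-1/2}$, and show the truncation error $(1-\eta_\theta)S'_\theta v$ is $O(\theta^{-\infty})$ in every $H^s$ when $\supp v\subseteq\{x_1\leq 0\}$, by exploiting the separation $|\theta(x-y)|\gtrsim\theta^{1/2}$ against the Schwartz decay of the convolution kernel. The paper additionally takes $\chi=\chi_1(x_1)\chi_2(x')$ in product form so the key $L^1$ estimate reduces to a one-dimensional integral, whereas you handle a general $\chi$ via Schur's test; both arguments are straightforward and yield the same conclusion.
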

\begin{proof}
  Choose $\chi=\chi_1(x_1)\chi_2(x')\in S(\R^n)$ with $\chi_1\in S(\R),\chi_2\in S(\R^{n-1})$ so that the Fourier transform $\hat\chi$ is identically $1$ near $0$; put $\chi_\theta(z)=\theta^n\chi(\theta z)$ and define the operator $C_\theta v=\chi_\theta*v$. Then $(C_\theta v)\ftrans=\widehat{\chi_\theta}\hat v$ with $\widehat{\chi_\theta}(\xi)=\hat\chi(\xi/\theta)$, therefore \eqref{EqRnSmoothing1} holds for $C_\theta$ in place of $\tilde S_\theta$ with constants $C'_{s,t}$ since $\hat\chi$ decays super-polynomially, and \eqref{EqRnSmoothing2} holds for $C_\theta$ in place of $\tilde S_\theta$ with constants $C'_{s,t}$ since $1-\hat\chi(\xi)$ vanishes at $\xi=0$ with all derivatives.

  Next, let $\psi\in\CI(\R^n)$ be a smooth function depending only on $x_1$, i.e.\ $\psi=\psi(x_1)$, so that $\psi(x_1)\equiv 1$ for $x_1\in(-\infty,1/2]$, $\psi(x_1)\equiv 0$ for $x_1\in[1,\infty)$, and $0\leq\psi\leq 1$. Put $\psi_\theta(x_1,x')=\psi(\theta x_1,x')$, and define
  \[
    \tilde S_\theta v:=\psi_{\theta^{1/2}}C_\theta v.
  \]
  Condition \eqref{EqRnSmoothing3} is satisfied by the support assumption on $\psi$. Let $\varphi=1-\psi$ and $\varphi_\theta=1-\psi_\theta$. To prove the other two conditions, we use the estimate
  \begin{equation}
  \label{EqCutoffOutside}
    \|\varphi_{\theta^{1/2}}C_\theta v\|_s\leq C''_{s,N}\theta^{-N}\|v\|_{L^2},\quad \supp v\subset H,\ s,N\geq 0,
  \end{equation}
  which we will establish below. Taking this for granted, we obtain for $v$ with $\supp v\subset H$:
  \[
    \|\tilde S_\theta v\|_s \leq \|C_\theta v\|_s + \|\varphi_{\theta^{1/2}}C_\theta v\|_s\leq C'_{s,t}\theta^{s-t}\|v\|_t + C''_{s,0}\|v\|_0
  \]
  for $s\geq t\geq 0$, which is the estimate \eqref{EqRnSmoothing1}; and \eqref{EqRnSmoothing2} follows from
  \[
    \|v-\tilde S_\theta v\|_s \leq \|v-C_\theta v\|_s + \|\varphi_{\theta^{1/2}}C_\theta v\|_s \leq C'_{s,t}\theta^{s-t}\|v\|_t+C''_{s,t-s}\theta^{s-t}\|v\|_0
  \]
  for $0\leq s\leq t$.

  We now prove \eqref{EqCutoffOutside} for $s\in\N_0$. For multiindices $\alpha=(\alpha_1,\alpha')$ with $|\alpha|\leq s$, we have for $v$ with $\supp v\subset H$ and for $(x_1,x')\in\supp\varphi_{\theta^{1/2}}C_\theta v$, which in particular implies $x_1\geq 1/(2\theta^{1/2})$:
  \begin{align*}
    \pa^\alpha&(\varphi_{\theta^{1/2}}C_\theta v)(x_1,x')=\sum_{j=0}^{\alpha_1}{\alpha_1\choose j}\theta^{(\alpha_1-j)/2}\varphi^{(\alpha_1-j)}(\theta^{1/2}x_1) \\
	  &\times\iint_{y_1\geq 1/(2\theta^{1/2})} \theta^{n+j+|\alpha'|}\chi_1^{(j)}(\theta y_1)\chi_2^{(\alpha')}(\theta y') v(x_1-y_1,x'-y')\,dy_1\,dy',
  \end{align*}
  thus
  \[
    \|\pa^\alpha(\varphi_{\theta^{1/2}}C_\theta v)\|_{L^2}\leq C_s\theta^{n+s}\|\check\chi_\theta\|_{L^1}\|v\|_{L^2},
  \]
  where
  \[
    \check\chi_\theta(x_1,x')=
	  \begin{cases}
	    0, & x_1<1/(2\theta^{1/2}), \\
	    \sum_{j=0}^{\alpha_1} |\chi_1^{(j)}(\theta x_1)\chi_2^{(\alpha')}(\theta x')| & \tn{otherwise}.
	  \end{cases}
  \]
  But $\|\check\chi_\theta\|_{L^1}\leq C_{N,s}\theta^{-N}$ for all $N$: Indeed, this reduces to the statement that for a fixed $\chi_0\in S(\R)$, one has
  \[
    \int_{1/(2\theta^{1/2})}^\infty |\chi_0(\theta x)|\,dx \leq C_N \int_{\theta^{-1/2}}^\infty (\theta x)^{-2N+1}\,dx=C'_N\theta^{-N}.
  \]
  Hence, we obtain \eqref{EqCutoffOutside}, and the proof is complete.
\end{proof}

We now combine Theorem~\ref{ThmTameSolutionOp}, giving the existence of tame forward solution operators, with Theorem~\ref{ThmNashMoser}, in the extended form described above, to solve quasilinear wave equations.

\begin{thm}
\label{ThmQuasilinearWave}
  Let $N\in\N$ and $c_k\in\CI(\C;\R)$, $g_k\in(\CI+\Hb^\infty)(M;\Sym^2\Tb M)$ for $1\leq k\leq N$; define the map $g\colon\cX^{s,\alpha}\to(\CI+\Hb^{s,\alpha})(M;\Sym^2\Tb M)$ by $g(u)=\sum_{k=1}^N c_k(u)g_k$ and assume that $\Box_{g(0)}$ satisfies the assumptions of \S\ref{SubsecForward} and of Theorem~\ref{ThmTameSolutionOp}; let us moreover assume that for small $u_0$, the metric $g(u_0)$ has uniform normally hyperbolic trapping in the sense described in \S\ref{SubsecTrapping}. Further, let $N'\in\N$ and define
  \begin{equation}
  \label{EqNonlinearQ}
    q(u,\bdiff u)=\sum_{j=1}^{N'} u^{e_j}\prod_{k=1}^{N_j} X_{jk}u,
  \end{equation}
  where
  \[
    e_j,N_j\in\N_0,N_j\geq 1, N_j+e_j\geq 2,X_{jk}\in(\CI+\Hb^\infty)\Vb.
  \]
  Then there exists $C_f>0$ such that for all forcing terms $f\in\Hb^{\infty,\alpha}(\Omega)^{\bullet,-}$ satisfying $\|f\|_{\Hb^{\max(\derivsU,n+5),\alpha}(\Omega)^{\bullet,-}}\leq C_f$, the equation
  \begin{equation}
  \label{EqQuasilinearPDE}
    \Box_{g(u)}u=f+q(u,\bdiff u)
  \end{equation}
  has a unique solution $u\in\cX^{\infty,\alpha}$.

  If more generally $g(u,\bdiff u)=\sum_{k=1}^N c_k(u,X_1 u,\ldots,X_L u)g_k$, where $X_1,\ldots,X_L\in\Vb(M)$ and $c_k\in\CI(\C^{1+L};\R)$, with $g(u_0,0)$ uniformly normally hyperbolic for small $u_0$, then there exists $C_f>0$ such that for all forcing terms $f\in\Hb^{\infty,\alpha}(\Omega)^{\bullet,-}$ satisfying $\|f\|_{\Hb^{\max(\derivsUdU,n+5),\alpha}(\Omega)^{\bullet,-}}\leq C_f$, the equation
  \begin{equation}
  \label{EqQuasilinearPDE2}
    \Box_{g(u,\bdiff u)}u=f+q(u,\bdiff u)
  \end{equation}
  has a unique solution $u\in\cX^{\infty,\alpha}$.

  In both cases, if one instead assumes that the trapping for $g(0,0)$ is $r$-normally hyperbolic for every $r$, then for every fixed $s\in\R$, one has solvability with $u\in\cX^{s,\alpha}$ provided the norm bound $C_f=C_f(s)>0$ on $f$ (on the same spaces as before) is sufficiently small.
\end{thm}

We point out that if all vector fields, coefficients and data are real, then $u$ is real-valued as well.

\begin{proof}[Proof of Theorem~\ref{ThmQuasilinearWave}]
  We write $|\cdot|_s$ for the $\cX^{s,\alpha}$-norm and $\|\cdot\|_s$ for the $\Hb^{s,\alpha}$-norm. (For brevity, we do not specify the underlying set, which, in the notation of \S\ref{SubsecForward}, is $\ft^{-1}_1([-\lambda,\infty))\cap\ft^{-1}_2([0,\infty))$ for varying $\lambda\geq 0$.) We define the map
  \[
    \phi(u;f)=\Box_{g(u)}u-q(u,\bdiff u)-f
  \]
  and check that it satisfies the conditions of Theorem~\ref{ThmNashMoser} with $u_0=0$. From the definition of $\Box_{g(u)}$ and the tame estimates for products, reciprocals and compositions, Corollary~\ref{CorHbModule} and Propositions~\ref{PropHbRec} and \ref{PropCompWithSmooth2}, we obtain
  \[
    \|\phi(u;f)\|_s \leq \|f\|_s + C(|u|_{s_0+2})(1+|u|_{s+2}),\quad s\geq s_0>n/2+1,
  \]
  thus the first estimate of \eqref{EqCondMapCont} for $3d\geq s_0+2$, $d\geq s_0$, $d\geq 2$. Next, we have $\phi'(u;f)v=\bigl(\Box_{g(u)}+L(u,\bdiff u)\bigr)v$, where the first order b-differential operator $L$ is of the form
  \begin{equation}
  \label{EqLinearizationExpr}
    L=\sum_{|\beta|\leq 1}\biggl(\sum_{1\leq|\alpha|\leq 2}a_{\alpha\beta}(u,\bdiff u)\Db^\alpha u\biggr)\Db^\beta + \sum_{|\beta|=1}a_\beta(u,\bdiff u)u\,\Db^\beta,
  \end{equation}
  with the second sum capturing one term of the linearization of terms $u^{e_j}X_{j1}u$ in $q$ (i.e.\ terms for which $N_j=1$). In particular,
  \begin{equation}
  \label{EqPhiPrime}
    \phi'(u;f)=P_0(u_0)+\tilde P(u,\Db u,\Db^2 u),
  \end{equation}
  where $P_0\in\Diffb^2$ and $\tilde P\in\Hb^{s-1,\alpha}\Diffb^2+\Hb^{s-2,\alpha}\Diffb^1$ for $u\in\cX^{s,\alpha}$; in fact, the leading order coefficient has regularity $\Hb^{s,\alpha}$, but we give up one derivative for direct comparison with \eqref{EqP0Form}. Therefore,
  \[
    \|\phi'(u;f)v\|_s \leq C(|u|_{s+2})|v|_{s+2},\quad s>n/2+1,
  \]
  which gives the second estimate of \eqref{EqCondMapCont} for $2d>n/2+1$ and $3d\geq 2d+2$. Next, we observe that $\phi''(u;f)(v,w)$ is bilinear in $v,w$, involves up to two b-derivatives of each $v$ and $w$, and the coefficients depend on up to two b-derivatives of $u$, thus
  \[
    \|\phi''(u;f)(v,w)\|_s \leq C(|u|_{s+2})|v|_{s+2}|w|_{s+2},\quad s>n/2+1,
  \]
  which gives the third estimate of \eqref{EqCondMapCont} for $3d>n/2+3$, $3d\geq 2d+2$. In summary, we obtain \eqref{EqCondMapCont} for integer $d>n/2+1$.

  Finally, we determine $d$ so that we have the tame estimate \eqref{EqCondTameSol}: Given $u\in\cX^{s+5,\alpha}$, we can write $\phi'(u;f)$ as in \eqref{EqPhiPrime}, with $P_0\in\Diffb^2$ and $\tilde P\in\Hb^{s+4,\alpha}\Diffb^2+\Hb^{s+3,\alpha}\Diffb^1$; hence, by Theorem~\ref{ThmTameSolutionOp}, we obtain a solution operator
  \begin{equation}
  \label{EqPsiTameProof}
  \begin{gathered}
    \psi(u;f)\colon\Hb^{s+3,\alpha}\to\cX^{s,\alpha}, \\
    |\psi(u;f)v|_s\leq C(s,|u|_{s_0})(\|v\|_{s+3}+\|v\|_{s_0}|u|_{s+5}),
  \end{gathered}
  \end{equation}
  where $s,s_0>n/2+2$, provided $|u|_{s_0}$ is small enough so that all dynamical and geometric hypotheses hold for $\phi'(u;f)$. Notice that the subprincipal term of $\phi'(u;f)$ can differ from that of $\Box_{g(0)}$ by terms of the form $a(u_0)u_0\Db^\beta$, $a\in\CI$, $|\beta|=1$, see \eqref{EqLinearizationExpr}; however, since such terms eliminate constants, the simple rank $1$ resonance at $0$ with resonant state $1$ does not change; and moreover such terms are \emph{small} because of the factor $u_0$, hence high energy estimates still hold in a (possibly slightly smaller) strip in the analytic continuation, see the remark below \cite[Theorem~1]{DyatlovNormally}. Since $s_0$ is independent of $s$, we have \eqref{EqPsiTameProof} for all $s>n/2+2$, in particular $\psi(u;f)\colon\Hb^{\infty,\alpha}\to\cX^{\infty,\alpha}$. Now, \eqref{EqPsiTameProof} implies that \eqref{EqCondTameSol} holds for $d>n/2+2$, $d\geq 5$, so we need to control $\max(\derivsU,n+5)$ derivatives of $f$.

  Thus, we can apply Nash-Moser iteration, Theorem~\ref{ThmNashMoser}, to obtain a solution $u\in\cX^{s,\alpha}$ of the PDE \eqref{EqQuasilinearPDE}, with the caveat that $u$ is a priori supported on a space slightly larger than $\Omega$, in the sense that $\supp u$ might not be contained in the future of $H_1$. (The support generally extends past $H_2$, but this does not concern us since we are always restricting ourselves to the region bounded by the connected components of $H_2$.) However, local uniqueness for quasilinear hyperbolic equations, see e.g.\ \cite[\S{16.3}]{TaylorPDE}, implies that $u$ in fact vanishes in the past of $H_1$, hence is supported in $\Omega$, and that $u$ is the unique solution of \eqref{EqQuasilinearPDE}, finishing the proof of the first part.

  The proof of the second part proceeds analogously; now the operator $L$ in \eqref{EqLinearizationExpr} involves an additional term
  \[
    \sum_{|\beta|=2}\biggl(\sum_{|\alpha|=1}a'_{\alpha\beta}(u,\bdiff u)\Db^\alpha u\biggr)\Db^\beta
  \]
  since the first order terms of the expression of $\Box_{g(u,\bdiff u)}$ involve derivatives of the metric, hence second derivatives of $u$; notice however that the additional term can be put into $\tilde P$ due to the factor of $du$. Therefore, if now $u\in\cX^{s+6,\alpha}$, then $\tilde P$ in \eqref{EqPhiPrime} lies in the space $\Hb^{s+4,\alpha}\Diffb^2\subset\Hb^{s+4,\alpha}\Diffb^2+\Hb^{s+3,\alpha}\Diffb^1$; the rest of the argument then goes through, the only change being that the $|u|_{s+5}$ norm in \eqref{EqPsiTameProof} needs to be replaced by $|u|_{s+6}$, hence the new requirement $d\geq 6$. Therefore, we need to control $\max(\derivsUdU,n+5)$ derivatives of $f$ for the second part.

  Lastly, we recall that $r$-normal hyperbolicity for any finite $r$ is structurally stable, hence for any fixed $r\in\R$, the metric $g(u_0,0)$ has normally hyperbolic trapping for sufficiently small $u_0$, with stable/unstable manifolds and trapped sets of class $\mc C^r$, depending continuously on $u_0$. Since the estimates at normally hyperbolic trapping on any fixed Sobolev space only require a finite number of regularity (since they then rely on the finiteness of only finitely many norms and seminorms of the manifolds and operators involved) and since Nash-Moser iteration only requires a finite number of derivatives if one is merely interested in obtaining a solution with finite b-regularity (see also Remark~\ref{RmkFiniteRegularity} below), the above arguments imply solvability in $\cX^{s,\alpha}$ provided the norm of $f$ in the stated spaces is small enough, depending on $s$: Indeed, if the norm of $f$ is $\leq C_f$, then the norm of every iterate $u_k$ in the Nash-Moser scheme will be bounded by a constant multiple of $C_f$ in a suitable (fixed) Sobolev space, hence the corresponding metrics $g(u_k,\bdiff u_k)$, restricted to the boundary at future infinity $\Omega\cap\pa M$, will be close to $g(0,0)$ and thus have $r$-normally hyperbolic trapping, where $r$ can be any fixed arbitrarily large real number if $C_f>0$ is sufficiently small, depending on $r$. Therefore, given $s$, we merely need to pick $C_f>0$ so small that the corresponding regularity $\mc C^r$ of the trapping is sufficient for all estimates at the trapping to hold on all (finite regularity!) Sobolev spaces which appear (explicitly and implicitly) in the above proof.
\end{proof}

\begin{rmk}
\label{RmkStrongerThanDSResult}
  In the asymptotically de Sitter setting considered in \cite{HintzQuasilinearDS}, the above Theorem extends \cite[Theorem~8.8]{HintzQuasilinearDS} (at the cost of requiring the control of more derivatives) since we allow the dependence of the metric $g(u,\bdiff u)$ on $\bdiff u$ as well.
\end{rmk}

\begin{rmk}
\label{RmkFiniteRegularity}
  An inspection of the proof of the abstract Nash-Moser theorem \ref{ThmNashMoser} in \cite{SaintRaymondNashMoser} shows that there are constants $C$ and $s_0$, depending only on the `loss of derivatives' $d$, such that the following holds: In order to obtain a solution $u\in\cX^{s,\alpha}$ for some finite $s\geq s_0$, it is sufficient to take $f\in\Hb^{Cs,\alpha}$, still assuming the norm of $f$ in the space indicated in the statement of Theorem~\ref{ThmQuasilinearWave} to be small.
\end{rmk}

Theorem~\ref{ThmQuasilinearWave} immediately implies the following result on Kerr-de Sitter space:

\begin{cor}
\label{CorQuasilinearKdS}
  Under the assumptions of Theorem~\ref{ThmQuasilinearWave}, the quasilinear wave equation \eqref{EqQuasilinearPDE}, resp.\ \eqref{EqQuasilinearPDE2}, on a 4-dimensional asymptotically Kerr-de Sitter space with $|a|\ll\bhm$ has a unique global solution in the space $\cX^{s,\alpha}$ if the $\Hb^{\derivsU,\alpha}(\Omega)^{\bullet,-}$-norm, resp.\ $\Hb^{\derivsUdU,\alpha}(\Omega)^{\bullet,-}$-norm, of the forcing term $f\in\Hb^{\infty,\alpha}(\Omega)^{\bullet,-}$ is sufficiently small.
\end{cor}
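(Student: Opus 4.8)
The plan is to deduce the corollary directly from Theorem~\ref{ThmQuasilinearWave} by verifying that the scalar wave operator $\Box_{g(0)}$ on a $4$-dimensional asymptotically Kerr-de Sitter space with $|a|\ll M$ satisfies the structural hypotheses imposed in Section~\ref{SubsecForward} and in Theorem~\ref{ThmTameSolutionOp}. Since $n=4$, one has $\max(12,n+5)=12$ and $\max(14,n+5)=14$, which accounts for the stated regularity thresholds on $f$; the weight $\alpha>0$ will be fixed small enough in the course of the verification.

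First I would recall the structure of the null-bicharacteristic flow of $\Box_{g_0}$ established in \cite{VasyMicroKerrdS} (see also the introduction of the present paper): the characteristic set in $\Sb^*M$ splits into the two components $\Sigma_\pm$; the images of the conormal bundles of the two horizons give the radial sets $L_\pm\subset\Sigma_\pm\cap\Sb^*_XM$, with one-dimensional stable/unstable manifolds transversal to $\Sb^*_XM$, so that the quantitative data \eqref{EqRadialParams} hold; and the trapped set $\Gamma=\Gamma^+\cup\Gamma^-$ is normally hyperbolic, arising as the transversal intersection $\Gamma^\pm=\Gamma^\pm_+\cap\Gamma^\pm_-$ with the codimension and tangency properties of assumptions \itref{EnumNTGamma}--\itref{EnumNTTrapped} and the quantitative relations \eqref{EqVphipm}--\eqref{EqVrho} (the latter in the form provided by \cite[Lemma~5.1]{DyatlovResonanceProjectors}). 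With the artificial boundaries $H_1,H_2$ chosen as in the discussion around Figure~\ref{FigKDS}, every bicharacteristic not in $L_\pm\cup\Gamma^\pm$ escapes to $\Sb^*_{H_1}M$ or $\Sb^*_{H_2}M$ in the appropriate directions, which is precisely the non-trapping condition of Section~\ref{SubsecForward}. All of this structure is perturbation stable, hence valid for asymptotically Kerr-de Sitter metrics and for the metrics $g(u_0)$ (which differ from $g_0$ by decaying terms since we work modulo constants).

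Next I would check the conditions of Theorem~\ref{ThmTameSolutionOp}. The operator $\Box_{g_0}$ is formally self-adjoint with respect to $|dg_0|$, so its skew-adjoint part vanishes; hence \eqref{EqSkewAdjointPart} holds trivially (the left side is $0<|\sigma|^{m-1}\nu_{\min}/2$), the threshold weight is $r=0$, and Theorem~\ref{ThmNHypSemiGlobal} applies, producing $\gamma_\Gamma>0$; uniform normal hyperbolicity of $\Gamma$ for the nearby operators follows from the structural stability of $r$-normal hyperbolicity proved in \cite{VasyMicroKerrdS}. For the resonances: by the results of Dyatlov \cite{DyatlovQNM,Dyatlov:Asymptotics}, together with their stability under the b-perturbations allowed here (cf.\ \cite{VasyMicroKerrdS,HintzVasySemilinear}), for $|a|\ll M$ the Mellin-transformed normal operator $\widehat{\Box_{g_0}}(\sigma)$ has in $\Im\sigma\geq 0$ only a simple, rank-one resonance at $\sigma=0$ with resonant state the constant function $1$; since every horizontal strip contains only finitely many resonances, we may fix $\alpha>0$ with $0<\alpha<\min(1,\gamma_\Gamma)$ so that there are no other resonances with $\Im\sigma\geq-\alpha$. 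This is exactly the hypothesis of Theorem~\ref{ThmExpansion}, hence of Theorem~\ref{ThmTameSolutionOp}. (The assumption $N_j\geq 1$ in the nonlinearity, inherited from Theorem~\ref{ThmQuasilinearWave}, guarantees that each term of $q$ contains a b-vector field applied to $u$, so constants are annihilated and the resonant state at $0$ is unaffected by the linearization, as needed in the proof of that theorem.)

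With all hypotheses of Theorem~\ref{ThmQuasilinearWave} verified, the conclusion --- existence and uniqueness of a global solution $u\in\cX_\R^{\infty,\alpha}$ of \eqref{EqQuasilinearPDE}, resp.\ \eqref{EqQuasilinearPDE2}, whenever $\|f\|_{\Hb^{12,\alpha}(\Omega)^{\bullet,-}}$, resp.\ $\|f\|_{\Hb^{14,\alpha}(\Omega)^{\bullet,-}}$, is sufficiently small --- follows immediately. I expect the only genuinely non-formal input to be the resonance count in $\Im\sigma\geq 0$, which is the reason the hypothesis $|a|\ll M$ is made; everything else is bookkeeping, and the general statements that isolate exactly this point are Theorems~\ref{ThmIntroGeneralKG} and \ref{ThmIntroGeneralWave}.
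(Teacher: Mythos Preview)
Your proposal is correct and follows the same approach as the paper's proof, which simply cites \cite[\S6]{VasyMicroKerrdS} for the dynamical hypotheses and \cite{DyatlovQNM} for the resonance computation. You have spelled out the verification in considerably more detail (the $n=4$ numerology, the vanishing of the skew-adjoint part of $\Box_{g_0}$, the role of $N_j\geq 1$), all of which is accurate and consistent with the surrounding text.
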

\begin{proof}
  For a verification of the dynamical assumptions for asymptotically Kerr-de Sitter spaces, we refer the reader to \cite[\S{6}]{VasyMicroKerrdS}; the resonances on the other hand were computed by Dyatlov \cite{DyatlovQNM}, and also by the authors using a perturbation argument \cite{HintzVasyKdsFormResonances}.
\end{proof}

%%%%%%%%%%%%%%%%%%%%%%%%%%%%%%%%%%%%%%%%%%%%%%%%%%
\subsection{Solving quasilinear Klein-Gordon equations}
\label{SubsecSolveKG}

The only difference between wave and Klein-Gordon equations with mass $m$ (which is to be distinguished from the black hole mass $\bhm$) is that the resonance of the Klein-Gordon operator $\Box-m^2$ with largest imaginary part, which gives the leading order asymptotics, is no longer at $0$ for $m\neq 0$. Thus, if $\sigma_1,\sigma_2\in\C$ are the first two resonances of $\Box-m^2$, i.e.\ there are no further resonances $\sigma$ with $\Im\sigma\geq\Im\sigma_2$, assume
\[
  0<-\Im\sigma_1<r<-\Im\sigma_2,
\]
and assume moreover that the high energy estimates for the normal operator family of $\Box-m^2$ hold in $\Im\sigma\geq -r$, the only change in the statement of Theorem~\ref{ThmExpansion} for Klein-Gordon operators is that the conclusion now is $u\in\cX^{s,r}_{\sigma_1}$, where $\cX^{s,r}_{\sigma_1}=\C\oplus\Hb^{s,r}(\Omega)^{\bullet,-}$, with $(c,u')$ identified with $cx^{i\sigma_1}\chi+u'$ for a smooth cutoff $\chi$ near the boundary.

\begin{rmk}
  There are more cases of potential interest: If $r<-\Im\sigma_1$, we obtain $u\in\Hb^{s,r}(\Omega)^{\bullet,-}$; if $r<0$, the statement of Theorem~\ref{ThmExpansion} is unchanged; and if $\Im\sigma_1$ and $\Im\sigma_2$ are close enough together (including the case that $\sigma_1$ is a double resonance) and there is a spectral gap below $\sigma_2$, one gets two terms in the expansion of $u$. For brevity, we only explain one scenario here. See also the related discussion in \cite[\S{8.4}]{HintzQuasilinearDS}.
\end{rmk}

We thus obtain the following adapted version of Theorem~\ref{ThmTameSolutionOp}:

\begin{thm}
\label{ThmTameSolutionOpKG}
  In the notation of \S\ref{SubsecSolve}, under the above assumptions and for $s>n/2+2$, define the space
  \[
    \cY_{\sigma_1}^{s,r}=\{u\in\cX_{\sigma_1}^{s,r}\colon Pu\in\Hb^{s+3,r}(\Omega)^{\bullet,-}\}.
  \]
  Then the operator $P\colon\cY^{s,r}\to\Hb^{s+3,r}(\Omega)^{\bullet,-}$ has a continuous inverse $S$ that satisfies the tame estimate
  \begin{equation}
  \label{EqTameSolOpKG}
    \|Sf\|_{\cX^{s,r}_{\sigma_1}}\leq C(s,\|v\|_{\cX^{s_0,\alpha}_{\sigma_1}})(\|f\|_{\Hb^{s+3,r}(\Omega)^{\bullet,-}}+\|f\|_{\Hb^{s_0,r}(\Omega)^{\bullet,-}}\|v\|_{\cX^{s+4,\alpha}_{\sigma_1}}).
  \end{equation}
\end{thm}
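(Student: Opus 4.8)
The plan is to run the pipelining argument behind Theorem~\ref{ThmTameSolutionOp} essentially verbatim, the only change being that the leading asymptotic term is now attached to the resonance $\sigma_1$ of $\Box-m^2$ rather than to the zero resonance of $\Box$. First I would invoke Lemma~\ref{LemmaWaveGlobalSolve}: its proof rests only on Lemma~\ref{LemmaWaveGlobalEnergy}, i.e.\ on the timelike characters of $dx/x$, $d\ft_1$ and $d\ft_2$, and the zeroth order term $-m^2$ does not affect these energy estimates; so for $f\in\Hb^{s+3,r}(\Omega)^{\bullet,-}\subset\Hb^{s+3,r_0}(\Omega)^{\bullet,-}$ with $r_0<0$ as in that lemma one obtains a unique forward solution $u\in\Hb^{s+4,r_0}(\Omega)^{\bullet,-}$ of $Pu=f$ satisfying the tame bound \eqref{EqGlobalTame} (with Sobolev order $s+4$).

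Next I would apply the Klein--Gordon version of Theorem~\ref{ThmExpansion} to this $u$, as described in the paragraph preceding the present theorem. The only modification in its proof is that the poles of $\hat N(P)(\sigma)^{-1}$ in $\Im\sigma\geq -r$ consist of the simple rank one resonance $\sigma_1$ (by the assumptions $0<-\Im\sigma_1<r<-\Im\sigma_2$ and the location of the resonances of $\Box-m^2$, cf.\ \cite{DyatlovQNM,HintzVasySemilinear}) in place of $0$; hence the contour deformation of \cite[Lemma~3.1]{VasyMicroKerrdS} produces the leading term $cx^{i\sigma_1}\chi$, $c\in\C$, and the iteration terminates with $u\in\cX^{s,r}_{\sigma_1}$. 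Everything else is unchanged: the high energy estimates for $\hat N(P)(\sigma)^{-1}$ hold in $\Im\sigma\geq -r$ by hypothesis; the normally hyperbolic trapping estimate \eqref{EqNHypSemiGlobal} is available with a loss of two derivatives since $r<\gamma_\Gamma$; and each deformation step loses one derivative which is recovered by the tame microlocal regularity statement Corollary~\ref{CorWavePropSing} (all of whose inputs --- elliptic, real principal type, radial point and normally hyperbolic trapping estimates --- are insensitive to the zeroth order, formally self-adjoint term $-m^2$). Composing the tame estimate of this adapted Theorem~\ref{ThmExpansion} with \eqref{EqGlobalTame} and tracking the derivative losses yields \eqref{EqTameSolOpKG}, with constant depending only on $s$ and $\|v\|_{\cX^{s_0,\alpha}_{\sigma_1}}$.

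It remains to see that $P\colon\cY^{s,r}_{\sigma_1}\to\Hb^{s+3,r}(\Omega)^{\bullet,-}$ is a bijection. Surjectivity is the construction just described. For injectivity, if $u\in\cX^{s,r}_{\sigma_1}$ solves $Pu=0$, then since $\Im\sigma_1<0$ the term $x^{i\sigma_1}\chi$ lies in a decaying b-Sobolev space, so $u$ does too, and $u$ is supported at $H_1$ and extendible at $H_2$; the forward uniqueness for the hyperbolic equation $Pu=0$ --- equivalently Lemma~\ref{LemmaWaveGlobalEnergy} after a harmless lowering of the weight below $r_0$, or the local uniqueness invoked in the proof of Theorem~\ref{ThmQuasilinearWave} --- forces $u=0$. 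Thus $S:=P^{-1}$ is well defined, and \eqref{EqTameSolOpKG} gives both its tameness and its continuity.

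I expect the only genuine point requiring care --- the analogue of the smallness of $\alpha$ in the wave case --- to be the requirement that the weight $r$, constrained by $0<-\Im\sigma_1<r<-\Im\sigma_2$, can simultaneously be taken less than $\gamma_\Gamma$, which is needed for \eqref{EqNHypSemiGlobal} to apply with only a two derivative loss; this is possible precisely when $\sigma_1$ is close enough to the real axis, i.e.\ when the mass $m$ is sufficiently small, after which the argument is routine.
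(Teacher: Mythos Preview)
Your proposal is correct and follows exactly the approach the paper takes: the paper states this theorem simply as ``the following adapted version of Theorem~\ref{ThmTameSolutionOp}'' obtained by running the same pipelining of Lemma~\ref{LemmaWaveGlobalSolve} with the $\sigma_1$-version of Theorem~\ref{ThmExpansion}, and your write-up just makes that explicit. Your closing remark about needing $r<\gamma_\Gamma$ (hence $m$ small) is a correct elaboration of what the paper packages into the standing hypothesis that the high energy estimates hold in $\Im\sigma\geq -r$.
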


This immediately gives:

\begin{thm}
\label{ThmQuasilinearKG}
  Under the above assumptions and the assumption $\alpha<-2\Im\sigma_1$, let $N,N'\in\N$ and $c_k\in\CI(\C;\R)$, $g_k\in(\CI+\Hb^\infty)(M;\Sym^2\Tb M)$ for $1\leq k\leq N$; define the map $g\colon\cX_{\sigma_1}^{s,\alpha}\to(\CI+\Hb^{s,\alpha})(M;\Sym^2\Tb M)$ by $g(u)=\sum_{k=1}^N c_k(u)g_k$ and assume that $\Box_{g(0)}$ satisfies the assumptions of \S\ref{SubsecForward} and of Theorem~\ref{ThmTameSolutionOpKG}. Moreover, define
  \[
    q(u,\bdiff u)=\sum_{j=1}^{N'} a_ju^{e_j}\prod_{k=1}^{N_j} X_{jk}u,
  \]
  where
  \[
	e_j,N_j\in\N_0,e_j+N_j\geq 2, a_j\in\CI, X_{jk}\in(\CI+\Hb^\infty)\Vb.
  \]
  Then there exists $C_f>0$ such that for all forcing terms $f\in\Hb^{\infty,\alpha}(\Omega)^{\bullet,-}$ satisfying $\|f\|_{\Hb^{\max(\derivsU,n+5),\alpha}(\Omega)^{\bullet,-}}\leq C_f$, the equation
  \begin{equation}
  \label{EqKGPDE}
    (\Box_{g(u)}-m^2)u=f+q(u,\bdiff u)
  \end{equation}
  has a unique solution $u\in\cX_{\sigma_1}^{\infty,\alpha}$.

  If more generally $g(u,\bdiff u)=\sum_{k=1}^N c_k(u,X_1 u,\ldots,X_L u)$, where $X_1,\ldots,X_L\in\Vb(M)$ and $c_k\in\CI(\C^{1+L};\R)$, then there exists $C_f>0$ such that for all forcing terms $f\in\Hb^{\infty,\alpha}(\Omega)^{\bullet,-}$ satisfying $\|f\|_{\Hb^{\max(\derivsUdU,n+5),\alpha}(\Omega)^{\bullet,-}}\leq C_f$, the equation
  \begin{equation}
  \label{EqKGPDE2}
    (\Box_{g(u,\bdiff u)}-m^2)u=f+q(u,\bdiff u)
  \end{equation}
  has a unique solution $u\in\cX^{\infty,\alpha}$.
\end{thm}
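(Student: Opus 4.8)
The plan is to reprise, almost verbatim, the Nash-Moser argument proving Theorem~\ref{ThmQuasilinearWave}, inserting the adjustments announced at the start of Section~\ref{SubsecSolveKG}. First I would take $B^s=\cX_{\sigma_1}^{s,\alpha}(\Omega)=\C\oplus\Hb^{s,\alpha}(\Omega)^{\bullet,-}$, with $(c,u')$ identified with $cx^{i\sigma_1}\chi+u'$, and $\bB^s=\Hb^{s,\alpha}(\Omega)^{\bullet,-}$, noting that the smoothing operators $S_\theta$ and the support bookkeeping around \eqref{EqSmoothingSupp} carry over unchanged, as they only involve the $\Hb^{s,\alpha}$ factor. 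I would set $\phi(u;f)=(\Box_{g(u)}-m^2)u-q(u,\bdiff u)-f$ and apply Theorem~\ref{ThmNashMoser} with $u_0=0$, so that $\|\phi(u_0;f)\|_{2d}=\|f\|_{2d}$ is small by hypothesis, and with the tame right inverse of $\phi'(u;f)$ supplied by Theorem~\ref{ThmTameSolutionOpKG} in place of Theorem~\ref{ThmTameSolutionOp}.

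The step I expect to be the \emph{main obstacle} --- and the one that forces the hypothesis $\alpha<-2\Im\sigma_1$, replacing the requirement $N_j\geq1$ of Theorem~\ref{ThmQuasilinearWave} --- is verifying that $\phi$ maps $\cX_{\sigma_1}^{\infty,\alpha}$ into $\Hb^{\infty,\alpha}(\Omega)^{\bullet,-}$ and is $C^2$. I would write $u=cx^{i\sigma_1}\chi+\tilde u$, observe that $u\in\Hb^{\infty,\alpha'}$ for every $\alpha'<-\Im\sigma_1$, and estimate term by term: the piece $(\Box_{g(0)}-m^2)(x^{i\sigma_1}\chi)$ lies in $\Hb^{\infty,\alpha}$ because $x^{i\sigma_1}$ is a resonant state of the normal operator of $\Box_{g(0)}-m^2$ at $\sigma_1$, so only the $x\Diffb^2$-part of $\Box_{g(0)}-m^2$ contributes near $\pa M$, producing a term decaying like $x^{1-\Im\sigma_1}$ with $1-\Im\sigma_1>-2\Im\sigma_1>\alpha$; $(\Box_{g(0)}-m^2)\tilde u\in\Hb^{\infty,\alpha}$; and $(\Box_{g(u)}-\Box_{g(0)})u$ and $q(u,\bdiff u)$ are, by Corollary~\ref{CorHbModule} and Proposition~\ref{PropCompWithSmooth2}, finite sums of products of at least two factors each in $\Hb^{\infty,\alpha'}$, hence lie in $\Hb^{\infty,2\alpha'}$, which is contained in $\Hb^{\infty,\alpha}$ once $\alpha'$ is taken close enough to $-\Im\sigma_1$ --- precisely the point where $\alpha<-2\Im\sigma_1$ is used. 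The same bookkeeping reappears at the level of the linearization, where it guarantees that the non-smooth perturbation of $\phi'(u;f)$ has trivial normal operator and that the expansion argument still closes up with remainder of weight $\alpha$.

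With well-definedness secured, the remaining steps are routine repetitions of the proof of Theorem~\ref{ThmQuasilinearWave}. The tame bounds in \eqref{EqCondMapCont} for $\phi$, $\phi'$ and $\phi''$ follow from the tame product, reciprocal and composition estimates of Section~\ref{SecNonsmoothCalc} (multiplication by the fixed function $x^{i\sigma_1}\chi$ being bounded on each $\Hb^{s,\alpha'}$), giving \eqref{EqCondMapCont} for any integer $d>n/2+1$. Linearizing gives $\phi'(u;f)=P_0+\tilde P$ with $P_0=\Box_{g(0)}-m^2\in\Diffb^2$ and, for $u\in\cX_{\sigma_1}^{s+6,\alpha}$, a second order b-differential operator $\tilde P$ whose coefficients decay at $\pa M$, so $N(\tilde P)=0$ and $N(\phi'(u;f))=\hat N(\Box_{g(0)}-m^2)$, whose only resonance in $\Im\sigma\geq-\alpha$ is $\sigma_1$ and which satisfies the assumed normally hyperbolic trapping estimates; for $|u|_{s_0}$ small the principal symbol $g^{-1}(u,\bdiff u)$ is a small perturbation of $g^{-1}(0)$, so the dynamical hypotheses of Section~\ref{SubsecForward} persist. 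Theorem~\ref{ThmTameSolutionOpKG} then furnishes $\psi(u;f)\colon\Hb^{s+3,\alpha}\to\cX_{\sigma_1}^{s,\alpha}$ with $\phi'(u;f)\psi(u;f)=I$ and $|\psi(u;f)h|_s\leq C(s,|u|_{s_0})(\|h\|_{s+3}+\|h\|_{s_0}|u|_{s+6})$ for $s,s_0>n/2+2$, i.e.\ \eqref{EqCondTameSol} with $d>n/2+2$, $d\geq6$ (respectively $d\geq7$ when $g$ depends on $\bdiff u$, since $\phi''$ then carries an extra b-derivative). Theorem~\ref{ThmNashMoser} produces $u\in\cX_{\sigma_1}^{\infty,\alpha}$, a priori supported on a domain slightly larger than $\Omega$; local uniqueness for quasilinear hyperbolic equations, as in \cite[\S{16.3}]{TaylorPDE}, then forces $u$ to be supported in $\Omega$ and to be the unique solution, and reality is inherited from the reality of the resonant state at $\sigma_1$. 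The derivative counts $\max(12,n+5)$ and $\max(14,n+5)$ come from $2d$ together with $d>n/2+2$, exactly as in Theorem~\ref{ThmQuasilinearWave}.
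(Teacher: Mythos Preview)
Your proposal is correct and follows essentially the same route as the paper: rerun the Nash--Moser argument of Theorem~\ref{ThmQuasilinearWave} with $\cX_{\sigma_1}^{s,\alpha}$ in place of $\cX^{s,\alpha}$, invoke Theorem~\ref{ThmTameSolutionOpKG} for the tame right inverse, and note that (i) at least quadratic expressions in $(u,\bdiff u)$ for $u\in\cX_{\sigma_1}^{s,\alpha}$ land in $\Hb^{s,\alpha}$ precisely because $\alpha<-2\Im\sigma_1$, and (ii) since every element of $\cX_{\sigma_1}^{s,\alpha}$ vanishes at $\pa M$, the normal operator of the linearization coincides with that of $\Box_{g(0)}-m^2$. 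The paper's proof says exactly this, in fewer words; your version spells out the additional point that $(\Box_{g(0)}-m^2)(cx^{i\sigma_1}\chi)\in\Hb^{\infty,\alpha}$, which the paper leaves implicit.

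One small correction: your chain $1-\Im\sigma_1>-2\Im\sigma_1>\alpha$ requires $-\Im\sigma_1<1$, which is not directly part of the hypotheses. The simpler justification is that the underlying setup (inherited from Theorem~\ref{ThmExpansion}) already has $\alpha<1$, so $\alpha<1<1-\Im\sigma_1$ since $\Im\sigma_1<0$; this gives the needed containment without the intermediate step.
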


Together with Theorem~\ref{ThmQuasilinearWave}, this proves Theorem~\ref{ThmIntroWave}. 

\begin{proof}[Proof of Theorem~\ref{ThmQuasilinearKG}.]
  The proof proceeds as the proof of Theorem~\ref{ThmQuasilinearWave}. Notice that we allow the nonlinear term $q$ to be more general, the point being that firstly, any at least quadratic expression in $(u,\bdiff u)$ with $u\in\cX_{\sigma_1}^{s,\alpha}$ gives an element of $\Hb^{s,\alpha}$, and secondly, every element in $\cX_{\sigma_1}^{s,\alpha}$ vanishes at the boundary, thus the normal operator family of the linearization of $\Box_{g(u)}-m^2-q(u,\bdiff u)-f$ at any $u\in\cX_{\sigma_1}^{s,\alpha}$ is equal to the normal operator family of $\Box_{g(0)}-m^2$, for which one has high energy estimates by assumption.

  We point out that the trapping trivially is uniformly normally hyperbolic since the normal operator family $\hat N(\Box_{g(u,\bdiff u)})$ is in fact \emph{independent} of $u\in\cX_{\sigma_1}^{s,\alpha}$; hence one indeed obtains a solution in $\cX_{\sigma_1}^{\infty,\alpha}$.
\end{proof}

By \cite[Lemma~3.5]{HintzVasySemilinear}, the assumptions of Theorem~\ref{ThmQuasilinearKG} are satisfied on asymptotically Kerr-de Sitter spaces as long as the mass parameter $m$ is small:

\begin{cor}
\label{CorQuasilinearKGKdS}
  Under the assumptions of Theorem~\ref{ThmQuasilinearKG} and for $a$ and $m>0$ sufficiently small, the quasilinear Klein-Gordon equation \eqref{EqKGPDE}, resp.\ \eqref{EqKGPDE2}, on a 4-dimensional asymptotically Kerr-de Sitter space with angular momentum $a$ has a unique global smooth (i.e.\ conormal, in the space $\cX^{\infty,\alpha}_{\sigma_1}$) solution if the $\Hb^{\derivsU,\alpha}(\Omega)^{\bullet,-}$-norm, resp.\ $\Hb^{\derivsUdU,\alpha}(\Omega)^{\bullet,-}$-norm, of the forcing term $f\in\Hb^{\infty,\alpha}(\Omega)^{\bullet,-}$ is sufficiently small.
\end{cor}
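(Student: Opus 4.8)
The plan is to obtain this corollary directly from Theorem~\ref{ThmQuasilinearKG} by checking, for the d'Alembertian on a $4$-dimensional asymptotically Kerr--de Sitter space with $|a|\ll M$ and Klein--Gordon mass $m>0$ small, the two clusters of hypotheses that theorem requires: (i) the dynamical and geometric assumptions of Section~\ref{SubsecForward} together with the skew-adjoint bound \eqref{EqSkewAdjointPart} at $\Gamma$, and (ii) the resonance hypotheses entering Theorem~\ref{ThmExpansion} and Theorem~\ref{ThmTameSolutionOpKG}, namely a simple resonance $\sigma_1=\sigma_1(m)$ of $\Box_{g(0)}-m^2$ of largest imaginary part with a spectral gap $0<-\Im\sigma_1<r<-\Im\sigma_2$, high-energy estimates for $\widehat{N(\Box_{g(0)}-m^2)}(\sigma)^{-1}$ in $\Im\sigma\ge -r$, and the compatibility $\alpha<-2\Im\sigma_1$. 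Since all of these conditions are \emph{open}, the strategy is to verify them in the model cases ($a=0$, $m=0$) and then invoke perturbation stability.

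First I would cite \cite[\S6]{VasyMicroKerrdS}: on exact Kerr--de Sitter space with $|a|\ll M$ the wave operator has exactly the structure at $\Tb^*_XM$ posited in Section~\ref{SubsecForward} --- the splitting $\Sigma=\Sigma_+\cup\Sigma_-$, radial sets $L_\pm$ whose stable/unstable manifolds $\cL_\pm$ are transversal to $\Sb^*_XM$ with the parameters in \eqref{EqRadialParams}, and $r$-normally hyperbolic trapping at $\Gamma^\pm=\Gamma^\pm_+\cap\Gamma^\pm_-$ verifying assumptions \itref{EnumNTGamma}--\itref{EnumNTQuant3}; structural stability of $r$-normal hyperbolicity supplies the uniformity used in Theorem~\ref{ThmNHypSemiGlobal}. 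The global non-trapping dynamics (all bicharacteristics in $\Sigma_\Omega\setminus(L_\pm\cup\Gamma^\pm)$ flowing to $\Sb^*_{H_j}M\cup L_\pm\cup\Gamma^\pm$, tending to $\Gamma^\pm$ in only one time direction) and the timelike characters of $dx/x$, $d\ft_1$, $d\ft_2$ are part of the same construction. All these conditions are open, so they survive passage to small $a$ and to the b-perturbations of $g$ produced when linearizing. For \eqref{EqSkewAdjointPart}: when $a=0$ one has $\tfrac{1}{2i}(\Box_{g(0)}-\Box_{g(0)}^*)=0$ and the threshold weight is $r=0$, so the strict inequality at $\Gamma$ holds with room to spare, and being strict it persists for $|a|\ll M$.

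Next I would pin down the resonances. For $m=0$, Dyatlov's computation \cite{DyatlovQNM} (as already used in Corollary~\ref{CorQuasilinearKdS}) gives a simple rank-one resonance of $\Box_{g(0)}$ at $0$ with resonant state the constants and no other resonance in $\Im\sigma\ge 0$. Turning on $m>0$, \cite[Lemma~3.5]{HintzVasySemilinear} shows this resonance perturbs to a simple resonance $\sigma_1(m)$ with $\Im\sigma_1(m)<0$, depending continuously on $m$ with $\sigma_1(m)\to 0$ as $m\to 0$, while every other resonance remains in $\Im\sigma<-\ep_0$ for a fixed $\ep_0>0$; the large-parameter estimates of \cite{VasyMicroKerrdS} are unaffected by adding $-m^2$, so for $m$ small there is $r$ with $0<-\Im\sigma_1(m)<r<\min(\ep_0,-\Im\sigma_2)$ and high-energy bounds hold in $\Im\sigma\ge -r$. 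I would then fix $m>0$ so small that this holds, choose $\alpha\in\bigl(0,\min(r,-2\Im\sigma_1(m),\gamma_\Gamma)\bigr)$ --- which is precisely the hypothesis $\alpha<-2\Im\sigma_1$ of Theorem~\ref{ThmQuasilinearKG} together with $\alpha<\gamma_\Gamma$ from Theorem~\ref{ThmExpansion} --- and finally take $|a|$ small enough that all the open conditions above, including the spectral gap (stable under perturbation by continuity of resonances in strips, which contain only finitely many), continue to hold. With these choices Theorem~\ref{ThmQuasilinearKG} applies verbatim and yields the unique global solution $u\in\cX_{\sigma_1,\R}^{\infty,\alpha}$ supported in $\Omega$, for $f$ small in $\Hb^{12,\alpha}$ (resp.\ $\Hb^{14,\alpha}$ in the case where $g$ depends on $\bdiff u$).

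The only genuinely delicate ingredient --- entirely imported rather than proved here --- is the persistence of the resonance picture: that no resonance of $\widehat{N(\Box_{g(0)}-m^2)}(\sigma)$ other than the simple $\sigma_1(m)$ enters $\Im\sigma\ge -r$ as $(a,m)$ ranges over a small neighborhood of $(0,0)$. This is exactly what \cite[Lemma~3.5]{HintzVasySemilinear} provides, combined with the finiteness of resonances in strips coming from the high-energy estimates of \cite{VasyMicroKerrdS}; everything else in the argument is an openness/perturbation statement requiring no new computation.
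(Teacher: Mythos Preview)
Your proposal is correct and follows the same approach as the paper: the paper's proof is the single sentence ``By \cite[Lemma~3.5]{HintzVasySemilinear}, the assumptions of Theorem~\ref{ThmQuasilinearKG} are satisfied on asymptotically Kerr--de Sitter spaces as long as the mass parameter $m$ is small,'' with the dynamical assumptions already verified (via \cite[\S6]{VasyMicroKerrdS} and \cite{DyatlovQNM}) in the proof of the wave-equation analogue, Corollary~\ref{CorQuasilinearKdS}. You have simply spelled out explicitly the two clusters of hypotheses---dynamics/trapping from \cite{VasyMicroKerrdS} and the perturbed resonance $\sigma_1(m)$ from \cite[Lemma~3.5]{HintzVasySemilinear}---and the order in which $m$, $\alpha$, and $a$ are chosen, which is exactly what the paper leaves implicit.
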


%%%%%%%%%%%%%%%%%%%%%%%%%%%%%%%%%%%%%%%%%%%%%%%%%%%%%%%%%%%%%%%%%%%%%%
\subsection{Proofs of Theorems~\ref{ThmIntroGeneralKG} and \ref{ThmIntroGeneralWave}}
\label{SubsecGeneralProofs}

Finally, following the same arguments as used in the previous section, we indicate how to prove the general theorems stated in the introduction. We continue to use, but need to generalize the setting considered in \S\ref{SubsecForward}: Namely, generalizing \eqref{EqP0Form}, we now allow $L$ to be any first order b-differential operator, and correspondingly need information on the skew-adjoint part of $P_0$; concretely, we define $\hat\beta$ at the (generalized) radial sets $L_\pm$, using the same notation as in \eqref{EqRadialParams}, by
\begin{equation}
\label{EqHatBeta}
  \sigma_{\bl,1}\Big(\frac{1}{2i}(P_0-P_0^*)\Big)\Big|_{L_\pm}=\pm\hat\beta\beta_0\rho.
\end{equation}
Moreover, at the trapped set $\Gamma=\Gamma^-\cup\Gamma^+$, we assume that
\begin{equation}
\label{EqP0ImagGamma}
  \sfe_1|_\Gamma<\numin/2,\quad \sfe_1=|\sigma|^{-1}\sigma_{\bl,1}\Big(\frac{1}{2i}(P_0-P_0^*)\Big),
\end{equation}
with $\numin$ the minimal normal expansion rate for the Hamilton flow of the principal symbol of $P_0$, and $\sigma$ the Mellin dual variable of $x$ after an identification of a collar neighborhood of $X$ in $M$ with $[0,\ep')_x\times X$; note that $\sigma$ is elliptic on $\Gamma$. Let $r_\thr$ be the threshold weight for the first part of Theorem~\ref{ThmTameHypTr}, i.e.\ $r_\thr=-\sup\sfe_1/c_\pa$ with $c_\pa$ as defined in \eqref{EqVx}.

Then Corollary~\ref{CorWavePropSing} holds in the current, more general setting, provided we assume $r<r_\thr$ and $s'>1+\sup_{L_\pm}(r\tilde\beta-\hat\beta)$. Likewise, we obtain the high energy estimates of Theorem~\ref{ThmNHypSemiMic} under the assumption $s>1/2+\sup_{L_\pm}(\gamma\tilde\beta-\hat\beta)$.

In order to generalize Theorem~\ref{ThmExpansion}, we first choose $0<r_+<1$ such that
\[
  (\sfe_1+r_+c_\pa)|_\Gamma < \numin/2,
\]
which holds for sufficiently small $r_+$ in view of \eqref{EqP0ImagGamma} by the compactness of $\Gamma$ in $\Sb^*M$. We moreover assume that there are no (nonzero) resonances in $\Im\sigma>-r_+$ in the case of Theorem~\ref{ThmIntroGeneralKG} (Theorem~\ref{ThmIntroGeneralWave}), and we assume further that $0<\alpha<r_+$. Then in the proof of Theorem~\ref{ThmExpansion}, ignoring the issue of threshold regularities at radial sets momentarily, we can use the contour shifting argument without loss of derivatives up to, but excluding, the weight $r_\thr$, corresponding to the contour of integration $\Im\sigma=-r_\thr$. Shifting the contour further down, we cannot use the non-smooth real principal type estimate at $\Gamma$ anymore and thus lose $2$ derivatives at each step; the total number of additional steps needed to shift the contour down to $\Im\sigma=-\alpha$ is easily seen to be at most
\[
  N=\max\left(0,\bigg\lceil\frac{\alpha-r_\thr}{\alpha}\bigg\rceil + 1\right),
\]
hence in order to have the final conclusion that $u$ has an expansion with remainder in $\Hb^{s,\alpha}$, we need to assume that $u$ initially is known to have regularity $\Hb^{s+2N,r_0}$ for any $r_0\in\R$, which in turn requires $\tilde s\geq s+2N$ and $f\in\Hb^{s+2N-1,r_0}$ for the first, lossless, part of the argument to work. Taking the regularity requirements at the radial sets into account, we further need to assume $s\geq s_0>\max(n+1/2,1+\sup(r\tilde\beta-\hat\beta))$. Under these assumptions, the proof of Theorem~\ref{ThmExpansion} applies, mutatis mutandis, to our current situation, and we obtain a tame solution operator as in Theorem~\ref{ThmTameSolutionOp}, which now loses $2N-1$ derivatives.

Thus, we can prove Theorems~\ref{ThmIntroGeneralKG} and \ref{ThmIntroGeneralWave} using the same arguments which we used in the proof of Theorem~\ref{ThmQuasilinearWave}; the `loss of derivatives' parameter $d$ now needs to satisfy the conditions
\begin{equation}
\label{EqGeneralDCond}
  d \geq 2N+3,\ d>n/2+6,\ d>1+\sup(r\tilde\beta-\hat\beta),
\end{equation}
with the first condition being the actual loss of derivatives, the second one coming from $s>n/2+6$ certainly being a high enough regularity for $\tilde s=s+2N$ to be $>n/2+6$, which is required for the application of the non-smooth microlocal regularity results, and the last condition being the threshold regularity (for the non-smooth estimates) at the radial sets.

\begin{rmk}
\label{RmkGeneralTheoremsMfds}
  We again point out that these theorems hold under general hypotheses, as explained in \S\ref{sec:general}; one merely needs to use the tame estimates \emph{on manifolds}, which hold by the discussion in Remarks~\ref{RmkRadialPointsMfds} and \ref{RmkNonsmoothTrappingMfds}.
\end{rmk}

%%%%%%%%%%%%%%%%%%%%%%%%%%%%%%%%%%%%%%%%%%%%%%%%%%%%%%%%%%%%%%%%%%%%%%
%\bibliographystyle{plain}
%\bibliography{../bib/math,../bib/mathcheck}

\end{document}